		\newlength\mylen
		\newcolumntype{C}{>{\hfil$}p{\mylen}<{$\hfil}}
	\newlength{\cellsize} \setlength{\cellsize}{18\unitlength}
	\newsavebox{\cell}
	\sbox{\cell}{\begin{picture}(18,18)
	\put(0,0){\line(1,0){18}}
	\put(0,0){\line(0,1){18}}
	\put(18,0){\line(0,1){18}}
	\put(0,18){\line(1,0){18}}
	\end{picture}}
	\newcommand\cellify[1]{\def\thearg{#1}\def\nothing{}%
	\ifx\thearg\nothing
	\vrule width0pt height\cellsize depth0pt\else
	\hbox to 0pt{\usebox{\cell} \hss}\fi%
	\vbox to \cellsize{
	\vss
	\hbox to \cellsize{\hss$#1$\hss}
	\vss}}
	\newcommand\tableau[1]{\vtop{\let\\\cr
	\baselineskip -16000pt \lineskiplimit 16000pt \lineskip 0pt
	\ialign{&\cellify{##}\cr#1\crcr}}}
	\newcommand{\red}[1]{\textcolor{red}{\textit{#1}}}
	\newcommand{\blue}[1]{\textcolor{blue}{\textit{#1}}}
	\newcommand{\green}[1]{\textcolor{green}{\textit{#1}}}
\newcommand{\pair}[1]{\left\langle#1\right\rangle}
\newcommand{\ldef}[1]{\textcolor{purple}{\textit{#1}}}
\DeclareMathOperator{\Ind}{Ind}
\DeclareMathOperator{\Endo}{End}
\DeclareMathOperator{\triv}{triv}
\DeclareMathOperator{\sign}{sign}
\DeclareMathOperator{\Tilt}{Tilt}
\DeclareMathOperator{\Parity}{Parity}
\DeclareMathOperator{\mix}{mix}
\DeclareMathOperator{\BS}{BS}
\DeclareMathOperator{\Pmix}{P^{\mix}}
\DeclareMathOperator{\Pmixw}{P^{\mix}_{Wh, \textit{I}}}
\DeclareMathOperator{\DBE}{\sD_{\text{BE}}}
\DeclareMathOperator{\DRE}{\sD_{\text{RE}}}
\DeclareMathOperator{\DRA}{\sD_{\text{RA}}}
\DeclareMathOperator{\C}{\mathbb{C}}
\DeclareMathOperator{\Q}{\mathbb{Q}}
\DeclareMathOperator{\Z}{\mathbb{Z}}
\DeclareMathOperator{\bD}{\mathbb{D}}
\DeclareMathOperator{\bF}{\mathbb{F}}
\DeclareMathOperator{\bK}{\mathbb{K}}
\DeclareMathOperator{\bO}{\mathbb{O}}
\DeclareMathOperator{\bP}{\mathbb{P}}
\DeclareMathOperator{\h}{\mathfrak{h}}
\DeclareMathOperator{\cE}{\mathcal{E}}
\DeclareMathOperator{\cP}{\mathcal{P}}
\DeclareMathOperator{\cT}{\mathcal{T}}
\DeclareMathOperator{\sD}{\mathscr{D}}
\DeclareMathOperator{\tA}{\textbf{A}}
\DeclareMathOperator{\tB}{\textbf{B}}
\DeclareMathOperator{\tC}{\textbf{C}}
\DeclareMathOperator{\tD}{\textbf{D}}
\DeclareMathOperator{\tE}{\textbf{E}}
\DeclareMathOperator{\tF}{\textbf{F}}
\DeclareMathOperator{\tG}{\textbf{G}}
\DeclareMathOperator{\tI}{\textbf{I}}
\DeclareMathOperator{\tR}{\textbf{R}}
\DeclareMathOperator{\ux}{\underline{\textit{x}}}
\DeclareMathOperator{\uy}{\underline{\textit{y}}}
\DeclareMathOperator{\uz}{\underline{\textit{z}}}
\DeclareMathOperator{\ue}{\underline{\textit{e}}}
\DeclareMathOperator{\uw}{\underline{\textit{w}}}
\DeclareMathOperator{\uu}{\underline{\textit{u}}}
\DeclareMathOperator{\ur}{\underline{\textit{r}}}
\DeclareMathOperator{\ustar}{\underline{\star}}
\DeclareMathOperator{\Hom}{Hom}
\DeclareMathOperator{\rank}{rank}
\DeclareMathOperator{\grrk}{rk^{\bullet}}
\DeclareMathOperator{\id}{id}
\DeclareMathOperator{\ch}{ch}
\DeclareMathOperator{\Sym}{Sym}
\DeclareMathOperator{\alphac}{\check{\alpha}}
\DeclareMathOperator{\For}{For}
\DeclareMathOperator{\ua}{\uparrow}
\DeclareMathOperator{\da}{\downarrow}
\newtheorem{thm}{Theorem}[section]
\newtheorem{lem}[thm]{Lemma}
\newtheorem{prop}[thm]{Proposition}
\newtheorem{conj}[thm]{Conjecture}
\theoremstyle{remark}
\newtheorem{rem}[thm]{Remark}
\newtheorem{exmp}[thm]{Example}
\title{(Co)Minuscule Hecke categories}
\author{J. Baine}
\date{}
\begin{document}

	\maketitle
	\begin{abstract} 
	
	We determine the $p$-Kazhdan-Lusztig bases for antispherical (co)minuscule Hecke categories in all characteristics, and for spherical (co)minuscule Hecke categories in good characteristic. 
	This is achieved using geometric and diagrammatic methods. 
	The $2$-Kazhdan-Lusztig bases of antispherical cominuscule Hecke categories exhibit extremely pathological behaviour. 
	The notions of $p$-small resolutions and $p$-tight elements are introduced and conjecturally explain this behaviour.
	\end{abstract}


\section{Introduction}
\subsection{Parabolic $p$-Kazhdan-Lusztig theory}

	Categories whose Grothendieck groups are isomorphic to Hecke algebras are ubiquitous in mathematics. 
	They arise in geometry as perverse sheaves on flag varieties \cite{KL80}, in Lie theory as blocks of BGG category $\mathcal{O}$ \cite{BGG76}, and in algebra as Soergel bimodules \cite{Soe07}.
	Such categories are known as Hecke categories.
	Kazhdan-Lusztig theory encodes deep, structural properties of Hecke categories in Kazhdan-Lusztig bases of the corresponding Hecke algebra \cite{KL79}.  
	\par 
	Deodhar \cite{Deo87} introduced parabolic Kazhdan-Lusztig theory, where one instead considers modules for the Hecke algebra obtained by inducing the trivial or sign representations of parabolic subalgebras; these are known as spherical and antispherical modules respectively. 
	For Weyl groups, these modules are categorified by certain classes of `mixed' sheaves on the corresponding partial flag variety. 
	The category of parity sheaves is known as the spherical category as it categorifies the spherical module; the category of mixed tilting sheaves is known as the antispherical Hecke category as it categorifies the antispherical module. 
	\par 
	When these categories are $\Bbbk$-linear over a field of characteristic 0, the indecomposable sheaves categorify the Kazhdan-Lusztig bases of their respective modules. 
	If $\Bbbk$ is a field of characteristic $p>0$, the indecomposable sheaves give rise to the celebrated $p$-Kazhdan-Lusztig bases of their respective modules. 
	$p$-Kazhdan-Lusztig bases are fundamental objects in modular representation theory; they encode the characters of simple and tilting modules for algebraic groups \cite{AMRW19, RW21} and the decomposition numbers of simple modules for symmetric groups \cite{Erd94}.
	\par 
	Despite their wide-ranging importance, categories with well-understood Kazhdan-Lusztig theory are rare.
	Categories with well-understood $p$-Kazhdan-Lusztig theory are rarer.
	At present, we only `understand' the $p$-Kazhdan-Lusztig theory arising from dihedral groups, the closely related universal Coxeter groups \cite{JW17, EL17}, and Hermitian symmetric pairs \cite{BDHN22}.
	In this paper we determine the parabolic $p$-Kazhdan-Lusztig bases arising from (co)minuscule partial flag varieties.  
		
\subsection{Main results}
		
	(Co)Minuscule partial flag varieties are among the best understood projective varieties. 
	They include projective spaces, (isotropic) Grassmannians, quadric hypersurfaces and certain exceptional partial flag varieties. 
	A complete list is given in Table \ref{Table: classification}. 
	We call a category a (co)minuscule Hecke category if it is equivalent to either the category of parity sheaves or the category of mixed tilting sheaves on a (co)minuscule partial flag variety. 
	\par
	The first main result of this paper is the determination of the $p$-Kazhdan-Lusztig bases of antispherical cominuscule Hecke categories. 
		
		\begin{thm}
		\label{Thm: antispherical cominuscule pKL basis}
			For any antispherical cominuscule Hecke category the $p$-Kazhdan-Lusztig basis is the: Bott-Samelson basis when $p=2$; and, the Kazhdan-Lusztig basis when $p \neq 2$. 	
		\end{thm}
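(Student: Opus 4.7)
The plan is to identify, for each minimal coset representative $w \in W^P$ of the cominuscule parabolic, the corresponding indecomposable antispherical mixed tilting object $T_w^P$ explicitly, working inside the diagrammatic Hecke category (equivalently, via mixed tilting sheaves on $G/P$). The starting point is the combinatorial rigidity of cominuscule coset representatives: each $w \in W^P$ is fully commutative in the sense of Stembridge, so any two reduced expressions for $w$ differ only by commutations, and the Bott--Samelson object $\BS(\uw)$ is canonically independent of the choice of reduced word $\uw$. The goal in each case is to understand the image of $\BS(\uw)$ in the antispherical quotient: for $p \neq 2$ it should decompose into indecomposable antispherical tilting objects exactly as dictated by the characteristic-zero Kazhdan--Lusztig combinatorics, while for $p = 2$ it should itself already be indecomposable.

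For $p \neq 2$ the approach is geometric. Cominuscule Schubert varieties admit classical small resolutions (Zelevinsky for type $A$ Grassmannians, Sankaran--Vanchinathan and Perrin in the remaining cominuscule cases). These resolutions have even, torsion-free cohomology on their fibres in every characteristic different from $2$, so the modular decomposition theorem applies and realises the pushforward as the $\Bbbk$-coefficient IC sheaf of the Schubert variety. Comparing with the explicit antispherical Kazhdan--Lusztig formulas available in cominuscule type (via e.g.\ Boe--Brenti), this identifies the $p$-Kazhdan--Lusztig basis with the ordinary Kazhdan--Lusztig basis after passage to the antispherical quotient.

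For $p = 2$ the approach is diagrammatic. I would compute the intersection form on $\Hom(\BS(\uw), \BS(\uw))$ in terms of the light-leaves basis and show that, after the antispherical projection, its reduction modulo $2$ is perfect; this forces $\Endo(\BS(\uw))$ to be local and hence $\BS(\uw)$ to be indecomposable. Full commutativity reduces the computation at each length step to a finite list of rank-two subsystem checks, involving only $A_1 \times A_1$ commutations together with the single non-commuting edge created when a simple reflection is appended, so the induction on $\ell(w)$ closes provided the base rank-two calculations go through modulo $2$.

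The main obstacle is precisely this $p = 2$ step: one must rule out any idempotent decomposition of $\BS(\uw)$ in the antispherical quotient, which amounts to a genuinely integral tracking of light-leaf pairings rather than a symbolic/character-theoretic argument. The fully-commutative combinatorics of $W^P$ is what keeps the relevant data finite and tractable; the delicate point is to exhibit enough explicit morphisms in the antispherical quotient to pin down the rank of the intersection form modulo $2$, and this is where I expect the bulk of the technical work to lie.
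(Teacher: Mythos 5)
Your proposal splits the theorem along the same two-regime boundary as the paper, and the $p=2$ half is correct in spirit (light leaves and local intersection forms), but both halves have substantive gaps.

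\textbf{The $p\neq 2$ half.} The factual claim that cominuscule Schubert varieties always admit small resolutions is false. Zelevinsky's result is for type $\tA$ Grassmannians only; for the Lagrangian Grassmannian $Sp_{2n}/P_{\tA_{n-1}}$ there are Schubert varieties that are not even rationally smooth (the ordinary spherical Kazhdan--Lusztig polynomials $m_{y,x}$ are non-trivial), so a small resolution cannot exist. More importantly, even where small resolutions do exist, they would establish that indecomposable parity sheaves on $G/P$ are IC sheaves, which is the \emph{spherical} statement ${}^p c_x = c_x$. Koszul duality sends parity sheaves on $G/P$ to mixed \emph{tilting} sheaves on the Whittaker side, not to Whittaker parity sheaves, so this does not yield the antispherical statement ${}^p d_x = d_x$ without an additional argument comparing tilting and parity decomposition numbers; in the paper that comparison (Section 6) runs in the opposite direction, from antispherical to spherical. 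The paper's actual $p>2$ proof (Sections 5.1--5.2) is entirely different: it composes the two Koszul dualities of AMRW to produce an equivalence $\Parity_{Wh,I}(B\backslash G)\simeq\Parity_{Wh,I}(\check B\backslash\check G)$, reducing the non-simply-laced cominuscule case to the Langlands dual minuscule case, which is then further reduced to the simply-laced case where $d_x = d_{\ux}$ is a classical fact.

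\textbf{The $p=2$ half.} You are right that one should exhibit the local intersection forms explicitly and show they become degenerate after reduction modulo $2$, but the mechanism is not ``a finite list of rank-two subsystem checks.'' The computation is genuinely global in rank $n$: one needs the combinatorial set $E(x)\subset x$ of ``removable'' even entries, the associated skew $\underline{121}$-tableaux carving out subexpressions $\ux^E\subseteq\ux$, and the identity $d_{\ux} = \sum_{E\subseteq E(x)} d_{x\backslash E}$ (Proposition \ref{prop: bs decomp}) telling you \emph{which} lower strata carry a possible summand. Only then does one compute the local intersection form at each stratum $x\backslash E$ and find it equals $2^{|E|}(-1)^{\ell(E)/2}$; the factor $2$ arises from $\pair{\alphac_0,\alpha_1}=-2$ in type $\tB_n$ propagating once per element of $E$, not from isolated $A_1\times A_1$ configurations. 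Your formulation ``show the reduction modulo $2$ of the intersection form on $\Endo(\BS(\uw))$ is perfect'' is also not the right criterion: one needs the \emph{local} intersection forms $I^0_{y,\ux}$ for $y<x$ to reduce to zero, which is what forces $B_{\ux}$ to have no lower summands. Identifying $E(x)$ and proving the Bott--Samelson decomposition is where the real work lies, and it is precisely the step your sketch omits.
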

		
		The Bott-Samelson basis is introduced in Section \ref{Ssec: Hecke algebras}; it is the crudest upper-bound for the $p$-Kazhdan-Lusztig basis. 
		Theorem \ref{Thm: antispherical cominuscule pKL basis} can be understood as saying the $2$-Kazhdan-Lusztig theory of antispherical cominuscule Hecke categories is as complicated as it can be, whilst the $p$-Kazhdan-Lusztig theory (when $p\neq 2$) is as simple as it can be. 
		The dramatic jump in complexity is illustrated for the Lagrangian Grassmannian in Figures \ref{fig:LG-AS-p0} and \ref{fig:LG-AS-p2}.\footnote{See Section \ref{Ssec: Figures} for details on how to interpret Figures \ref{fig:LG-AS-p0}  and \ref{fig:LG-AS-p2}.} 
		\par 
		We provide three proofs of Theorem \ref{Thm: antispherical cominuscule pKL basis}. 
		When $p>2$, we provide a geometric proof using modular Koszul duality in Section \ref{Ssec: AS Comin Geo}.
		The second proof, which holds for all $p$, explicitly calculates the relevant local intersection forms. 
		The third, which holds for all $p$, explicitly calculates the relevant idempotents projecting from the Bott-Samelson object onto the indecomposable object in the diagrammatic antispherical category. 
		\par 
		In Section \ref{Ssec: p-tightness and p-small}, we introduce the geometric notion of $p$-small resolutions of singularities, which generalises the classical notion of small resolutions.
		We conjecture a geometric explanation for the characteristic 2 phenomena in terms of $2$-small resolutions of Schubert varieties in the Langlands dual, full flag variety. 
		A related notion of $p$-tight elements is also introduced. 
		We suspect these notions will be of independent interest. 
		\par 
		We also determine the $p$-Kazhdan-Lusztig bases of antispherical minuscule Hecke categories. 
		
		\begin{thm}
		\label{Thm: antispherical minuscule pKL basis}
			For any antispherical minuscule Hecke category the $p$-Kazhdan-Lusztig basis is the Kazhdan-Lusztig basis for all $p$.
		\end{thm}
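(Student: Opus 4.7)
The plan is to mirror the three approaches used for Theorem~\ref{Thm: antispherical cominuscule pKL basis}, exploiting the fact that in the minuscule setting the Schubert geometry is uniformly better behaved across characteristics and, unlike the cominuscule case, no pathology emerges at $p=2$.

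For the geometric route, I would use that for any minuscule parabolic $P$ and any $w\in W^P$, the Schubert variety $X_w \subset G/P$ admits a small resolution of singularities in all characteristics (Zelevinsky in type $A$, with analogous constructions in the remaining minuscule types). Smallness forces $\pi_{w*}\underline{\Bbbk}_{\widetilde{X}_w}[\dim X_w] \simeq IC_w^{\Bbbk}$ for any field $\Bbbk$, so the indecomposable parity sheaf $\mathcal{E}_w$ coincides with the IC sheaf. Thus the spherical $p$-canonical basis equals the classical Kazhdan-Lusztig basis in every characteristic, and modular Koszul duality (in good characteristic) transports this into the desired statement about antispherical tilting sheaves.

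To cover the remaining primes and to treat the antispherical side directly, I would replicate the intersection-form and idempotent calculations used in the second and third proofs of Theorem~\ref{Thm: antispherical cominuscule pKL basis}, now in the minuscule setting. The aim is to show that the relevant local intersection forms on the degree-zero parts of $\Endo(\BS(\underline{w}))$, projected into the antispherical category, have determinant $\pm 1$, so no prime-sensitive degeneration occurs. The combinatorics of minuscule elements --- they are fully commutative, and their heaps are planar distributive lattices --- should make these determinants transparent.

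The main obstacle is the uniformity of the combinatorial verification. Each minuscule family is tractable on its own, but one must treat classical types $A$, $B$, $D$ and the exceptional minuscule cases (the Cayley plane for $E_6$ and the Freudenthal variety for $E_7$) coherently. A type-free proof would be ideal, but a careful case analysis leveraging the explicit small resolutions already in the literature is an acceptable fallback, with the two exceptional varieties likely requiring the most care.
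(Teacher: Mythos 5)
The geometric route as you've framed it has a directional error in how Koszul duality is applied, and it does not prove what the theorem asserts. The $p$-Kazhdan--Lusztig basis $\{ {}^{p}d_x \}$ is defined via the characters of the indecomposable \emph{parity} objects $\check{\cE}_x \in \Parity_{Wh,I}(\check{B}\backslash\check{G})$. Under parabolic Koszul duality $\kappa$, these correspond to the mixed \emph{tilting} sheaves $T_x \in \Tilt^{\mix}(U\backslash G/P)$, not to the parity sheaves $\cE_x \in \Parity(U\backslash G/P)$. Your small-resolution argument shows $\cE_x \cong \mathcal{IC}_x$ for all $p$, i.e.\ that the \emph{spherical} pKL basis satisfies ${}^{p}c_x = c_x$ --- a statement about the spherical parities, which Koszul duality carries to the tiltings $\check{T}_x$ on the Whittaker side, not the parities $\check{\cE}_x$. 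You even write ``antispherical \emph{tilting} sheaves'' in your own summary; that is not the object ${}^{p}d_x$ is defined by. In characteristic $p$ the characters of parities and of tiltings in the same category are genuinely different, and the passage between them (e.g.\ via Ringel duality as in Section~\ref{Ssec: Ringel duality}, or the decomposition-number bookkeeping of Section~\ref{Sec: Decomposition numbers}) is nontrivial. So as written the argument computes ${}^{p}c_x$ and leaves ${}^{p}d_x$ untouched.

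The paper's geometric proof is considerably more elementary than small resolutions: it is essentially a Hecke-algebra argument. One first uses Koszul duality together with the stratified isomorphisms of Lemma~\ref{Lem: reduce to simply laced vars} to reduce the categorical statement to simply-laced $\check{G}$ (Lemma~\ref{Lem: reduce to simply laced cats}). In the simply-laced minuscule cases Brenti showed, purely in $N^I$, that $d_x = d_{\ux}$ for every reduced expression $\ux$. Since the sandwich Lemma~\ref{Lem: pKL bound} (and its antispherical analogue) gives $d_x \preceq {}^p d_x \preceq d_{\ux}$ for all $p$, the equality of the bounds forces ${}^p d_x = d_x$ with no geometry needed once you are in simply-laced type. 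Your intersection-form plan is in the spirit of the paper's second and third proofs, but be aware that those computations must be done in the \emph{Cartan realisation of the Langlands dual type} (the paper does type $\tC_n$ for the odd orthogonal Grassmannian and type $\tB_n$ for projective space, cf.\ Remarks~\ref{rem: minu p=2 intersection forms} and \ref{rem: minu p=2 idempotent}), and the simply-laced families are already disposed of by Brenti plus the sandwich lemma, so there is no need to run case analysis through $E_6$, $E_7$, etc.
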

		
		This re-proves the main result of \cite{BDHN22}. 
		Again, we give three proofs: one geometric, one based on local intersections forms, and one by explicitly calculating idempotents. 
		The geometric proof in Section \ref{Ssec: AS Min Geo} is extremely short, utilising Koszul duality and certain classically known isomorphisms of minuscule partial flag varieties.  
		\par 
		Classical antispherical Kazhdan-Lusztig polynomials for (co)minuscule partial flag varieties satisfy the remarkable `monomial property:' they are always either $0$ or a monic monomial \cite{Bre02,Bre09}. 
		It follows immediately from Theorems \ref{Thm: antispherical cominuscule pKL basis} and \ref{Thm: antispherical minuscule pKL basis} and  \cite{BDFHN23} that this property continues to hold in the modular setting.

		\begin{thm}
		\label{Thm: Monomial property}
			For any antispherical (co)minuscule Hecke category and any $p \geq 0$, every $p$-Kazhdan-Lusztig polynomial is either 0 or a monic monomial. 
		\end{thm}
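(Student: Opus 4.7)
The plan is to bootstrap from the two preceding theorems: once the $p$-Kazhdan-Lusztig basis in each setting has been identified, the monomial property reduces to reading off the coefficients of each basis element in the standard basis and invoking either the classical result of Brenti or its modular extension in \cite{BDFHN23}. I would split the argument into three cases, according to whether the setting is minuscule, and in the cominuscule case whether $p=2$.

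In the first case, antispherical minuscule with arbitrary $p$, Theorem \ref{Thm: antispherical minuscule pKL basis} identifies the $p$-Kazhdan-Lusztig basis with the ordinary Kazhdan-Lusztig basis, so the corresponding $p$-Kazhdan-Lusztig polynomials coincide with the classical antispherical Kazhdan-Lusztig polynomials. Brenti \cite{Bre02,Bre09} establishes that these are either $0$ or a monic monomial, which gives the claim. The second case, antispherical cominuscule with $p \neq 2$, is handled identically using the second half of Theorem \ref{Thm: antispherical cominuscule pKL basis}.

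The only remaining case is antispherical cominuscule with $p=2$, where the first half of Theorem \ref{Thm: antispherical cominuscule pKL basis} identifies the $p$-Kazhdan-Lusztig basis with the Bott-Samelson basis. Here I would invoke \cite{BDFHN23}, which shows that in the antispherical cominuscule setting each Bott-Samelson basis element expands in the standard basis with coefficients that are either $0$ or a monic monomial; that is, the antispherical Bott-Samelson basis satisfies the monomial property. This covers the remaining case and completes the proof.

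The argument is essentially an accounting exercise, and the only subtlety I anticipate is a compatibility check: one must verify that the Bott-Samelson basis referenced in Theorem \ref{Thm: antispherical cominuscule pKL basis} is indexed in the same way as the basis to which \cite{BDFHN23} applies, namely via the canonical reduced expression attached to each element of the cominuscule parabolic quotient. Once this indexing is matched, the three cases together yield the monomial property for every $p \geq 0$.
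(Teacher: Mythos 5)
Your proposal follows exactly the route the paper intends: identify ${}^p d_x$ with $d_x$ or $d_{\ux}$ via Theorems~\ref{Thm: antispherical cominuscule pKL basis} and~\ref{Thm: antispherical minuscule pKL basis}, then read off the monomial property from Brenti in the Kazhdan-Lusztig case and from \cite{BDFHN23} in the Bott-Samelson case, so you have simply unpacked the paper's one-line deduction into its constituent cases. The compatibility check you flag at the end is in fact automatic, since every element of $W^I$ for a (co)minuscule quotient is fully commutative (cf.\ \cite{Ste96}), so $d_{\ux}$ is independent of the choice of reduced expression and there is no indexing ambiguity to resolve.
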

		
		Indeed, Theorem \ref{Thm: antispherical cominuscule pKL basis} shows that the monomial property is really a property of the Bott-Samelson basis that is inherited by the ($p$-)Kazhdan-Lusztig basis.
		A geometric consequence of this property is each indecomposable Whittaker parity sheaf in a (co)minuscule Hecke category is perverse. 
		\par 
		The final main result of this work is the determination of the $p$-Kazhdan-Lusztig basis of any spherical (co)minuscule Hecke category in good characteristic. 
		
		\begin{thm}
		\label{Thm: Spherical pKL bases}
			For any spherical (co)minuscule Hecke category the $p$-Kazhdan-Lusztig basis is the Kazhdan-Lusztig basis when $p$ is a good prime.
		\end{thm}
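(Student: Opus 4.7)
The plan is to reduce the claim to the assertion that, on a (co)minuscule partial flag variety in good characteristic, the indecomposable parity sheaves coincide with the IC sheaves; this immediately yields the theorem, since parity sheaves categorify the $p$-Kazhdan-Lusztig basis while IC sheaves categorify the Kazhdan-Lusztig basis. I would establish this equality via modular Koszul duality (in the form of Achar-Riche), which in good characteristic relates the spherical Hecke category of $G$ on a partial flag variety to the antispherical Hecke category of the Langlands dual $G^{\vee}$ on the dual partial flag variety, interchanging minuscule with cominuscule, sending indecomposable parity sheaves to indecomposable tilting objects, and matching IC sheaves on the two sides.

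With this dictionary in hand, the spherical cominuscule case translates to the antispherical minuscule case for $G^{\vee}$, which is governed by Theorem \ref{Thm: antispherical minuscule pKL basis} and holds for all $p$. The spherical minuscule case translates to the antispherical cominuscule case for $G^{\vee}$, governed by Theorem \ref{Thm: antispherical cominuscule pKL basis}, which holds for $p \neq 2$. In every Cartan type outside type $A$ the prime $2$ is bad, so the hypothesis ``$p$ is good'' automatically forces $p \neq 2$ and Theorem \ref{Thm: antispherical cominuscule pKL basis} applies. The remaining case, namely Grassmannians in characteristic $2$, is classical: Schubert varieties in Grassmannians admit Zelevinsky-type small resolutions whose fibres have $p$-torsion-free cohomology concentrated in even degrees, so the resulting pushforward is simultaneously the IC sheaf and a parity sheaf.

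The principal obstacle is ensuring modular Koszul duality, in the precise form needed, is available for parabolic (co)minuscule Hecke categories in good characteristic, and verifying that it sends indecomposable parity objects to indecomposable tilting objects and IC objects to IC objects. This is essentially the content of the Achar-Riche theorems for the full flag variety; the parabolic adaptation, together with the classical isomorphisms of (co)minuscule partial flag varieties already exploited in the proof of Theorem \ref{Thm: antispherical minuscule pKL basis}, should suffice. As an alternative fully geometric approach, one could invoke the small resolutions of Schubert varieties in (co)minuscule partial flag varieties (Zelevinsky, Sankaran-Vanchinathan, Perrin) and show directly that in good characteristic their fibres have even cohomology free of $p$-torsion, so the pushforwards are simultaneously IC and parity, bypassing Koszul duality altogether.
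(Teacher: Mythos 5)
Your proposal contains a genuine gap, and it is precisely the gap that Section \ref{Sec: Decomposition numbers} of the paper exists to fill. You correctly note that parabolic Koszul duality $\kappa$ sends indecomposable parity sheaves on $U\backslash G/P$ to indecomposable \emph{tilting} objects on the antispherical side, and vice versa. But Theorems \ref{Thm: antispherical cominuscule pKL basis} and \ref{Thm: antispherical minuscule pKL basis} are statements about the antispherical \emph{parity} sheaves $\check{\cE}_x$, i.e.\ the objects $\kappa(T_x)$ where $T_x$ is a spherical \emph{tilting} object. So these theorems do not directly say anything about $\kappa(\cE_x)\cong\check{T}_x$, which is what controls ${}^p c_x$. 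Your phrase ``the spherical cominuscule case translates to the antispherical minuscule case for $G^\vee$'' is therefore not a complete reduction: knowing that antispherical parity decomposition numbers vanish does not, by duality alone, tell you that antispherical tilting decomposition numbers vanish.

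Bridging this is exactly the content of Proposition \ref{Prop: mixed decomp numbers} and Proposition \ref{Prop: Parity parity decomp}. The paper shows $\check{\mathfrak{E}}_I^{\mix} = \mathfrak{T}_I^{\mix} = \tR\mathfrak{P}_I^{\mix} = \tR\mathfrak{L}_I^{\mix,t}$ via Koszul duality, mixed parabolic Ringel duality (Proposition \ref{Prop: Antispherical Ringel}, which the paper has to construct), and a BGG-type reciprocity. One then passes to the unmixed categories using the degrading functor of Achar--Riche (Lemma \ref{Lem: Degrading decomp numbers}) — and it is here, not in Koszul duality, that the ``good prime'' hypothesis actually enters, since the degrading functor is only available in good characteristic. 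Finally, one needs Lemma \ref{Lem: IC parity decomp} (a parabolic version of Williamson's argument from \cite{Wil15}) and an inductive argument to deduce triviality of the \emph{parity} decomposition $\mathfrak{E}_I$ from triviality of the \emph{IC} decomposition $\mathfrak{L}_I$. Your proposal omits all of this, and the middle paragraph in particular reads as if the antispherical theorem for parity objects yields the spherical theorem for parity objects ``immediately.'' It does not.

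Your alternative suggestion — invoking small resolutions and directly verifying torsion-freeness of fibre cohomology — is closer to the spirit of the paper's handling of the remaining $p=2$, type $\tA$ case (where Zelevinsky's small resolutions are indeed used). But as a uniform argument it is undeveloped: not all Schubert varieties in (co)minuscule flag varieties are known to admit small resolutions, and even where they do, the torsion-freeness you would need is not free and would have to be established prime by prime. In short, the reduction you propose via Koszul duality is the right starting point but stops several steps short of the theorem; the paper's Section \ref{Sec: Decomposition numbers} is where the real work, and the good-prime hypothesis, live.
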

		
		This is shown using using categorical arguments. 
		In particular, we construct mixed, parabolic Ringel duality functors in Section \ref{Ssec: Ringel duality}. 
		We then related the graded decomposition numbers of mixed tilting, projective and intersection cohomology sheaves, and parity sheaves on the Langlands dual partial flag variety, generalising \cite[\S 2.7]{AR16a}. 
		Finally we show that, under mild conditions on $p$, if the antispherical $p$-Kazhdan-Lusztig basis is the antispherical Kazhdan-Lusztig basis, then the spherical $p$-Kazhdan-Lusztig basis is the spherical Kazhdan-Lusztig basis. 
		We expect the results of this section will be of independent interest. 
		
		\begin{figure}[!htb]
  			\centering
  			\includegraphics[width=0.675\linewidth]{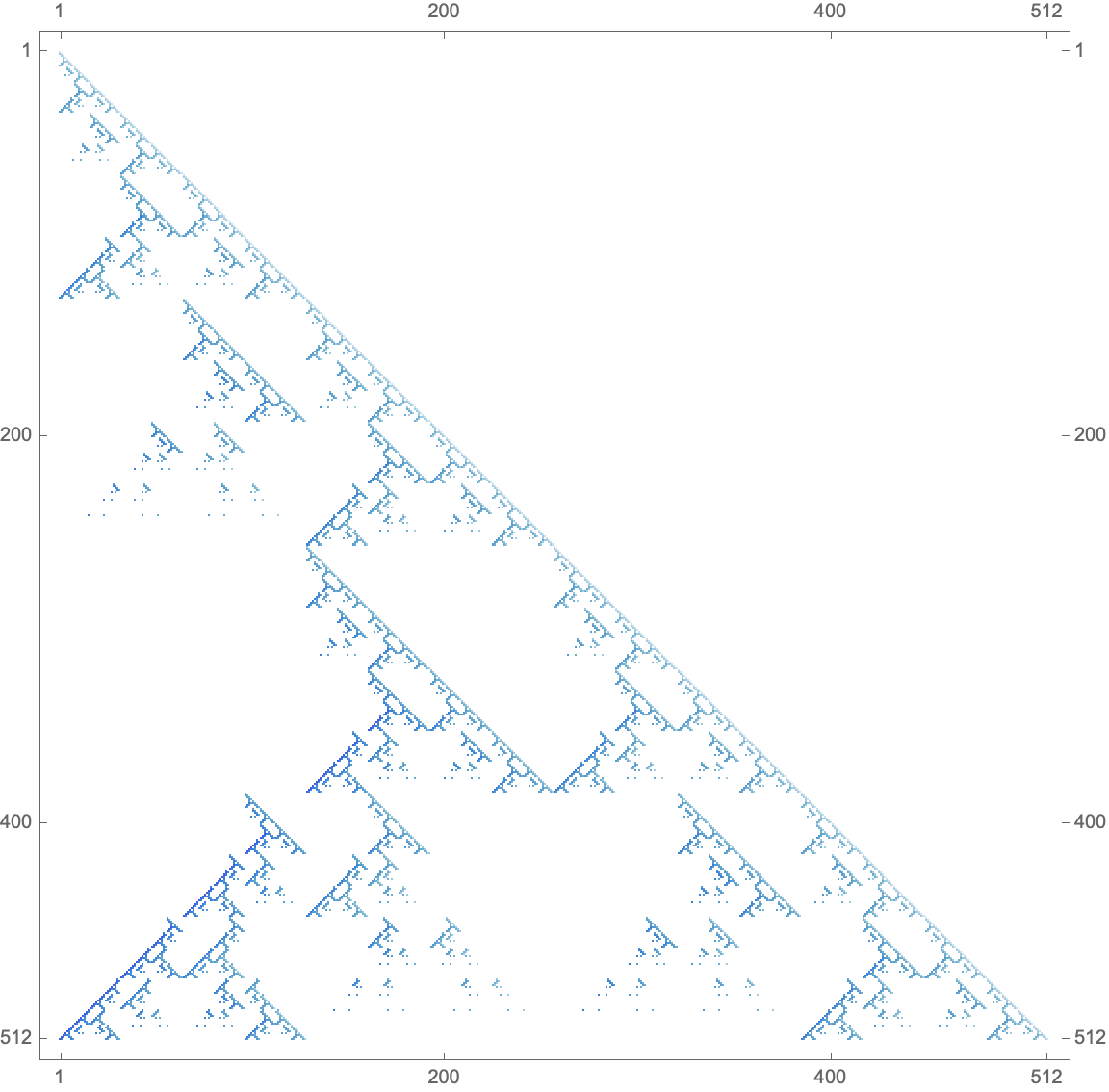}
  			\caption{Antispherical Kazhdan-Lusztig polynomials for the Lagrangian Grassmannian.}
  			\label{fig:LG-AS-p0}
		\end{figure}
		\begin{figure}[!htb]
			\centering
 		 	\includegraphics[width=0.675\linewidth]{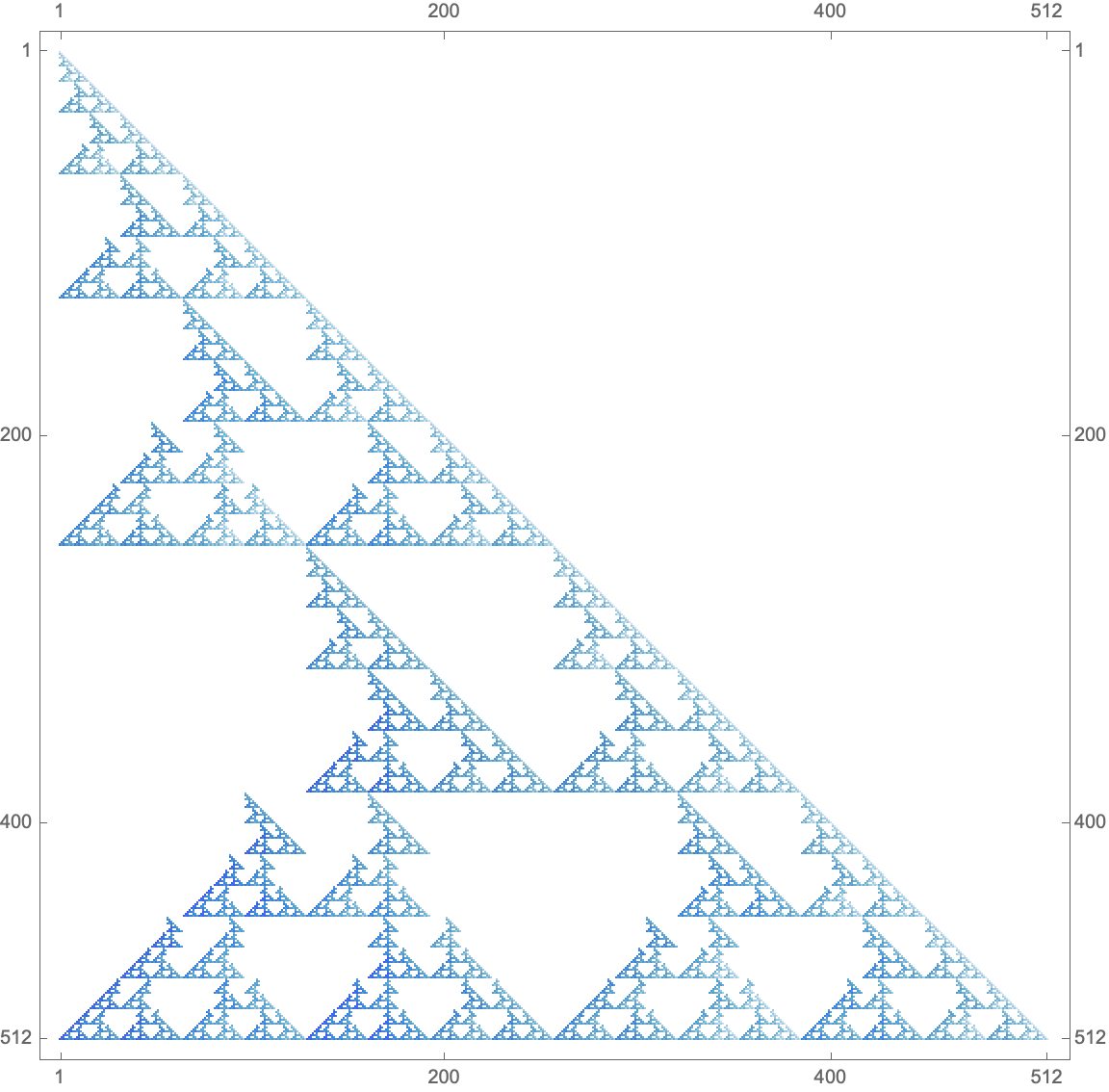}
 			\caption{Antispherical $2$-Kazhdan-Lusztig polynomials for the Lagrangian Grassmannian.}
  			\label{fig:LG-AS-p2}
		\end{figure}
		
\subsection{The Lagrangian Grassmannian}

		The uniformity in the statement of Theorem \ref{Thm: antispherical cominuscule pKL basis} does a disservice to complexity that arises in antispherical cominuscule Hecke categories in characteristic $2$.
		The most interesting behaviour occurs when the partial flag variety $G/P$ is the Lagrangian Grassmannian. 
		That is, when $G= Sp_{2n}$ and $P$ is a parabolic of type $\tA_{n-1}$. 
		\par 
		In this case, the indecomposable objects are parameterised by subsets of $x \subseteq \{1, \dots, n\}$. 
		To each subset $x$, we associate a combinatorially defined subset $E(x) \subset x$ in Section \ref{Ssec: Defect 0}. 
		
		\begin{prop}
			For each $x \subseteq \{ 1, \dots , n\}$, the antispherical $2$-Kazhdan-Lusztig basis element ${}^2 d_x$ decomposes as the following sum of antispherical Kazhdan-Lusztig basis elements $d_y$:
			\begin{align*}
				{}^2 d_x  = \sum_{y \subseteq E(x)} d_{x \backslash y}
			\end{align*}
			where $x \backslash y$ denotes the complement of $y$ in $x$. 
		\end{prop}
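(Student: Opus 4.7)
The plan is to combine Theorem \ref{Thm: antispherical cominuscule pKL basis} with an explicit recursive calculation inside the antispherical module of the type $\tC_n$ Hecke algebra. First, observe that by Theorem \ref{Thm: antispherical cominuscule pKL basis} applied in characteristic $2$, the element ${}^2 d_x$ equals the Bott-Samelson basis element $b_{\underline{x}}$ attached to a canonical reduced expression $\underline{x}$ of $x$; and by Theorem \ref{Thm: antispherical cominuscule pKL basis} applied in any odd characteristic (or classically), each $d_{x \setminus y}$ is the ordinary antispherical Kazhdan-Lusztig basis element. The identity to be proved is therefore the purely combinatorial expansion
\[
b_{\underline{x}} \;=\; \sum_{y \subseteq E(x)} d_{x \setminus y}
\]
of a Bott-Samelson element in the classical Kazhdan-Lusztig basis of the antispherical module over $\Z[v,v^{-1}]$; in particular the characteristic plays no further role.

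Second, I would prove this identity by induction on $|x|$, factoring $\underline{x} = \underline{x}' \cdot s$ so that $b_{\underline{x}} = b_{\underline{x}'} \cdot \underline{H}_s$. Applying the inductive hypothesis to $b_{\underline{x}'}$, the task reduces to expanding each product $d_{w} \cdot \underline{H}_s$ in the antispherical Kazhdan-Lusztig basis. Here the key input is the classical monomial property of antispherical Kazhdan-Lusztig polynomials on cominuscule partial flag varieties (Theorem \ref{Thm: Monomial property} and \cite{Bre02,Bre09}): the $\mu$-coefficients governing this multiplication are all either $0$ or $1$, so each such product is a sum of at most two Kazhdan-Lusztig basis elements that can be read off from the subset encoding.

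Third, I would identify the subsets appearing in the resulting sum with $\{x \setminus y : y \subseteq E(x)\}$. The set $E(x)$, as defined in Section \ref{Ssec: Defect 0}, should precisely record those positions in $\underline{x}$ at which the recursion $d_w \cdot \underline{H}_s = d_{ws} + d_{w'}$ genuinely splits into two terms rather than producing a single new element. Granting the correct combinatorial definition, the recursion then produces $2^{|E(x)|}$ summands $d_{x \setminus y}$ indexed bijectively by subsets $y \subseteq E(x)$, each appearing with coefficient $1$.

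The main obstacle is the combinatorial bookkeeping in the third step. The set $E(x)$ is defined intrinsically in terms of the subset $x \subseteq \{1, \dots, n\}$, whereas the splittings arise dynamically along the recursion on a chosen reduced expression. Matching the two requires showing that the ``splitting positions'' encountered during the recursion depend only on $x$ (not on the intermediate subsets produced along the way) and are captured exactly by $E(x)$. I expect this to be established by a direct inductive argument on the subset encoding, taking advantage of the fact that reduced expressions in the cominuscule setting are combinatorially rigid (they are all related by commutation moves, so the BS element $b_{\underline{x}}$ does not depend on the choice of reduced expression up to canonical isomorphism).
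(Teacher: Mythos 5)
Your high-level strategy matches the paper exactly: you invoke Theorem~\ref{Thm: antispherical cominuscule pKL basis} to reduce to the purely combinatorial identity $d_{\ux} = \sum_{E \subseteq E(x)} d_{x \backslash E}$ in the antispherical module over $\Z[v,v^{-1}]$, and you then prove this by inducting on length and expanding $d_{w} \cdot b_{s}$ in the Kazhdan-Lusztig basis. This is precisely the structure of the paper's Proposition~\ref{prop: bs decomp} (combined with Theorem~\ref{thm: indecomposable objects}).

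However, the logic of your second step is not sound. The monomial property (Theorem~\ref{Thm: Monomial property}) only tells you that each $\mu$-coefficient $\mu(z,w)$ is $0$ or $1$; it does \emph{not} bound the number of $z$ with $\mu(z,w)=1$ and $zs < z$. In general $d_w \cdot b_s$ can have arbitrarily many KL summands even when all $\mu$'s are in $\{0,1\}$, so "each such product is a sum of at most two Kazhdan-Lusztig basis elements" does not follow from what you cite. What the paper actually uses is the explicit Lejczyk-Stroppel cup-diagram description of the antispherical Kazhdan-Lusztig polynomials (Proposition~\ref{prop: kl basis}): $n_{z,w} = v^{\#(z,w)}$ when finite, and $0$ otherwise. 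From that one reads off that $\mu(z,w)=1$ exactly when $\#(z,w)=1$, i.e.\ when $z$ is obtained from $w$ by a single "row flip" or "row removal" in the partition model. The paper's proof then verifies directly that for $i>0$ no such $z$ has the required right descent, and for $i=0$ there is at most one, and that this unique splitting position is recorded by the new element of $E(x)$ via Lemma~\ref{lem: E set}. This explicit combinatorial input is what is missing from your sketch, and your third step (showing the splitting positions depend only on $x$ and are given by $E(x)$) cannot be carried out without it.
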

		
		When $x = w_0 w_I$ we have $\ell(x) = n(n+1)/2$ and $|E(x)| = \lfloor n/2\rfloor$. 
		Varying $n$ gives the first family of elements in finite Weyl groups where the number of non-zero terms in the $p$-Kazhdan-Lusztig basis (when expressed in the Kazhdan-Lusztig basis) grows exponentially relative to the length of the element. 
		The jump in complexity becomes more apparent when one considers the change-of-basis matrices between the $p$-Kazhdan-Lusztig basis and the standard basis. 
		This is depicted in Figures \ref{fig:LG-AS-p0} and \ref{fig:LG-AS-p2}; see Section \ref{Ssec: Figures} for further details on interpreting these figures.
		\par 
		In Section \ref{Ssec: Defect 0} we explicitly describe all degree-$0$ endomorphisms of indecomposable objects in the antispherical category. 
		Consequently, determine a presentation for the degree-0 endomorphism algebra. 
		
		\begin{prop}
		\label{Prop: Endo alg}
			If the antispherical Hecke category is $\Bbbk$-linear and $2 \notin \Bbbk^{\times}$, then, for any $x \in W^I$, the degree-0 endomorphism algebra of the corresponding indecomposable object is 
			\begin{align*}
				\frac{\Bbbk[\varphi_E \vert E \subseteq E(x)]}
				{\left(\varphi_E \varphi_{E'} - 2^{|E \cap E'|} (-1)^{\ell(E \cap E')/2 } \varphi_{E \cup E'} \right)}.
			\end{align*} 
		\end{prop}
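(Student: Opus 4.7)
The plan is to compute the endomorphism algebra by passing to an explicit diagrammatic presentation. Since $\Bbbk$ is a field and $2 \notin \Bbbk^{\times}$ forces $\operatorname{char}(\Bbbk) = 2$, Theorem~\ref{Thm: antispherical cominuscule pKL basis} identifies the indecomposable object attached to $x$ with the antispherical Bott--Samelson object $\BS(\underline{x})$ for any reduced expression $\underline{x}$ of $x$: the Bott--Samelson basis coincides with the $2$-Kazhdan--Lusztig basis, so no idempotent splitting is required. The task therefore reduces to computing $\operatorname{End}^{0}(\BS(\underline{x}))$ in the diagrammatic antispherical Hecke category.

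The first step is to produce, for every subset $E \subseteq E(x)$, an explicit degree-$0$ endomorphism $\varphi_E$ of $\BS(\underline{x})$ assembled from the dots and trivalent vertices of the Elias--Williamson calculus, with $\varphi_{\emptyset} = \operatorname{id}$. These are the natural defect-$0$ light-leaf pairs indexed by $E(x)$, as developed in Section~\ref{Ssec: Defect 0}. Spanning follows from a dimension count: by the preceding proposition the Grothendieck class of $\BS(\underline{x})$ is a sum of $2^{|E(x)|}$ distinct classical antispherical Kazhdan--Lusztig basis elements, each contributing a one-dimensional degree-$0$ endomorphism space, so $\dim \operatorname{End}^{0}(\BS(\underline{x})) \leq 2^{|E(x)|}$; the explicit $\varphi_E$ realise this upper bound and hence form a basis.

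The main computational step is to verify the product formula
\[
\varphi_E \, \varphi_{E'} = 2^{|E \cap E'|} (-1)^{\ell(E \cap E')/2} \, \varphi_{E \cup E'}.
\]
Stacking the diagrams for $\varphi_E$ and $\varphi_{E'}$ creates one barbell for each position shared by $E$ and $E'$; in the cominuscule setting of the Lagrangian Grassmannian each such barbell evaluates via the one-colour relation to the scalar $2$, giving the factor $2^{|E \cap E'|}$. After harvesting these scalars, the residual diagram is reassembled as $\varphi_{E \cup E'}$ using the two-colour relations, with the sign $(-1)^{\ell(E \cap E')/2}$ recording the parity of the number of trivalent-vertex slides required.

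I expect the principal obstacle to be the careful bookkeeping of signs. Commutativity of the algebra is manifest at the Grothendieck level but not at the diagrammatic level, and a coherent orientation convention for the dots entering each $\varphi_E$ must be fixed so that the product carries exactly the sign $(-1)^{\ell(E \cap E')/2}$ rather than some other $\pm 1$. The canonical such convention is singled out by the structure of the reduced expression of $x \in W^{I}$ intrinsic to the Lagrangian Grassmannian, after which the sign calculation reduces to a purely combinatorial trivalent-vertex count.
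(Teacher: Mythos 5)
Your high-level plan — pass to the Bott--Samelson object, produce the $\varphi_E$ as defect-$0$ double light-leaves, show they form a basis of $\Endo^0(B_{\ux})$, then verify the product formula diagrammatically — is aligned with the paper's strategy. However, the central computational step is not actually carried out, and the heuristics you supply for why it should work are incorrect in their details.

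The key idea you are missing is a reduction: the paper does not attempt to compute $\varphi_E\varphi_{E'}$ directly for arbitrary $E,E'$. Instead it first establishes the factorisation
\[
\varphi_E = \prod_{t\in E}\varphi_{\{t\}},
\]
by an induction on $|E|$ that splits into two cases according to whether the expression $\ur_k$ separating $\uu_{t_{k-1}}$ from $\uu_{t_k}$ is empty, with the nontrivial case handled by repeated Frobenius associativity (a one-colour relation, not a two-colour one). Once this factorisation is in hand, the general quadratic relation reduces to the singleton relation $\varphi_{\{t\}}^2 = 2(-1)^{t/2}\varphi_{\{t\}}$, which is exactly the local intersection form entry from Proposition~\ref{prop: local intersection form}. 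Trying to stack $\varphi_E$ against $\varphi_{E'}$ for incomparable $E,E'$ head-on would be considerably messier, since the two morphisms are built from different decompositions $\ux = \ur_1\uu_{t_{i_1}}\cdots$ of the same reduced expression.

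Your heuristics for where the scalar and sign come from are also off. The barbell relation says a barbell equals the polynomial $\alpha_s$, not a scalar; the actual scalar arises only after pushing the resulting polynomials through the diagram via polynomial forcing and the twisted Leibniz rule (Equation~\ref{eqn: twisted leibniz}), and it is the Cartan pairing $\pair{\alphac_0,\alpha_1}=-2$ in type $\tB_n$ that produces the factor of $2$ (compare the type-$\tC_n$ remark following Proposition~\ref{prop: local intersection form}, where $\pair{\alphac_0,\alpha_1}=-1$ and the $2$ disappears). Likewise the sign $(-1)^{\ell(E)/2}$ comes from the product of Cartan entries $\pair{\alphac_{s_{2j}},\alpha_{s_{2j+1}}}$, not from counting trivalent-vertex slides; trivalent-vertex moves (isotopy, Frobenius associativity) never introduce signs. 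Your worry about ``a coherent orientation convention for the dots'' is spurious: the $\varphi_E$ are symmetric about the horizontal midpoint by construction (this is precisely what drives the Gel'fand-trick proof of commutativity), and dots carry no orientation. Finally, your claim that the $\varphi_E$ ``realise the upper bound and hence form a basis'' silently assumes linear independence, which the paper obtains from the double light-leaf basis theorem of Libedinsky--Williamson rather than from a dimension count alone.
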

		
		More generally, for any complete local ring $\Bbbk$, Proposition \ref{Prop: Endo alg} gives a presentation for the degree-0 endomorphism algebra of any Bott-Samelson object indexed by a reduced expression for $x$. 
		Using this presentation we can explicitly describe the projector onto the maximal indecomposable summand. 
		
		\begin{prop}
			If $2 \in \Bbbk^{\times}$, then for any $x$ and any Bott-Samelson indexed by a reduced expression of $x$, the idempotent corresponding to the projector onto the maximal indecomposable summand is given by:
			\begin{align*}
				e_x 
				= 
				\sum_{E \subseteq E(x)} 
				\frac{(-1)^{\ell(E)/2} }{(-2)^{|E|}} \varphi_E
				=
				\prod_{t \in E(x)} \left( 1 - \frac{(-1)^{t/2}}{2} \varphi_{\{ t\}} \right).
			\end{align*}
		\end{prop}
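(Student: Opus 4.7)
The plan is to verify the formula defines an idempotent, then identify its image with the maximal summand $B_x$.

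For the algebraic identity between the two expressions, I would expand the product. Using Proposition \ref{Prop: Endo alg}, distinct singletons $\{t\}$ and $\{t'\}$ have empty intersection, so $\prod_{t \in E}\varphi_{\{t\}} = \varphi_E$; collecting the scalar coefficients gives $\prod_{t \in E}\bigl(-\tfrac{(-1)^{t/2}}{2}\bigr) = \tfrac{(-1)^{\ell(E)/2}}{(-2)^{|E|}}$, matching the sum form. For idempotency, it suffices to show each factor $1 - c_t\varphi_{\{t\}}$ (where $c_t := (-1)^{t/2}/2$) is an idempotent and that these factors commute. Commutativity is immediate from the symmetry of the presentation in $E$ and $E'$. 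Setting $E = E' = \{t\}$ in the presentation yields $\varphi_{\{t\}}^2 = 2(-1)^{t/2}\varphi_{\{t\}}$, and the choice $c_t = (-1)^{t/2}/2$ is precisely what makes $2c_t(-1)^{t/2} = 1$, so expanding $(1 - c_t\varphi_{\{t\}})^2$ collapses back to $1 - c_t\varphi_{\{t\}}$.

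The main obstacle is showing $e_x$ projects specifically onto $B_x$ rather than onto some other primitive summand. Combining the preceding proposition with Theorem \ref{Thm: antispherical cominuscule pKL basis} (in characteristic $\neq 2$) gives $\BS(\ux) \cong \bigoplus_{y \subseteq E(x)} B_{x \backslash y}$. When $2 \in \Bbbk^{\times}$, the generators $\varphi_E$ with $|E| \geq 2$ are redundant (they equal products of the $\varphi_{\{t\}}$ with disjoint support), so the presentation realises $\Endo^0(\BS(\ux))$ as $\bigotimes_{t \in E(x)}\Bbbk[\varphi_{\{t\}}]/(\varphi_{\{t\}}^2 - 2(-1)^{t/2}\varphi_{\{t\}}) \cong \Bbbk^{2^{|E(x)|}}$. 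This commutative semisimple algebra has exactly $2^{|E(x)|}$ primitive orthogonal idempotents, one per summand, and $e_x$ is the component indexed by $E = \emptyset$.

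To pin down that this component is indeed $B_x$, I would appeal to the explicit construction of the $\varphi_{\{t\}}$ from Section \ref{Ssec: Defect 0}: each is built from morphisms that factor through proper lower summands, so $\varphi_{\{t\}}$ restricts to zero on $B_x$, the unique summand characterised by having nonzero support on the open Schubert cell indexed by $x$. Consequently $e_x = \prod_t(1 - c_t\varphi_{\{t\}})$ restricts to the identity on $B_x$, so $B_x \subseteq e_x \cdot \BS(\ux)$. Since $e_x$ is a primitive idempotent in the endomorphism algebra, $e_x \cdot \BS(\ux)$ is indecomposable, forcing $e_x \cdot \BS(\ux) = B_x$.
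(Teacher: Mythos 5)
Your proof is correct and follows essentially the same route as the paper: both rely on the presentation of $\Endo^0(B_{\ux})$ from Proposition \ref{prop: alg presentation}, verify idempotency directly from the factorised form, and identify the image as $B_x$ by showing $e_x$ annihilates the generators $\varphi_{\{t\}}$. Your identification step is spelled out a bit more explicitly than the paper's: you decompose $\Endo^0(B_{\ux})\cong\Bbbk^{2^{|E(x)|}}$ and note that each $\varphi_{\{t\}}$ factors through a Bott--Samelson object for $x\setminus\{t\}<x$ and hence restricts to zero on $B_x$ (since $\Endo^0(B_x)\cong\Bbbk$), whereas the paper phrases this as the $\varphi_{\{t\}}$ generating a maximal ideal killed by $e_x$; these are the same argument in different packaging.
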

	   
\subsection{Structure}

	We begin by fixing notation for Coxeter systems and Hecke algebras in Section \ref{Sec: Coxeter systems}. 
	Minuscule and cominuscule partial flag varieties are introduced in Section \ref{Sec: CoMin partial flags}.
	In Section \ref{Sec: Geo Hecke cats} we recall various geometric incarnations of Hecke categories. 
	The geometric proofs of Theorems \ref{Thm: antispherical cominuscule pKL basis} and \ref{Thm: antispherical minuscule pKL basis} are given in Section \ref{Sec: Geo anti}.
	Diagrammatic Hecke categories are introduced in Section \ref{Sec: Diagrammatic Hecke Cats} and used to give the remaining proofs of Theorems \ref{Thm: antispherical cominuscule pKL basis} and \ref{Thm: antispherical minuscule pKL basis} in Section \ref{Sec: Dia Cominuscule}. 
	In Section \ref{Sec: Decomposition numbers} we study decomposition numbers for various classes of objects under the (derived) extension of scalars and modular reduction functors. 
	In Section \ref{Sec: Spherical} we deduce Theorem \ref{Thm: Spherical pKL bases} as an application of the results in Section \ref{Sec: Decomposition numbers}. 
	Finally, we introduce the notions of $p$-tight elements and $p$-small resolutions in Section \ref{Sec: p-tightness}, and conjecture a geometric explanation for the characteristic 2 phenomena occurring in antispherical cominuscule Hecke categories.

\section{Coxeter systems and Hecke algebras}
\label{Sec: Coxeter systems}

\subsection{Coxeter systems}

	We fix notation relating to Coxeter systems. 
	A standard reference is \cite{BB08}. 
	\par 
	Let $(W,S)$ be a Coxeter system with length function $\ell$ and Bruhat order $\leq$. 
	We always assume $W$ is a finite Weyl group with longest element $w_0$.  
	For any $I \subseteq S$ we have the standard\footnote{As we exclusively consider standard parabolic subgroups, we will omit the word `standard'. } parabolic subgroup $W_I$ with longest element $w_I$. 
	We denote by ${}^IW$ (resp. $W^I$) the minimal length coset representatives for $W_I \backslash W$ (resp. $W/W_I$). 
	\par 
	An \ldef{expression} is a sequence $ \ux = (s_1 , \dots, s_k)$ where $s_i \in S$. 
	Expressions are always denoted by underlines, i.e. $\ux$.
	The length of an expression $\ell(\ux)$ is its length as a sequence. 
	An expression is said to be reduced if $\ell(\ux)= \ell(\ux_{\bullet})$, where $(-)_{\bullet}$ is the map into $W$ induced by multiplication. 
	The empty expression $\underline{\emptyset}$ is, by definition, a reduced expression for the identity element $\id \in W$.
	\par 
	Later we will consider light-leaves for the antispherical Hecke category.
	These are constructed from the data of an $I$-parabolic Bruhat strolls associated to a subexpression.
	We recall the following notions from \cite{Deo90,EW16,LW22}.
	\par 
	A \ldef{subexpression} $\uy \subseteq \ux$ is a subsequence $\uy = (s_{i_1}, \dots , s_{i_j})$ of $\ux$. 
	Alternatively, we identify a subexpression $\uy \subseteq \ux$ with the decorated sequence $\ux^e$ where each term is decorated with either 1 (indicating inclusion) or 0 (indicating omission). 
	When explicitly dealing with subexpressions, we write
	\begin{align*}
			\uy 
			= 
			\ux ^{e}
			= 
			\begin{smallmatrix}
				e_1		&	e_2		&	\dots	&	e_k
			\\
				s_{1}	&	s_{2}	&	\dots 	&	s_{k}
			\end{smallmatrix}
			. 
	\end{align*}
	Each subexpression has an associated \ldef{Bruhat stroll}, which is a decoration of $\ux^e$ defined as follows. 
	Set $\ux_{0} := \underline{\emptyset}$, $\ux_{i} := (s_1 , s_2 , \dots , s_i)$, $e_{i}  := (e_1 , e_2 , \dots , e_i)$, and $x_{i} := (\ux_{i}^{e_i})_{\bullet}$ where $i \leq \ell(\ux)$.
	That is, $\ux_{i}$ is $\ux^e$ truncated to the first $i$ terms. 
	The $i$-th term in the Bruhat stroll is defined to be
	\begin{align*}
		&\ua \text{ if } x_{i-1}  s_i > x_{i-1} \text{ and,}
		\\
		&\da \text{ if } x_{i-1}  s_i < x_{i-1}.
	\end{align*}
	The Bruhat stroll of a subexpression will typically be written as
	\begin{align*}
			\uy 
			=
			\ux^e 
			=
			\begin{smallmatrix}
				\ua		&	\ua		&	\da	&	\dots 	&	\ua
			\\
				0			&	1			&	1			&	\dots	&	e_k
			\\
				s_1			&	s_2			&	s_3			&	\dots 	&	s_k
			\end{smallmatrix}
			. 
	\end{align*}
	A Bruhat stroll is \ldef{$I$-parabolic} if $x_{i-1}s_i \in {}^I W$ for each $1 \leq i \leq \ell(\ux)$.
	The \ldef{defect} $\text{df}$ of a subexpression $\ux^e$ is defined as
	\begin{align*}
			\text{df}(\ux^e) = 
			\left(
			\# \begin{smallmatrix}\ua \\ 0	\end{smallmatrix}
			\text{ in the Bruhat stroll of } \ux^e 
			\right)
			- 
			\left(
			\# \begin{smallmatrix}\da \\ 1	\end{smallmatrix}
			\text{ in the Bruhat stroll of } \ux^e 
			\right)
			.
	\end{align*}
	
	Finally, recall that $x \in W$ is said to be \ldef{fully commutative}, in the sense of \cite{Ste96}, if no reduced expression of $x$ contains a consecutive subexpression that is a reduced expression for the longest element of a dihedral group of type $\tI_{2}(m)$ where $(m\geq 3)$. 

\subsection{Hecke algebras}
\label{Ssec: Hecke algebras}
 
 	We now fix notation for Hecke algebras. 
 	We use the normalisation in \cite{Soe97}, and the notation of \cite{Wil18}. 
	\par 
	The Hecke algebra $H$ associated to the Coxeter system $(W,S)$ is the unital, associative $\Z[v,v^{-1}]$-algebra on the symbols $\{ \delta_x  \,|\, x \in W \}$ subject to the relations:
		\begin{align*}
			\delta_x \delta_{y} = &\delta_{xy} 
			&& \text{if } \ell(x) + \ell(y) = \ell(xy) \text{, and}
			\\
			(\delta_s + v)(\delta_s -v^{-1}) = &0
			&& \text{for all } s \in S.
		\end{align*}	
	It follows from \cite{Tits69} that $\{ \delta_x \,|\, x \in W\}$ is a basis of $H$, known as the standard basis. 
	For any $I \subseteq S$, we denote by $H_I$ the corresponding Hecke subalgebra of $H$. 
	\par 
	Fix $I \subseteq S$. 
	Denote by $\triv$ (resp. $\sign$) the rank 1 $H_I$-module where $\delta_s$ acts by $v^{-1}$ (resp. $-v$) for each $s \in I$. 
	Induction produces the \ldef{spherical module} ${}^I M := \Ind_{H_I}^{H} \triv$ and \ldef{antispherical module} ${}^I N := \Ind_{H_I}^{H} \sign$ which are right $H$-modules.
	Both modules have a standard basis which we abusively denote by $\{ \delta_x^I  ~\vert~ x \in {}^I W \}$, where $\delta_x^I := 1 \otimes \delta_x$.

\subsection{Kazhdan-Lusztig bases}

	The Kazhdan-Lusztig involution is the $\Z$-linear involution uniquely characterised by the properties $\overline{v} = v^{-1}$ and $\overline{\delta_x} = \delta_{x^{-1}}^{-1}$ for each $x \in W$. 
	It induces an involution on ${}^I M$ and ${}^I N$. 	
	Each module has \ldef{Kazhdan-Lusztig basis} 
	\begin{align*}
		\{ b_x  ~\vert~ x \in  W \} \subset H, 
		&&
		\{ c_x  ~\vert~ x \in {}^I W \} \subset {}^I M,
		&&
		\{ d_x  ~\vert~ x \in {}^I W \} \subset {}^I N,
	\end{align*}
	
	\noindent
	uniquely characterised by being: self-dual $\overline{a_x}=a_x$; and positively graded $a_x \in \delta_x + \sum_{y<x} v\Z[v]\delta_y$, where $a \in \{b,c,d\}$ and $x \in W$ or $x \in {}^IW$ as appropriate, \cite{KL79,Deo87}.  
	The corresponding \ldef{Kazhdan-Lusztig polynomials} $h_{y,x}$, $m_{y,x}$, and $n_{y,x}$ are defined by the equalities:
	\begin{align*}
		b_x = \sum_{y} h_{y,x} \delta_y
		&&
		c_x = \sum_{y} m_{y,x} \delta^{I}_y 
		&&
		d_x = \sum_{y} n_{y,x} \delta^{I}_y. 
	\end{align*}

\subsection{Bott-Samelson bases }

	For any expression $\ux = (s_1, \dots , s_k)$ define $b_{\ux} := b_{s_1} \dots b_{s_k}$. 
	Similarly define $c_{\ux} = c_{\id} \cdot b_{\ux}$ and $d_{\ux} = d_{\id} \cdot b_{\ux}$. 
	If a reduced expression $\ux$ of $x$ is fixed for each $x \in W$, then a standard uni-triangularity argument shows 
	\begin{align*}
		\{ b_{\ux}  ~\vert~ x \in  W \} \subset H, 
		&&
		\{ c_{\ux}  ~\vert~ x \in {}^I W \} \subset {}^I M,
		&&
		\{ d_{\ux}  ~\vert~ x \in {}^I W \} \subset {}^I N,
	\end{align*}
	are each bases of their respective modules. 
	Each will be called a \ldef{Bott-Samelson basis} of its module. 
	Typically the choice of reduced expression is extremely important; $b_{\ux}$ is independent of the choice of reduced expression if and only if $x$ is fully commutative. 
	Fortunately, we only ever consider fully commutative elements, so we can safely ignore this subtlety. 
	\par 
	Deodhar's defect formula expresses a Bott-Samelson element in terms of the standard basis \cite{Deo90, LW22}. 
	The versions we require are:
	\begin{align*}
			b_{\ux} 
			= 
			\sum_{\ux^e \subseteq \ux} v^{\text{df}(\ux^e)} 
				\delta_{\ux^{e}_{\bullet}}
			&&
			\text{ and }
			&&
			 d_{\ux} 
			 = 
			 \sum_{\substack{\ux^e \subseteq \ux \\ \ux^e \text{is } I\text{-antispherical}}} 
			 	v^{\text{df}(\ux^e)} \delta_{\ux^{e}_{\bullet}}^{I}
	\end{align*}
	where $\text{df}(\ux^e)$ denotes the defect of the subexpression. 

\subsection{$p$-Kazhdan-Lusztig bases} 
	
	The $p$-Kazhdan-Lusztig bases were introduced in \cite{Wil15, JW17, RW18}.
	Unlike the preceding bases they are not intrinsic to the Hecke algebra.
	Rather, the \ldef{$p$-Kazhdan-Lusztig bases}
	\begin{align*}
		\{ {}^p b_x  ~\vert~ x \in  W \} \subset H, 
		&&
		\{ {}^p c_x  ~\vert~ x \in {}^I W \} \subset {}^I M,
		&&
		\{{}^p d_x  ~\vert~ x \in {}^I W \} \subset {}^I N,
	\end{align*}
	arise as the images under the character map of the indecomposable parity sheaves in the Hecke categories introduced in Section \ref{Sec: Geo Hecke cats}.
	The corresponding \ldef{$p$-Kazhdan-Lusztig polynomials} ${}^ph_{y,x}$, ${}^pm_{y,x}$, and ${}^pn_{y,x}$ are defined by the equalities:
	\begin{align*}
		{}^p b_x = \sum_{y} {}^ph_{y,x} \delta_y
		&&
		{}^pc_x = \sum_{y} {}^pm_{y,x} \delta^{I}_y 
		&&
		{}^pd_x = \sum_{y} {}^pn_{y,x} \delta^{I}_y. 
	\end{align*}
	The following basic properties of $p$-Kazhdan-Lusztig polynomials are shown in \cite[\S4]{JW17}. 
	Let $x \in W$ be arbitrary, and fix a reduced expression $\ux$ of $x$, then
	
	\begin{tabular}{cl}
		(a) & we have ${}^p b_x \in b_x + \sum_{y<x} \Z_{\geq 0} [v,v^{-1}] b_y$, and 
		\\
		(b) & we have $b_{\ux} \in {}^p b_x + \sum_{y<x} \Z_{\geq 0} [v,v^{-1}] {}^p b_y$.
	\end{tabular}
	
	\noindent
	Analogous statements hold for ${}^p c_x$ and ${}^p d_x$. 
	These allow us to obtain crude bounds for $p$-Kazhdan-Lusztig basis purely using the Hecke algebra. 
	For any two $h,h' \in H$, we write $h \preceq h'$ if $h'-h \in \sum_{y} \Z_{\geq 0}[v,v^{-1}] b_y$.

	\begin{lem}
	\label{Lem: pKL bound}
		For any $x \in W$ and any reduced expression $\ux$ of $x$, we have $b_{x} \preceq {}^p b_x  \preceq b_{\ux}$ for all $p$.
		In particular, if $b_x = b_{\ux}$ then $b_x = {}^p b_x  = b_{\ux}$.
	\end{lem}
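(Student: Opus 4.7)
The plan is to deduce both inequalities directly from the two properties (a) and (b) listed just before the lemma, with a small intermediate step to convert expansions in the $\{{}^p b_y\}$-basis into expansions in the $\{b_y\}$-basis.

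For the lower bound $b_x \preceq {}^p b_x$ there is essentially nothing to do: property (a) states precisely that ${}^p b_x - b_x$ lies in $\sum_{y<x} \Z_{\geq 0}[v,v^{-1}] b_y$, which is exactly the condition in the definition of $\preceq$.

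For the upper bound ${}^p b_x \preceq b_{\ux}$, property (b) gives $b_{\ux} - {}^p b_x \in \sum_{y<x} \Z_{\geq 0}[v,v^{-1}] {}^p b_y$, so the task is to rewrite the right-hand side in terms of the ordinary Kazhdan-Lusztig basis. I would argue by induction on $y$ in the Bruhat order: property (a) applied to each $y$ expands ${}^p b_y$ as a non-negative $\Z[v,v^{-1}]$-linear combination of $b_z$ for $z \leq y$, and since $\Z_{\geq 0}[v,v^{-1}]$ is closed under addition and multiplication, substituting these expansions yields $b_{\ux} - {}^p b_x \in \sum_{y<x} \Z_{\geq 0}[v,v^{-1}] b_y$, which is the required statement ${}^p b_x \preceq b_{\ux}$.

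For the final assertion, assume $b_x = b_{\ux}$. Then chaining the two inequalities gives $b_x \preceq {}^p b_x \preceq b_x$, i.e.\ both ${}^p b_x - b_x$ and $b_x - {}^p b_x$ lie in $\sum_y \Z_{\geq 0}[v,v^{-1}] b_y$. Writing ${}^p b_x - b_x = \sum_y f_y b_y$ with $f_y \in \Z[v,v^{-1}]$, this means every $f_y$ and every $-f_y$ has non-negative coefficients, forcing $f_y = 0$ for all $y$; here the linear independence of $\{b_y \mid y \in W\}$ is used. Hence ${}^p b_x = b_x = b_{\ux}$. No step in the argument is really an obstacle; the only mildly non-trivial point is the inductive change of basis between $\{{}^p b_y\}$ and $\{b_y\}$ needed to interpret (b) in terms of the relation $\preceq$.
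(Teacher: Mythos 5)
Your proof is correct and follows the same route as the paper, which simply says the lemma is immediate from properties (a) and (b); you have spelled out the one point the paper leaves implicit, namely converting the non-negative ${}^p b_y$-expansion from (b) into a non-negative $b_y$-expansion via (a). Note that this conversion is a direct substitution (each ${}^p b_y$ already expands as a non-negative combination of $b_z$ with $z \leq y$ by (a)), so no induction on the Bruhat order is actually required.
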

	\begin{proof}
		This is immediate from properties (a) and (b) listed above. 
	\end{proof}
	
	The obvious analogue of Lemma \ref{Lem: pKL bound} holds for ${}^p c_{x}$ and ${}^p d_x$. 

\section{(Co)Minuscule partial flag varieties}
\label{Sec: CoMin partial flags}

	We begin by fixing notation for algebraic groups, then we recall the definition of (co)minuscule partial flag varieties and their classification.  
	Finally we note various properties of these varieties that will be exploited in later sections. 
	\par 
	For a thorough account of the geometry of (co)minuscule partial flag varieties see  \cite[\S 9]{BL00}.

\subsection{Algebraic groups and flag varieties}
\label{Ssec: Alg groups}

	Fix a simple root datum $(R \subset X, \check{R} \subset \check{X})$ and a set of positive roots $R_+ \subset R$. 
	Associated to this data is the Chevalley group scheme $G_{\Z}$, Borel subgroup scheme $B_{\Z}$ and maximal torus subgroup scheme $T_{\Z}$. 
	Let $U_{\Z} \subset B_{\Z}$ be the unipotent radical subgroup scheme of $B_{\Z}$.
	The choice of positive roots endows the Weyl group $W$ of $G$ with a set of simple reflections. 
	For any $I \subseteq S$, there is an associated parabolic subgroup $P_{I,\Z} $.
	When $I$ is clear, we write $P_{\Z}$. 
	Denote by $G$, $B$, $T$, $U$, $P$ the corresponding schemes over $\C$ obtained via extension of scalars.
	By our assumptions, $G$ is a simple algebraic group. 
	\par 
	Given $G$, the Langlands dual group $\check{G}$ is the group associated to the root datum $( \check{R} \subset \check{X}, R \subset X)$. 
	We denote the corresponding subgroup schemes of $\check{G}$ by $\check{B}$, $\check{T}$, $\check{U}$, $\check{P}$, etc. 
	\par 
	Finally, the partial flag variety $G/P$ admits a Bruhat stratification, i.e. a stratification $B$-orbits:
	\begin{align*}
			G/P = \bigsqcup_{x \in W^I} B \cdot xP/P.
	\end{align*} 
	The closure of each strata $X_x := \overline{B \cdot xP/P} = \sqcup_{y\leq x } B \cdot yP/P$	is a Schubert variety. 
		
\subsection{Classification of (co)minuscule flag varieties}	
\label{Ssec: classification}
 	
 	We now recall the classification of (co)minuscule flag varieties. 
 	\par 
 	Fix a simple algebraic group $G$.  
 	A fundamental weight $\varpi \in X$ for $G$ is said to be: \ldef{minuscule} if $\pair{\alphac, \varpi} \leq 1$ for all $\alpha \in R_+$; and \ldef{cominuscule} if $\pair{\alphac_0 , \varpi} =1$, where $\alpha_0$ is the longest root.
 	Various equivalent definitions are noted in \cite[\S 2.11.15]{BL00}.   
 	A maximal parabolic subgroup  $P_I$ is said to be \ldef{(co)minuscule} when $I = S\backslash \{ s_{\varpi} \}$, where $s_{\varpi}$ is the simple reflection corresponding to a (co)minuscule weight.  
 	The variety $G/P_I$ is a \ldef{(co)minuscule flag variety} if $P_I$ is a (co)minuscule parabolic subgroup. 
 	Note that minuscule and cominuscule flag varieties are related by Langlands duality: if $G/P$ is minuscule then $\check{G}/\check{P}$ is cominuscule and vice versa. 
 	\par 
 	(Co)Minuscule partial flag varieties arise from seven families and two exceptional cases, which are listed in Table \ref{Table: classification}.

		\begin{table}[ht!]
		\begin{center}
		\bgroup
		\def\arraystretch{1.5}
		\begin{tabular}{| c c c c |} 
		\hline
				$G$
			& 
				$P$
			& 
				Classification
			& 
				Description
		\\ 
		\hline\hline
				$A_{n}$
			& 
				
				\begin{tikzpicture}
				\draw[fill=black] 
						(0,0) circle [radius=.1]  --
						(0.5,0) circle [radius=.1];
					\draw[fill=black] (0.5,0) -- (1,0);
					\draw[fill=black] (1,0) -- (1.5,0);
					\draw[fill=white]	
						(1,0) circle [radius=.1] ;
					\draw[fill=black]	
						(1.5,0) circle [radius=.1] 	--
						(2,0) circle [radius=.1] ; 
					\draw[fill=white]
						(2,0.25) circle [radius=.0001];
				\end{tikzpicture}
			& 
				Both
			&
				Grassmannian
		\\ 
		\hline
				$B_{n}$
			& 
				
				\begin{tikzpicture}
					\draw[fill=black]
				 		(0,0) circle [radius=.1]; 
					\draw (0,0.05) -- (0.5,0.05);
					\draw (0,-0.05) -- (0.5,-0.05);
					\draw (0.2,0.1) -- (0.3,0);
					\draw (0.2,-0.1) -- (0.3,0);
					\draw[fill=black] 
						(0.5,0) circle [radius=.1] --
						(1,0) circle [radius=.1];
					\draw (1,0) -- (1.5,0);
					\draw[fill=black]
						(1.5,0) circle [radius=.1] --
						(2,0) circle [radius=.1];
					\draw[fill=white]
						(0,0) circle [radius=.1];
					\draw[fill=white]
						(2,0.25) circle [radius=.0001]; 
				\end{tikzpicture}
			& 
				minuscule
			&
				Odd orthogonal  Grassmannian
			\\
		\hline
				$B_{n}$
			& 
				
				\begin{tikzpicture}
					\draw[fill=black]
				 		(0,0) circle [radius=.1]; 
					\draw (0,0.05) -- (0.5,0.05);
					\draw (0,-0.05) -- (0.5,-0.05);
					\draw (0.2,0.1) -- (0.3,0);
					\draw (0.2,-0.1) -- (0.3,0);
					\draw[fill=black] 
						(0.5,0) circle [radius=.1] --
						(1,0) circle [radius=.1];
					\draw (1,0) -- (1.5,0);
					\draw[fill=black]
						(1.5,0) circle [radius=.1] --
						(2,0) circle [radius=.1];
					\draw[fill=white]
						(2,0) circle [radius=.1];
					\draw[fill=white]
						(2,0.25) circle [radius=.0001]; 
				\end{tikzpicture}
			& 
				cominuscule
			&
				Odd dimensional quadric 
		\\
		\hline
				$C_{n \geq 3}$
			& 
				
				\begin{tikzpicture}
					\draw[fill=black]
				 		(0,0) circle [radius=.1]; 
					\draw (0,0.05) -- (0.5,0.05);
					\draw (0,-0.05) -- (0.5,-0.05);
					\draw (0.3,0.1) -- (0.2,0);
					\draw (0.3,-0.1) -- (0.2,0);
					\draw[fill=black] 
						(0.5,0) circle [radius=.1] --
						(1,0) circle [radius=.1];
					\draw (1,0) -- (1.5,0);
					\draw[fill=black]
						(1.5,0) circle [radius=.1] --
						(2,0) circle [radius=.1];
					\draw[fill=white]
						(0,0) circle [radius=.1];
					\draw[fill=white]
						(2,0.25) circle [radius=.0001]; 
				\end{tikzpicture}
			& 
				cominuscule
			&
				Lagrangian Grassmannian
		\\
		\hline
				$C_{n \geq 3}$
			& 
				
				\begin{tikzpicture}
					\draw[fill=black]
				 		(0,0) circle [radius=.1]; 
					\draw (0,0.05) -- (0.5,0.05);
					\draw (0,-0.05) -- (0.5,-0.05);
					\draw (0.3,0.1) -- (0.2,0);
					\draw (0.3,-0.1) -- (0.2,0);
					\draw[fill=black] 
						(0.5,0) circle [radius=.1] --
						(1,0) circle [radius=.1];
					\draw (1,0) -- (1.5,0);
					\draw[fill=black]
						(1.5,0) circle [radius=.1]--
						(2,0) circle [radius=.1];
					\draw[fill=white]
						(2,0) circle [radius=.1] ;
					\draw[fill=white]
						(2,0.25) circle [radius=.0001]; 
				\end{tikzpicture}

			& 
				minuscule
			&
				Projective space
		\\ 
		\hline
				$D_{n \geq 4}$
			& 
				
				\begin{tikzpicture}
					\draw[fill=black]
						(0,0.25) circle [radius=.1]    --++ (-25:0.55)
						(0,-0.25) circle [radius=.1]  --++ (25:0.55); 
					\draw[fill=white]
						(0,0.25) circle [radius=.1]; 
					\draw[fill=black] 
						(0.5,0) circle [radius=.1]  --
						(1,0) circle [radius=.1];
					\draw[fill=black] (1,0) -- (1.5,0);
					\draw[fill=black]
						(1.5,0) circle [radius=.1] --
						(2,0) circle [radius=.1]; 
				\end{tikzpicture} 
			& 
				both
			&
				Spinor variety
		\\
		\hline
				$D_{n \geq 4}$
			& 
				
				\begin{tikzpicture}
					\draw[fill=black]
						(0,0.25) circle [radius=.1]  --++ (-25:0.55)
						(0,-0.25) circle [radius=.1] --++ (25:0.55); 
					\draw[fill=black] 
						(0.5,0) circle [radius=.1] --
						(1,0) circle [radius=.1];
					\draw[fill=black] (1,0) -- (1.5,0);
					\draw[fill=black]
						(1.5,0) circle [radius=.1] --
						(2,0) circle [radius=.1];
					\draw[fill=white]
						(2,0) circle [radius=.1];  
				\end{tikzpicture} 
			& 
				both
			&
				Even dimensional  quadric
		\\ 
		\hline
				$E_6$
			& 
				
				\begin{tikzpicture}
					\draw[fill=black] 
						(0.5,0) circle [radius=.1]  --
						(1,0) circle [radius=.1] ;
					\draw[fill=black] 
						(1.5,0) circle [radius=.1]  --
						(1.5,0.5) circle [radius=.1];
					\draw[fill=black] (1,0) -- (1.5,0);
					\draw[fill=black] (1.5,0) -- (2,0);
					\draw[fill=black]	
						(2,0) circle [radius=.1]	--
						(2.5,0) circle [radius=.1]; 
					\draw[fill=white] 
						(0.5,0) circle [radius=.1];
				\end{tikzpicture}
			& 
				both
			&
				Caley plane
				\\ 
		\hline
				$E_7$
			& 
				
				\begin{tikzpicture}
					\draw[fill=black] 
						(0,0) circle [radius=.1] --
						(0.4,0) circle [radius=.1];
					\draw[fill=black] (0.4,0) -- (0.8,0);
					\draw[fill=black] 
						(0.8,0) circle [radius=.1]  --
						(1.2,0) circle [radius=.1];
					\draw[fill=black] (1.2,0) -- (1.6,0);
					\draw[fill=black]	
						(1.6,0) circle [radius=.1]	--
						(2,0) circle [radius=.1]; 
					\draw[fill=black]	
						(1.2,0) circle [radius=.1]	--
						(1.2,0.5) circle [radius=.1];
					\draw[fill=white] 
						(0,0) circle [radius=.1];
				\end{tikzpicture}
			& 
				both
			&
				Exceptional Freudenthal  variety
				\\ 
		\hline
		\end{tabular}
		\egroup
		\end{center}
		\caption{Minuscule and cominuscule partial flag varieties.}
		\label{Table: classification}
		\end{table}

\subsection{Non-simply laced minuscule flag varieties.}

	It is well-known that when studying Schubert varieties in minuscule flag varieties, it suffices to consider only the those Schubert varieties arising from simply-laced minuscule flag varieties (i.e. when $G$ is simply-laced) by the following Lemma.  
	
	\begin{lem}
	\label{Lem: reduce to simply laced vars}
		For any non-simply-laced minuscule flag variety $G/P$, there is a simply laced minuscule flag variety $G'/P'$ and an isomorphism $f: G/P \rightarrow G'/P'$, which respects the Bruhat stratification of each partial flag variety. 
	\end{lem}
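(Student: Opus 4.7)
The plan is to enumerate the non-simply-laced minuscule cases from Table \ref{Table: classification} and, in each case, exhibit a classical stratification-preserving isomorphism with a simply-laced minuscule flag variety. Inspecting the table, the non-simply-laced minuscule cases are precisely the odd orthogonal Grassmannian $G/P$ for $G$ of type $\tB_n$ (with $P$ corresponding to the short end node), and the projective space $Sp_{2n}/P = \bP^{2n-1}$ for $G$ of type $\tC_n$ (with $P$ corresponding to the long end node).

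For the $\tB_n$-case, I would take $G'/P'$ to be the spinor variety of type $\tD_{n+1}$ and use the classical isomorphism $\text{OG}(n, 2n+1) \cong S_{n+1}$ arising from the inclusion $Spin_{2n+1} \hookrightarrow Spin_{2n+2}$; this isomorphism is for instance discussed in \cite[\S 9]{BL00}. For the $\tC_n$-case, the target $G'/P'$ is simply $\bP^{2n-1}$ viewed as a Grassmannian of lines for $SL_{2n}$, with the isomorphism induced by $Sp_{2n} \hookrightarrow SL_{2n}$. In both cases the map $f$ is given by the action of the subgroup on a distinguished point of $G'/P'$, which is transitive with stabilizer $P$.

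The remaining task is to verify that $f$ matches Bruhat strata. I would argue this by showing that the Borel $B \subset G$ sits inside $B' \subset G'$ and has the same orbits on $G'/P'$ as $B'$ does. The cleanest way is combinatorial: in both of the cases at hand, the bijection between Schubert cells of $G/P$ and of $G'/P'$ is induced by an isomorphism of the underlying minuscule posets $W^I$ and $(W')^{I'}$, each of which completely controls the Bruhat stratification of a minuscule partial flag variety (see \cite[\S 9]{BL00}). For $\tB_n$ versus $\tD_{n+1}$ the required order isomorphism comes from the standard identification of both posets with the poset of subsets of $\{1, \dots, n\}$ of the appropriate parity; for $\tC_n$ versus $\tA_{2n-1}$ both posets are totally ordered chains of length $2n-1$. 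In either case one can then check, stratum by stratum, that $f$ sends the stratum indexed by $x \in W^I$ to the stratum indexed by the corresponding element of $(W')^{I'}$.

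The main obstacle is really just the bookkeeping in the type-$\tB$ case: one must keep track of how the natural embedding of Weyl groups $W(\tB_n) \hookrightarrow W(\tD_{n+1})$ restricts to give the minuscule-coset bijection, and verify that $B$-orbits inside a given $B'$-orbit on the spinor variety exhaust that $B'$-orbit. The type-$\tC$ case is immediate because $Sp_{2n}$ already acts transitively on each Schubert cell of $\bP^{2n-1}$. Given the brevity of these verifications and the fact that such isomorphisms are well-documented in the literature on minuscule geometry, I expect the proof to be essentially a short combinatorial check combined with citations.
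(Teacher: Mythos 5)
Your approach is correct and matches the paper's, which simply cites \cite[\S 3.1]{BP99} for this lemma and then records exactly the two isomorphisms you describe (odd orthogonal Grassmannian $\cong$ spinor variety of type $\tD_{n+1}$, and $Sp_{2n}/P \cong \bP^{2n-1} \cong SL_{2n}/P'$) in the remark immediately following. One minor slip: in the type $\tC_n$ case the removed node giving projective space is the \emph{short}-root end, not the long one, but this does not affect the argument.
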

	
	\begin{proof}
		This is \cite[\S 3.1]{BP99}. 
	\end{proof}

	\par
	These isomorphisms are quite explicit. 
	When $G$ is type $\tC_n$ and $P$ is type $\tC_{n-1}$, the variety $G/P$ is complex projective space $\bP^{2n-1}$ and the corresponding $G'$ is always type $\tA_{2n-1}$ with $P'$ type $\tA_{2n-2}$. 
	When $G$ is type $\tB_n$ and $P$ is type $\tA_{n-1}$, the variety $G/P$ is the odd orthogonal Grassmannian, which can be identified with the spinor variety, and the corresponding $G'$ is always type $\tD_{n+1}$ with $P'$ type $\tA_{n}$. 

	\par 
	By an abuse of notation we write $f: W^I \longrightarrow W'^{I'}$ for the map induced on minimal coset representatives; it is described in \cite{BM09}.

\section{Geometric Hecke categories}
\label{Sec: Geo Hecke cats}

	We now introduce various geometric categories which categorify either the spherical or antispherical module. 
	Our treatment will be quite terse; the interested reader should consult \cite{JMW14,AR16a,AR16b,RW18,AMRW19} for more details.  

\subsection{Parity sheaves and the spherical Hecke category}
\label{Ssec: parity sheaves}

	As noted in the introduction, the $p$-Kazhdan-Lusztig basis of the spherical module $M^{I}$ will be categorified by parity sheaves on the partial flag variety $G/P_I$. 
	\par  
	We continue the notation for algebraic groups from Section \ref{Ssec: Alg groups}. 	
	For any of the categories of sheaves considered below, when the ring of coefficients $\Bbbk$ is clear, we omit it from the notation. 
	\par 
	Fix $\Bbbk$, either a complete local ring or a field. We denote by 
	\begin{align*}
		D^b(U \backslash G / P,\Bbbk)
	\end{align*}
	the bounded, derived category of sheaves of $\Bbbk$-modules on $G/P$ which are constructible with respect to the stratification by $B$-orbits.
	It is triangulated, and we denote the shift functor by $(1)$.
	The `stacky' notation is motivated by the following equivalences which are explained in \cite[\S5.3]{AR16a}: 
	(a) the forgetful functor from the $U$-equivariant derived category of constructible sheaves of $\Bbbk$-modules on $G/P$ to $D^b(U \backslash G / P)$ is an equivalence; and 
	(b) the aforementioned $U$-equivariant category is naturally equivalent to the $P^{\text{op}}$-equivariant derived category of constructible sheaves of $\Bbbk$-modules on $U \backslash G$, where the action is induced by right multiplication. 
	\par 
	Parity sheaves were introduced in \cite{JMW14}, they are certain classes of sheaves satisfying cohomological parity vanishing properties; stalks and costalks of indecomposable parity sheaves are concentrated in either even or odd degrees. 
	We denote by
	\begin{align*}
		\Parity(U \backslash G /P, \Bbbk)	
	\end{align*}
	the full subcategory of \ldef{parity sheaves} in $D^b(U \backslash G / P,\Bbbk)$. 
	It is stable under the shift functor $(1)$.
	Since we assume $\Bbbk$ is a complete local ring, $\Parity(U \backslash G /P, \Bbbk)$ is Krull-Schmidt, and isomorphism classes of indecomposable parity sheaves $\cE_x(n)$ are parameterised by $(x,n) \in W^I \times \Z$. 
	We write $\cE_x$ for $\cE_x(0)$.
	Each $\cE_x$ is characterised by the properties:
	
	\begin{tabular}{rl}
		(a)& $\cE_x$ is supported on the Schubert variety $X_x$;\\
		(b)& $\cE_x$ restricted to $B \cdot xP/P$ is isomorphic to $\underline{\Bbbk}_{B\cdot xP/P}(\ell(x))$; and \\
		(c)& the stalks and costalks of $\cE_x$ vanish in degree $d$ where $d \not\equiv \ell(x) \mod 2$.
	\end{tabular}
	
	\noindent
	We note, but will not use, that $\Parity(U \backslash G /P, \Bbbk)$ is a right-module category over the analogously defined $\Parity(P \backslash G /P, \Bbbk)$, where the monoidal action is induced by convolution of sheaves. 
	\par 
	The category $\Parity(U \backslash G /P, \Bbbk)$ categorifies the spherical module.\footnote{More accurately $\Parity(B \backslash G /P, \Bbbk)$ categorifies the spherical $H$-module $M^I$, as convolution endows $\Parity(B \backslash G /P, \Bbbk)$ with the structure of a left-module category over $\Parity(B \backslash G /B, \Bbbk)$. Then $\Parity(U \backslash G /P, \Bbbk)$ categorifies the $\Z[v,v^{-1}]$-module obtained by forgetting the left $H$-action on $M^I$.} 
	In particular, we have an isomorphism of $\Z[v,v^{-1}]$-modules:
	\begin{center}
	\begin{tabular}{cccc}
		$\ch:$
		&$[\Parity(U \backslash G /P , \Bbbk)]$ 
		& $\tilde{\longrightarrow}$ 
		& $M^I$
	\\
		&$[\mathcal{F}]$
		& $\longmapsto$
		& $\displaystyle \sum_{x \in W^I} \sum_{i \in \Z}  \rank_{\Bbbk} H^i (\mathcal{F} _{x P/P}) v^{-\ell(x) - i} \delta_x^I $ 
	\end{tabular}
	\end{center}
	where $\rank_{\Bbbk} H^i (\mathcal{F} _{x P/P})$ denotes the rank of the cohomology of the stalk of $\mathcal{F}$ at the point $xP/P$. 
	Note that if $\Bbbk$ is a field of characteristic $p$ or a complete local ring with residue field of characteristic $p$, then, by definition, we have $\ch [\cE_x] = {}^p c_x$.
	When $I = \emptyset$ then $M^I \cong H$ and $\ch [\cE_x] = {}^p b_x$.
	Finally, if $p=0$ then indecomposable parity sheaves are simple intersection cohomology sheaves, so we obtain the classical Kazhdan-Lusztig basis by \cite{KL80, Deo87}.

\subsection{Whittaker sheaves and the antispherical Hecke category}
\label{Ssec: Whittaker}

	Fix an algebraically closed field $\mathbb{L}$ of characteristic $\ell \neq p$ such that there is a non-trivial additive character $\psi: \Z/\ell\Z \rightarrow \Bbbk^{\times}$. 
	In a deviation from the notation in Section \ref{Ssec: Alg groups}, we have group schemes $G$, $B$, $T$, $U$, and $P_I$, but they are now considered as schemes over $\mathbb{L}$ via extension of scalars.
	In this context, $D^b(H \backslash G / K,\Bbbk)$ now denotes the analogously defined category of sheaves in the \'etale topology.  
	\par 
	Whittaker sheaves arise from an alternate stratification of the \textit{full} flag variety $B\backslash G$. 
	We first need to introduce two more groups: $U^I$ and $U_I^{-}$. 
	The group $U^I$ is the unipotent radical of $P_I$. 
	If $L_I \subset P_I$ denotes the Levi factor, then $U_I^{-}$ is defined to be the unipotent radical of the  Borel subgroup of $L_I$ that is opposite $L_I \cap B$. 
	The desired stratification of $B\backslash G$ is by $ U_I^{-} U^I$-orbits:
	\begin{align*}
			B \backslash G 
			= 
			\bigsqcup_{x \in W}  B \backslash B x \cdot  U_I^- U^I .
	\end{align*} 
	When $x \in W^I$, the stratum $B \backslash B x \cdot  U_I^- U^I$ has dimension $\ell(xw_I)$.
	\par 
	If we fix an isomorphism $U_{s}^- \rightarrow \mathbb{G}_a$ for each $s \in I$, then we obtain a morphism of algebraic groups:
	\begin{align*}
		\chi: 
		 U_I^- U^I
		\longrightarrow 
		U_I^-
		\longrightarrow
		U_I^- / [U_I^-,U_I^-]
		\tilde{\longrightarrow}
		\prod_{s \in I} U_s^-
		\tilde{\longrightarrow}
		(\mathbb{G}_a)^I
		\longrightarrow
		\mathbb{G}_a
	\end{align*} 
	where the third map is the inverse of the natural embedding, and the final map is induced by addition. 
	Fix a non-trivial additive character $\psi: \Z/\ell\Z \rightarrow \Bbbk^{\times}$.
	The Artin-Schreier sheaf $\mathcal{L}_{\psi}$ is the rank-one local system on $\mathbb{G}_a$ which arises as the $\psi$-isotypic component of $\alpha_{*} \underline{\Bbbk}_{\mathbb{G}_a}$, where $\alpha:\mathbb{G}_a \rightarrow \mathbb{G}_a $ is the Artin-Schreier map $\alpha(x)= x^\ell - x $. 
	The sheaf $\chi_I^* \mathcal{L}_{\psi}$ is multiplicative, in the sense of \cite[\S A.1]{AR16a}, so one can consider twisted $(U^IU_I^- ,\chi_I^* \mathcal{L}_{\psi})$-equivariant complexes on $B \backslash G$. 
	\par 
	The triangulated category of twisted $(U_I^- U^I ,\chi_I^* \mathcal{L}_{\psi})$-equivariant complexes on $B \backslash G$ is denoted by 
	\begin{align*}
		D_{Wh,I}(B \backslash G , \Bbbk).
	\end{align*}
	The definition of parity objects carries over to $D_{Wh,I}(B \backslash G , \Bbbk)$, and we denote by 
	\begin{align*}
		\Parity_{Wh,I}(B \backslash G ,\Bbbk)
	\end{align*} 
	the full subcategory of parity objects in $D_{Wh,I}(B \backslash G)$. 
	Since $\Bbbk$ is a complete local ring or a field, the category $\Parity_{Wh,I}(B \backslash G)$ is Krull-Schmidt and isomorphism classes of indecomposable parity sheaves $\cE_x(n)$ are parameterised by $(x,n) \in  W^I \times \Z$ and admit an analogous characterisation to that in Section \ref{Ssec: parity sheaves}. 
	\par 
	The category $\Parity_{Wh,I}(B \backslash G)$ categorifies the antispherical module $N^I$.\footnote{$\Parity_{Wh,I}(B \backslash G)$ can be endowed with the structure of a left-module category over $\Parity(B\backslash G/B)$ where the action is via convolution; see \cite[\S11.1]{RW18}.}
	In particular, there is an isomorphism of $H$-modules given by 
	\begin{center}
	\begin{tabular}{cccc}
		$\ch:$
		&$[\Parity_{Wh,I}(B \backslash G,\Bbbk)]$ 
		& $\tilde{\longrightarrow}$ 
		& $N^I$
	\\
		&$[\mathcal{F}]$
		& $\longmapsto$
		& $\displaystyle \sum_{x \in W } \sum_{i \in \Z}  \rank_{\Bbbk} H^i (\mathcal{F}_{B\backslash Bx}) v^{-\ell(x) - i} \delta_x^I $ 
	\end{tabular}
	\end{center} 
	where $\rank_{\Bbbk} H^i (\mathcal{F}_{B\backslash Bx}) $ denotes the rank of the cohomology of the stalk of $\mathcal{F}$ at the point $B\backslash Bx$.  
	If $\Bbbk$ is a field of characteristic $p$ or a complete local ring with residue field of characteristic $p$, then, by definition, we have $\ch [\cE_x] = {}^p d_x$. 
	When $p=0$ we have $\ch [\cE_x] = d_x$ by \cite{AB09}.

\subsection{A quotient construction of antispherical Hecke categories}
\label{Ssec: Geo Quotient Construction}

	The antispherical module $N^I$ can be constructed as a quotient of the Hecke algebra
	\begin{align*}
		N^I
		\cong 
		H~/~( b_{z} ~|~ z \notin W^I ),
	\end{align*}
	where the image of $b_x$ identifies with $d_x$ when $x \in W^I$, see \cite[\S 1.3]{LW22}. 
	Completely analogously, we can consider the category 
	\begin{align*}
		\Parity(B \backslash G / U) ~/~ \langle \cE_x ~|~ x \notin W^I \rangle_{\oplus, (1)}
	\end{align*} 
	where $\langle \cE_x \,|\, x \notin W^I \rangle_{\oplus, (1)}$ denotes the subcategory of $\Parity(B \backslash G / U)$ additively generated by the $\cE_x$, and closed under the shift functor $(1)$. 
	Morphisms in this category are morphisms in $\Parity(B \backslash G / U)$ modulo those factoring through any object in  $\langle \cE_x \,|\, x \notin W^I \rangle_{\oplus, (1)}$. 
	By \cite[\S 11.5]{RW18} and \cite[\S 6.2]{AMRW19} we have an equivalence 
	\begin{align*}
		Av: \Parity(B \backslash G / U) ~/~ \langle \cE_x ~|~ x \notin W^I \rangle_{\oplus, (1)}
		\longrightarrow
		\Parity_{Wh,I}(B \backslash G,\Bbbk)
	\end{align*}
	satisfying $Av(\cE_x) \cong \cE_x$ for each $x \in W$.

\subsection{Mixed Hecke categories}

	In this Section we recall mixed Hecke categories. 
	These are modular analogues of the category of mixed $\Q_{p}$-sheaves on flag varieties that were studied in \cite{BGS96}. 
	This approach to mixed modular sheaves stems from \cite{AR16a, AMRW19}. 	
	One of the main reasons for considering these categories is that they are related by Koszul duality. 
	\par 
	The \ldef{mixed derived spherical Hecke category} $D^{\mix}(U \backslash G / P)$ is defined as
	\begin{align*}
		D^{\mix}(U \backslash G / P) &:= K^b(\Parity(U \backslash G / P)).
	\end{align*} 
	Similarly, the \ldef{mixed derived antispherical Hecke category} $D^{\mix}_{Wh, I}( G / B)$ is defined as 
	\begin{align*}
		D^{\mix}_{Wh, I}( B \backslash G) &:= K^b(\Parity_{Wh,I}( B \backslash G))
	\end{align*} 
	where, in both cases, $K^b$ denotes the bounded homotopy category. 
	Each category is endowed with an internal shift functor $(1)$ and a cohomological shift functor $[1]$. 
	We define the tate twist $\langle 1 \rangle$ as $(-1)[1]$.
	\par
	Each of the mixed Hecke categories admits a canonical perverse $t$-structure. 
	We will not require the precise definition, instead referring the interested reader to \cite[\S 3]{AR16b} and \cite[\S 6]{AMRW19} in the case of mixed spherical Hecke categories, and \cite[\S 6]{AMRW19} for mixed antispherical Hecke categories. 
	The \ldef{mixed perverse categories}, denoted  
	\begin{align*}
		\Pmix (U \backslash G / P) && \Pmixw (B \backslash G),
	\end{align*}
	arise as the hearts of the perverse $t$-structures on $D^{\mix}(U \backslash G/P)$ and $D^{\mix}_{Wh,I}(B \backslash G)$ respectively. 
	\par 
	We now write $\Pmix$ to mean either of the mixed perverse Hecke categories introduced above, as each of the following statements applies to each category.
	The tate twist $\langle 1 \rangle$ is $t$-exact. 
	The category $\Pmix$ has the structure of a graded highest weight category, in the sense of \cite[\S A]{AR16b},\footnote{Where they are instead called quasihereditary.} where $\langle 1 \rangle$ is the shift functor. 
	The graded highest weight structure implies isomorphism classes of indecomposable tilting $T_x$, standard $\Delta_x$, costandard $\nabla_x$, and simple $L_x$ objects are parameterised (up to shift) by $x \in W^I$.
	Since $W$ is finite $\Pmix$ has enough projectives; we denote a projective cover of $L_x$ by $P_x$.  
	We denote by 
	\begin{align*}
		\Tilt^{\mix}(U \backslash G / P) && \Tilt_{Wh, I}^{\mix}(B \backslash G ) 
	\end{align*}
	the full subcategories of tilting objects in the relevant perverse Hecke category. 
	\par 
	Suppose $\Bbbk$ is a field or complete local ring.
	Modular \ldef{Koszul duality} \cite{AMRW19,RV23} establishes an equivalence of triangulated categories
	\begin{align*}
		\kappa: D^{\mix}(U \backslash G / B) ~~ \tilde{\longrightarrow} ~~ D^{\mix}(\check{B} \backslash \check{G} / \check{U})
	\end{align*}
	satisfying $\kappa \circ \langle 1 \rangle \cong (1) \circ \kappa$ and 
	\begin{align*}
		\kappa(T_x) \cong \check{\cE}_x, 
		&&
		\kappa(\cE_x) \cong \check{T}_x,
		&&
		\kappa(\Delta_x ) \cong \check{\Delta}_x 
		&&
		\kappa(\nabla_x) \cong \check{\nabla}_x. 
	\end{align*}
	Note that the groups $ \check{G} , \check{B}$ and $\check{U}$ are the Langlands dual groups to  $G,B$ and $U$ respectively. 
	Parabolic Koszul duality was also considered in \cite{AMRW19}, where it was shown that if $2 \in \Bbbk^{\times}$, there is an equivalence of triangulated categories
	\begin{align*}
		\kappa: D^{\mix}(U \backslash G / P) ~~ \tilde{\longrightarrow} ~~ D^{\mix}_{Wh,I}(\check{B} \backslash \check{G})
	\end{align*}
	again satisfying $\kappa \circ \langle 1 \rangle \cong (1) \circ \kappa$ and 
	\begin{align*}
		\kappa(T_x) \cong \check{\cE}_x, 
		&&
		\kappa(\cE_x) \cong \check{T}_x,
		&&
		\kappa(\Delta_x ) \cong \check{\Delta}_x, 
		&&
		\kappa(\nabla_x) \cong \check{\nabla}_x. 
	\end{align*}
	We also denote the map induced on Grothendieck groups by $\kappa$. 
	
\section{Geometric antispherical (co)minuscule Hecke categories}
\label{Sec: Geo anti}

	If $\Bbbk$ is a field or complete local ring such that $2 \in \Bbbk^{\times}$, parabolic Koszul duality induces an equivalence 
	\begin{align*}
		\kappa: \Tilt^{\mix}(U \backslash G / P, \Bbbk) ~~ \tilde{\longrightarrow} ~~ \Parity_{Wh,I}(\check{B} \backslash \check{G}, \Bbbk).
	\end{align*}
	For this reason, we call $\Parity_{Wh,I}(\check{B} \backslash \check{G}, \Bbbk)$ an antispherical (co)minuscule Hecke category when $G/P$ is a (co)minuscule flag variety. 
	We continue nomenclature even when $2 \notin \Bbbk$. 
	\par 
	We now prove, using geometric techniques, that the antispherical $p$-Kazhdan-Lusztig basis for antispherical (co)minuscule Hecke categories is the Kazhdan-Lusztig basis whenever $p>2$.

\subsection{Antispherical minuscule Hecke categories}
\label{Ssec: AS Min Geo}

	We begin by showing that we can reduce to the simply-laced case. 
	Throughout this section we always take $\Bbbk$ to be a field or a complete local ring. 
	\begin{lem}
	\label{Lem: reduce to simply laced cats}
		Suppose $2 \in \Bbbk^{\times}$. 
		For any non-simply-laced $\check{G}$ such that $\Parity_{Wh,I}(\check{B} \backslash \check{G}, \Bbbk)$ is minuscule, there is a simply-laced $\check{G}'$ such that $\Parity_{Wh,I'}(\check{B}' \backslash \check{G}', \Bbbk)$ is minuscule and  an equivalence
		\begin{align*}
			\check{f}_* :\Parity_{Wh,I}(\check{B} \backslash \check{G}, \Bbbk) ~~ \longrightarrow ~~ \Parity_{Wh,I'}(\check{B}' \backslash \check{G}', \Bbbk)
		\end{align*}
		satisfying $\check{f}_*(\cE_x) \cong \cE_{f(x)}'$ for each $x \in W^I$.
	\end{lem}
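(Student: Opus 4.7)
The plan is to transport the isomorphism $f : G/P \to G'/P'$ supplied by Lemma \ref{Lem: reduce to simply laced vars} across parabolic Koszul duality. Since Langlands duality interchanges types $\tB_n$ and $\tC_n$, if $\check{G}$ is non-simply-laced then so is $G$, and hence the minuscule partial flag variety $G/P$ Koszul-dual to $\Parity_{Wh,I}(\check{B}\backslash\check{G},\Bbbk)$ is itself a non-simply-laced minuscule partial flag variety. Lemma \ref{Lem: reduce to simply laced vars} produces a simply-laced minuscule $G'/P'$ (explicitly $\tA_{2n-1}/\tA_{2n-2}$ or $\tD_{n+1}/\tA_n$) together with a stratification-preserving isomorphism $f$; taking $\check{G}'$ and $\check{P}'$ to be the Langlands duals of $G'$ and $P'$ produces the target category of the lemma.

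First I would construct the equivalence on the partial flag side. Because $f$ sends $B \cdot xP/P$ onto $B' \cdot f(x)P'/P'$, direct image gives an equivalence of constructible derived categories which preserves the properties (a)--(c) of Section \ref{Ssec: parity sheaves} characterising indecomposable parity sheaves. This restricts to an equivalence
$$
f_* : \Parity(U\backslash G/P,\Bbbk) \longrightarrow \Parity(U'\backslash G'/P',\Bbbk), \qquad f_*(\cE_x) \cong \cE'_{f(x)}.
$$
Taking bounded homotopy categories extends this to an equivalence of mixed derived categories. Since $f_*$ visibly sends $!$- and $*$-extensions of shifted constant sheaves on $B$-strata to their primed counterparts, it respects the standard and costandard objects, is $t$-exact for the perverse $t$-structure, and therefore restricts to an equivalence $f_*^{\mix} : \Tilt^{\mix}(U\backslash G/P) \to \Tilt^{\mix}(U'\backslash G'/P')$ with $f_*^{\mix}(T_x) \cong T'_{f(x)}$.

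The rest is formal. The hypothesis $2 \in \Bbbk^{\times}$ makes parabolic Koszul duality available on both sides, so I define
$$
\check{f}_* := \kappa' \circ f_*^{\mix} \circ \kappa^{-1} \colon \Parity_{Wh,I}(\check{B}\backslash\check{G},\Bbbk) \longrightarrow \Parity_{Wh,I'}(\check{B}'\backslash\check{G}',\Bbbk).
$$
Using the recorded identification $\kappa(T_z) \cong \check{\cE}_z$, one chases
$$
\check{f}_*(\cE_x) \;\cong\; \kappa'\bigl(f_*^{\mix}(T_x)\bigr) \;\cong\; \kappa'(T'_{f(x)}) \;\cong\; \cE'_{f(x)},
$$
as required.

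The main obstacle is verifying the second half of the parity step: that $f_*$ actually promotes to an equivalence of mixed tilting subcategories. Everything else is either a direct consequence of the geometric content of Lemma \ref{Lem: reduce to simply laced vars} or formal bookkeeping with $\kappa$; it is the intrinsic compatibility of $f_*$ with the standard/costandard filtrations that does the work, and it is what forces the detour through Koszul duality, since there is no natural morphism between $\check{G}/\check{B}$ and $\check{G}'/\check{B}'$ to push forward along directly.
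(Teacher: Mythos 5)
Your proof is correct and takes essentially the same approach as the paper: transport the stratified isomorphism $f$ of Lemma \ref{Lem: reduce to simply laced vars} through constructible derived categories, restrict to parity sheaves, pass to $K^b$ and then to mixed tiltings, and conjugate by parabolic Koszul duality on both sides. The only cosmetic difference is that the paper abusively writes $\kappa$ for what you more precisely call $\kappa^{-1}$, and you spell out slightly more explicitly why $f_*$ respects the tilting subcategories.
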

	
	\begin{proof}
		By assumption on $\check{G}$ and Lemma \ref{Lem: reduce to simply laced vars}, there is a simply-laced $G'$, a minuscule flag variety $G'/P'$, and isomorphism $f: G/P \rightarrow G'/P'$ satisfying $B\cdot x P/P \cong B' \cdot f(x) P'/P'$ for each $x \in W^I$. 
		Hence $f$ is a stratified isomorphism, in the sense of \cite[\S2.4]{JMW14}, which induces an equivalence of (constructible) derived categories
		\begin{align*}
			f_* :  
			D_{(B)}^b(G/P) 
			~~ \longrightarrow ~~
			D_{(B')}^b(G'/P').
		\end{align*}
		After identifying $D^b(U \backslash G/P, \Bbbk)$ with $D_{(B)}^b(G/P, \Bbbk)$ via the forgetful functor, this restricts to an equivalence 
		\begin{align*}
			f_* :  
			\Parity(U \backslash G/P) 
			~~ \longrightarrow ~~
			\Parity(U' \backslash G'/P').
		\end{align*}
		The equivalence is extremely naive: the identification of $B\cdot x P/P $ with $ B' \cdot f(x) P'/P'$ induces an isomorphism $f_*( \cE_x) \cong \cE_{f(x)}'$.
		Passing to bounded homotopy categories, and then restricting to the full subcategories of tilting sheaves gives an equivalence
		\begin{align*}
			f_* :  
			\Tilt^{\mix}(U \backslash G/P) 
			~~ \longrightarrow ~~
			\Tilt^{\mix}(U' \backslash G'/P') 
		\end{align*}
		satisfying $f_*(T_x) \cong T_{f(x)}'$. 
		The desired functor is then obtained as the composition 
		\begin{align*}
			\check{f}_*:
			\Parity_{Wh,I}(\check{B} \backslash \check{G}) 
			~ \xrightarrow{\kappa} ~
			\Tilt^{\mix}(U \backslash G / P)
			~ \xrightarrow{f_*} ~
			\Tilt^{\mix}(U' \backslash G'/P')
			~ \xrightarrow{\kappa'} ~
			\Parity_{Wh,I}(\check{B}' \backslash \check{G}') .
		\end{align*}
		which evidently satisfies $\check{f}_*(\cE_x) \cong \cE_{f(x)}'$.
	\end{proof}

	\begin{thm}
	\label{Thm: Min pKL Geom}
		Let $2 \in \Bbbk^{\times}$ and  $\Parity_{Wh,I}(\check{B} \backslash \check{G}, \Bbbk)$ be an antispherical minuscule Hecke category. 
		Then ${}^p d_x = d_x$ for all $x \in W^I$. 
	\end{thm}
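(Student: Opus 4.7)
The plan is to use Lemma \ref{Lem: reduce to simply laced cats} to pass to a simply-laced group, observe that a simply-laced minuscule partial flag variety is automatically cominuscule, and then invoke the separate geometric proof of the antispherical cominuscule theorem (Theorem \ref{Thm: antispherical cominuscule pKL basis}) given in Section \ref{Ssec: AS Comin Geo}. The hypothesis $p\neq 2$ of that theorem is precisely our assumption $2\in\Bbbk^\times$, so the substantive modular parabolic Koszul duality input can simply be cited rather than re-run.

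First I would apply Lemma \ref{Lem: reduce to simply laced cats} to obtain a simply-laced $\check G'$, a subset $I'\subseteq S'$, and an equivalence $\check f_*:\Parity_{Wh,I}(\check B\backslash\check G,\Bbbk)\xrightarrow{\sim}\Parity_{Wh,I'}(\check B'\backslash\check G',\Bbbk)$ sending $\cE_x$ to $\cE_{f(x)}'$. Because this equivalence is induced by a stratified isomorphism of partial flag varieties, the character maps are intertwined with the combinatorial bijection $f:W^I\to W'^{I'}$, so ${}^pd_x=d_x$ holds in the original category if and only if ${}^pd_{f(x)}=d_{f(x)}$ holds in the simply-laced one. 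Hence one may assume $\check G$ is simply-laced from the start.

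Next I would exploit that in a simply-laced type the long and short coroots coincide, so every minuscule fundamental weight is automatically cominuscule; indeed, Table \ref{Table: classification} lists every simply-laced minuscule partial flag variety as being of type ``both''. Consequently $\Parity_{Wh,I}(\check B\backslash\check G,\Bbbk)$ is also an antispherical cominuscule Hecke category, and Theorem \ref{Thm: antispherical cominuscule pKL basis} (in its $p\neq 2$ guise, to be proven geometrically via Koszul duality in Section \ref{Ssec: AS Comin Geo}) yields ${}^pd_x=d_x$. Transporting back through $\check f_*$ concludes the proof.

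There is no real obstacle here: the only thing to verify is that $\check f_*$ transports $p$-Kazhdan–Lusztig polynomials under $f$, which is immediate from its preservation of indecomposable parity sheaves. The genuine geometric content -- the modular parabolic Koszul-duality argument -- is deferred to the cominuscule proof, which is what makes the present proof ``extremely short.''
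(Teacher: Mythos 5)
Your reduction to the simply-laced case via Lemma \ref{Lem: reduce to simply laced cats} matches the paper exactly, but the second step has a genuine circularity problem. You propose to finish by noting that simply-laced minuscule flag varieties are also cominuscule, and then citing the geometric proof of Theorem \ref{Thm: Comin pKL Geom} in Section \ref{Ssec: AS Comin Geo}. But that proof handles the simply-laced cominuscule cases precisely by observing they are also minuscule and invoking Theorem \ref{Thm: Min pKL Geom} --- the very statement being proven here. Moreover, the only genuinely new geometric input in Section \ref{Ssec: AS Comin Geo}, namely Lemma \ref{Lem: Other Koszul duality Whit version}, is stated and proved only for simply-connected \emph{non-simply-laced} $G$; it gives you nothing once you have reduced to the simply-laced case. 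Koszul duality alone cannot close the argument: in simply-laced type it interchanges parity sheaves and tilting sheaves, which pins down neither.

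The missing ingredient is not geometric at all. After applying Lemma \ref{Lem: reduce to simply laced cats}, the paper cites Brenti's combinatorial computation that for simply-laced minuscule flag varieties the classical Kazhdan-Lusztig basis of the antispherical module already equals the Bott-Samelson basis, $d_x = d_{\ux}$ (Theorem 5.1 of \cite{Bre02} for Grassmannians, Theorem 4.1 of \cite{Bre09} for Spinor varieties and even quadrics, and a hand check for $\tE_6, \tE_7$). Lemma \ref{Lem: pKL bound} then pinches $d_x \preceq {}^p d_x \preceq d_{\ux}$, forcing equality for every $p$. This combinatorial input is what makes the proof terminate, and it is the piece your proposal lacks.
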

	
	\begin{proof}
		By Lemma \ref{Lem: reduce to simply laced cats}, it suffices to consider only the simply-laced cases. 
		However, in each simply-laced case it is known that for each reduced expression $\ux$ of $x \in W^I$ we have $d_x = d_{\ux}$, so we must have $d_x = {}^p d_x = d_{\ux}$ by Lemma \ref{Lem: pKL bound}. 
		For proofs of $d_x = d_{\ux}$, see \cite[Theorem 5.1]{Bre02} for Grassmannians and \cite[Theorem 4.1]{Bre09} for Spinor varieties and even dimensional quadrics, the exceptional cases are easily checked by hand. 
	\end{proof}
 	
 	\begin{rem}
		The condition $p>2$ in Lemma \ref{Lem: reduce to simply laced cats} is a consequence of a technical difficulty in the construction of Koszul duality (and consequently parabolic Koszul duality) in \cite{AMRW19}. 
		In \cite{RV23} Riche and Vay overcome this technical condition, constructing Koszul duality for all $p$.
		However, they do not consider parabolic Koszul duality.  
		If parabolic Koszul duality is constructed for all $p$, the geometric proof of Theorem \ref{Thm: Min pKL Geom} will hold for all $p$.
	\end{rem}
 	
\subsection{Antispherical cominuscule Hecke categories}
\label{Ssec: AS Comin Geo}

	We now determine the $p$-Kazhdan-Lusztig bases for antispherical cominuscule Hecke categories. 
	By Theorem \ref{Thm: Min pKL Geom}, it suffices to consider only the non-simply-laced cases, which are the Lagrangian Grassmannian and odd dimensional quadrics. 
	\par 
	First, recall that if $G$ is a simple algebraic group with root system $R$, then $p$ is a good prime for $G$ if $p$ does not divide any coefficient of the highest root $\alpha \in R$ when expressed as a sum of simple roots. 
	A prime $p$ is said to be very good if either: $G$ is not type $\tA$ and $p$ is good; or $G$ is type $\tA_{n}$ and $p \not| n+1$. 
	See Table \ref{Table: primes} for (very) good primes in each type.

		\begin{table}[h!]
		\begin{center}
		\bgroup
		\def\arraystretch{1.5}
		\begin{tabular}{| c c c c c|} 
		\hline
				Type
			& 
				$\tA_n$
			& 
				$\tB_n$, $\tC_n$ or $\tD_n$
			& 
				$\textbf{G}_2$, $\tF_4$, $\tE_6$ or  $\tE_7$
			& 
				$\tE_8$
		\\ 
		\hline
				Good primes
			& 
				$p>0$
			& 
				$p>2$
			& 
				$p>3$
			& 
				$p>5$
		\\  
		\hline
				Very good primes
			& 
				$p ~\not\vert ~ n+1 $
			& 
				$p>2$
			& 
				$p>3$
			& 
				$p>5$
		\\  
		\hline
		\end{tabular}
		\egroup
		\caption{Good and very good primes for simple algebraic groups.}
		\label{Table: primes}
		\end{center}
		\end{table}
		
	We note the following equivalence, from which we will deduce our result. 
	
	\begin{lem}
	\label{Lem: Other Koszul duality}
		Let $G$ be a simply-connected, simple algebraic group, and $p \in \Bbbk^{\times}$ for each $p$ which is not a very good prime for $G$.
		Then there is an equivalence of triangulated categories
		\begin{align*}
			\varkappa:
			D^{\mix} (U \backslash G / B) 
			\longrightarrow
			D^{\mix} (\check{U} \backslash \check{G} / \check{B}) 
		\end{align*}
		satisfying $\varkappa(\cE_x) \cong \check{\cE}_x$ for all $x \in W$. 
	\end{lem}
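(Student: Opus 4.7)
I would construct $\varkappa$ by identifying both sides with a common Soergel-bimodule model and exploiting the symmetry between the Cartan realisations attached to $G$ and to $\check G$ in very good characteristic. The approach avoids combining the Koszul duality $\kappa$ of the preceding section with any kind of Ringel-type functor, since there is no natural autoequivalence taking tilting sheaves to parity sheaves; instead the entire equivalence is built at the level of the category of parity sheaves, after which one passes to $K^b$.

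Concretely I would proceed in three steps. First, the modular Soergel Hom-formula for the full flag variety, available in the required generality in \cite{RW18,AMRW19}, gives a fully faithful graded embedding of $\Parity(B\backslash G/B, \Bbbk)$ into the category of Soergel bimodules attached to the Cartan realisation $\mathfrak{h}_G := X_*(T) \otimes_\Z \Bbbk$, carrying $\cE_x$ to the indecomposable Soergel bimodule indexed by $x$; the analogous statement identifies $\Parity(\check{B} \backslash \check{G} / \check{B}, \Bbbk)$ with Soergel bimodules over $\mathfrak{h}_{\check G} = X^*(T) \otimes_\Z \Bbbk$. Second, the natural root/coroot pairing identifies $\mathfrak{h}_{\check G}$ with $\mathfrak{h}_G^*$ so that the simple roots and simple coroots of $\check G$ correspond respectively to the simple coroots and simple roots of $G$, and under the hypothesis on $\Bbbk$ the associated Demazure operators on the two sides are intertwined up to rescalings by non-very-good primes, which are invertible in $\Bbbk$. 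This produces an isomorphism of the diagrammatic Hecke categories of \cite{EW16} attached to the two realisations, and hence an equivalence
\[
\Parity(B \backslash G / B, \Bbbk) \simeq \Parity(\check B \backslash \check G / \check B, \Bbbk)
\]
sending $\cE_x$ to $\check{\cE}_x$. Third, applying $K^b(-)$ and the ``stacky'' forgetful equivalence of \cite[\S5.3]{AR16a} on each side yields the triangulated equivalence $\varkappa$, and the matching of indecomposables is automatic from the construction.

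The hard part will be the second step. The diagrammatic Hecke category and its realisation by Soergel bimodules are extremely sensitive to the choice of realisation, and the identification of realisations under root/coroot duality generically fails to lift to an equivalence of diagrammatic categories in small characteristic: for example in type $\tB_n$ at $p=2$ or type $\tG_2$ at $p=3$ the rescaling factors that relate long and short simple roots become non-invertible and the resulting categories genuinely split apart. The precise role of the very-good-prime hypothesis is to clear these denominators and to align the two realisations, while simple-connectedness of $G$ ensures that the character and cocharacter lattices pair perfectly and no further adjustment is required at the level of lattices.
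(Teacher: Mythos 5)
Your proof is correct in outline and takes a genuinely different route from the paper's. The paper obtains $\varkappa$ by composing the two Koszul duality equivalences of \cite{AMRW19} (their Theorems 5.6 and 7.2), so its proof is a single citation. You instead construct the equivalence directly at the additive level, with no appeal to Koszul duality or to the homotopy-category structure: you identify $\Parity(B\backslash G/B)$ and $\Parity(\check B\backslash\check G/\check B)$ with Elias--Williamson diagrammatic categories for the respective Cartan realisations, then argue that under the hypothesis the two realisations are interchanged by a canonical isomorphism. The conceptual content you extract --- that the very-good-prime hypothesis is precisely what makes a $W$-invariant symmetrising form an isomorphism $\mathfrak h_G \xrightarrow{\sim} \mathfrak h_G^* \cong \mathfrak h_{\check G}$, identifying the two Cartan realisations up to a \emph{reciprocal} rescaling of roots and coroots --- is worth spelling out and is left implicit by the paper's citation. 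Where the paper's $\varkappa$ is a derived equivalence built from tilting--parity Koszul duality, yours is directly an equivalence of the additive parity categories, and passing to $K^b$ is a formality.

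The gaps you flag as ``the hard part'' are real and deserve to be made precise. First, the identification $\mathfrak h_{\check G}\cong\mathfrak h_G^*$ swaps roots and coroots and transposes the Cartan matrix, so the two realisations are \emph{not} literally isomorphic; one must additionally invoke that the Elias--Williamson category is insensitive to the reciprocal rescaling $\alpha_s\mapsto c_s\alpha_s$, $\alpha_s^\vee\mapsto c_s^{-1}\alpha_s^\vee$ (which fixes all Cartan integers $\langle\alpha_s^\vee,\alpha_t\rangle$). Second, the existence of the required $W$-equivariant isomorphism over $\Bbbk$ needs the Cartan determinant (and symmetrising scalars) to be invertible --- this is $n+1$ in type $\tA_n$, $2$ in types $\tB,\tC,\tD$, $3$ in type $\tG_2$, etc. --- which is where the very-good hypothesis genuinely enters. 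Third, one should say why the resulting equivalence of $\DBE$'s descends to $\DRE$ (equivalently, from $\Parity(B\backslash G/B)$ to $\Parity(U\backslash G/B)$), which follows because the equivalence is $R$-linear with $R_G\cong R_{\check G}$ intertwined by the Cartan-form isomorphism. With these points supplied, the argument is correct and rather more self-contained than the paper's.
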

	
	\begin{proof}
		The desired equivalence is obtained by composing the two forms of Koszul duality constructed in \cite{AMRW19}. 
		Namely, those in Theorems 5.6 and 7.2.
	\end{proof}
	
	\begin{lem}
	\label{Lem: Other Koszul duality Whit version}
		Let $G$ be simply-connected and non-simply-laced, $\Parity_{Wh,I}(B \backslash G )$ cominuscule, and $2 \in \Bbbk^{\times}$. 
		There is an equivalence
		\begin{align*}
			\varkappa^{Av}: \Parity_{Wh,I}(B \backslash G, \Bbbk ) \longrightarrow \Parity_{Wh,I}(\check{B} \backslash \check{G} , \Bbbk)
		\end{align*}
		satisfying $\varkappa^{Av} (\cE_x) \cong \cE_x$ for each $x \in W$.
	\end{lem}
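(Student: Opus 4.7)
The plan is to deduce the equivalence from the full-flag Koszul duality of Lemma \ref{Lem: Other Koszul duality} by descending to the antispherical quotient of Section \ref{Ssec: Geo Quotient Construction}. Under the hypotheses $G$ is necessarily of type $\tB_n$ or $\tC_n$, and by Table \ref{Table: primes} the non-very-good primes in these types are exactly $p=2$; the assumption $2 \in \Bbbk^{\times}$ therefore verifies the hypothesis of Lemma \ref{Lem: Other Koszul duality}, producing a triangulated equivalence
\[
\varkappa: D^{\mix}(U \backslash G / B) \;\tilde{\longrightarrow}\; D^{\mix}(\check{U} \backslash \check{G} / \check{B})
\]
satisfying $\varkappa(\cE_x) \cong \check{\cE}_x$ for every $x \in W$.

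Since $\varkappa$ commutes with the internal shift $(1)$ and bijectively matches the generators $\cE_x(n)$ of $\Parity(B \backslash G / U)$ (viewed inside $D^{\mix} = K^b(\Parity)$ as complexes concentrated in degree $0$) with the corresponding generators $\check{\cE}_x(n)$ on the Langlands dual side, it restricts to an additive equivalence of parity subcategories. Moreover, Langlands duality preserves the Weyl group $W$ and the parabolic datum $I \subseteq S$, so the full subcategory $\langle \cE_x \mid x \notin W^I \rangle_{\oplus,(1)}$ is sent onto $\langle \check{\cE}_x \mid x \notin W^I \rangle_{\oplus,(1)}$. Hence $\varkappa$ descends to an equivalence of the corresponding additive quotients. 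Composing with the equivalences
\[
Av:\Parity(B \backslash G / U) / \langle \cE_x \mid x \notin W^I \rangle_{\oplus,(1)} \;\tilde{\longrightarrow}\; \Parity_{Wh, I}(B \backslash G, \Bbbk)
\]
and its Langlands dual analogue from Section \ref{Ssec: Geo Quotient Construction} produces the desired functor $\varkappa^{Av}$; the identity $\varkappa^{Av}(\cE_x) \cong \check{\cE}_x$ for $x \in W^I$ follows from $\varkappa(\cE_x) \cong \check{\cE}_x$ and $Av(\cE_x) \cong \cE_x$.

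The most delicate point is verifying that $\varkappa$ restricts cleanly to the parity subcategories of $K^b(\Parity)$: in general, a triangulated equivalence need not preserve the subcategory of complexes concentrated in degree $0$. Here this restriction holds because $\varkappa$ is constructed in \cite{AMRW19} as a composition of two Koszul dualities, one sending parity to tilting and the other sending tilting back to parity, so that the intermediate tilting objects cancel and $\varkappa$ preserves parity sheaves on the nose. A secondary bookkeeping point is the identification $U \backslash G / B \cong B \backslash G / U$ via inversion on $G$, which is routine.
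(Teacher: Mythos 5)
Your proof is correct and follows essentially the same route as the paper: apply Lemma \ref{Lem: Other Koszul duality}, observe that $\varkappa$ preserves the additive subcategory generated by the $\cE_x$ with $x \notin W^I$, pass to quotients, and conjugate by $Av$ and its Langlands-dual analogue. The only additions you make beyond the paper's own proof are to spell out why ``$2 \in \Bbbk^\times$'' suffices to invoke Lemma \ref{Lem: Other Koszul duality} (via the classification forcing $G$ to be of type $\tB_n$ or $\tC_n$) and to flag and resolve the point that $\varkappa$ genuinely restricts to the degree-$0$ parity subcategory of $K^b(\Parity)$; both of these are treated implicitly in the paper and your explicit treatment is accurate.
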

	
	\begin{proof}
		By the classification of non-simply-laced cominuscule flag varieties, it suffices to ensure $2 \in \Bbbk^{\times}$ to apply Lemma \ref{Lem: Other Koszul duality}. 
		Observe that $\varkappa$ restricts to an equivalence of the graded, additive subcategories
		\begin{align*}
			\varkappa:
			\langle \cE_x ~|~ x \notin W^I \rangle_{\oplus, (1)}
			\longrightarrow
			\langle \check{\cE}_x ~|~ x \notin W^I \rangle_{\oplus, (1)}
		\end{align*}
		Hence $\varkappa$ induces an equivalence of the quotient categories
		\begin{align*}
			\varkappa:
			\Parity (U \backslash G / B) ~/~  \langle \cE_x ~|~ x \notin W^I \rangle_{\oplus, (1)}
			\longrightarrow
			\Parity (\check{U} \backslash \check{G} / \check{B})  ~/~ \langle \check{\cE}_x ~|~ x \notin W^I \rangle_{\oplus, (1)}.
		\end{align*}
		Now recall the equivalence $Av$ from Section \ref{Ssec: Geo Quotient Construction}, and denote by $Av^{-1}$ the quasi-inverse functor. 
		The composition $\check{A}v^{-1} \circ \varkappa \circ Av$ gives the desired equivalence $\varkappa^{Av}$, it is easy to check it satisfies $\varkappa^{Av}(\cE_x) \cong \cE_x$ for all $x \in W$. 
	\end{proof}
	
	We can now deduce the main result of this Section. 
	
	\begin{thm}
	\label{Thm: Comin pKL Geom}
		Let $2 \in \Bbbk^{\times}$ and $\Parity_{Wh,I}(\check{B} \backslash \check{G}, \Bbbk)$ be an antispherical cominuscule Hecke category. 
		Then ${}^p d_x = d_x$ for all $x \in W^I$. 
	\end{thm}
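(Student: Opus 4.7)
The plan is to split off the simply-laced cases using the classification in Table \ref{Table: classification}, and reduce the remaining non-simply-laced cominuscule cases to Theorem \ref{Thm: Min pKL Geom} via a Langlands-swapped application of Lemma \ref{Lem: Other Koszul duality Whit version}.

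First, inspection of Table \ref{Table: classification} shows that every simply-laced cominuscule partial flag variety is simultaneously minuscule. Consequently, whenever $G/P$ is simply-laced, the category $\Parity_{Wh,I}(\check{B}\backslash\check{G})$ is also an antispherical minuscule Hecke category, and Theorem \ref{Thm: Min pKL Geom} (applicable since $2\in\Bbbk^\times$) directly gives ${}^{p}d_x = d_x$.

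It therefore suffices to handle the two non-simply-laced cominuscule cases: the Lagrangian Grassmannian in type $\tC_n$ and the odd-dimensional quadric in type $\tB_n$. In both, the Langlands dual $\check{G}$ is non-simply-laced and $\check{G}/\check{P}$ is minuscule. I would apply Lemma \ref{Lem: Other Koszul duality Whit version} with its roles of $G$ and $\check{G}$ swapped; the hypotheses are satisfied because $\check{G}$ is non-simply-laced, $\Parity_{Wh,I}(\check{B}\backslash\check{G})$ is cominuscule by assumption, and $2\in\Bbbk^\times$. This yields an equivalence
\begin{align*}
\varkappa^{Av}:\Parity_{Wh,I}(\check{B}\backslash\check{G},\Bbbk) ~~\tilde{\longrightarrow}~~ \Parity_{Wh,I}(B\backslash G,\Bbbk)
\end{align*}
satisfying $\varkappa^{Av}(\cE_x)\cong\cE_x$ for each $x\in W^I$. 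The right-hand side is an antispherical minuscule Hecke category (since $\check{G}/\check{P}$ is minuscule), so Theorem \ref{Thm: Min pKL Geom} gives ${}^{p}d_x=d_x$ there.

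The main obstacle is to transfer this identity back along $\varkappa^{Av}$ to the cominuscule side. The key point is that $\varkappa^{Av}$ is built from the two Koszul dualities of \cite{AMRW19} together with the $Av$-equivalence; tracing the construction, it is compatible with the action of $\Parity(B\backslash G/B)$ by convolution and with the internal shift $(1)$. It therefore sends Bott-Samelson parity sheaves associated to an expression $\ux$ to the corresponding Bott-Samelson objects on the other side, with matching graded decomposition numbers into indecomposables. Since both ${}^{p}d_x$ and $d_x$ can be recovered from these decomposition numbers together with the intrinsic Bott-Samelson characters $d_{\ux}$, the equality ${}^{p}d_x = d_x$ is preserved.
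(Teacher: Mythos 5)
Your proposal is correct and follows essentially the same route as the paper: dispose of the simply-laced cases by noting cominuscule equals minuscule there, then reduce the two non-simply-laced cases (Lagrangian Grassmannian and odd quadric) to Theorem \ref{Thm: Min pKL Geom} via Lemma \ref{Lem: Other Koszul duality Whit version}. The paper's proof is two sentences and leaves the transfer of the identity ${}^p d_x = d_x$ along $\varkappa^{Av}$ implicit; you are right to flag this as a point needing justification, since $\varkappa^{Av}(\cE_x)\cong\cE_x$ alone does not directly entail that the two character maps into $N^I$ agree. The correct way to close this gap, as you gesture at, is to observe that $\varkappa^{Av}$ intertwines the module structures over the Koszul-dual pair $\Parity(\check{B}\backslash\check{G}/\check{B})$ and $\Parity(B\backslash G/B)$, hence sends the Bott-Samelson object $B_{\ux}$ to $B_{\ux}$; since $\ch([B_{\ux}])=d_{\ux}$ intrinsically in $N^I$, the induced map on Grothendieck groups is the identity under both identifications with $N^I$, and the multiplicity matrix $(B_{\ux}:\cE_y)$ is the same on both sides, forcing ${}^p d_x$ to coincide.
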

	
	\begin{proof}
		Lemma \ref{Lem: Other Koszul duality Whit version} implies the remaining non-simply-laced cases are equivalent to their Langlands dual Hecke categories (i.e. antispherical minuscule Hecke categories) when $2 \in \Bbbk^{\times}$. 
		Since each simply-laced cominuscule flag variety is also minuscule, the claim follows from Theorem \ref{Thm: Min pKL Geom}.
	\end{proof}
	
	\begin{rem}
		Unlike Theorem \ref{Thm: Min pKL Geom}, parabolic Koszul duality in characteristic 2 would not imply ${}^2 d_x  = d_x$ for the cominuscule case. 
		As we will see, the 2-Kazhdan-Lusztig basis is considerably more complicated. 
		A geometric explanation for the characteristic 2 phenomena is conjectured in Section \ref{Ssec: 2-tightness}.
	\end{rem}

\section{Diagrammatic Hecke categories}	
\label{Sec: Diagrammatic Hecke Cats}

	We now recall the various diagrammatic categories we will use to determine the $p$-Kazhdan-Lusztig bases for the non-simply-laced, antispherical (co)minuscule Hecke categories. 

\subsection{Realisations}
  
	Recall the notion of a realisation $(\{ \alphac_s \,|\, s \in S\} \subset \h, \{ \alpha_s \,|\, s \in S\} \subset \h^* )$ of $W$ as introduced in \cite{EW16} and subsequently refined in \cite{Haz23}. 
	\par 
	Define $R$ to be the graded ring $\Sym(\h^*)$ where $\h^*$ is in degree $2$. The Coxeter group $W$ acts on $R$ via the contragradient representation. The Demazure operators $\partial_s : R \rightarrow R$ are given by
	\begin{align*}
		\partial_s (f) := \frac{f-s(f)}{\alpha_s} ~~~\text{for all } f \in R. 
	\end{align*}
	Further, we consider $\Bbbk$ as an $R$-module via the isomorphism $\Bbbk \cong R/ R_+$ where $R_+$ now denotes the ideal generated by polynomials of strictly positive degree. 
	\par 
	The root datum of $G$ gives rise to a realisation over $\Z$, known as the Cartan realisation. 
	We always take the realisation $\h$ to be that obtained from the Cartan realisation by extending scalars to $\Bbbk$. 
	In particular, it can be of adjoint, simply-connected or universal type, in the sense of \cite[\S2.2.2]{Ric}.
	Cartan realisations of universal or simply-connected type satisfy the `parabolic property' of \cite[\S2.3]{LW22}, and Cartan realisations of universal or adjoint type always satisfy Demazure surjectivity. 

\subsection{The Elias-Williamson category}	
	
	We now give an extremely terse account of the Elias-Williamson diagrammatic categories. 
	The reader not already familiar with these categories should consult \cite{EMTW20}.
	\par 
	Let $\sD_{\BS}(\h)$ denote the diagrammatic \ldef{Bott-Samelson category} associated to the realisation $\h$, as introduced in \cite{EW16}. 
	It is a $\Bbbk$-linear, strict, monoidal category enriched over $R$-modules. 
	Objects $B_{\ux}$ in $\sD_{\BS}(\h)$ are indexed by (not necessarily reduced) expressions $\ux$. 
	The monoidal product is written as juxtaposition and defined by $B_{\ux} B_{\uy} = B_{\ux \uy}$ where $\ux\uy$ is the concatenation of $\ux$ and $\uy$. 
	Morphisms in this category are given by planar diagrams, known as Soergel diagrams. 
	These diagrams are generated by vertices of the form
	\[
		\settowidth\mylen{200000000000000000}
		\begin{array}{CCC|C}
			\begin{array}{c}
			\textit{univalent}
			\\
			\textit{vertex}
			\end{array}
		& 
			\begin{array}{c}
			\textit{trivalent}
			\\
			\textit{vertex}
			\end{array}
		&
			\begin{array}{c}
			2m_{st}\textit{-valent} 
			\\
			\textit{vertex} 
			\end{array}
		&
			\begin{array}{c}
			\textit{homogeneous}
			\\
			\textit{polynomial}
			\end{array}
		\\
			\tikz[scale=0.7]
			{
			\draw[color=blue] (3,-1) to (3,0);
			\node[circle,fill,draw,inner sep=0mm,minimum size=1mm,color=blue] at (3,0) {};
			\draw[dotted] (3,0) circle (1cm);
			}
		&
			\tikz[scale=0.7]
			{
			\draw[color=blue] (-30:1cm) -- (0,0) -- (90:1cm);
			\draw[color=blue] (-150:1cm) -- (0,0);
			\draw[dotted] (0,0) circle (1cm);
			}
		&
			\tikz[scale=0.7]
			{
			\draw[color=blue] (0,0) -- (22.5:1cm);
			\draw[color=red] (0,0) -- (0:1cm);
			\draw[color=blue] (0,0) -- (67.5:1cm);
			\draw[color=red] (0,0) -- (45:1cm);
			\draw[color=blue] (0,0) -- (112.5:1cm);
			\draw[color=red] (0,0) -- (90:1cm);
			\draw[color=blue] (0,0) -- (157.5:1cm);
			\draw[color=red] (0,0) -- (135:1cm);			
			\draw[color=blue] (0,0) -- (-22.5:1cm);
			\draw[color=red] (0,0) -- (180:1cm);			
			\draw[color=blue] (0,0) -- (-67.5:1cm);
			\draw[color=red] (0,0) -- (-45:1cm);			
			\draw[color=blue] (0,0) -- (-112.5:1cm);
			\draw[color=red] (0,0) -- (-90:1cm);			
			\draw[color=blue] (0,0) -- (-157.5:1cm);
			\draw[color=red] (0,0) -- (-135:1cm);
			\draw[dotted] (0,0) circle (1cm);
			}
		&
			\tikz[scale=0.7]{
			\draw[dotted] (0,0) circle (1cm);
			\node at (0,0) {$f$};
			}
		\\
			\deg 1
		&
			\deg -1
		&
			\deg 0
		&
			\deg f
		\end{array}
		\]
	and subject to various local relations.
	We will only recall the relations we require; the rest may be found in \cite{EW16}.
	We require both polynomial relations
		\[
		\settowidth\mylen{2000000000000000000000000000000000}
		\begin{array}{CC}
			\textit{Barbell relation}
			&
			\textit{Polynomial forcing}
		\\
			\begin{array}{c}
			\tikz[scale=0.7]{
			\draw[dotted] (0,0) circle (1cm);
			\draw[color=blue] (0,0.5) -- (0,-0.5);
			\node[draw, fill, circle, inner sep=0mm, minimum size=1mm, color=blue] at (0,0.5) {};
			\node[draw, fill, circle, inner sep=0mm, minimum size=1mm, color=blue] at (0,-0.5) {};
			} 
			\end{array}
			=
			\begin{array}{c}
			\tikz[scale=0.7]{
			\draw[dotted] (0,0) circle (1cm);
			\node at (0,0) {$\alpha_{{\color{blue}s}}$};
			} 
			\end{array}
		&
			\begin{array}{c}
			\tikz[scale=0.7]{
			\draw[dotted] (0,0) circle (1cm);
			\draw[color=blue] (0,-1) to (0,1);
			\node at (-0.5,0) {$f$};
			} \end{array}
			=
			\begin{array}{c}
			\tikz[scale=0.7]{
			\draw[dotted] (0,0) circle (1cm);
			\draw[color=blue] (0,-1) to (0,1);
			\node at (0.5,0) {${\color{blue}s}f$};
			} 
			\end{array}
			+
			\begin{array}{c}
			\tikz[scale=0.7]{
			\draw[dotted] (0,0) circle (1cm);
			\draw[color=blue] (0,-1) to (0,-0.5);
			\draw[color=blue] (0,1) to (0,0.5);
			\node[color=blue, draw, fill, circle, inner sep=0mm, minimum size=1mm] at (0,-0.5) {};
			\node[color=blue, draw, fill, circle, inner sep=0mm, minimum size=1mm] at
			(0,0.5) {};
			\node at (0,0) {$\partial_{\color{blue}s} f$};
			} \end{array}
		\end{array}
		\]
	the one-colour relations
		\[
		\settowidth\mylen{20000000000000000000000000}
		\begin{array}{CCC}
			\textit{Frobenius unit}
		&
			\textit{Frobenius associativity}
		&
			\textit{Needle relation}
		\\
			\begin{array}{c}
    		\tikz[scale=0.7]{\draw[dotted] (0,0) circle (1cm);
			\draw[color=blue] (-90:1cm) -- (0,0) -- (90:1cm);
			\draw[color=blue] (-0:0.5cm) -- (0,0);
			\node[circle, fill, draw, inner sep=0mm, minimum size=1mm, color=blue] at (-0:0.5cm) {};
			}
  			\end{array}
			=
  			\begin{array}{c}
    		\tikz[scale=0.7]{\draw[dotted] (0,0) circle (1cm);
			\draw[color=blue] (-90:1cm) -- (0,0) -- (90:1cm);}
  			\end{array}
  		&
 			\begin{array}{c}
    		\tikz[scale=0.7]{\draw[dotted] (0,0) circle (1cm);
			\draw[color=blue] (-45:1cm) -- (0.3,0) -- (45:1cm);
			\draw[color=blue] (135:1cm) -- (-0.3,0) -- (-135:1cm);
			\draw[color=blue] (-0.3,0) -- (0.3,0);
			}
			\end{array}
			=
			\begin{array}{c}
			\tikz[scale=0.7, rotate=90]{\draw[dotted] (0,0) circle (1cm);
			\draw[color=blue] (-45:1cm) -- (0.3,0) -- (45:1cm);
			\draw[color=blue] (135:1cm) -- (-0.3,0) -- (-135:1cm);
			\draw[color=blue] (-0.3,0) -- (0.3,0);
			}
			\end{array}
		&
			\begin{array}{c}
			\begin{tikzpicture}[scale=0.7]
			\draw[dotted] (0,0) circle (1cm);
			\draw[blue] (0,0) circle (0.6cm);
			\draw[blue] (0,-0.6) --(0,-1);
			\draw[blue] (0,0.6) --(0,1);
    		\end{tikzpicture}
 			\end{array}
 			= 
 			0
		\end{array}
		\]
	the two-colour relations for $m_{st}=2$, and the $\tA_1 \times \tA_1 \times \tA_1$ Zamolodchikov relation
		\[
		\settowidth\mylen{20000000000000000000000000}
		\begin{array}{CCC}
			\textit{Two-colour associativity}
		&
			\textit{Jones-Wenzl relation}
		&
			\textit{Zamolodchikov}
		\\
			\begin{array}{c}
    		\begin{tikzpicture}[scale=0.7]
      		\draw[dotted] (0,0) circle (1cm);
			\draw[red] (-45:1) -- (135:1);
			\draw[blue] (-135:1) -- (45:0.3);
			\draw[blue] (45:0.3) -- (20:1);
			\draw[blue] (45:0.3) -- (80:1); 
    		\end{tikzpicture}
  			\end{array}
			=
  			\begin{array}{c}
    		\begin{tikzpicture}[scale=0.7]
      		\draw[dotted] (0,0) circle (1cm);
			\draw[red] (-45:1) -- (135:1);
			\draw[blue] (-135:1) -- (-135:0.4);
			\draw[blue] (-135:0.4) -- (20:1);
			\draw[blue] (-135:0.4) -- (80:1); 
    		\end{tikzpicture}
  			\end{array}
		&
			\begin{array}{c}
    		\begin{tikzpicture}[scale=0.7]
      		\draw[dotted] (0,0) circle (1cm);
			\draw[red] (-45:1) -- (135:1);
			\draw[blue] (45:1) -- (-135:0.6);
			\node[circle, fill, draw, inner sep=0mm, minimum size=1mm, color=blue] at (-135:0.6) {}; 
    		\end{tikzpicture}
  			\end{array}
			=
			\begin{array}{c}
    		\begin{tikzpicture}[scale=0.7]
      		\draw[dotted] (0,0) circle (1cm);
			\draw[red] (-45:1) -- (135:1);
			\draw[blue] (45:1) -- (-135:-0.4);
			\node[circle, fill, draw, inner sep=0mm, minimum size=1mm, color=blue] at (-135:-0.4) {}; 
    		\end{tikzpicture}
  			\end{array}
  		&
  			\begin{array}{c}
    		\begin{tikzpicture}[scale=0.7]
      		\draw[dotted] (0,0) circle (1cm);
			\draw[red] (-45:1) -- (135:1);
			\draw[blue] (45:1) -- (-135:1); 
			\draw[color=green] (1,0) arc [start angle=0, end angle = 180, x radius = 1cm , y radius = 0.3cm]; 
    		\end{tikzpicture}
  			\end{array}
			=
			\begin{array}{c}
    		\begin{tikzpicture}[scale=0.7]
      		\draw[dotted] (0,0) circle (1cm);
			\draw[red] (-45:1) -- (135:1);
			\draw[blue] (45:1) -- (-135:1); 
			\draw[color=green] (1,0) arc [start angle=0, end angle = -180, x radius = 1cm , y radius = 0.3cm];  
    		\end{tikzpicture}
  			\end{array}.
		\end{array}
		\]
	Each relation is homogeneous with respect to the degree of the Soergel graph, so $\Hom_{\sD_{\BS}(\h)}(B_{\ux}, B_{\uy})$ is a $\Z$-graded $\Bbbk$-module. 
	Following \cite{ARV20}, there is a category associated to $\sD_{\BS}(\h)$ which has: objects $B_{\ux}(n)$, where $\ux$ is an expression and $n \in \Z$; morphisms from $B_{\ux}(n)$ to $B_{\uy}(m)$ are degree $m-n$ morphisms in $\Hom_{\sD_{\BS}(\h)}(B_{\ux}, B_{\uy})$; and, a shift functor $(1)$. 
	We will interchangeably identify these categories. 
	\par 
	The \ldef{(bi-equivariant) Elias-Williamson diagrammatic category} $\DBE(\h)$ is the graded, additive, Karoubian completion of $\sD_{\BS}(\h)$. 
	If we wish to emphasise the underlying ring $\Bbbk$, we write $\sD_{\text{BE}}(\h,\Bbbk)$.
	It inherits a monoidal structure from $\sD_{\BS}(\h)$, which is again denoted by juxtaposition. 
	When $\Bbbk$ is a complete local ring $\DBE(\h)$ is Krull-Schmidt, and the indecomposable objects are parameterised by $B_{x}(n)$ where $(x, n) \in W \times \Z$. 
	Moreover, each $B_x$ is uniquely characterised by the properties:
		
	\begin{tabular}{rl}
		(a) & if $\ux$ is a reduced expression of $x$, $B_x$ is a multiplicity-one direct summand of $B_{\ux}$;
		\\
		(b) & if $\ell(\uy) < \ell(\ux)$, $B_x$ is not a summand of $B_{\uy}$;
	\end{tabular}
		
	\noindent
	We define the graded Hom-space $\Hom^{\bullet}(B, B')$ as $\bigoplus_{n \in \Z} \Hom(B, B'(n))$.
	It is a free, finitely generated $R$-module. 
	Moreover, since we have taken $\h$ to be the Cartan realisation of $G$, we have an equivalence of monoidal categories
	\begin{align*}
		\mathcal{D} :  \Parity(B \backslash G / B) ~~\longrightarrow~~ \DBE(\h)
	\end{align*}
	satisfying $\mathcal{D}(\cE_x) \cong B_x$ for all $x \in W$, see \cite[\S10.3]{RW18}.

\subsection{The diagrammatic antispherical category}

	We now recall the diagrammatic categorification of the antispherical module introduced in \cite[\S 4]{RW18} and generalised in \cite{LW22}. 
	We proceed in two steps.  
	\par 
	First, consider the \ldef{(right-equivariant) Elias-Williamson diagrammatic category} $\DRE(\h)$, which is defined as the category with the same objects as $\DBE(\h)$, and (graded) Hom-spaces are
	\begin{align*}
		\Hom^{\bullet}_{\DRE(\h)} (B,B') := \Bbbk \otimes_{R} \Hom^{\bullet}_{\DBE(\h)}(B,B').
	\end{align*} 
	This category is additive, $\Bbbk$-linear and has shift functor $(1)$ induced by the shift functor on $\DBE(\h)$. 
	It is not monoidal, but is a module category over $\DBE(\h)$ with the obvious monoidal action. 
	If $\Bbbk$ is a complete local ring $\DRE(\h)$ is Krull-Schmit, with indecomposable objects $B_x(n)$ parameterised by $(x,n) \in W \times \Z$. 
	The forgetful functor $\For : \DBE(\h) \rightarrow \DRE(\h)$ is essentially surjective and satisfies $\For(B_x) \cong B_x$, hence our abuse of notation. 
	Since $\h$ is the Cartan realisation of $G$, we have an equivalence
	\begin{align*}
		\mathcal{D} : \Parity(U \backslash G / B ) ~~\longrightarrow~~ \DRE(\h)
	\end{align*}
	satisfying $\mathcal{D}(\cE_x) \cong B_x$ for each $x \in W$; this follows from combining the arguments in \cite[\S4.3]{AR16a}, \cite[\S1.7]{Ric19} and \cite[\S6.7]{EW16}.
	\par 
	The construction of the diagrammatic antispherical Hecke category is completely analogous to the quotient construction in Section \ref{Ssec: Geo Quotient Construction}. 
	We define the \ldef{diagrammatic antispherical category} $\DRA(\h)$ as the category 
	\begin{align*}
		\DRA(\h,I) := \DRE(\h) ~/~ \langle B_x  ~|~ x \notin {}^I W \rangle_{\oplus, (1)}
	\end{align*}
	where $\langle B_x  ~|~ x \notin {}^I W \rangle_{\oplus, (1)}$ denotes the full subcategory generated by the $B_x$ under direct sums and shifts.  
	The category $\DRA(\h,I)$ is graded, additive and a module category over $\DBE(\h)$. 
	When $\Bbbk$ is a complete local ring, $\DRA(\h,I)$ is Krull-Schmit with indecomposable objects $B_{x}(n)$ parameterised by $(x, n) \in {}^I W \times \Z$. 
	Moreover, the quotient functor $q : \DRE(\h) \rightarrow \DRA(\h,I)$  is essentially surjective and satisfies $q(B_{x}) \cong B_x$, hence the abuse of notation. 
	Since $\h$ is the Cartan realisation of $G$, the functors $\mathcal{D}$ and $Av^{-1}$ induce an equivalence 
	\begin{align*}
		\Parity_{Wh, I}( G/ B) ~~ \longrightarrow ~~ \DRA(\h,I)
	\end{align*}
	satisfying $\mathcal{D} \circ Av^{-1} (\cE_x) \cong B_x$ for all $x \in {}^I W$.
	\par
	We conclude by recalling that antispherical light-leaves were introduced in \cite[\S5]{LW22}. 
	These are morphisms in $\DRA(\h,I)$ which are inductively constructed from the data of an $I$-parabolic Bruhat stroll on a subexpression $\ux^e \subseteq \ux$.  
	We assume the reader is familiar with this construction.

\section{Diagrammatic antispherical (co)minuscule Hecke categories}
\label{Sec: Dia Cominuscule}

	We now determine the $p$-Kazhdan-Lusztig bases of antispherical (co)minuscule Hecke categories using diagrammatic methods. 
	These results hold in all characteristics, and facilitate greater insight into the structure of indecomposable objects in $\Parity_{Wh,I}(G/B)$.  
	\par  
	The vast majority of this Section is devoted the antispherical Hecke category of the Lagrangian Grassmannian.  
	In Section \ref{Ssec: odd quadrics} we consider odd dimensional quadric hypersurfaces.
	The Langlands dual minuscule cases are treated simultaneously. 
	\par 
	When we consider the diagrammatic antispherical Hecke category of the Lagrangian Grassmannian (resp. odd dimensional quadric hypersurfaces), $\h$ will be a Cartan realisation of type $\tB_n$ (resp. $\tC_n$) because Koszul duality requires we consider the Langlands dual realisation.  
	
\subsection{Minimal coset representatives}
\label{Ssec: min coset reps LG}
	
	In Sections \ref{Ssec: min coset reps LG}-\ref{Ssec: Endomorphism algebras}, the Coxeter system $(W,S)$ will always be type $\tB_n$ and $I \subset S$ will be type $\tA_{n-1}$.
	\par 
	We begin by establishing notation for ${}^I W$.
	Standard references are \cite{BM09} or \cite{BB08}. 
	Our simple reflections are labelled as follows: 
	\[
	\begin{tikzpicture}
		\node[circle,fill,draw,inner sep=0mm,minimum size=2mm,color=black, thick] at (-1.5,0) {}; 
		\node[circle,fill,draw,inner sep=0mm,minimum size=2mm,color=black, thick] at (-1,0) {};
		\node[circle,fill,draw,inner sep=0mm,minimum size=2mm,color=black, thick] at (-0.5,0) {};
		\node[circle,fill,draw,inner sep=0mm,minimum size=2mm,color=black, thick] at (0.5,0) {}; 
		\node[circle,fill,draw,inner sep=0mm,minimum size=2mm,color=black, thick] at (1,0) {};
		\node[circle,fill,draw,inner sep=0mm,minimum size=2mm,color=black, thick] at (1.5,0) {};  
		\draw (-1.5,0.05) -- (-1,0.05);
		\draw (-1.5,-0.05) -- (-1,-0.05);
		\draw (-0.5,0) -- (-1,0);
		\draw (0.5,0) -- (1,0);
		\draw (1,0) -- (1.5,0);
		\filldraw[black] (-1.5,-0.5) node[font = \small]{${}_0$}; 
		\filldraw[black] (-1,-0.5) node[font = \small]{${}_1$};
		\filldraw[black] (-0.5,-0.5) node[font = \small]{${}_2$}; 
		\filldraw[black] (1.5,-0.5) node[font = \small]{${}_{n-1}$};	
		\filldraw[black] (0,0) node[font = \small]{\dots};		 
		\end{tikzpicture}
		\]
		This ensures compatibility with the obvious inclusion $W_{\tB_n} \hookrightarrow W_{\tB_{n+1}}$.
		Recall $W$ can be realised as the signed permutations of $[\pm n] := \lbrace \pm 1, \dots , \pm n \rbrace$, and $W_I$ as the sign-symmetric permutations. 
		In terms of transpositions, this is
		\begin{align*}
			s_0 = (-1, 1) 
		&&
			\text{ and }
		&&
			s_i = (i, i+1)(-i-1, -i) 
			\text{ when } 0<i<n.
		\end{align*}
		Each minimal coset representative ${}^I W$ is fully commutative \cite[\S 6]{Ste96}. 
		Many combinatorial objects index ${}^I W$, we briefly recall how they can be identified. 
		\par 
		\textit{Subsets of $\{1, \dots ,n \}$}:
		Let $[n] := \{1, 2, \dots , n \}$ and $\cP([n])$ be the power-set of $[n]$.
		The group $W$ acts on $\cP([n])$ via the action
		\begin{align}
		\label{eqn: action on sets}
			x s_i 
		=
		 	\begin{cases}
		 		(x \backslash \lbrace i\rbrace ) \cup \lbrace i+1 \rbrace
		 	&
		 		\text{if }
		 		i \in x
		 		\text{ and }
			 		i+1 \notin x,
	 		\\
	 			x
	 		&
	 			\text{if }
	 			i, i+1 \in x 
	 			\text{ or }
	 			i, i+1 \notin x,
		 		\text{ and}
		 	\\
		 		(x \backslash \lbrace i+1 \rbrace) \cup \lbrace i \rbrace
		 	&
		 		\text{if }
		 		i+1 \in x
			 		\text{ and }
	 			i \notin x. 
		 	\end{cases}
		\end{align}
		where we identify $\{ 0 \}$ with $\emptyset$. 
		Clearly $W_I$ stabilises $\emptyset$, and it is easily checked if $j \in x \subseteq [n]$ is minimal and $i<j$, then $x(s_0 s_1 \dots s_{i-1}) = x \cup \{ i \}$.  
		Hence $W$ acts transitively, and we can identify  ${}^I W$ with $\cP([n])$. 
		Under this identification $\id \in {}^I W$ corresponds to $\emptyset$ and $w_I w_0 \in {}^I W$ corresponds to $[n]$.   
		\par 
		\textit{Strictly decreasing sequences}:
		Given $\{ t_1, t_2 , \dots , t_k \} \subseteq [n]$ we impose an order on the elements by requiring $t_1 > t_2 > \dots > t_k$. Then the sequence $(t_1 , t_s , \dots , t_k )$ is strictly decreasing. 
		\par 
		\textit{Partitions}: 
		Given a strictly decreasing sequence $(t_1, t_2 , \dots , t_k)$, we can associate the (left justified) partition with rows of lengths $t_1, t_2 , \dots , t_k$ from top-to-bottom. 
		\par 
		Partitions are endowed with a partial order called the \ldef{containment order}:
		a partition $x = (t_1 , \dots , t_k)$ contains $y = (t_1' , \dots , t_l')$ if $k\geq l$ and $t_i \geq t_i'$ for each $1 \leq i \leq l$.
		A partition $\lambda = (t_1 , \dots , t_k)$ is called \ldef{strict} if $t_1 > \dots > t_k$.   
		Figure \ref{fig:Bruhat graph} depicts the containment order on strict partitions with longest part $5$. 
		The following fact is shown in \cite[Lemma 44]{BM09}. 
		\begin{lem}
			The containment order on strict partitions with longest part $n$ is isomorphic, as a poset, to the Bruhat order on ${}^I W$. 
		\end{lem}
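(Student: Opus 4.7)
The plan is to construct canonical reduced expressions for elements of ${}^I W$, verify rank matching, then derive the order isomorphism by matching cover relations in one direction and using full commutativity for the other. To that end, for a strict partition $\lambda = (t_1 > t_2 > \cdots > t_k)$, I would define the canonical expression
\begin{align*}
\underline{x}_\lambda := (s_0 s_1 \cdots s_{t_1-1})(s_0 s_1 \cdots s_{t_2-1}) \cdots (s_0 s_1 \cdots s_{t_k-1})
\end{align*}
by reading the rows of $\lambda$ from longest to shortest. Using \eqref{eqn: action on sets} and induction on $i$, one checks that the $i$-th block sends $\{t_1, \ldots, t_{i-1}\}$ to $\{t_1, \ldots, t_i\}$ and that each of its letters strictly increases the length. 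Hence $\underline{x}_\lambda$ is reduced of length $|\lambda|$, and the bijection $\lambda \leftrightarrow x_\lambda$ is rank-preserving.

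For the implication $\mu \subseteq \lambda \Rightarrow x_\mu \leq x_\lambda$, it suffices to treat a single containment cover $\mu \lessdot \lambda$. Adding one box corresponds, at the subset level, to either (i) appending $1$ (when $1 \notin \mu$) or (ii) replacing some $t \in \mu$ by $t+1$ (when $t+1 \notin \mu$); in both cases \eqref{eqn: action on sets} gives $x_\lambda = x_\mu \cdot s$ for a single simple reflection $s \in \{s_0, s_t\}$, and by the rank statement above the length increases by exactly one. Concatenating along a saturated chain yields $x_\mu \leq x_\lambda$.

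For the converse $x_\mu \leq x_\lambda \Rightarrow \mu \subseteq \lambda$, I would combine the subword property of Bruhat order with the full commutativity of ${}^I W$ \cite{Ste96}: every reduced expression of $x_\lambda$ differs from $\underline{x}_\lambda$ only by commutation moves, so reduced subexpressions of $\underline{x}_\lambda$ (modulo commutations) enumerate all $y \leq x_\lambda$. The expression $\underline{x}_\lambda$ defines a heap whose order ideals are in bijection with strict sub-partitions of $\lambda$, and a direct subset computation via \eqref{eqn: action on sets} identifies each ideal's evaluated element with $x_\mu$ for the corresponding $\mu \subseteq \lambda$. Together with the forward direction this yields the claimed poset isomorphism.

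The main obstacle is the heap identification in the final step: confirming that the heap attached to $\underline{x}_\lambda$ has exactly the strict sub-partitions of $\lambda$ as its order ideals, and that each ideal computes the expected $x_\mu$, requires careful combinatorial bookkeeping—although it is ultimately straightforward given the explicit form of $\underline{x}_\lambda$.
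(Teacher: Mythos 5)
The paper does not prove this lemma at all; it simply cites \cite[Lemma 44]{BM09}. So you are attempting to supply an actual proof, which is a genuinely different (and more self-contained) route than the paper takes. Your first two steps are fine in outline: the canonical row-by-row expression is indeed reduced (though ``each of its letters strictly increases the length'' should be substantiated, e.g.\ by counting inversions in the signed-permutation model, to avoid implicitly assuming the rank statement you are trying to prove), and the cover-relation argument for $\mu \subseteq \lambda \Rightarrow x_\mu \leq x_\lambda$ is sound.

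The gap is in the converse direction, and it is more serious than the ``combinatorial bookkeeping'' you flag. You write that reduced subexpressions of $\underline{x}_\lambda$ modulo commutations enumerate all $y \leq x_\lambda$ and then pass to order ideals of the heap. But order ideals of $H(x_\lambda)$ are in bijection with only a \emph{proper} subset of $[\id, x_\lambda]$. Already for $\underline{w} = (s_0, s_1)$, the subword $(s_1)$ at position $\{2\}$ is a reduced subexpression evaluating to $s_1 \leq w$, yet $\{2\}$ is not an order ideal of the two-element chain $H(w)$; so ``reduced subexpression'' and ``order ideal'' are not the same thing even modulo commutations. In that example $s_1 \notin {}^I W$, which suggests the intended statement is that order ideals are in bijection with $[\id, x_\lambda] \cap {}^I W$ — but this is exactly what you need to prove, and your argument does not supply it. Concretely, you need to show that if $y \in {}^I W$ and $y \leq x_\lambda$, then \emph{some} reduced subexpression for $y$ inside $\underline{x}_\lambda$ is an order ideal of the heap; for instance, one could argue that the leftmost reduced subexpression for $y$ has this property, using that every nonempty $y \in {}^I W$ has $s_0$ as the unique letter starting any reduced expression and that the unique minimum of $H(x_\lambda)$ is the first position, which is labelled $s_0$. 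Without such an argument the heap step only re-proves the forward containment and does not yield the converse.
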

		  
		\begin{figure}[p]
			\begin{center}
			\begin{tikzpicture}[scale=.6]
 				\node (54321) at (18,15) {$\{5,4,3,2,1\}$};
 				
 				\node (5432) at (18,13) {$\{5,4,3,2\}$};
 				
 				\node (5431) at (18,11) {$\{5,4,3,1\}$};
 				
 				\node (5421) at (15,9) {$\{5,4,2,1\}$};
 				\node (543) at (21,9) {$\{5,4,3\}$};
 				
 				\node (5321) at (15,7) {$\{5,3,2,1\}$};
 				\node (542) at (21,7) {$\{5,4,2\}$};
 				
 				\node (4321) at (12,5) {$\{4,3,2,1\}$};
 				\node (532) at (18,5) {$\{5,3,2\}$};
 				\node (541) at (24,5) {$\{5,4,1\}$};
 				
 				\node (432) at (12,3) {$\{4,3,2\}$};
 				\node (531) at (18,3) {$\{5,3,1\}$};
 				\node (54) at (24,3) {$\{5,4\}$};
 				
 				\node (431) at (12,1) {$\{4,3,1\}$};
 				\node (521) at (18,1) {$\{5,2,1\}$};
 				\node (53) at (24,1) {$\{5,3\}$};

 				\node (421) at (12,-1) {$\{4,2,1\}$};
 				\node (43) at (18,-1) {$\{4,3\}$};
 				\node (52) at (24,-1) {$\{5,2\}$};
 				
 				\node (321) at (12,-3) {$\{3,2,1\}$};
 				\node (42) at (18,-3) {$\{4,2\}$};
 				\node (51) at (24,-3) {$\{5,1\}$};
 				
 				\node (32) at (12,-5) {$\{3,2\}$};
 				\node (41) at (18,-5) {$\{4,1\}$};
 				\node (5) at (24,-5) {$\{5\}$};
 				
 				\node (31) at (15,-7) {$\{3,1\}$};
 				\node (4) at (21,-7) {$\{4\}$};
 				
 				\node (21) at (15,-9) {$\{2,1\}$};
 				\node (3) at (21,-9) {$\{3\}$};
 				
 				\node (2) at (18,-11) {$\{2\}$};
 				
 				\node (1) at (18,-13) {$\{1\}$};
 				
 				\node (0) at (18,-15) {$\emptyset$};
				\draw (0) -- (1);
				
				\draw (1) -- (2);
				
				\draw (2) -- (21);
				\draw (2) -- (3);
			
				\draw (21) -- (31);
				\draw (3) -- (31);
				\draw (3) -- (4);
				
				\draw (31) -- (32);
				\draw (31) -- (41);
				\draw (4) -- (41);
				\draw (4) -- (5);
				
				\draw (32) -- (321);
				\draw (32) -- (42);
				\draw (41) -- (42);
				\draw (41) -- (51);
				\draw (5) -- (51);
				
				\draw (321) -- (421);
				\draw (42) -- (421);
				\draw (42) -- (43);
				\draw (42) -- (52);
				\draw (51) -- (52);
				
				\draw (421) -- (431);
				\draw (421) -- (521);
				\draw (43) -- (431);
				\draw (43) -- (53);
				\draw (52) -- (521);
				\draw (52) -- (53);
				
				\draw (431) -- (432);
				\draw (431) -- (531);
				\draw (521) -- (531);
				\draw (53) -- (531);
				\draw (53) -- (54);
				
				\draw (432) -- (4321);
				\draw (432) -- (532);
				\draw (531) -- (532);
				\draw (531) -- (541);
				\draw (54) -- (541);
				
				\draw (4321) -- (5321);
				\draw (532) -- (5321);
				\draw (532) -- (542);
				\draw (541) -- (542);
				
				\draw (5321) -- (5421);
				\draw (542) -- (5421);
				\draw (542) -- (543);
				
				\draw (5421) -- (5431);
				\draw (543) -- (5431);

				\draw (5431) -- (5432);
				
				\draw (5432) -- (54321);
			\end{tikzpicture}
			\end{center}
			\caption{The Bruhat poset in types $\tA_{4} \backslash\tB_5$}
			\label{fig:Bruhat graph}
			\end{figure}	
		 
		For $x \in {}^I W$ we consider the associated partition.
		Filling each box with the number of boxes to the left in that row\footnote{
		This is sometimes referred to as the right-arm length of the box.}
		defines a tableau. 
		Reading the entries of the tableau left-to-right, top-to-bottom gives a reduced expression for $x \in {}^I W$, which we call the \ldef{row reduced expression}.
		This is a special case of \cite[Theorem 8]{BM09}. 
		\par 
		Given $x \in {}^I W$ and the tableau constructed above, we construct a \ldef{shifted tableau} by shifting the $i^{\text{th}}$ row $2(i-1)$ boxes to the right. 
		The \ldef{column reduced expression} is obtained by reading the entries of the tableau top-to-bottom, left-to-right. 
		\par 
		One sees the column reduced expression is reduced, as it can be obtained from the row reduced expression by repeatedly applying the commutative braid relation $st=ts$ to each $s_i$ until $s_i$ is either: 
		on the far left (in which case $i=0$);
		or adjacent to either $s_0$, $s_1$ or $s_{i+2}$. 
		
		\begin{exmp}
		The subset $\{6,4,2,1\}$ has tableau and shifted tableau
		\[
		\begin{array}{c}
		\tableau{
			0 	& 	1 	& 	2 	& 	3 	& 	4 	& 	5 	 \\
			0 	& 	1 	& 	2 	& 	3 	& 	 	& 	   \\
			0 	& 	1 	& 	 	& 	 	& 	 	& 	  \\
			0 	& 	 	& 	 	& 	 	& 	 	& 	   
		}
		\end{array}
		~~~~~~~~~~~~~~
		\begin{array}{c}
		\tableau{
			0&1&2&3&4&5 	 \\
			 & &0&1&2&3  \\
			 & & & &0&1  \\
			 & & & & & &	0   
		}
		\end{array}
		\]
		respectively. The row reduced expression is $\ux = s_0 s_1 s_2 s_3 s_4 s_5 ~ s_0 s_1 s_2 s_3 ~ s_0 s_1  ~ s_0$, while the column reduced expression is $\ux = s_0 ~ s_1 ~ s_2 s_0 ~ s_3 s_1 ~ s_4 s_2 s_0 ~ s_5 s_3 s_1 ~ s_0$.
		\end{exmp} 
		
		Our rationale for considering these reduced expressions is as follows:
		the column reduced expression is well-suited for studying subexpressions, whereas the row reduced expression is better-suited for inductive arguments on the length of elements.  	
		\par 
		We freely and implicitly identify elements of $ {}^I W$ with subsets of  $[n]$, and the corresponding partitions and tableaux. 
		Given $x = \{ t_1  > \dots > t_k \}$ we can consider its:
		\begin{itemize}
			\item \textit{Length}: $\ell(x) = \sum _{i} t_i$, which is the usual length function on ${}^I W$, and
			\item \textit{Cardinality}: $|x| = k$, which is the cardinality of the subset corresponding to $x$. 
		\end{itemize}
		Finally, note the row (resp. column) reduced expression $\ux$ arises as the concatenation of the expressions for each row (resp. column) in the tableau (resp. shifted tableau).  

\subsection{Defect-0 subexpressions}
\label{Ssec: Defect 0}
	
		We now identify a set of defect-0 subexpressions of $\ux$ for the row and column reduced expression of $x$. 
		In Section \ref{Ssec: bs basis} it will be shown these are \textit{all} defect-0 subexpressions of $\ux$. 
		\par 
		Fix $x \subseteq [n]$ and define $E(x) \subset x$ as
		\begin{align*}
			E(x)
		:=
			\lbrace
				t \in x 
			~\vert~	
				t 
				\text{ is even and }
				| \lbrace t' \in x ~\vert~ t > t' \geq k \rbrace |
			>
				| \lbrace t' \notin x ~\vert~ t > t' \geq k \rbrace |
				\text{ for all }
				t > k \geq 1
			\rbrace.
		\end{align*}
		Ordering the elements of $x = \{ t_1 > \dots > t_k \}$ shows
		\begin{align}
		\label{eqn: row length condition}
			E(x)
		=
			\lbrace
				t_i \in x 
			~\vert~	
				t_i 
				\text{ is even and }
				t_i - t_{i+j}
				\leq 
				2j-1
				\text{ for all }
				1 \leq  j \leq t_i /2
			\rbrace.
		\end{align}
		\noindent 
		If $t_i \in E(x)$, the non-shifted teableau for $x$ must have the rightmost box of row $i$ marked $t_i -1$, row $i+j$ must contain boxes marked $t_i - 2j-1$ and $t_i - 2j$ when $0<j<t_i/2$, and row $t_{i + t_i/2}$ must contain a box marked $0$. 
		We call the skew-partition formed by such boxes a  \ldef{length-$t_i$ 1-2-1-tableau} $\underline{121}_{t_i}$. 
		Note $\underline{121}_{t_i}$ only depends on $t_i$, not $i$; $i$ determines how $\underline{121}_{t_i}$ is contained inside $x$.   
		For example, $\underline{121}_6$ is 
		\[
		\tableau{
			&&&&&5\\
			&&&3&4\\
			&1&2\\
			0
		}
		\]
		Since the partition $x$ is strict, there is no box to the right of $t_i -1$ in row $i$, or $t_i-2$ in row $i+1$. 
		\par 
		Recall the shifted tableau for $x$ is obtained from the non-shifted tableau by shifting row $i$ by $2(i-1)$ boxes to the right. 
		After shifting, the skew-tableau $\underline{121}_t$ becomes a two-column, skew tableau $\uu_t$, where the left column consists of boxes marked with $t-1$, $t-3$, $\dots$ $1$ read top-to-bottom, while the right column consists of boxes marked with $t-2$, $t-4$, $\dots$, $0$ read top-to-bottom, and the right column is shifted-down by a box.
 		For example $\uu_6$ is 
		\[
		\tableau{
			5\\
			3&4\\
			1&2\\
			&0
		}
		\]
		Reading the entries of $u_t$ top-to-bottom, left-to-right defines an expression 
		\begin{align*}
			\uu_t 
			= 
			s_{t -1} s_{t -3}  \dots s_3 s_1 ~~
			s_{t -2} s_{t -4}  \dots s_2 s_0 
		\end{align*} 
		which is reduced as each simple reflection appears at most once. 
		We write $u_t$ for $\uu_{t \bullet}$. 
		\par 
		Examples are provided for the sanity of the reader. 
		
		\begin{exmp}
			Let $x =\{6,5,4,3,2,1\}$, then $E(x) = \{6,4,2 \}$. 
			The tableaux corresponding to $x$ are 
				\[
				\begin{array}{c}
					\tableau{
					0&1&2&3&4&\green{5}\\
					0&1&2&\green{3} &\green{4}&\\
					0&\green{1}   &\green{2}&\red{3}	\\
					\green{0}&\red{1}	&\red{2}&\\
					\red{0}	&\blue{1}\\
					\blue{0}
				}
				\end{array}
				\longleftrightarrow
				\begin{array}{c}
					\tableau{
					0&1&2&3&4&\green{5}\\
					&&0&1&2&\green{3} &\green{4}&\\
					&& & &0&\green{1}   &\green{2}&\red{3}	\\
					&& & & & 			&\green{0}&\red{1}	&\red{2}&\\
					&& & & & 			& 		  & 			&\red{0}	&\blue{1}\\
					&& & & & 			& 		  & 			& 		& 		&\blue{0}
				}
				\end{array}
				\]
			where we have marked $u_6$ in \green{green}, $u_4$ in \red{red}, and $u_2$ in \blue{blue}.
			Their reduced expressions are $\uu_6 = s_5 s_3 s_1 ~ s_4 s_2 s_0$, $\uu_4 = s_3 s_1 ~s_2 s_0$, and $\uu_2 = s_1~ s_0$ respectively. 
		\end{exmp}
		
		The next example demonstrates  boxes can appear above the left column of $\uu_i$. 
		\begin{exmp}
			Let $x =\{6,4,2,1\}$, then $E(x) = \{2 \}$.
			The tableaux corresponding to $x$ are 
			\[
			\begin{array}{c}
			\tableau{
				0&1&2&3&4&5 	 		\\
				0&1&2&3  		\\
				0&\red{1}	\\
				\red{0}   
			}
			\end{array}
			\longleftrightarrow
			\begin{array}{c}
			\tableau{
				0&1&2&3&4&5 	 		\\
				 & &0&1&2&3  		\\
				 & & & &0&\red{1}	\\
				 & & & & & &\red{0}   
			}
			\end{array}
			\]
			with $u_2$ marked in \red{red}.
			The reduced expression is $\uu_2 = s_1 ~s_0$.
		\end{exmp}
		
		The final example demonstrates that $x$ can have boxes to the right of some $u_i$ which are not part of any $u_j$.  
		
		\begin{exmp}
		\label{exmp: tableau 121 removal}
			Let $x =\{10,9,8,7,3,1\}$, then $E(x) = \{10\}$. The tableaux corresponding to $x$ are 
			\[
			\begin{array}{c}
			\tableau{
				 0&1&2&3&4&5&6&7&8&\red{9}\\
				 0&1&2&3&4&5&6&\red{7}&\red{8} \\
				 0&1&2&3&4&\red{5}&\red{6}&7\\
				 0&1&2&\red{3}&\red{4}&5&6	\\
				 0&\red{1}&\red{2}\\
				 \red{0}
			}
			\end{array}
			\longleftrightarrow
			\begin{array}{c}
			\tableau{
				 0&1&2&3&4&5&6&7&8&\red{9}\\
				  & &0&1&2&3&4&5&6&\red{7}&\red{8} \\
				  & & & &0&1&2&3&4&\red{5}&\red{6}&7\\
				  & & & & & &0&1&2&\red{3}&\red{4}&5&6	\\
				  & & & & & & & &0&\red{1}&\red{2}\\
				  & & & & & & & & & 		  &\red{0}
			}
			\end{array}
			\]
			with $u_{10}$ marked in {\color{red}red}. 
			The reduced expression is $\uu_{10} = s_9 s_7 s_5 s_3 s_1 ~s_8 s_6 s_4 s_2 s_0$. 
		\end{exmp}
		
		Now fix $x = \{ t_1 > \dots > t_l \}$ and $E = \{ t_{i_1} > \dots > t_{i_k}  \}  \subseteq E(x)$. 
		The subset $E$ determines a sequence of disjoint skew-partitions $u_{t_i}$ in the shifted partition of $x$, each with a reduced expression $\uu_{t_i}$.
		This allows us to write the column reduced expression $\ux$ of $x$ as the concatenation of a sequence of expressions
		\begin{align*}
			\ux
			=
			\ur_{1} \uu_{t_{i_1}} \ur_{2} \uu_{t_{i_2}} 
			\dots 
			\ur_{k} \uu_{t_{i_k}} \ur_{k+1}
		\end{align*}
		where $\ur_1$ is the expression formed by taking all terms of $\ux$ prior to the commencement of $\uu_{t_{i_1}}$, $\ur_2$ is the expression formed by taking all terms of $\ux$ proceeding $\uu_{t_{i_1}}$ and preceding $\uu_{t_{i_2}}$, and so on. 
		Note that each $r_i$ depends on both $x$ and $E$.
		
		\begin{lem}
		\label{lem: subexpression 121 removal}
			Fix $x \in {}^I W$ and $E= \{ t_i \} \subseteq E(x)$. 
			Then $\ur_1 \ur_2$ is a reduced expression for $x \backslash \{ t_i \}$. 
		\end{lem}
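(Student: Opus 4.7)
The length identity $\ell(\ur_1 \ur_2) = \ell(\ur_1) + \ell(\ur_2) = \ell(\ux) - \ell(\uu_{t_i}) = \ell(x) - t_i = \ell(x \backslash \{t_i\})$ is immediate from reducedness of $\ux$, so it suffices to prove $(\ur_1 \ur_2)_\bullet = x \backslash \{t_i\}$; reducedness then follows automatically. I plan to work with the identification of ${}^I W$ with $\cP([n])$ from Section \ref{Ssec: min coset reps LG}, tracking subsets via the action \eqref{eqn: action on sets}. Setting $Y_1 := \emptyset \cdot (\ur_1)_\bullet$, the argument reduces to two claims:
\begin{itemize}
\item[(A)] $Y_1 \cdot u_{t_i} = Y_1 \cup \{t_i\}$ as subsets of $[n]$;
\item[(B)] no simple reflection $s_k$ appearing in $\ur_2$ has $k \in \{t_i - 1, t_i\}$.
\end{itemize}

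Given (B), each $s_k$ in $\ur_2$ acts on any subset without affecting the membership of $t_i$, since such an $s_k$ only toggles between the labels $k$ and $k+1$. Iterating and combining with (A) then yields
\[
\emptyset \cdot x = (Y_1 \cup \{t_i\}) \cdot (\ur_2)_\bullet = (Y_1 \cdot (\ur_2)_\bullet) \cup \{t_i\} = (\emptyset \cdot (\ur_1 \ur_2)_\bullet) \cup \{t_i\},
\]
so $\emptyset \cdot (\ur_1 \ur_2)_\bullet = x \backslash \{t_i\}$, the element $t_i$ being forced out of the left-hand side by the length count. Claim (B) itself is a direct tableau-theoretic check. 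A box labelled $t_i$ can only appear in a row $j < i$ (forced by $t_j \geq t_i + 1$), at shifted column $t_i + 2(j-1) \leq L - 1$, and so is read in $\ur_1$. A box labelled $t_i - 1$ sits in some row $j \leq i$; for $j < i$ the shifted column is $\leq L - 2$ and so lies in $\ur_1$, while for $j = i$ it is at column $L$, which is precisely the opening entry of $\uu_{t_i}$.

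The substantive step is Claim (A). I plan to describe $Y_1$ explicitly through the ``climb'' interpretation of the action: $s_0$ creates a new copy of the smallest element, while $s_k$ ($k > 0$) increments $k$ to $k + 1$ whenever $k \in Y_1$ and $k+1 \notin Y_1$. Under this reading, $Y_1$ records the current position of the partial climb of each row $j \in \{1, 2, \dots, i + t_i/2 - 1\}$ whose initial $s_0$ has already been encountered in $\ur_1$. Using the defining condition \eqref{eqn: row length condition} for $t_i \in E(x)$, one then verifies inductively that the simple reflections of $\uu_{t_i} = s_{t_i - 1} s_{t_i - 3} \cdots s_1 \cdot s_{t_i - 2} s_{t_i - 4} \cdots s_0$ each encounter the expected entry in the current subset: the left-column factors successively complete the climbs of rows $i, i+1, \dots, i + t_i/2 - 1$ to their respective targets $t_i, t_{i+1}, \dots, t_{i + t_i/2 - 1}$, while the right-column factors propagate those completions and ultimately seed row $i + t_i/2$. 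The cumulative change to $Y_1$ is precisely the insertion of $t_i$, establishing (A). The main obstacle is the bookkeeping of the intermediate subset states during the application of $u_{t_i}$; this is elementary but intricate, and is governed precisely by \eqref{eqn: row length condition}.
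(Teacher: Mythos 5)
Your reduction to claims (A) and (B) is a genuinely different strategy from the paper's: instead of writing $\ur_1$, $\ur_2$ explicitly as products of the blocks $\uw_a = s_0s_1\cdots s_{a-1}$ and $\uw_{a,b} = s_a\cdots s_{b-1}$ and collapsing via $\uw_a\uw_{a,b}=\uw_b$, you work directly with the action of $W$ on $\cP([n])$ from \eqref{eqn: action on sets}, isolating the element $t_i$ via claim (B) and so avoiding any explicit description of $\ur_2$. The commutation observation (B) is nice and you justify it adequately. That said, claim (A) is where the real content lies, and it is left as a sketch rather than a proof.

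The concrete gap is that you never establish what $Y_1$ actually is, and the ``climb'' description you give of it is inaccurate. In fact one can show (this is what the paper's proof does, by identifying $\ur_1$ with the row-reduced expression $\uw_{t_1}\cdots\uw_{t_{i-1}}\uw_{t_i-1}\uw_{t_i-3}\cdots\uw_1$) that $Y_1 = \{t_1,\dots,t_{i-1},\, t_i-1, t_i-3, \dots, 3, 1\}$, after which (A) is a short mechanical check: the left-column factors $s_{t_i-1},s_{t_i-3},\dots,s_1$ increment the last $t_i/2$ entries one step to $t_i, t_i-2,\dots,2$, and the right-column factors $s_{t_i-2},\dots,s_2,s_0$ increment all but the first of those once more and insert a fresh $1$, giving $Y_1\cup\{t_i\}$. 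Your claim that ``the left-column factors successively complete the climbs of rows $i,\dots,i+t_i/2-1$ to their respective targets $t_i,t_{i+1},\dots,t_{i+t_i/2-1}$'' is simply wrong — after the left column the set is $\{t_1,\dots,t_{i-1},\,t_i,t_i-2,\dots,2\}$, and the entries $t_i-2,t_i-4,\dots$ are generally not equal to $t_{i+1},t_{i+2},\dots$. Relatedly, you attribute the verification of (A) to condition \eqref{eqn: row length condition}, but that condition is really what guarantees $\underline{121}_{t_i}$ is a valid skew-subtableau of $x$ and hence that the decomposition $\ux = \ur_1\uu_{t_i}\ur_2$ exists at all (and, implicitly, that boxes from rows $j<i$ interleaved with $\uu_{t_i}$ in the column reading commute past it); once the shape of $Y_1$ is pinned down, the computation in (A) does not invoke \eqref{eqn: row length condition} again. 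To complete your proof you should derive the explicit form of $Y_1$ — at which point you are effectively reproving the block factorization of $\ur_1$ that drives the paper's argument.
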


		The following example is more enlightening than the proof. 
		
		\begin{exmp}
			Let $x =\{10,9,8,7,3,1\}$, as in Example \ref{exmp: tableau 121 removal}, and $E= \{10\}$. 
			The expression $\ur_1 \ur_2$ corresponds to deleting the skew-partition $u_{10}$ from $x$, then shifting the tableau $r_2$ up until it joins $r_1$.  
			\[
			\begin{array}{c}
			\tableau{
				 0&1&2&3&4&5&6&7&8&\red{9}\\
				 0&1&2&3&4&5&6&\red{7}&\red{8} \\
				 0&1&2&3&4&\red{5}&\red{6}&7\\
				 0&1&2&\red{3}&\red{4}&5&6	\\
				 0&\red{1}&\red{2}\\
				 \red{0}
			}
			\end{array}
			\longrightarrow
			\begin{array}{c}
			\tableau{
				 0&1&2&3&4&5&6&7&8&\\
				 0&1&2&3&4&5&6& & \\
				 0&1&2&3&4& & &7\\
				 0&1&2& & &5&6	\\
				 0\\
			}
			\end{array}
			\longrightarrow
			\begin{array}{c}
			\tableau{
				 0&1&2&3&4&5&6&7&8\\
				 0&1&2&3&4&5&6&7 \\
				 0&1&2&3&4&5&6\\
				 0&1&2&	\\
				 0&\\
			}
			\end{array}
			\]
		\end{exmp}
 
 		Before we prove the Lemma, we first introduce some notation. 
 		For non-negative integers $a\leq b$ define
 			\begin{align*}
 				\uw_a 
 			= 
 				s_0 s_1 \dots s_{a-1},
 			&&
 				\uw_{a,b}
 			= 
 				s_a s_{a+1} \dots  s_{b-2} s_{b-1}.
 			\end{align*}
 		Write $w_a$, $w_{a,b}$ for the corresponding elements of $W$. 
 		Note $\uw_a$ is the expression obtained by reading the terms in a length-$a$ row of any tableau.

 		\begin{proof}
 			Fix $x = \{ t_1 > \dots > t_k \}$.
 			Recall $t_i$ is even; set $j = t_i/2$.
  			By definition of the 1-2-1 tableau we have  
 			\begin{align*}
 				\ur_1 
 			&= 
 				\uw_{t_1} \dots \uw_{t_{i-1}} 
 				\uw_{t_i -1} \uw_{t_i -3} \dots \uw_3 \uw_1,
 				\\
 				\ur_2
			&=
				\uw_{t_{i} - 1, t_{i + 1}} \uw_{t_i - 3, t_{i + 2}}\dots 
				\uw_{1, t_{i + j}}
				\uw_{t_{ i + j  +1} } 
				\uw_{t_{i + j  +2} } \dots 
				\uw_{t_k}. 
 			\end{align*} 
 			Note $\uw_a \uw_{a,b} = \uw_b$ and $w_{a,b}$ commutes with $w_{c}$ for any $c < a$.  
 			Consequently 
 			\begin{align*}
 				(\ur_1\ur_2)_{\bullet} 
 			&= 
 				w_{t_1} \dots w_{t_{i-1}}
 				(w_{t_i -1} w_{t_i - 1 , t_{i+1}} )\dots (w_1 w_{1, t_{i + j}})
 				w_{t_{ i + j  +1} } 
				\dots 
				w_{t_k}
			\\
			&=
				w_{t_1} \dots w_{t_{i-1}}
 				w_{t_{i+1}} \dots w_{ t_{i + j}}
 				w_{t_{ i + j  +1} } 
				\dots 
				w_{t_k}
			\\
			&= 
				w_{t_1} \dots w_{t_{i-1}}
 				w_{t_{i+1}} \dots 
				w_{t_k}
 			\end{align*}
 			which is the element $x \backslash \{ t_i\}$. 
 			Note that $\ell(x \backslash \{ t_i\}) = \ell(x) - t_i$, which is the length of the expression $\ur_1 \ur_2$. 
 			Hence $\ur_1 \ur_2$ is reduced. 
 		\end{proof}
 
		Given an expression $\uz$, denote by $\uz^0$ (resp. $\uz^1$) the subexpression where each index is 0 (resp. $1$). 
		For each $E \subseteq E(x)$ we define an associated subexpression $\ux^{E} \subseteq \ux$ as
		\begin{align*}
			\ux^E
			= 
			\ur_{1}^1 \uu_{t_{i_1}}^0 \ur_{2}^1 \uu_{t_{i_2}}^0 
			\dots 
			\ur_{k}^1 \uu_{t_{i_k}}^0 \ur_{k+1}^1.
		\end{align*}
		
		\begin{prop}
		\label{prop: bruhat strolls}
			Fix $x \in {}^I W$ and $E \subseteq E(x)$. 
			The subexpression $\ux^{E} \subseteq \ux$ is $I$-parabolic, with defect $0$ and no $\begin{smallmatrix} \da		\\1		\end{smallmatrix}$'s occurring in the Bruhat stroll. 
			Moreover, $\left(\ux^{E} \right)_{\bullet} = x \backslash E$. 
		\end{prop}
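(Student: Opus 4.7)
The plan is to verify the four assertions in sequence, combining multiplication arguments with a tableau-theoretic analysis of the $\uu_{t_{i_j}}$-blocks.

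First I would establish $(\ux^E)_{\bullet} = x \backslash E$. Because every $\ur_j$-block is decorated $1$ and every $\uu_{t_{i_j}}$-block is decorated $0$, we have $(\ux^E)_\bullet = (\ur_1\ur_2\cdots\ur_{k+1})_\bullet$. Iterating Lemma~\ref{lem: subexpression 121 removal}---peeling off the blocks $\uu_{t_{i_1}}, \uu_{t_{i_2}}, \ldots$ one at a time---both identifies this product as $x \backslash E$ and shows the concatenation $\ur_1\ur_2\cdots\ur_{k+1}$ is a reduced expression for $x \backslash E$. As an immediate consequence, every $1$-decorated position of $\ux^E$ contributes an $\ua/1$ arrow in the Bruhat stroll, so no $\begin{smallmatrix}\da\\1\end{smallmatrix}$ arrows appear.

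Next I would check $I$-parabolicity, verifying $x_{i-1}s_i \in {}^IW$ at each step. Inside an $\ur_j$-block this is automatic since $x_{i-1}$ and $x_{i-1}s_i$ are consecutive partial products of a reduced expression for $x \backslash E \in {}^IW$. Inside a $\uu_{t_{i_j}}$-block, $x_{i-1}$ stays constant at $y_j := (\ur_1\cdots\ur_j)_\bullet \in {}^IW$, and the set-action formula~(\ref{eqn: action on sets}) shows that toggling any single coordinate leaves us inside $\cP([n]) \simeq {}^IW$.

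The subtlest step, and the principal obstacle, is showing that no $\begin{smallmatrix}\ua\\0\end{smallmatrix}$ arrows occur, which together with the preceding paragraph yields defect~$0$. Concretely, one must show each simple reflection $s_k$ appearing in $\uu_{t_{i_j}}$ satisfies $y_js_k < y_j$. I would prove this by a direct tableau-theoretic case analysis: the subset encoding $y_j$ is obtained from $x$ by discarding the cells of $u_{t_{i_1}}, \ldots, u_{t_{i_j}}$ (using iterated Lemma~\ref{lem: subexpression 121 removal}), and the hypothesis $t_{i_j} \in E(x)$---equivalently the row-length inequalities of~(\ref{eqn: row length condition})---pins down the position of $u_{t_{i_j}}$ relative to the boundary of $y_j$'s shifted diagram. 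Splitting $\uu_{t_{i_j}}$ into its left-column reflections $s_{t-1},s_{t-3},\ldots,s_1$ and right-column reflections $s_{t-2},s_{t-4},\ldots,s_0$, one reads off from the shifted tableau of $y_j$ that each such $s_k$ acts by pushing a boundary box inward---a length-decreasing move---yielding the desired $\begin{smallmatrix}\da\\0\end{smallmatrix}$ arrow. The $E(x)$-condition is exactly what precludes a ``missing'' boundary box that would flip the arrow to $\ua$, so every arrow in the $\uu$-segments is $\da$ and the defect count vanishes.
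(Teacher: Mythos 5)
There is a genuine gap in your argument for defect~$0$, and it is the central step.

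You assert that the obstacle is to ``show that no $\begin{smallmatrix}\ua\\0\end{smallmatrix}$ arrows occur,'' i.e., that every reflection $s_k$ in a $\uu_{t_{i_j}}$-block satisfies $y_j s_k < y_j$, giving only $\begin{smallmatrix}\da\\0\end{smallmatrix}$ arrows. This is false, and no tableau argument can rescue it. Take $x = \{2,1\}$ with $E = \{2\}$; the column reduced expression is $\ux = s_0 s_1 s_0$ with $\ur_1 = s_0$, $\uu_2 = s_1 s_0$, $\ur_2 = \underline{\emptyset}$, so $y_1 = \{1\}$. Then $y_1 s_1 = \{2\} > y_1$, so the first reflection of $\uu_2$ (the odd-indexed one) produces an $\begin{smallmatrix}\ua\\0\end{smallmatrix}$ arrow, not $\begin{smallmatrix}\da\\0\end{smallmatrix}$. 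In general the left-column (odd-indexed) reflections $s_{t-1}, s_{t-3}, \dots, s_1$ of $\uu_t$ all give $\ua$ (they enlarge a row of the partition), while the right-column (even-indexed) reflections $s_{t-2}, \dots, s_2, s_0$ all give $\da$. The defect vanishes not because there are no $\begin{smallmatrix}\ua\\0\end{smallmatrix}$'s, but because each $\uu_t$-block contributes exactly $t/2$ of them and exactly $t/2$ of $\begin{smallmatrix}\da\\0\end{smallmatrix}$'s, which cancel in $\text{df} = \#\begin{smallmatrix}\ua\\0\end{smallmatrix} - \#\begin{smallmatrix}\da\\0\end{smallmatrix}$. (You may have been misled by the printed defect formula in Section~2, whose second term appears to contain a typo --- ``$\begin{smallmatrix}\da\\1\end{smallmatrix}$'' should read ``$\begin{smallmatrix}\da\\0\end{smallmatrix}$''; with the formula as printed, ``defect~$0$ and no $\begin{smallmatrix}\da\\1\end{smallmatrix}$'s'' would force no $\begin{smallmatrix}\ua\\0\end{smallmatrix}$'s at all, contradicting the stroll just computed, and the Deodhar defect formula stated for $b_{\ux}$ would also fail already for $b_s b_s$.) The correct remaining step is therefore not a uniformity claim but a parity count: verify that, starting from $r_1 = \{t_1, \dots, t_{i-1}, t_i-1, t_i-3, \dots, 1\}$, one has $r_1 s_j > r_1$ for odd $j \leq t_i-1$ and $r_1 s_j < r_1$ for even $j \leq t_i-2$, and run the induction on $|E|$.

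Your first two points --- that $(\ux^E)_\bullet = x\backslash E$ by iterating Lemma~\ref{lem: subexpression 121 removal} and that no $\begin{smallmatrix}\da\\1\end{smallmatrix}$'s occur because $\ur_1\cdots\ur_{k+1}$ is reduced --- are correct and match the paper's strategy. The $I$-parabolicity sketch is loose (``toggling any single coordinate leaves us inside $\cP([n])$'' needs the observation that $y_j s_k \in {}^I W$ exactly when the subset actually changes, which in turn relies on the explicit structure of $r_1$), but the idea is salvageable. The defect step is not salvageable as stated and needs to be replaced with the cancelling-pairs count above.
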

		\begin{proof}
			Fix $x = \{ t_1 > \dots > t_l \}$. 
			We prove the claim by induction on $|E|$.
			If $|E|= 0$, the claim is trivial. 
			\par 
			Suppose $E= \{ t_i \}$. 
			Then $\ux = \ur_1 \uu_{t_i} \ur_2$ and $\ux^E = \ur_{1}^{1} \uu_{t_i}^{0} \ur_{2}^{1}$. 
			Since $\ux$ is reduced, $\ur_1$ must also be reduced.
			In particular every decoration of $\ur_1^1$ has the form $\begin{smallmatrix} \ua		\\1		\end{smallmatrix}$.
			It is clear $\ur_1$ is a reduced expression for 
			\begin{align*}
				r_1 
				= 
				\{ t_1 > \dots > t_{i-1} > t_i -1 > t_i -3 > \dots > 3  > 1 \}.
			\end{align*} 
			The action given in Equation \ref{eqn: action on sets} implies if $j \leq t_i -1$ then
			\begin{align*}
				r_1 s_j > r_1 &~~~ \text{ if } j \text{ is odd, and}
				\\
				r_1 s_j < r_1 &~~~ \text{ if } j \text{ is even.}
			\end{align*} 
			Hence every term in $\uu_{t_i}^0$ with odd subscript has decoration $\begin{smallmatrix} \ua		\\0		\end{smallmatrix}$, and every term with even subscript has decoration $\begin{smallmatrix} \da		\\0		\end{smallmatrix}$.
			Consequently the subexpression $\ur_1^1 \uu_{t_i}^0$ has defect 0.
			Lemma \ref{lem: subexpression 121 removal} implies $(\ur_1^{1} \uu_{t_i}^{0} \ur_2^{1})_{\bullet} = x\backslash \{t_i  \}$.   
			Since $\ur_1 \ur_2$ is reduced and evaluates to $x \backslash \{t_i \}$, it follows $\ux^E$ has no $\begin{smallmatrix} \da		\\1\end{smallmatrix}$'s. 
			\par 
			Now suppose $E = \{ t_{i_1} > \dots > t_{i_k} \}$ for $k>1$. 
			We can write $\ux$ as the concatenation 
			\begin{align*}
				\ux
				=
				\ur_{1} 
				\uu_{t_{i_1}}
				\dots 
				\uu_{t_{i_{k-1}}}
				\ur_{k}
				\uu_{t_{i_k}}
				\ur_{k+1}
			\end{align*}
			Define $\uy = \ur_1 \uu_{t_1} \dots \ur_k$ and $y = \uy_{\bullet}$. 
			As $\ux$ is a reduced expression for $x$ it follows $\uy$ is a reduced expression for $y$ and $y\in {}^I W$. 
			Moreover, $E \backslash \{ t_{i_k} \} \subset  E(y)$.
			By induction $\uy^{E \backslash\{ t_{i_k}\}}$ is a defect-0 subexpression of $\uy$ that has no $\begin{smallmatrix} \da		\\1		\end{smallmatrix}$'s in its Bruhat stroll and evaluates to $y \backslash (E \backslash \{ t_{i_k} \} )$. 
			Now observe that 
			\begin{align*}
				\ux^E = \uy^{E \backslash \{ t_{i_k} \}} \uu_{t_{i_k}}^0 \ur_{k+1}^1
			\end{align*}
			so we can argue exactly as in the base case to prove the claim. 
		\end{proof}
		
		In Section \ref{Ssec: bs basis} we prove every defect-0 subexpression of $\ux$ has the form $\ux^E$ where $E \subseteq E(x)$. 
		
	\subsection{Entries in the local intersection form}
	\label{Ssec: entries in lif}
		
		We now construct light-leaves associated to the subexpressions $\ux^E$ in the diagrammatic category $\DRA(\h,I)$. 
		Then we determine entries in the local intersection form associated to those light-leaves.
		Recall $\h$ is a Cartan realisation of type $\tB_n$.
		Our labelling conventions imply $\pair{\alphac_0 , \alpha_1} = -2$. 
		\par 
		We call an even-indexed (resp. odd-indexed) simple reflection an \ldef{even reflection (resp. odd reflection)}.
		Given the important role that the parity of a simple reflection plays, we colour our simple reflections as follows:
		
		\[
		\begin{tikzpicture}
			 
			\draw (-1.5,0.05) -- (-1,0.05);
			\draw (-1.5,-0.05) -- (-1,-0.05);
			\draw (-1,0) -- (1.5,0);
					\draw (-1.3,0) -- (-1.1,0.15);
					\draw (-1.3,0) -- (-1.1,-0.15);
			\node[circle,fill,draw,inner sep=0mm,minimum size=2mm,color=red, thick] at (-1.5,0) {}; 
			\node[circle,fill,draw,inner sep=0mm,minimum size=2mm,color=blue, thick] at (-1,0) {};
			\node[circle,fill,draw,inner sep=0mm,minimum size=2mm,color=red!60!yellow!, thick] at (-0.5,0) {};
			\node[circle,fill,draw,inner sep=0mm,minimum size=2mm,color=blue!50!cyan, thick] at (0,0) {}; 
			\node[circle,fill,draw,inner sep=0mm,minimum size=2mm,color=red!20!yellow, thick] at (0.5,0) {}; 
			\node[circle,fill,draw,inner sep=0mm,minimum size=2mm,color=cyan, thick] at (1,0) {};
			\node[circle,fill,draw,inner sep=0mm,minimum size=2mm,color=yellow, thick] at (1.5,0) {}; 
			\filldraw[black] (-1.5,-0.5) node[font = \small]{0}; 
			\filldraw[black] (-1,-0.5) node[font = \small]{1};
			\filldraw[black] (-0.5,-0.5) node[font = \small]{2}; 
			\filldraw[black] (0,-0.5) node[font = \small]{3};
			\filldraw[black] (0.5,-0.5) node[font = \small]{4};
			\filldraw[black] (1,-0.5) node[font = \small]{5};
			\filldraw[black] (1.5,-0.5) node[font = \small]{6};	
			\filldraw[black] (2,0) node[font = \small]{\dots};
		 
		\end{tikzpicture}
		\]

		\par 
		Fix $x$ and $E \subseteq E(x)$.
		By construction $\ur_1$ is formed from alternating decreasing sequences of even and odd reflections. 
		In general $\ur_1$ can end with either a sequence of even or odd reflections. 
		If $\ur_1$ ends with a sequence of odd reflections, they commute with every subsequent reflection in the column reduced expression $\ux$ of $x$. 
		Consequently we only consider the case where every $\ur_1$ ends in a sequence of even reflections. 
		The light-leaf corresponding to $\ur_1^1$ is the identity morphism on $B_{\ur_1}$ as $\ur_1$ is reduced.
		A prototypical light-leaf corresponding to $\ur_1^1$ has the form 
		\[	
		\begin{array}{c}
		\begin{tikzpicture}[scale=0.6]
      		\draw[color=red, thick] (0,0) to (0,3);
      		\draw[color=blue, thick] (0.2,0) to (0.2,3);
      		\draw[color=red!60!yellow, thick] (0.4,0) to (0.4,3);
      		\draw[color=red, thick] (0.6,0) to (0.6,3);
      		\draw[color=blue!50!cyan, thick] (0.8,0) to (0.8,3);
      		\draw[color=blue, thick] (1,0) to (1,3);
      		\draw[color=red!20!yellow, thick] (1.2,0) to (1.2,3);
      		\draw[color=red!60!yellow, thick] (1.4,0) to (1.4,3);
      		\draw[color=red, thick] (1.6,0) to (1.6,3);
    	\end{tikzpicture}
    	\end{array}.
    	\]

    	\par      	
    	Recall that for any $t_i \in E$, the Bruhat stroll $\ux^{E}$ contains the subexpression $\uu_{t_i}^{0}$ which is a sequence of odd reflections all decorated with $\begin{smallmatrix} \ua		\\0		\end{smallmatrix}$ followed by a sequence of even reflections all decorated with $\begin{smallmatrix} \da		\\0		\end{smallmatrix}$. 
    	Consequently a typical light-leaf $LL_{\ur_1 \uu_1 , E}$  corresponding to $ \ur_1^1 \uu_{t_i}^0$  has the form
   
    	\[	
		\begin{array}{c}
		\begin{tikzpicture}[scale=0.6]
      		\draw[color=red, thick] (0,0) to (0,3);
      		\draw[color=blue, thick] (0.2,0) to (0.2,3);
      		\draw[color=red!60!yellow, thick] (0.4,0) to (0.4,3);
      		\draw[color=red, thick] (0.6,0) to (0.6,3);
      		\draw[color=blue!50!cyan, thick] (0.8,0) to (0.8,3);
      		\draw[color=blue, thick] (1,0) to (1,3);
      		\draw[color=red!20!yellow, thick] (1.2,0) to (1.2,3);
      		\draw[color=red!60!yellow, thick] (1.4,0) to (1.4,3);
      		\draw[color=red, thick] (1.6,0) to (1.6,3);
      		\draw[color=cyan, thick] (1.9,0) to (1.9,0.8);
			\node[circle,fill,draw,inner sep=0mm,minimum size=1mm,color=cyan, thick] at (1.9,0.8) {};
      		\draw[color=blue!50!cyan, thick] (2.2,0) to (2.2,0.8);
			\node[circle,fill,draw,inner sep=0mm,minimum size=1mm,color=blue!50!cyan, thick] at (2.2,0.8) {};
			\draw[color=blue, thick] (2.5,0) to (2.5,0.8);
			\node[circle,fill,draw,inner sep=0mm,minimum size=1mm,color=blue, thick] at (2.5,0.8) {};
			\draw[color=red!20!yellow, thick] (2.8,0) arc [start angle=0, end angle = 90, x radius = 1.6cm , y radius = 2cm];
			\draw[color=red!60!yellow, thick] (3,0) arc [start angle=0, end angle = 90, x radius = 1.6cm , y radius = 2.2cm];
			\draw[color=red, thick] (3.2,0) arc [start angle=0, end angle = 90, x radius = 1.6cm , y radius = 2.4cm];
    	\end{tikzpicture}
    	\end{array}.
    	\]
    	That is, it is the identity morphism on $\ur_1$, followed by a sequence of odd-coloured up-dots, followed by a sequence (of the same length) of even-coloured trivalent vertices. 
    	\par
    	Now consider the morphism $LL_{\ur_1 \uu_1, E} \circ \Gamma \Gamma^{\ur_1 \uu_1, E}$, which is the composition of the light leaf $LL_{\ur_1 \uu_1, E}$ with the morphism obtained by reflecting $LL_{\ur_1 \uu_1, E}$ along a horizontal axis. 
    	Note the following rather trivial identity holds for $a<c$ by the twisted Leibniz rule
    	\begin{align}
    	\label{eqn: twisted leibniz}
    		\partial_{s_{\red{2a}}} \left( \prod^c_{b \geq a} \alpha_{s_{\blue{2b+1}}} \right)
    		=
    		\pair{\alphac_{s_{\red{2a}}}, \alpha_{s_{\blue{2a+1}}} }
    		\prod_{b > a}^c \alpha_{s_{\blue{2b+1}}}.
    	\end{align}
    	So, it is easy to describe the action of certain Demazure operators on products of odd-indexed simple roots. 
    	Consequently we have
		\begin{align}
		\label{eqn: repeated barbell forcing}
		LL_{\ur_1 \uu_1, E} \circ \Gamma \Gamma^{\ur_1 \uu_1, E}
		=
		\begin{array}{c}
		\begin{tikzpicture}[scale=0.5]
      		\draw[color=red!20!yellow, thick] (1.2,-3) to (1.2,3);
      		\draw[color=red!60!yellow, thick] (1.4,-3) to (1.4,3);
      		\draw[color=red, thick] (1.6,-3) to (1.6,3);
      		\draw[color=cyan, thick] (1.9,-0.8) to (1.9,0.8);
			\node[circle,fill,draw,inner sep=0mm,minimum size=1mm,color=cyan, thick] at (1.9,0.8) {};
			\node[circle,fill,draw,inner sep=0mm,minimum size=1mm,color=cyan, thick] at (1.9,-0.8) {};
      		\draw[color=blue!50!cyan, thick] (2.2,-0.8) to (2.2,0.8);
			\node[circle,fill,draw,inner sep=0mm,minimum size=1mm,color=blue!50!cyan, thick] at (2.2,0.8) {};
			\node[circle,fill,draw,inner sep=0mm,minimum size=1mm,color=blue!50!cyan, thick] at (2.2,-0.8) {};
			\draw[color=blue, thick] (2.5,-0.8) to (2.5,0.8);
			\node[circle,fill,draw,inner sep=0mm,minimum size=1mm,color=blue, thick] at (2.5,0.8) {};
			\node[circle,fill,draw,inner sep=0mm,minimum size=1mm,color=blue, thick] at (2.5,-0.8) {};
			\draw[color=red!20!yellow, thick] (2.8,0) arc [start angle=0, end angle = 90, x radius = 1.6cm , y radius = 2cm];
			\draw[color=red!20!yellow, thick] (2.8,0) arc [start angle=0, end angle = -90, x radius = 1.6cm , y radius = 2cm];
			\draw[color=red!60!yellow, thick] (3,0) arc [start angle=0, end angle = 90, x radius = 1.6cm , y radius = 2.2cm];
			\draw[color=red!60!yellow, thick] (3,0) arc [start angle=0, end angle = -90, x radius = 1.6cm , y radius = 2.2cm];
			\draw[color=red, thick] (3.2,0) arc [start angle=0, end angle = 90, x radius = 1.6cm , y radius = 2.4cm];
			\draw[color=red, thick] (3.2,0) arc [start angle=0, end angle = -90, x radius = 1.6cm , y radius = 2.4cm];
    	\end{tikzpicture}
    	\end{array}
    	=
    	\begin{array}{c}
		\begin{tikzpicture}[scale=0.5]
      		\draw[color=red!20!yellow, thick] (1.2,-3) to (1.2,3);
      		\draw[color=red!60!yellow, thick] (1.4,-3) to (1.4,3);
      		\draw[color=red, thick] (1.6,-3) to (1.6,3);
      		\draw[color=cyan, thick] (1.9,-0.8) to (1.9,0.8);
			\node[circle,fill,draw,inner sep=0mm,minimum size=1mm,color=cyan, thick] at (1.9,0.8) {};
			\node[circle,fill,draw,inner sep=0mm,minimum size=1mm,color=cyan, thick] at (1.9,-0.8) {};
      		\draw[color=blue!50!cyan, thick] (2.2,-0.8) to (2.2,0.8);
			\node[circle,fill,draw,inner sep=0mm,minimum size=1mm,color=blue!50!cyan, thick] at (2.2,0.8) {};
			\node[circle,fill,draw,inner sep=0mm,minimum size=1mm,color=blue!50!cyan, thick] at (2.2,-0.8) {};
			\draw[color=blue, thick] (2.5,-0.8) to (2.5,0.8);
			\node[circle,fill,draw,inner sep=0mm,minimum size=1mm,color=blue, thick] at (2.5,0.8) {};
			\node[circle,fill,draw,inner sep=0mm,minimum size=1mm,color=blue, thick] at (2.5,-0.8) {};
			\draw[color=red, thick] (2.8,0) arc [start angle=0, end angle = 75, x radius = 1.6cm , y radius = 2cm];
			\draw[color=red, thick] (2.8,0) arc [start angle=0, end angle = -75, x radius = 1.6cm , y radius = 2cm];
			\draw[color=red!60!yellow, thick] (3,0) arc [start angle=0, end angle = 90, x radius = 1.6cm , y radius = 2.2cm];
			\draw[color=red!60!yellow, thick] (3,0) arc [start angle=0, end angle = -90, x radius = 1.6cm , y radius = 2.2cm];
			\draw[color=red!20!yellow, thick] (3.2,0) arc [start angle=0, end angle = 105, x radius = 1.6cm , y radius = 2.4cm];
			\draw[color=red!20!yellow, thick] (3.2,0) arc [start angle=0, end angle = -105, x radius = 1.6cm , y radius = 2.4cm];
    	\end{tikzpicture}
    	\end{array}
    	=
    	\prod_{j=1}^{t_i/2}
    	\pair{\alphac_{s_{2j}},\alpha_{s_{2j+1}}}
    	\begin{array}{c}
		\begin{tikzpicture}[scale=0.5]
      		\draw[color=red!20!yellow, thick] (1.2,-3) to (1.2,3);
      		\draw[color=red!60!yellow, thick] (1.4,-3) to (1.4,3);
      		\draw[color=red, thick] (1.6,-3) to (1.6,3);
    	\end{tikzpicture}
    	\end{array}
    	=
    	2(-1)^{t_i/2}
    	\begin{array}{c}
		\begin{tikzpicture}[scale=0.5]
      		\draw[color=red!20!yellow, thick] (1.2,-3) to (1.2,3);
      		\draw[color=red!60!yellow, thick] (1.4,-3) to (1.4,3);
      		\draw[color=red, thick] (1.6,-3) to (1.6,3);
    	\end{tikzpicture}
    	\end{array}
		\end{align}
		where the second equality follows from the fact that all even reflections commute; the third equality from repeatedly applying the polynomial forcing relation and Equation \ref{eqn: twisted leibniz}; and, the fourth uses that $\pair{\alphac_0 , \alpha_1} = -2$ in type $\tB_n$.  
    	\par 
    	Suppose $E = \{ t_1 , \dots , t_k\}$ where $k>1$. 
		The subexpression corresponding to $E$ is
		\begin{align*}
			\ux^E =  \ur_1^1 \uu_{t_{1}}^0 \ur_2^1 \dots \ur_k^1 \uu_{t_{k}} ^0 \ur_{k+1}^1.
		\end{align*}
		Proposition \ref{prop: bruhat strolls} implies each $\ur_j$ is exclusively decorated with $\begin{smallmatrix} \ua \\1	\end{smallmatrix}$'s, so the corresponding light leaf is the identity on each $B_{\ur_j}$. 
		Again, we can reduce to the case where each $\ur_j$ ends in a sequence of even reflections, as any odd sequence of odd reflections ending $\ur_j$ will commute with every simple reflection to the right of $\ur_j$ in $\ux$. 
		For example, if $\ur_2 \neq \emptyset$, a typical light leaf corresponding to $\ur_1^1 \uu_{t_{i_1}}^0 \ur_2^1$ has the form
		\[	
		\begin{array}{c}
		\begin{tikzpicture}[scale=0.6]
      		\draw[color=red, thick] (0,0) to (0,3);
      		\draw[color=blue, thick] (0.2,0) to (0.2,3);
      		\draw[color=red!60!yellow, thick] (0.4,0) to (0.4,3);
      		\draw[color=red, thick] (0.6,0) to (0.6,3);
      		\draw[color=blue!50!cyan, thick] (0.8,0) to (0.8,3);
      		\draw[color=blue, thick] (1,0) to (1,3);
      		\draw[color=red!20!yellow, thick] (1.2,0) to (1.2,3);
      		\draw[color=red!60!yellow, thick] (1.4,0) to (1.4,3);
      		\draw[color=red, thick] (1.6,0) to (1.6,3);
      		\draw[color=cyan, thick] (1.9,0) to (1.9,0.8);
			\node[circle,fill,draw,inner sep=0mm,minimum size=1mm,color=cyan, thick] at (1.9,0.8) {};
      		\draw[color=blue!50!cyan, thick] (2.2,0) to (2.2,0.8);
			\node[circle,fill,draw,inner sep=0mm,minimum size=1mm,color=blue!50!cyan, thick] at (2.2,0.8) {};
			\draw[color=blue, thick] (2.5,0) to (2.5,0.8);
			\node[circle,fill,draw,inner sep=0mm,minimum size=1mm,color=blue, thick] at (2.5,0.8) {};
			\draw[color=red!20!yellow, thick] (2.8,0) arc [start angle=0, end angle = 90, x radius = 1.6cm , y radius = 2cm];
			\draw[color=red!60!yellow, thick] (3,0) arc [start angle=0, end angle = 90, x radius = 1.6cm , y radius = 2.2cm];
			\draw[color=red, thick] (3.2,0) arc [start angle=0, end angle = 90, x radius = 1.6cm , y radius = 2.4cm];
      		\draw[color=blue!50!cyan, thick] (3.4,0) to (3.4,3);
      		\draw[color=blue, thick] (3.6,0) to (3.6,3);
      		\draw[color=red!60!yellow, thick] (3.8,0) to (3.8,3);
      		\draw[color=red, thick] (4,0) to (4,3);
    	\end{tikzpicture}
    	\end{array}.
    	\]
    	On the other hand, each $\uu_{t_i}^0$ is a sequence of odd reflections each decorated with $\begin{smallmatrix} \ua \\0	\end{smallmatrix}$'s, followed by a sequence of even reflections each decorated with $\begin{smallmatrix} \da \\0	\end{smallmatrix}$'s. 
    	Since both $\ur_j$ and $\uu_{t_{j}}$ end in a sequence of even reflections (with subscripts at least as large as every even reflection appearing to the right in $\ux$), the even reflections in $\uu_{t_{i}}$ must form trivalent vertices with the even reflections in $\ur_{i}$ if $\ur_{i} \neq \emptyset$, or $\uu_{t_{i-1}}$ if $\ur_i = \emptyset$. 
    	Thus, typical light leaves for $\ur_1 \uu_{t_i} \ur_2 \uu_{t_2}$ have the form 
    	\[
    	\begin{array}{c}
		\begin{tikzpicture}[scale=0.6]
      		\draw[color=red, thick] (0,0) to (0,3);
      		\draw[color=blue, thick] (0.2,0) to (0.2,3);
      		\draw[color=red!60!yellow, thick] (0.4,0) to (0.4,3);
      		\draw[color=red, thick] (0.6,0) to (0.6,3);
      		\draw[color=blue!50!cyan, thick] (0.8,0) to (0.8,3);
      		\draw[color=blue, thick] (1,0) to (1,3);
      		\draw[color=red!20!yellow, thick] (1.2,0) to (1.2,3);
      		\draw[color=red!60!yellow, thick] (1.4,0) to (1.4,3);
      		\draw[color=red, thick] (1.6,0) to (1.6,3);
      		\draw[color=cyan, thick] (1.9,0) to (1.9,0.8);
			\node[circle,fill,draw,inner sep=0mm,minimum size=1mm,color=cyan, thick] at (1.9,0.8) {};
      		\draw[color=blue!50!cyan, thick] (2.2,0) to (2.2,0.8);
			\node[circle,fill,draw,inner sep=0mm,minimum size=1mm,color=blue!50!cyan, thick] at (2.2,0.8) {};
			\draw[color=blue, thick] (2.5,0) to (2.5,0.8);
			\node[circle,fill,draw,inner sep=0mm,minimum size=1mm,color=blue, thick] at (2.5,0.8) {};
			\draw[color=red!20!yellow, thick] (2.8,0) arc [start angle=0, end angle = 90, x radius = 1.6cm , y radius = 2cm];
			\draw[color=red!60!yellow, thick] (3,0) arc [start angle=0, end angle = 90, x radius = 1.6cm , y radius = 2.2cm];
			\draw[color=red, thick] (3.2,0) arc [start angle=0, end angle = 90, x radius = 1.6cm , y radius = 2.4cm];
      		\draw[color=blue!50!cyan, thick] (3.4,0) to (3.4,3);
      		\draw[color=blue, thick] (3.6,0) to (3.6,3);
      		\draw[color=red!60!yellow, thick] (3.8,0) to (3.8,3);
      		\draw[color=red, thick] (4,0) to (4,3);
      		\draw[color=blue, thick] (4.3,0) to (4.3,0.8);
			\node[circle,fill,draw,inner sep=0mm,minimum size=1mm,color=blue, thick] at (4.3,0.8) {};
			\draw[color=red, thick] (4.6,0) arc [start angle=0, end angle = 90, x radius = 0.6cm , y radius = 1.5cm];
    	\end{tikzpicture}
    	\end{array}
    	~~~~
    	\text{or}
    	~~~~
    	\begin{array}{c}
		\begin{tikzpicture}[scale=0.6]
      		\draw[color=red, thick] (0,0) to (0,3);
      		\draw[color=blue, thick] (0.2,0) to (0.2,3);
      		\draw[color=red!60!yellow, thick] (0.4,0) to (0.4,3);
      		\draw[color=red, thick] (0.6,0) to (0.6,3);
      		\draw[color=blue!50!cyan, thick] (0.8,0) to (0.8,3);
      		\draw[color=blue, thick] (1,0) to (1,3);
      		\draw[color=red!20!yellow, thick] (1.2,0) to (1.2,3);
      		\draw[color=red!60!yellow, thick] (1.4,0) to (1.4,3);
      		\draw[color=red, thick] (1.6,0) to (1.6,3);
      		\draw[color=cyan, thick] (1.9,0) to (1.9,0.8);
			\node[circle,fill,draw,inner sep=0mm,minimum size=1mm,color=cyan, thick] at (1.9,0.8) {};
      		\draw[color=blue!50!cyan, thick] (2.2,0) to (2.2,0.8);
			\node[circle,fill,draw,inner sep=0mm,minimum size=1mm,color=blue!50!cyan, thick] at (2.2,0.8) {};
			\draw[color=blue, thick] (2.5,0) to (2.5,0.8);
			\node[circle,fill,draw,inner sep=0mm,minimum size=1mm,color=blue, thick] at (2.5,0.8) {};
			\draw[color=red!20!yellow, thick] (2.8,0) arc [start angle=0, end angle = 90, x radius = 1.6cm , y radius = 2cm];
			\draw[color=red!60!yellow, thick] (3,0) arc [start angle=0, end angle = 90, x radius = 1.6cm , y radius = 2.2cm];
			\draw[color=red, thick] (3.2,0) arc [start angle=0, end angle = 90, x radius = 1.6cm , y radius = 2.4cm];	
      		\draw[color=blue!50!cyan, thick] (3.5,0) to (3.5,0.4);
			\node[circle,fill,draw,inner sep=0mm,minimum size=1mm,color=blue!50!cyan, thick] at (3.5,0.4) {};
			\draw[color=blue, thick] (3.8,0) to (3.8,0.4);
			\node[circle,fill,draw,inner sep=0mm,minimum size=1mm,color=blue, thick] at (3.8,0.4) {};
      		\draw[color=red!60!yellow, thick] (4.2,0) arc [start angle=0, end angle = 89, x radius = 1.6cm , y radius = 1.4cm];
			\draw[color=red, thick] (4.4,0) arc [start angle=0, end angle = 90, x radius = 1.6cm , y radius = 1.6cm];
    	\end{tikzpicture}
    	\end{array} 
    	\]
		where in the first example $\ur_2 \neq \emptyset$, and in the second $\ur_2 = \emptyset$.  
		Repeatedly applying Equation \ref{eqn: repeated barbell forcing} to the composition $LL_{\ux, E} \circ \Gamma \Gamma^{\ux, E}$ shows:
		
		\begin{prop}
		\label{prop: local intersection form}
			Let $x \in {}^I W$ and $\ux$ be the column reduced expression. 
			Fix $E = \{t_1 , \dots  ,t_k\} \subseteq E(x)$, and recall the decomposition $\ux = \ur_1 \uu_{t_1} \dots \ur_k \uu_{t_k} \ur_{k+1}$ induced by $E$,  then 
			\begin{align*}
				LL_{\ux, E} \circ \Gamma \Gamma^{\ux, E} 
				=
				2^{|E|} (-1)^{\ell(E)/2} \id_{B_{\ur_1 \ur_2 \ur_3 \dots \ur_{k+1}}} 
			\end{align*}
			where $\id_{B_{\ur_1 \ur_2 \ur_3 \dots \ur_{k+1}}}$ is the identity morphism on the Bott-Samelson object $B_{\ur_1 \ur_2 \ur_3 \dots \ur_{k+1}}$, $|E|=k$ and $\ell(E) = \sum_i t_i$. 
		\end{prop}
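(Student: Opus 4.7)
The plan is to reduce the statement to the single-block computation already carried out in Equation \ref{eqn: repeated barbell forcing} and then iterate. I argue by induction on $|E|$. The base case $|E| = 0$ is immediate: by Proposition \ref{prop: bruhat strolls} the subexpression $\ux^{E}$ is decorated entirely with $\begin{smallmatrix}\ua\\1\end{smallmatrix}$, so $LL_{\ux, E}$ is the identity on $B_{\ux}$, and the formula correctly reads $\id_{B_{\ux}}$.

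For the inductive step, I peel off the outermost block, say the one indexed by $t_1$. By Proposition \ref{prop: bruhat strolls}, the portion of $LL_{\ux, E}$ coming from $\uu_{t_1}^{0}$ is a sequence of odd-coloured up-dots followed by even-coloured trivalent vertices, and the same (reflected) picture appears in $\Gamma \Gamma^{\ux, E}$. Composing them produces $t_1/2$ barbells on odd-coloured strands nested between paired trivalent vertices on even-coloured strands. This is precisely the local configuration analysed in Equation \ref{eqn: repeated barbell forcing}: Frobenius associativity and the Frobenius unit contract each nested pair of trivalent vertices into a single strand, and polynomial forcing together with the twisted Leibniz rule (Equation \ref{eqn: twisted leibniz}) evaluates the resulting nested barbells to the scalar
\[
\prod_{a=0}^{t_1/2-1} \pair{\alphac_{2a}, \alpha_{2a+1}} \;=\; 2(-1)^{t_1/2},
\]
using $\pair{\alphac_0, \alpha_1} = -2$ in type $\tB_n$ and $\pair{\alphac_{2a}, \alpha_{2a+1}} = -1$ for $a \geq 1$. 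The diagram that remains is exactly $LL_{\ux', E'} \circ \Gamma\Gamma^{\ux', E'}$, where by Lemma \ref{lem: subexpression 121 removal}, $\ux' = \ur_1 \ur_2 \uu_{t_2} \dots \uu_{t_k} \ur_{k+1}$ is a reduced expression for $x \setminus \{t_1\}$, and $E' = E \setminus \{t_1\} \subseteq E(x \setminus \{t_1\})$. The inductive hypothesis supplies the remaining factor $2^{|E|-1}(-1)^{\ell(E')/2}$, and multiplying through yields $2^{|E|}(-1)^{\ell(E)/2}$ as claimed.

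The main technical subtlety is the case where $\ur_i = \emptyset$ for some $i > 1$, since then the trivalent vertices from $\uu_{t_i}^{0}$ do not land on identity strands within an adjacent $\ur$-block but rather on strands already engaged by the trivalent vertices of an earlier $\uu_{t_j}$ (compare the second typical light-leaf diagram displayed in Section \ref{Ssec: entries in lif}). I would handle this uniformly by first applying Frobenius associativity to rearrange all the even-coloured trivalent vertices in $LL_{\ux, E} \circ \Gamma\Gamma^{\ux, E}$ into nested pairs, one pair per even-coloured strand per block, decoupling the block contributions. Once this decoupling is in place, the per-block reduction above applies without modification, and the total contribution is $\prod_{i=1}^{k} 2(-1)^{t_i/2} = 2^{|E|}(-1)^{\ell(E)/2}$, concluding the induction.
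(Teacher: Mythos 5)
Your proof is correct and follows essentially the same route as the paper: peel off the $\uu_{t_i}$-blocks one at a time by iterating the single-block evaluation of Equation \ref{eqn: repeated barbell forcing}, and decouple adjacent blocks when $\ur_i = \emptyset$ via Frobenius associativity (the paper invokes the same manoeuvre in Case 2 of its proof of Proposition \ref{prop: alg presentation}). You have also quietly corrected an indexing slip in the paper's displayed product, which should run over $j = 0, \dots, t_i/2 - 1$ rather than $j = 1, \dots, t_i/2$ so that the factor $\pair{\alphac_0, \alpha_1} = -2$ is actually included.
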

		
		We illustrate\footnote{Pun intended.} the Proposition with an aesthetically pleasing example. 
		
		\begin{exmp}
		If $x=\{6,5,4,3,2,1\}$ then $E(x)=\{6,4,2\}$. 
		Set $E=E(x)$. 
		The composition of the light leaves $LL_{\ux,E} \circ \Gamma\Gamma^{\ux,E}$ is
		\[
		\begin{array}{c}
		\begin{tikzpicture}[scale=0.6]
      		\draw[color=red, thick] (0,-3) to (0,3);
      		\draw[color=blue, thick] (0.2,-3) to (0.2,3);
      		\draw[color=red!60!yellow, thick] (0.4,-3) to (0.4,3);
      		\draw[color=red, thick] (0.6,-3) to (0.6,3);
      		\draw[color=blue!50!cyan, thick] (0.8,-3) to (0.8,3);
      		\draw[color=blue, thick] (1,-3) to (1,3);
      		\draw[color=red!20!yellow, thick] (1.2,-3) to (1.2,3);
      		\draw[color=red!60!yellow, thick] (1.4,-3) to (1.4,3);
      		\draw[color=red, thick] (1.6,-3) to (1.6,3);
      		\draw[color=cyan, thick] (1.9,-0.8) to (1.9,0.8);
			\node[circle,fill,draw,inner sep=0mm,minimum size=1mm,color=cyan, thick] at (1.9,0.8) {};
			\node[circle,fill,draw,inner sep=0mm,minimum size=1mm,color=cyan, thick] at (1.9,-0.8) {};
      		\draw[color=blue!50!cyan, thick] (2.2,-0.8) to (2.2,0.8);
			\node[circle,fill,draw,inner sep=0mm,minimum size=1mm,color=blue!50!cyan, thick] at (2.2,0.8) {};
			\node[circle,fill,draw,inner sep=0mm,minimum size=1mm,color=blue!50!cyan, thick] at (2.2,-0.8) {};
			\draw[color=blue, thick] (2.5,-0.8) to (2.5,0.8);
			\node[circle,fill,draw,inner sep=0mm,minimum size=1mm,color=blue, thick] at (2.5,0.8) {};
			\node[circle,fill,draw,inner sep=0mm,minimum size=1mm,color=blue, thick] at (2.5,-0.8) {};
			\draw[color=red!20!yellow, thick] (2.8,0) arc [start angle=0, end angle = 90, x radius = 1.6cm , y radius = 2cm];
			\draw[color=red!20!yellow, thick] (2.8,0) arc [start angle=0, end angle = -90, x radius = 1.6cm , y radius = 2cm];
			\draw[color=red!60!yellow, thick] (3,0) arc [start angle=0, end angle = 90, x radius = 1.6cm , y radius = 2.2cm];
			\draw[color=red!60!yellow, thick] (3,0) arc [start angle=0, end angle = -90, x radius = 1.6cm , y radius = 2.2cm];
			\draw[color=red, thick] (3.2,0) arc [start angle=0, end angle = 90, x radius = 1.6cm , y radius = 2.4cm];	
			\draw[color=red, thick] (3.2,0) arc [start angle=0, end angle = -90, x radius = 1.6cm , y radius = 2.4cm];	
      		\draw[color=blue!50!cyan, thick] (3.5,-0.4) to (3.5,0.4);
			\node[circle,fill,draw,inner sep=0mm,minimum size=1mm,color=blue!50!cyan, thick] at (3.5,0.4) {};
			\node[circle,fill,draw,inner sep=0mm,minimum size=1mm,color=blue!50!cyan, thick] at (3.5,-0.4) {};
			\draw[color=blue, thick] (3.8,-0.4) to (3.8,0.4);
			\node[circle,fill,draw,inner sep=0mm,minimum size=1mm,color=blue, thick] at (3.8,0.4) {};
			\node[circle,fill,draw,inner sep=0mm,minimum size=1mm,color=blue, thick] at (3.8,-0.4) {};
      		\draw[color=red!60!yellow, thick] (4.2,0) arc [start angle=0, end angle = 89, x radius = 1.6cm , y radius = 1.4cm];
      		\draw[color=red!60!yellow, thick] (4.2,0) arc [start angle=0, end angle = -89, x radius = 1.6cm , y radius = 1.4cm];
			\draw[color=red, thick] (4.4,0) arc [start angle=0, end angle = 90, x radius = 1.6cm , y radius = 1.6cm];
			\draw[color=red, thick] (4.4,0) arc [start angle=0, end angle = -90, x radius = 1.6cm , y radius = 1.6cm];
			\draw[color=blue, thick] (4.7,-0.3) to (4.7,0.3);
			\node[circle,fill,draw,inner sep=0mm,minimum size=1mm,color=blue, thick] at (4.7,0.3) {};
			\node[circle,fill,draw,inner sep=0mm,minimum size=1mm,color=blue, thick] at (4.7,-0.3) {};
			\draw[color=red, thick] (5,0) arc [start angle=0, end angle = 90, x radius = 0.8cm , y radius = 0.8cm];
			\draw[color=red, thick] (5,0) arc [start angle=0, end angle = -90, x radius = 0.8cm , y radius = 0.8cm];
    	\end{tikzpicture}
    	\end{array}
    	=
    	2(-1)^{1}
    	\begin{array}{c}
		\begin{tikzpicture}[scale=0.6]
      		\draw[color=red, thick] (0,-3) to (0,3);
      		\draw[color=blue, thick] (0.2,-3) to (0.2,3);
      		\draw[color=red!60!yellow, thick] (0.4,-3) to (0.4,3);
      		\draw[color=red, thick] (0.6,-3) to (0.6,3);
      		\draw[color=blue!50!cyan, thick] (0.8,-3) to (0.8,3);
      		\draw[color=blue, thick] (1,-3) to (1,3);
      		\draw[color=red!20!yellow, thick] (1.2,-3) to (1.2,3);
      		\draw[color=red!60!yellow, thick] (1.4,-3) to (1.4,3);
      		\draw[color=red, thick] (1.6,-3) to (1.6,3);
      		\draw[color=cyan, thick] (1.9,-0.8) to (1.9,0.8);
			\node[circle,fill,draw,inner sep=0mm,minimum size=1mm,color=cyan, thick] at (1.9,0.8) {};
			\node[circle,fill,draw,inner sep=0mm,minimum size=1mm,color=cyan, thick] at (1.9,-0.8) {};
      		\draw[color=blue!50!cyan, thick] (2.2,-0.8) to (2.2,0.8);
			\node[circle,fill,draw,inner sep=0mm,minimum size=1mm,color=blue!50!cyan, thick] at (2.2,0.8) {};
			\node[circle,fill,draw,inner sep=0mm,minimum size=1mm,color=blue!50!cyan, thick] at (2.2,-0.8) {};
			\draw[color=blue, thick] (2.5,-0.8) to (2.5,0.8);
			\node[circle,fill,draw,inner sep=0mm,minimum size=1mm,color=blue, thick] at (2.5,0.8) {};
			\node[circle,fill,draw,inner sep=0mm,minimum size=1mm,color=blue, thick] at (2.5,-0.8) {};
			\draw[color=red!20!yellow, thick] (2.8,0) arc [start angle=0, end angle = 90, x radius = 1.6cm , y radius = 2cm];
			\draw[color=red!20!yellow, thick] (2.8,0) arc [start angle=0, end angle = -90, x radius = 1.6cm , y radius = 2cm];
			\draw[color=red!60!yellow, thick] (3,0) arc [start angle=0, end angle = 90, x radius = 1.6cm , y radius = 2.2cm];
			\draw[color=red!60!yellow, thick] (3,0) arc [start angle=0, end angle = -90, x radius = 1.6cm , y radius = 2.2cm];
			\draw[color=red, thick] (3.2,0) arc [start angle=0, end angle = 90, x radius = 1.6cm , y radius = 2.4cm];	
			\draw[color=red, thick] (3.2,0) arc [start angle=0, end angle = -90, x radius = 1.6cm , y radius = 2.4cm];	
      		\draw[color=blue!50!cyan, thick] (3.5,-0.4) to (3.5,0.4);
			\node[circle,fill,draw,inner sep=0mm,minimum size=1mm,color=blue!50!cyan, thick] at (3.5,0.4) {};
			\node[circle,fill,draw,inner sep=0mm,minimum size=1mm,color=blue!50!cyan, thick] at (3.5,-0.4) {};
			\draw[color=blue, thick] (3.8,-0.4) to (3.8,0.4);
			\node[circle,fill,draw,inner sep=0mm,minimum size=1mm,color=blue, thick] at (3.8,0.4) {};
			\node[circle,fill,draw,inner sep=0mm,minimum size=1mm,color=blue, thick] at (3.8,-0.4) {};
      		\draw[color=red!60!yellow, thick] (4.2,0) arc [start angle=0, end angle = 89, x radius = 1.6cm , y radius = 1.4cm];
      		\draw[color=red!60!yellow, thick] (4.2,0) arc [start angle=0, end angle = -89, x radius = 1.6cm , y radius = 1.4cm];
			\draw[color=red, thick] (4.4,0) arc [start angle=0, end angle = 90, x radius = 1.6cm , y radius = 1.6cm];
			\draw[color=red, thick] (4.4,0) arc [start angle=0, end angle = -90, x radius = 1.6cm , y radius = 1.6cm];
    	\end{tikzpicture}
    	\end{array}
    	=
    	2^2(-1)^{3}
    	\begin{array}{c}
		\begin{tikzpicture}[scale=0.6]
      		\draw[color=red, thick] (0,-3) to (0,3);
      		\draw[color=blue, thick] (0.2,-3) to (0.2,3);
      		\draw[color=red!60!yellow, thick] (0.4,-3) to (0.4,3);
      		\draw[color=red, thick] (0.6,-3) to (0.6,3);
      		\draw[color=blue!50!cyan, thick] (0.8,-3) to (0.8,3);
      		\draw[color=blue, thick] (1,-3) to (1,3);
      		\draw[color=red!20!yellow, thick] (1.2,-3) to (1.2,3);
      		\draw[color=red!60!yellow, thick] (1.4,-3) to (1.4,3);
      		\draw[color=red, thick] (1.6,-3) to (1.6,3);
      		\draw[color=cyan, thick] (1.9,-0.8) to (1.9,0.8);
			\node[circle,fill,draw,inner sep=0mm,minimum size=1mm,color=cyan, thick] at (1.9,0.8) {};
			\node[circle,fill,draw,inner sep=0mm,minimum size=1mm,color=cyan, thick] at (1.9,-0.8) {};
      		\draw[color=blue!50!cyan, thick] (2.2,-0.8) to (2.2,0.8);
			\node[circle,fill,draw,inner sep=0mm,minimum size=1mm,color=blue!50!cyan, thick] at (2.2,0.8) {};
			\node[circle,fill,draw,inner sep=0mm,minimum size=1mm,color=blue!50!cyan, thick] at (2.2,-0.8) {};
			\draw[color=blue, thick] (2.5,-0.8) to (2.5,0.8);
			\node[circle,fill,draw,inner sep=0mm,minimum size=1mm,color=blue, thick] at (2.5,0.8) {};
			\node[circle,fill,draw,inner sep=0mm,minimum size=1mm,color=blue, thick] at (2.5,-0.8) {};
			\draw[color=red!20!yellow, thick] (2.8,0) arc [start angle=0, end angle = 90, x radius = 1.6cm , y radius = 2cm];
			\draw[color=red!20!yellow, thick] (2.8,0) arc [start angle=0, end angle = -90, x radius = 1.6cm , y radius = 2cm];
			\draw[color=red!60!yellow, thick] (3,0) arc [start angle=0, end angle = 90, x radius = 1.6cm , y radius = 2.2cm];
			\draw[color=red!60!yellow, thick] (3,0) arc [start angle=0, end angle = -90, x radius = 1.6cm , y radius = 2.2cm];
			\draw[color=red, thick] (3.2,0) arc [start angle=0, end angle = 90, x radius = 1.6cm , y radius = 2.4cm];	
			\draw[color=red, thick] (3.2,0) arc [start angle=0, end angle = -90, x radius = 1.6cm , y radius = 2.4cm];	
    	\end{tikzpicture}
    	\end{array}
    	=
    	2^3(-1)^6
    	\begin{array}{c}
		\begin{tikzpicture}[scale=0.6]
      		\draw[color=red, thick] (0,-3) to (0,3);
      		\draw[color=blue, thick] (0.2,-3) to (0.2,3);
      		\draw[color=red!60!yellow, thick] (0.4,-3) to (0.4,3);
      		\draw[color=red, thick] (0.6,-3) to (0.6,3);
      		\draw[color=blue!50!cyan, thick] (0.8,-3) to (0.8,3);
      		\draw[color=blue, thick] (1,-3) to (1,3);
      		\draw[color=red!20!yellow, thick] (1.2,-3) to (1.2,3);
      		\draw[color=red!60!yellow, thick] (1.4,-3) to (1.4,3);
      		\draw[color=red, thick] (1.6,-3) to (1.6,3);
    	\end{tikzpicture}
    	\end{array}.
		\]
		For the sake of completeness we emphasise $|E|=3$ and $\ell(E)/2= (6+4+2)/2 = 6$. 
		\end{exmp}
		
		\begin{rem}
			In type $\tC_n$ we have $\pair{\alphac_0 , \alpha_1}= -1$.
			Modifying the preceding argument shows 
			\begin{align*}
				LL_{\ux, E} \circ \Gamma \Gamma^{\ux, E} 
				=
				(-1)^{\ell(E)/2} \id_{B_{\ur_1 \ur_2 \ur_3 \dots \ur_{k+1}}} .
			\end{align*} 	 
		\end{rem}
		\begin{rem}
			Proposition \ref{prop: bruhat strolls} proves the Bruhat strolls associated to $\ux^E$ contain no $\begin{smallmatrix}\da\\1\end{smallmatrix}$'s. 
			Consequently a non-diagrammatic proof of Proposition \ref{prop: local intersection form} can be obtained using the nil-Hecke ring and the `gobbling morphisms' of \cite{HW18}.
		\end{rem}

	\subsection{The classical Kazhdan-Lusztig basis}
	
		We recall the description of the classical Kazhdan-Lusztig basis of ${}^I N$ as determined in \cite{LS13}. 
		They use different combinatorial objects to index their minimal coset representatives, and consider the antispherical module of type  $\tA_n \backslash \tD_{n+1}$. 
		Lemmata \ref{Lem: reduce to simply laced cats} and \ref{Lem: Other Koszul duality Whit version} imply the Kazhdan-Lusztig basis of type  $\tA_n \backslash \tD_{n+1}$ is the Kazhdan-Lusztig basis in type $\tA_{n-1} \backslash \tB_{n}$.
		We also translate their results into our language of sets and partitions.
		\par 
		For this section only, we consider $x \in {}^I W$ as an even cardinality subset of $[\tilde{n}]:= \{ 0, 1, \dots , n \}$, instead of the usual $[n]:= \{ 1, \dots, n \}$. 
		We do this in the obvious way:
		\begin{align*}
		\begin{array}{ccl}
			\cP([n])
		&
			\hookrightarrow
		&
			\cP([\tilde{n}])
		\\
			x
		&
			\mapsto
		&
			\begin{cases}
				x & \text{if } |x| \text{ is even, }
				\\
				x \cup \{ 0 \} & \text{if } |x| \text{ is odd. }
			\end{cases}
		\end{array}
		\end{align*}
		For each $x \in {}^I W$, we construct a relation $\sim_{x}$ on $[\tilde{n}]$ in two steps. 
		If $i,j \in [\tilde{n}]$ and $i \sim_{x} j$ we say $i$ and $j$ are \ldef{$x$-related}. 
		\begin{itemize}
			\item \textit{Step 1}: Say $i \sim_{x} j$ if:  $i>j$, $i \in x$, $j \notin x$, and $i$ is the minimal number such that $|\{ i' \in x ~\vert~ i \geq i' \geq j \}| = |\{ j' \notin x ~\vert~ i\geq j' \geq j \}|$. 
			\item \textit{Step 2}: Enumerate those $t_{1} , \dots ,t_{k} \in x$ that were not joined in step one,  such that $t_{1} < \dots < t_{k}$. 
		 		For any integer $j$, where $1\leq j \leq k/2$, say $t_{{2j-1}} \sim_x t_{2j}$. 
		 		Note that if $k$ is odd, then $t_{k}$ is not $x$-related to anything. 
		\end{itemize}
		
		\begin{exmp}
			Let $x= [4] \subset [\tilde{4}] $. 
			Note that $0 \notin x$. 
			So $1 \sim_x 0 $ in Step 1 and $3 \sim_x 2 $ in Step $2$. 
			The element $4 \in [\tilde{4}]$ is not $x$-related to any element. 
		\end{exmp}
		
		\par
		For each $x$-related pair $i,j \in [\tilde{n}]$ where $i>j$, define maps 
		\begin{align*}
			\phi_{i,j}^x (z) 
		=
			\begin{cases}
				\left( z \backslash \lbrace i \rbrace \right) \cup \lbrace j \rbrace
			&
				\text{if } i,j \text{ are joined in step one, and}
			\\
				z \backslash \lbrace i,j \rbrace 
			&
				\text{if } i,j \text{ are joined in step two.}
			\end{cases}
		\end{align*}
		If there is a sequence of maps $\phi^{x}_{i_1, j_1}, \dots , \phi^{x}_{i_k, j_k}$ satisfying $\phi^{x}_{i_k, j_k} \circ \dots \circ \phi^{x}_{i_1, j_1}(x) =y$, define $\#(y,x)$ to be the minimal length of such a sequence. Otherwise, set $\#(x,y) = \infty$. 
		\par 
		We can now state \cite[\S 2.2]{LS13} by another name.
		
		\begin{prop}
		\label{prop: kl basis}
			Let $x,y \in {}^I W$, then
			\begin{align*}
				n_{y,x}
			=	
				\begin{cases}
					v^{\#(y,x)}
				&
					\text{if } \#(y,x) < \infty,	
				\\
					0
				&
					\text{otherwise.}
				\end{cases}
			\end{align*}
		\end{prop}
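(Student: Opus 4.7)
The plan is to derive the proposition from the description of antispherical Kazhdan-Lusztig polynomials for the Spinor variety of type $\tA_n \backslash \tD_{n+1}$ in \cite[\S 2.2]{LS13}, which is exactly what the paragraph preceding the statement advertises. Two ingredients are required: a comparison of antispherical Kazhdan-Lusztig bases between type $\tA_{n-1} \backslash \tB_n$ and type $\tA_n \backslash \tD_{n+1}$, and a translation of the combinatorics of \cite{LS13} into the subsets-of-$[\tilde n]$ language of Section \ref{Ssec: min coset reps LG}.

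For the first ingredient, the odd orthogonal Grassmannian (type $\tB_n / \tA_{n-1}$) is stratified-isomorphic to the Spinor variety (type $\tD_{n+1} / \tA_n$) by Lemma \ref{Lem: reduce to simply laced vars}. Since simple perverse sheaves are preserved by stratified isomorphisms, and the classical antispherical Kazhdan-Lusztig basis is the character of the corresponding Whittaker simple perverse sheaves, the induced bijection $f : {}^I W \to {}^{I'} W'$ identifies the classical antispherical Kazhdan-Lusztig bases term by term. Consequently it suffices to establish the formula after transporting $x, y$ via $f$, i.e. to prove the analogue of the claimed formula in type $\tA_n \backslash \tD_{n+1}$.

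For the second ingredient, the minimal coset representatives in type $\tA_n \backslash \tD_{n+1}$ are indexed in \cite{LS13} by strict partitions (equivalently, by $\pm$-sequences recording filled and unfilled positions), which under $f$ correspond to the subsets $x \subseteq [\tilde n]$ of even cardinality used here. Working through the dictionary, one checks that: the $x$-relation defined in Step 1 coincides with the nested bracketing of filled and unfilled positions used by \cite{LS13}; the Step 2 pairing matches their pairing of the residual entries; and the two families of elementary moves $\phi^x_{i,j}$ correspond exactly to their two types of elementary moves on strict partitions. Under this dictionary $\#(y,x)$ equals their length statistic, and their formula becomes the formula of the proposition.

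The main obstacle is purely the combinatorial bookkeeping of the translation: one must reconcile the subset convention used here (even-cardinality subsets of $[\tilde n]$, with the distinguished role of $0$) with the strict-partition convention in \cite{LS13}, and verify that the length $\#(y,x)$ is preserved by the bijection. This is routine but must be carried out carefully to ensure that the monomial exponents match on the nose, since Proposition \ref{prop: kl basis} asserts equality of polynomials, not merely of supports.
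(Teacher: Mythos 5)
Your overall strategy matches the paper's: reduce the computation to type $\tA_n \backslash \tD_{n+1}$, where \cite{LS13} applies, and then translate their cup-diagram combinatorics into the language of subsets of $[\tilde{n}]$. The second ingredient (the translation) is handled at the same level of detail as the paper, which itself only sketches the "tedious definition chase," so no complaint there.

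The gap is in the first ingredient. You invoke Lemma \ref{Lem: reduce to simply laced vars} (the stratified isomorphism $G/P \cong G'/P'$) and assert that "simple perverse sheaves are preserved by stratified isomorphisms," concluding that the Whittaker simple perverse sheaves are identified. But the Whittaker perverse sheaves categorifying $d_x$ live on the \emph{full} flag variety $B \backslash G$ (stratified by $U_I^- U^I$-orbits), not on the partial flag variety $G/P$. The stratified isomorphism of $G/P$'s does not directly produce an equivalence of Whittaker categories on $B \backslash G$'s; one must pass through Koszul duality. This is precisely the content of Lemma \ref{Lem: reduce to simply laced cats} (and, because $\tA_{n-1} \subset \tB_n$ is on the cominuscule side of the convention, also Lemma \ref{Lem: Other Koszul duality Whit version}), which the paper cites for exactly this identification. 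Your argument can be repaired either by citing those Whittaker-level lemmata in place of the raw geometric Lemma \ref{Lem: reduce to simply laced vars}, or — since the classical $n_{y,x}$ are purely combinatorial and $W_{\tB_n} \cong W_{\tC_n}$ as Coxeter systems — by reducing to the spherical polynomials $m_{y,x}$ via the inversion formula and applying the stratified isomorphism of the partial flag varieties to the IC sheaves on $G/P$, where it does directly apply.
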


		We sketch how to translate our terminology into theirs. 
		For $x \subseteq [\tilde{n}]$, define a sequence of length $n+1$ whose $(i+1)^{\text{th}}$ term is $-$ if $i \in x$ and $+$ if $i \notin x$. 
		This is the $\{ -,+\}$-sequence of \cite[\S 1]{LS13} corresponding to the image of $x$ under the poset isomorphism between $({}^{\tA_{n}} W_{\tD_{n+1}}, \leq )$ and $({}^{\tA_{n-1}} W_{\tC_{n}}, \leq )$. 
		To a $\{ -,+\}$-sequence Lejczyk and Stroppel associate a `cup diagram', see \cite[\S2.1]{LS13}. 
		A tedious definition chase shows that $i \sim_x j$ in:
		\begin{itemize}
			\item step 1 correspond to points in $\{ 1, \dots , n+1 \}$ which are connected by a cup in the cup diagram for the $\{ -,+\}$-sequence associated to $x$, and
			\item step 2 correspond to points in $\{\pm 1 , \dots , \pm (n+1) \}$ which are connected by a cup which crosses 0 in the cup diagram for the $\{ -,+\}$-sequence associated to $x$. 
		\end{itemize} 
		Finally, the maps $\phi^{x}_{i,j}$ correspond to `clockwise-oriented' weights, as we insist $i>j$.

		\begin{rem}
			The maps $\phi^x_{i,j}$ have an intuitive description in terms of partitions.
			Fix $i>j$. 
			If $i \sim_x j$ are linked in step 1 then $\phi^x_{i,j}$ flips row $i$ onto row $j$; recall that we have an empty row when $j=0$.
			If $i \sim_x j$ are linked in step 2 then $\phi^x_{i,j}$ removes both rows $i$ and $j$. 
		\end{rem}
				
		\begin{exmp}
			Let $x = [4] \subset [\tilde{4}]$. Then the maps $\phi_{1,0}^x$ and $\phi_{3,2}^{x}$ are given by 
			\[
			\phi_{1,0}^x 
			\left(
			\begin{array}{c}
			\tableau{
				0&1&2&3 	 		\\
				0&1&2  		\\
				0&1	\\
				0   
			}
			\end{array}
			\right)
			=
			\begin{array}{c}
			\tableau{
				0 & 1 & 2 & 3 	 		\\
				0 & 1 & 2  		\\
				0 & 1	\\
				\hfill   
			}
			\end{array}
			~~~~
			\text{and}
			~~~~
			\phi_{3,2}^x 
			\left(
			\begin{array}{c}
			\tableau{
				0&1&2&3 	 		\\
				0&1&2  		\\
				0&1	\\
				0   
			}
			\end{array}
			\right)
			=
			\begin{array}{c}
			\tableau{
				0&1&2&3 	 		\\
				\hfill & \hfill & \hfill  		\\
				\hfill & \hfill	\\
				0   
			}
			\end{array}.
			\]
			So $d_{\{4,3,2,1\}} = \delta_{\{4,3,2,1\}}^{I} + v \delta_{\{4,3,2\}}^I  + v \delta_{\{4,1\}}^I + v^2 \delta_{\{ 4\}}^I$.  
		\end{exmp}

		Observe $\#(y,x) = 1$ if and only if $y$ is obtained from $x$ by applying exactly one row flip or row removal. 
		Consequently, the symmetric difference of $x$ and $y$ is $\{ i,j\}$ where $i \sim_x j$. 
	
	\subsection{Bott-Samelson basis}
	\label{Ssec: bs basis}

		We now determine the Bott-Samelson basis of ${}^I N$. 
		The inductive construction utilises the \textit{row reduced expression} of $x$. 
		However, since each $x \in {}^I W$ is fully commutative, each result holds independent of the choice of reduced expression.  

		\begin{lem}
		\label{lem: E set}
			If $x = \{t_1 > \dots > t_l > i+1 \} \in {}^I W$, then either $E(x) = E(xs_i)$ or $E(x) = E(xs_i) \cup \{t_j \}$ where $t_{j+ t_j/2} =1$. 
		\end{lem}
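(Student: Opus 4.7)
The plan is to compare membership in $E(x)$ and $E(xs_i)$ row by row, using Equation~(\ref{eqn: action on sets}) to compute $xs_i$ and the reformulation (\ref{eqn: row length condition}) of $E$. Since $i+1 = \min(x)$ and $i \notin x$, Equation~(\ref{eqn: action on sets}) yields $xs_i = \{t_1, \dots, t_l, i\}$ when $i \geq 1$, and $xs_i = \{t_1, \dots, t_l\}$ when $i = 0$ (using the identification $\{0\} \sim \emptyset$). The smallest row $i+1$ itself never lies in $E(x)$, since (\ref{eqn: row length condition}) requires $t_{k+j}$ to exist for $1 \leq j \leq t_k/2$, whereas no rows lie below the bottom row of $x$. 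The comparison therefore reduces to checking, for each $1 \leq k \leq l$, whether $t_k \in E(x) \iff t_k \in E(xs_i)$.

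Fix such a $t_k$. The only entries of $x$ appearing in (\ref{eqn: row length condition}) are $t_{k+1}, \dots, t_{k + t_k/2}$. If $k + t_k/2 \leq l$, these all coincide in $x$ and $xs_i$, so membership agrees. If $k + t_k/2 > l+1$, the required row is absent from both partitions, so $t_k$ lies in neither set. The only borderline case is $k + t_k/2 = l+1$, where the final inequality in (\ref{eqn: row length condition}) reads $t_k - t_{l+1} \leq t_k - 1$, i.e.\ $t_{l+1} \geq 1$. When $i \geq 1$ we have $t_{l+1} = i+1$ in $x$ and $t_{l+1} = i$ in $xs_i$, both $\geq 1$, so the borderline case is identical in both partitions and $E(x) = E(xs_i)$.

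Only the case $i = 0$ remains. Here $xs_i$ has no row $l+1$ at all, so any $t_k$ with $k + t_k/2 = l+1$ that lies in $E(x)$ fails to lie in $E(xs_i)$; and the condition $k + t_k/2 = l+1$ is equivalent to $t_{k + t_k/2} = t_{l+1} = i+1 = 1$. The one substantive point is uniqueness of the distinguished index: if $j < j' \leq l$ both satisfied $j + t_j/2 = j' + t_{j'}/2 = l+1$, then $k := j' - j$ lies in $[1, t_j/2)$ and $t_j - t_{j'} = 2(j'-j) = 2k$, violating the inequality $t_j - t_{j+k} \leq 2k - 1$ required by (\ref{eqn: row length condition}) and so forcing $t_j \notin E(x)$. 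Hence at most one such $t_j$ can contribute, and we obtain $E(x) = E(xs_i) \cup \{t_j\}$ in this case. The whole argument is routine bookkeeping; the only potential stumbling blocks are the convention $\{0\} \sim \emptyset$ when $i = 0$ and the uniqueness of the distinguished $t_j$, both of which are handled by the above observations.
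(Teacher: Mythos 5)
Your proof is correct and follows essentially the same approach as the paper's: compare membership of each $t_k$ in $E(x)$ and $E(xs_i)$ using the reformulation (\ref{eqn: row length condition}), noting the two partitions differ only in (the presence of) the bottom row, so that only the borderline index with $k + t_k/2 = l+1$ can behave differently, and only when $i=0$. One small merit of your version is that you explicitly verify that at most one such borderline index can lie in $E(x)$ (two indices $j < j'$ with $j + t_j/2 = j' + t_{j'}/2 = l+1$ force $t_j - t_{j'} = 2(j'-j)$, violating (\ref{eqn: row length condition})); the paper's proof implicitly relies on this fact, packaging it into the word ``minimal'', without spelling it out.
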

		\begin{proof}
			By Equation \ref{eqn: action on sets} we have
			\begin{align*}
				xs_i =
				\begin{cases}
					\{t_1 > \dots > t_l > i \} 
					&\text{ if } i \geq 1,
					\\
					\{t_1 > \dots > t_l  \}
					&\text{ if } i=0.
				\end{cases}
			\end{align*}
			So $x$ and $xs_i$ only differ in their minimal elements. 
			In either case $x>xs_i$. 
			Note that for any $y \in {}^I W$, if $t_j \in E(y)$ then  $t_{j + t_{j}/2} \geq 1$, as row $(j + t_{j}/2)$ in the partition corresponding to $y$ is non-empty. 
			If $i \geq 1$ then any $t \in E(x)$ also satisfies the row-length condition in Equation \ref{eqn: row length condition} to be in $E(xs_i)$ and vice versa. 
			Hence $E(xs_i) = E(x)$. 
			\par 
			If $i=0$, then $x = \{t_1 > \dots > t_l > 1 \}$ and $xs_0 = \{t_1 > \dots > t_l \}$. 
			There are two possibilities to consider. 
			\par  
			Suppose $t_j \in E(x)$ is minimal and $t_{j+ t_j/2} = 1$, then $t_j \notin E(xs_0)$ as $|xs_0| = j+ t_j/2 - 1$, i.e. the partition $xs_0$ has too few parts. 
			Hence $E(x) = E(xs_0) \cup \{ t_j \}$.
			Alternatively, suppose $t_j \in E(x)$ is minimal and $t_{j+ t_j/2} > 1$, then $t_j \in E(xs_0)$ as the row-length condition in Equation \ref{eqn: row length condition} remains satisfied. 
			Hence $E(x) = E(xs_0)$. 	
		\end{proof}

		We can now determine the Bott-Samelson bases for ${}^I N$.
		
		\begin{prop}
		\label{prop: bs decomp}
			Fix $x \in {}^I W$, and let $\ux$ be any reduced expression. Then
			\begin{align*}
				d_{\ux}  = \sum_{E \subseteq E(x)} d_{x \backslash E}.
			\end{align*} 
		\end{prop}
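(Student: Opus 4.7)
The plan is to proceed by induction on $\ell(x)$. Since every element of ${}^I W$ is fully commutative, $b_{\ux}$ (and hence $d_{\ux}$) is independent of the chosen reduced expression $\ux$, so it suffices to work throughout with the row reduced expression of $x$. The base case $x = \id$ is immediate: $E(\id) = \emptyset$ and both sides equal $\delta^I_{\id}$.

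For the inductive step, let $s = s_i$ be the final letter of the row reduced expression of $x$, set $y := xs < x$, so $\ux = \underline{y}\, s$. Applying the definition and the inductive hypothesis yields
\begin{align*}
d_{\ux} = d_{\underline{y}} \cdot b_s = \sum_{E' \subseteq E(y)} d_{y \setminus E'} \cdot b_s.
\end{align*}
Each factor $d_w \cdot b_s$ with $w = y \setminus E'$ is then expanded via the standard recursion in the antispherical module: $d_w b_s = (v + v^{-1}) d_w$ when $ws < w$; $d_w b_s = d_{ws} + \sum_{z < w,\, zs < z} \mu_s(z,w)\, d_z$ when $ws > w$ and $ws \in {}^I W$; and $d_w b_s = 0$ when $ws \notin {}^I W$. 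Crucially, Proposition \ref{prop: kl basis} determines the $\mu$-coefficients combinatorially: $\mu_s(z,w) \neq 0$ exactly when $z$ is obtained from $w$ by a single row-flip or row-removal $\phi^w_{i',j'}$ compatible with $s$.

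The remaining step is to reorganise the resulting expression so that it visibly equals $\sum_{E \subseteq E(x)} d_{x \setminus E}$. This is where Lemma \ref{lem: E set} enters, providing two cases: either (a) $E(x) = E(y)$, or (b) $E(x) = E(y) \cup \{t_j\}$ where $t_{j + t_j/2} = 1$. In case (a), each subset $E \subseteq E(x) = E(y)$ is matched with $E' = E$, the leading $d_{(y \setminus E)s} = d_{x \setminus E}$ supplying the correct contribution, while the $\mu$-correction terms must be shown to cancel in pairs across the sum. In case (b), the extra subsets $E \ni t_j$ on the right-hand side are accounted for by precisely these $\mu$-correction terms, which appear because $s = s_0$ completes the length-$t_j$ 1-2-1-tableau $\underline{121}_{t_j}$.

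The main obstacle is the combinatorial bookkeeping in case (b): one must verify that the $\mu$-corrections emerging from the various $d_{y \setminus E'} b_s$ assemble exactly into $\sum_{E \subseteq E(x),\, t_j \in E} d_{x \setminus E}$, with no spurious terms. This reduces to checking that the row-flip and row-removal operations identified by Proposition \ref{prop: kl basis}, when combined with right-multiplication by $s_0$, interact with the 1-2-1-tableau structure exactly as Lemma \ref{lem: E set} predicts the passage from $E(y)$ to $E(x)$. A secondary technical wrinkle is the three-way case split for each $d_w b_s$ depending on the position of $E'$ relative to the last row of $y$; the key point is that the correct global identity emerges only after summing over all $E' \subseteq E(y)$.
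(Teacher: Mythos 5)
Your overall framework matches the paper's: induction on length using the row reduced expression, expanding $d_{\underline{y}}\,b_{s_i}$ via the inductive hypothesis, invoking Proposition~\ref{prop: kl basis} to control the $\mu$-coefficients, and invoking Lemma~\ref{lem: E set} to relate $E(x)$ and $E(xs_i)$. However, there is a genuine gap in how you propose to close case (a), and a related simplification you are missing.

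In case (a), you write that the $\mu$-correction terms ``must be shown to cancel in pairs across the sum.'' This cannot be the mechanism: $d_{y_E} b_{s_i}$ expands as $d_{x\backslash E}+\sum_z \mu(z,y_E)\,d_z$ with \emph{nonnegative} integer coefficients $\mu(z,y_E)\in\{0,1\}$, and the target $\sum_{E\subseteq E(x)} d_{x\backslash E}$ is a multiplicity-free sum in the $d$-basis. There is no signed cancellation available; the only way the identity can hold is for the correction terms to \emph{vanish individually}. That is precisely what the paper proves: for $i>0$ it shows $\#(z,y_E)\geq 2$ for every $z$ with $zs_i<z$, because $i+1$ and $i$ are never $y_E$-related (the larger element is not in $y_E$), so $\mu(z,y_E)=0$; for $i=0$ one shows that there is at most one $t$ with $t\sim_{y_E}1$, that this $t$ is \emph{independent of} $E$, and that it exists if and only if $E(x)=E(xs_0)\cup\{t\}$. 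This independence of $t$ from $E$ is the essential combinatorial fact that makes the bookkeeping in your case (b) come out cleanly, and your plan does not identify it. Finally, the ``three-way case split'' you flag as a technical wrinkle is spurious: since $E\subseteq E(xs_i)$ consists of large entries while $s_i$ acts only on the minimal element, one always has $y_E s_i = x\backslash E\in {}^I W$ and $y_E s_i> y_E$, so only the single case $ws>w$, $ws\in {}^I W$ occurs.
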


		\begin{proof}
			We prove the claim by induction on $\ell(\ux)$. 
			If $\ux = \emptyset$ then $E(\id)= \emptyset$, and the claim is immediate. 
			\par 
			Now fix $\ux$ where $\ell(\ux)>0$ and assume the claim holds for every reduced expression $\ux'$ where $\ell(\ux')< \ell(\ux)$. 
			Let $x = \{t_1 > \dots > t_l > i+1 \}$, Equation \ref{eqn: action on sets} again implies 
			\begin{align*}
				xs_i =
				\begin{cases}
					\{t_1 > \dots > t_l > i \} 
					&\text{ if } i \geq 1
					\\
					\{t_1 > \dots > t_l  \}
					&\text{ if } i=0.
				\end{cases}
			\end{align*} 
			Clearly $x> xs_i$ as the Bruhat order on ${}^I W$ agrees with the containment order on partitions. 
			Minimality implies $i+1, i \notin E(x)$ and $i+1,i \notin E(xs_i)$. 
			For any $E \subset E(xs_i)$ set $y_E=xs_i \backslash E$. 
			Then we have $y_E s_i = x\backslash E$, as $i$ is in both $y_E$ and $x \backslash E$, and $i+1$ is in neither $y_E$ nor $x \backslash E$.
			The induction hypothesis implies
			\begin{align*}
				d_{\ux} 
				=
				d_{\underline{xs}_i} b_{s_i}
				= 
				\sum_{E \subseteq E(xs_i)} d_{y_E} b_{s_i}
				=
				\sum_{E \subseteq E(xs_i)} d_{x \backslash E}
				+
				\sum_{E \subseteq E(xs_i)}\sum_{zs_i < z} \mu(z ,y_E) d_{z}
			\end{align*} 
			where $\mu(z , y_E)$ is the coefficient of $v$ in $n_{z, y_E}$. 
			By Proposition \ref{prop: kl basis} we know $\mu(z, y_E) = 1$ if $ \#(z, y_E)=1$ and $\mu(z, y_E) = 0$ otherwise.
			Hence
			\begin{align}
			\label{eqn: bs decomp}
				d_{\ux} 
				=
				\sum_{E \subseteq E(xs_i)} d_{x \backslash E}
				+
				\sum_{E \subseteq E(xs_i)}\sum_{\substack{zs_i < z \\ \#(z, y_E)=1 }} d_{z}.
			\end{align}
			The condition $zs_i < z$ is equivalent to either: $i+1 \in z$ and $i \notin z$ when $i>0$; or $1 \in z$ when $i=0$.  
			\par 
			Observe that if $i > 0$ the elements $i+1$ and $i$ are not $y_E$-related as $i+1 \notin y_E$ and it is necessary that the greater of the two elements be in $y_E$. 
			It follows that $\#(z, y_E) \geq 2$ for any $z$ satisfying $zs_i < z$.
			Consequently 
			\begin{align*}
				d_{\ux} 
				=
				\sum_{E \subseteq E(xs_i)} d_{x \backslash E}
				= 
				\sum_{E \subseteq E(x)} d_{x \backslash E}
			\end{align*}
			where the second equality follows from Lemma \ref{lem: E set}. 
			This proves the claim when $i>0$. 
			\par 
			If $i=0$, then $xs_0 = \{t_1 > \dots > t_l  \}$ where $t_l \geq 2$. 
			By Equation \ref{eqn: action on sets}, $zs_0 < z$ is equivalent to $1 \in z$.
			So, it follows that $\#(z,y_E) = 1$ if and only if there exists $t \in y_E$ such that $t \sim_{y_E} 1$ and $\phi_{t,1}^{y_E}(y_E)=z$. 
			Suppose such a $t$ exists. 
			Since $1 \notin y_E$, it must be that $t \sim_{y_E} 1$ in Step 1. 
			By definition $t$ must be the minimal element in $y_E$ such that $|\{ j \in y_E ~\vert~ t \geq j \geq 1 \}| = |\{j \notin y_E ~\vert~ t \geq j \geq 1\} |$. 
			Since $t$ is minimal, $t$ is strictly smaller than the minimal element of $E(xs_0)$ as $t' \in E(xs_0)$ requires $|\{ j \in xs_0 ~\vert~ t' \geq j \geq 1 \}| > |\{j \notin xs_0 ~\vert~ t' \geq j \geq 1\}|$.   
			Hence $t$ is determined by $xs_0$. 
			Consequently if $t$ exists for some $y_E = xs_0 \backslash E$, it exists for every $y_E = xs_0 \backslash E$; that is, it is independent of the subset $E$. 
			Further, $t \in E(x)$ because $x = xs_0 \cup \{ 1 \}$.
			Hence $E(x) = E(xs_0) \cup \{ t \}$ by Lemma \ref{lem: E set}. 
			Finally, note that $z = (y_E \backslash \{ t \}) \cup \{ 1\} = x \backslash (E \cup \{ t\})$ as $t \sim_{y_E} 1$ are related in Step 1.
			Hence
			\begin{align*}
				d_{\ux} 
				= 
				\sum_{E \subseteq E(xs_0)} d_{xs_0 \backslash E} b_{s_0}
				= 
				\sum_{E \subseteq E(xs_0)} (d_{x \backslash E} + d_{x \backslash (E \cup \{ t \})} )
				= 
				\sum_{E \subseteq E(x)} d_{x \backslash E} 
			\end{align*}
			proving the claim.
			\par 
			If no such $t$ exists, then the claim is immediate from Equation \ref{eqn: bs decomp} and Lemma \ref{lem: E set}. 	  
		\end{proof}
		
\subsection{The $2$-Kazhdan-Lusztig basis}
	
		We now determine the $p$-Kazhdan-Lusztig basis using local intersection forms. 
		We quickly recall the construction of local intersection forms from \cite{JW17, GJW23, EMTW20}.  
		\par 
		Let $\sD_{\text{RA}}^{\BS}(\h,I)$ denote the full subcategory of Bott-Samelson objects in $\DRA(\h,I)$. 
		Define  
		\begin{align*}
			\sD^{\not<y}_{\text{RA}} (\h , I) 
			:= 
			\sD_{\text{RA}}^{\BS}(\h,I) ~/~ \langle B_{\uz} \vert z < y \rangle_{(1), \oplus}. 
		\end{align*}
		for each $y \in W$.
		The category is enriched over $\Z$-graded $R$-modules. 
		We denote by  $\Hom_{\not < y}^{\bullet}(B,B')$ the graded Hom spaces and $\Hom_{\not < y}^{i}(B,B')$ the degree-$i$ morphisms. 
		Since  any non-identity endomorphisms of $B_{\uy}$ factor through some $B_{\uz}$ where $z<y$, we have $\Endo^{\bullet}_{\not < y}(B_{\uy}) \cong R$ and $\Endo^{0}_{\not < y}(B_{\uy}) \cong \Bbbk$.
		Hence, for any Bott-Samelson object $B_{\ux}$, we define the \ldef{degree-$i$ local intersection form} $I_{y, \ux}^{i}$ as:
		\[
		\begin{array}{ccccccC}
			I_{y, \ux}^{i}:
			&
			\Hom_{\not< y }^{i}(B_{\ux} , B_{\uy})
			\times
			\Hom^{-i}_{\not< y }(  B_{\uy}, B_{\ux})
			&
			\longrightarrow
			&
			\Endo_{\not< y }^{\bullet}(B_{\uy})
			&
			\longrightarrow
			&
			\Endo_{\not< y }^{0}(B_{\uy}) \cong \Bbbk
			\\\\
			&
			(f,g)
			&
			\longmapsto
			&
			f \circ g
			&
			\longmapsto
			&
			(f \circ g)_0
		\end{array}
		\]
		where $(f \circ g)_0$ denotes the degree-0 component of $f\circ g$. 
		The importance of the local intersection form stems from the following Lemma.
		\begin{lem}
		\label{Lem: IF Multiplicity General}
			Let $\Bbbk$ be a complete local ring, and $\bF$ the residue field.  
			For objects $B_y$ and $B_{\ux}$ in $\DRA(\h,I,\Bbbk)$, the graded multiplicity of $B_y$ in $B_{\ux}$ is 
			\begin{align*}
			\grrk I^{\bullet}_{y, \ux}
			=
			\sum_i  \rank (I_{y,\ux}^i \otimes \bF)  v^i. 
		\end{align*}  
		\end{lem}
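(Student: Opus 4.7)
The strategy is to decompose $B_{\ux}$ in the Krull--Schmidt category $\DRA(\h,I,\Bbbk)$, pass to the quotient $\sD^{\not<y}_{\text{RA}}$, and then use the fact that $\Endo^0_{\not<y}(B_y) \cong \Bbbk$ is local to extract the graded multiplicity of $B_y$ from the composition pairing $I^\bullet_{y,\ux}$. This is the classical ``local intersection form'' argument of \cite{JW17,EMTW20} applied to the antispherical setting.

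First, by Krull--Schmidt, write $B_{\ux} \cong \bigoplus_{z \in {}^I W,\,n \in \Z} B_z(n)^{\oplus m_{z,n}}$ with only finitely many $m_{z,n} \neq 0$, and denote by $m_y(v)$ the graded multiplicity of $B_y$ to be computed. Applying the additive quotient functor $\DRA \to \sD^{\not<y}_{\text{RA}}$ kills every $B_z(n)$ with $z<y$, so the image of $B_{\ux}$ in the quotient is isomorphic to $B_y^{\oplus m_y(v)} \oplus X$, where $X$ is a direct sum of shifts of indecomposables $B_z$ with $z \not\leq y$ and $z \neq y$. The graded Hom spaces appearing in $I^i_{y,\ux}$ therefore split as
\begin{align*}
\Hom^i_{\not<y}(B_{\ux}, B_y) &= D_i \oplus E_i,\\
\Hom^{-i}_{\not<y}(B_y, B_{\ux}) &= D_i' \oplus E_i',
\end{align*}
where $D_i,D_i'$ come from the $B_y$-isotypic block and $E_i,E_i'$ come from $X$.

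Under the natural dual bases determined by this decomposition, the restriction of the composition pairing to $D_i \times D_i'$ is a perfect $\Bbbk$-bilinear pairing between free $\Bbbk$-modules of rank equal to the coefficient of $v^i$ in $m_y(v)$; tensoring with $\bF$ yields a perfect pairing of $\bF$-vector spaces of that rank. Any composition $f\circ g$ contributing to $I^i_{y,\ux}$ outside this diagonal block factors through some summand $B_z(n)$ of $X$, producing an element $h \in \Endo^0_{\not<y}(B_y) = \Bbbk$. This $h$ must lie in the maximal ideal $\mathfrak{m} \subset \Bbbk$: if instead $h$ were a unit, then $e := f \circ h^{-1} \circ g$ would be an idempotent on $B_z(n)$ with image isomorphic to $B_y$, realising $B_y$ as a direct summand of $B_z(n)$ and contradicting Krull--Schmidt uniqueness together with $z \neq y$. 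Hence all off-diagonal contributions vanish under $\otimes_{\Bbbk}\bF$, and summing over $i$ gives $\grrk I^\bullet_{y,\ux} = m_y(v)$.

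The principal delicate step is the block-triangular analysis of the composition pairing and the vanishing of the $X$-contributions modulo $\mathfrak{m}$; this combines the Krull--Schmidt theorem, the idempotent-lifting argument above, and the locality of the complete local ring $\Bbbk$.
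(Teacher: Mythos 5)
The paper's ``proof'' of this lemma is a one-line citation to Gibson--Jensen--Williamson, with the remark that the idempotent-lifting argument there carries over to $\DRA(\h,I)$. Your proposal spells out the argument being cited, and the overall strategy --- Krull--Schmidt decomposition in the Karoubian category, passage to the quotient $\sD^{\not<y}_{\text{RA}}$, block-diagonality of the composition pairing along the decomposition, and locality of $\Bbbk$ to kill the contributions from summands $B_z(n)$ with $z \neq y$ --- is the correct and standard one. So what you do differently is that you actually prove it; the conclusion matches.

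That said, two statements should be tightened. First, the claim that ``the restriction of the composition pairing to $D_i \times D_i'$ is a perfect $\Bbbk$-bilinear pairing between free $\Bbbk$-modules of rank equal to the coefficient of $v^i$ in $m_y(v)$'' overstates the situation. With the paper's convention that $\Endo^{\bullet}_{\not<y}(B_{\uy}) \cong R$, one has $\Hom^i_{\not<y}(B_y(n), B_{\uy}) \cong R_{i-n}$, so $D_i \cong \bigoplus_n R_{i-n}^{\oplus m_{y,n}}$ and $D_i' \cong \bigoplus_n R_{n-i}^{\oplus m_{y,n}}$; these are in general of different ranks and the pairing is not perfect on them. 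What is true is that the product $R_{i-n} \cdot R_{n-i}$ lands in $R_0$ only when $n=i$, so the radical of the restricted pairing contains the entire $n \neq i$ part, and on the $n=i$ block the pairing is an identity matrix of size $m_{y,i}$. This still yields $\rank(I^i_{y,\ux} \otimes \bF) = m_{y,i}$, but for the stated reason rather than by perfectness. Second, the phrase ``$e := f \circ h^{-1} \circ g$ would be an idempotent on $B_z(n)$'' has source and target on the wrong object: $f \circ h^{-1}\circ g$ is an endomorphism of $B_{\uy}$. What you need is that if $f \circ g$ factors as $\alpha \circ \beta$ with $\beta : B_{\uy} \to B_z(n)$ and $\alpha: B_z(n) \to B_{\uy}$, and $(\alpha\beta)_0 = h$ were a unit, then $(\beta\alpha)^2 = \beta(\alpha\beta)\alpha = h\,\beta\alpha$, so $h^{-1}\beta\alpha \in \Endo(B_z(n))$ is idempotent, and it cannot be $0$ (else $h^2 = 0$) nor the identity (else $B_z(n) \cong B_y$), contradicting indecomposability of $B_z(n)$. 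Finally, keep in mind that $I^\bullet_{y,\ux}$ is defined in the paper with the Bott--Samelson object $B_{\uy}$, not $B_y$, as the target, and that $\sD^{\not<y}_{\text{RA}}$ is a quotient of the Bott--Samelson subcategory; one must pass to the Karoubi envelope of the quotient before speaking of $B_y$ and of the decomposition of $B_{\ux}$, where $B_{\uy}$ and $B_y$ become canonically isomorphic. These are fixable imprecisions rather than gaps in the method.
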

		\begin{proof}
			This is \cite[\S 5]{GJW23} for $\DBE(\h)$, the idempotent lifting arguments carry over to $\DRA(\h,I)$.
		\end{proof}

		We can now determine the $p$-Kazhdan-Lusztig basis. 
		
		\begin{thm}
		\label{thm: indecomposable objects}
			Let $\h$ be a Cartan realisation type $\tB_n$ over a field or a complete local ring $\Bbbk$, and $I\subset S$ of type $\tA_{n-1}$. 
			The $p$-Kazhdan-Lusztig basis of $\sD_{\text{RA}}(\h,I, \Bbbk)$ is
			\begin{align*}
				{}^p d_x 
				= 
				\begin{cases}
					d_{\ux} &\text{if } 2 \notin \Bbbk^{\times}, \text{ and }
				\\
					d_x &\text{if } 2 \in \Bbbk^{\times},
				\end{cases}
			\end{align*}
			where $\ux$ is any reduced expression of $x$. 
		\end{thm}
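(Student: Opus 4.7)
The plan is to induct on $\ell(x)$, using local intersection forms to determine which indecomposables appear as summands of the Bott--Samelson object $B_{\ux}$ in $\DRA(\h,I)$. The base case $x = \id$ is trivial. For the inductive step, Lemma \ref{Lem: pKL bound} sandwiches ${}^p d_x$ between $d_x$ and $d_{\ux}$, and Proposition \ref{prop: bs decomp} expands $d_{\ux} = \sum_{E \subseteq E(x)} d_{x \backslash E}$, so ${}^p d_x$ must be a sub-sum of this expansion.

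First I would observe that comparing Deodhar's defect formula for $d_{\ux}$ with Proposition \ref{prop: bs decomp} and the unitriangular expansion of each $d_z$ in the standard basis forces the $I$-antispherical defect-$0$ subexpressions of $\ux$ to be precisely the family $\{\ux^E : E \subseteq E(x)\}$ from Proposition \ref{prop: bruhat strolls}, with $\ux^E$ the unique such subexpression evaluating to $x \backslash E$. Via the antispherical light leaves basis of \cite{LW22}, this makes $\Hom^{0}_{\not<y}(B_{\ux}, B_{\uy})$ one-dimensional for $y = x \backslash E$, so the degree-$0$ local intersection form $I^{0}_{y, \ux}$ is represented by the single entry $LL_{\ux, E} \circ \Gamma\Gamma^{\ux, E}$, which Proposition \ref{prop: local intersection form} evaluates to $2^{|E|}(-1)^{\ell(E)/2}$.

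When $2 \in \Bbbk^{\times}$, each such entry is a unit, so Lemma \ref{Lem: IF Multiplicity General} shows that $B_{x \backslash E}$ is a direct summand of $B_{\ux}$ of multiplicity one at degree $0$. Character matching against $d_{\ux} = \sum_E d_{x \backslash E}$ together with the inductive hypothesis ${}^p d_{x \backslash E} = d_{x \backslash E}$ then pins the multiplicity exactly and rules out any shifted copies, yielding $B_{\ux} \cong \bigoplus_{E \subseteq E(x)} B_{x \backslash E}$ and therefore ${}^p d_x = d_x$.

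When $2 \notin \Bbbk^{\times}$, the intersection form entry vanishes for every $E \neq \emptyset$, so no $B_{x \backslash E}$ (with $E \neq \emptyset$) splits off $B_{\ux}$ at degree $0$. To upgrade this to indecomposability of $B_{\ux}$ itself, I would extend the computation underlying Equation~\ref{eqn: repeated barbell forcing} to arbitrary pairs of defect-$0$ double light leaves in $\Endo^{0}(B_{\ux})$, in line with Proposition \ref{Prop: Endo alg}: in characteristic $2$ the resulting algebra collapses to one generated by commuting nilpotents and is therefore local, so $B_{\ux}$ is indecomposable and ${}^p d_x = d_{\ux}$. The main obstacle is this last step, passing from the vanishing of individual intersection form entries to the genuine indecomposability of $B_{\ux}$ in characteristic $2$; it requires controlling the full degree-$0$ endomorphism algebra and verifying that no higher-defect double leaves provide additional splittings, essentially a direct proof of the Bott--Samelson version of Proposition \ref{Prop: Endo alg}.
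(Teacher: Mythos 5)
Your proposal uses the same core inputs as the paper --- Proposition~\ref{prop: bs decomp}, Proposition~\ref{prop: local intersection form}, and Lemma~\ref{Lem: IF Multiplicity General} --- and the argument for $2 \in \Bbbk^{\times}$ is essentially the paper's (though the paper does not need the induction on $\ell(x)$). For $2 \notin \Bbbk^{\times}$, however, the ``main obstacle'' you identify is not actually one: Lemma~\ref{Lem: IF Multiplicity General} already returns the full \emph{graded} multiplicity of $B_y$ in $B_{\ux}$ as $\sum_i \rank(I^i_{y,\ux}\otimes\bF)\,v^i$. Because the decomposition $d_{\ux} = \sum_{E} d_{x\backslash E}$ of Proposition~\ref{prop: bs decomp} is multiplicity-free and concentrated in degree $0$, the form $I^{\bullet}_{x\backslash E, \ux}$ vanishes in all non-zero degrees and is the $1\times 1$ matrix $[2^{|E|}(-1)^{\ell(E)/2}]$ in degree $0$, which has rank $0$ over $\bF$ whenever $E\neq\emptyset$. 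So $B_{\ux}$ contains no shifted copy of any $B_{x\backslash E}$ with $E\neq\emptyset$, and since $d_y$ appears with zero coefficient in $d_{\ux}$ for every other $y$, the only remaining summand is $B_x$ with multiplicity one. The paper concludes directly at this point; no further indecomposability argument is required.

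Your proposed detour --- computing $\Endo^0(B_{\ux})$ from pairwise products of defect-$0$ double light leaves and showing it is local --- is a valid alternative, and is essentially the content of Proposition~\ref{prop: alg presentation} together with Proposition~\ref{prop: idempotent}, which give the paper's ``third proof'' via idempotents. But it is considerably heavier than needed for the intersection-form route, and you correctly flag the care required to avoid circularity: the paper derives Proposition~\ref{prop: alg presentation} from Theorem~\ref{thm: indecomposable objects}, so you would need to re-establish the basis and multiplication rules for $\Endo^0(B_{\ux})$ directly from the light-leaves filtration rather than from the theorem you are trying to prove.
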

		\begin{proof}
			For any expression $\uz$, not necessarily reduced, Lemma \ref{Lem: IF Multiplicity General} shows that after taking the diagrammatic character (see \cite[\S6]{LW22}) we have 
			\begin{align*}
				d_{\uz} = \sum_{w \in W} \grrk I^{\bullet}_{y, \ux}~ d_w.
			\end{align*}
			Hence it suffices to compute the local intersection forms only at those $y \leq x$ such that $d_y$ has non-zero coefficient in $d_{\ux}$. 
			By Proposition \ref{prop: bs decomp}, we need only check the local intersection forms $I_{x \backslash E, \ux}^{\bullet}$ for each $E \subseteq E(x)$. 
			Since the sum in Proposition \ref{prop: bs decomp} is multiplicity free and concentrated in degree 0, it follows that the $I_{x \backslash E, \ux}^{\bullet}$ is concentrated in degree 0, i.e. $I_{x \backslash E, \ux}^{\bullet} = I^0_{x \backslash E, \ux}$, and determined by $LL_{\ux, E} \circ \Gamma \Gamma^{\ux, E}$. 
			This composition was determined in Proposition \ref{prop: local intersection form}, which implies the local intersection form is the $1 \times 1$ matrix
			\begin{align*}
				I_{x \backslash E, \ux}^0 
				= 
				\left[2^{|E|} (-1)^{\ell(E)/2} \right].
			\end{align*} 
			Hence, if $E \neq \emptyset$ we have
			\begin{align*}
				\grrk \left( I_{x \backslash E, \ux}^{\bullet} \otimes \bF  \right) 
				=
				\begin{cases}
					0 
					&
					\text{ if } 2 \notin \Bbbk, \text{ and }
				\\
					1
					&
					\text{ if } 2 \in \Bbbk.
				\end{cases}
			\end{align*}
			Hence the claim.  
		\end{proof}

		\begin{rem}
			Suppose $\Bbbk$ is a complete local ring such that $2 \notin \Bbbk$ and let $\bK$ denote its fraction field.
			Recall $w_I w_0 = [n]$. 
			It is easily seen that $|E(w_I w_0)| = \lfloor n/2 \rfloor$ and $\ell(w_I w_0) = \binom{n+1}{2}$. 
			If we denote by $\bK(B_{w_I w_0})$ the object obtained by extending scalars, then Theorem \ref{thm: indecomposable objects} implies 
			\begin{align*}
				\bK(B_{w_I w_0}) \cong \bigoplus_{y \in E(w_I w_0)} {}^{\bK} B_{w_I w_0 \backslash E}
			\end{align*}
			which has $2^{\lfloor n/2 \rfloor}$ summands. 
			Varying $n$ produces the first family in finite Hecke categories of indecomposable objects $B$ whose number of summands under extension of scalars grows exponentially relative to their length.\footnote{
			Let $\Bbbk$ be a complete local ring with residue field  of characteristic $p$, and fraction field $\bK$. 
			If $\h$ is a realisation of type $\tI_2(m)$ over $\Bbbk$, then any $x \in W$ has the property $\bK(B_x)$ has $2^k$ indecomposable summands, where $k$ is the number of non-zero digits in the $p$-adic expansion of $\ell(x)$. 
			In particular, the number of summands grows (at most) linearly in the length of $x$. } 
		\end{rem}

		\begin{rem}
		\label{rem: minu p=2 intersection forms}
			The argument in Theorem \ref{thm: indecomposable objects} shows that in type $\tC_n$, for any $x \in {}^I W$ and  $E \subseteq E(x)$, the local intersection form  is 
			\begin{align*}
				I_{x \backslash E, \ux}^0 
				= 
				\left[(-1)^{\ell(E)/2} \right].
			\end{align*}
			Hence the $p$-Kazhdan-Lusztig basis for the odd orthogonal grassmannian is independent of $p$.  
		\end{rem}
		
		\begin{rem}
			The Bott-Samelson basis of ${}^I N$ in type $\tC_n$ was studied in \cite{BDFHN23}, where it was shown that when $d_{\ux}$ is expressed in terms of the standard basis each coefficient is a monic monomial. 
			Obviously their result extends to type $\tB_n$. 
			Combined with Theorem \ref{thm: indecomposable objects}, this shows all $p$-Kazhdan-Lusztig polynomials for $\DRA(\h,I)$ are monoic monomials.  
		\end{rem}

\subsection{Degree-0 endomorphism algebras}
\label{Ssec: Endomorphism algebras}
	
		We conclude by noting that the subalgebra $\Endo^0(B_x) \subset \Endo^{\bullet} (B_x)$ has a beautiful combinatorial description. 
		This allows us to explicitly determine the idempotents which project from $B_{\ux}$ onto $B_x$.
		In this section we fix $x \in {}^I W$ and define $\varphi_{E} := \Gamma \Gamma^{\ux, E} \circ LL_{\ux, E}$ for each $E \subseteq E(x)$. 
		We continue to assume $\Bbbk$ is a field or a complete local ring. 
		
		\begin{lem}
			  If $2 \notin \Bbbk^{\times}$, then the morphisms $\{ \varphi_{E} ~\vert~ E \subseteq E(x) \}$ are a basis of $\Endo^0(B_x)$. 
		\end{lem}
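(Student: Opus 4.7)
The plan is to identify $\{\varphi_E : E \subseteq E(x)\}$ with the degree-$0$ slice of the antispherical double-leaves basis of $\Endo^{\bullet}(B_{\ux})$. First I would invoke Theorem~\ref{thm: indecomposable objects}: since $2 \notin \Bbbk^{\times}$, we have ${}^p d_x = d_{\ux}$, so $B_{\ux}$ is itself indecomposable and isomorphic to $B_x$ in $\DRA(\h,I,\Bbbk)$. It therefore suffices to prove that $\{\varphi_E : E \subseteq E(x)\}$ is a basis of $\Endo^0(B_{\ux})$.

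Next, the antispherical double-leaves basis theorem (\cite[\S 6]{LW22}) provides a basis of $\Endo^{\bullet}(B_{\ux})$ consisting of compositions $\Gamma\Gamma^{\ux,F'} \circ LL_{\ux,F}$, indexed by ordered pairs of $I$-parabolic subexpressions $\ux^{F}, \ux^{F'} \subseteq \ux$ with matching endpoint $(\ux^{F})_{\bullet} = (\ux^{F'})_{\bullet}$, and each such composition has degree $\text{df}(\ux^{F}) + \text{df}(\ux^{F'})$. Because defects of $I$-antispherical subexpressions are non-negative---readily visible from Deodhar's formula for $d_{\ux}$ together with the fact that $d_{\ux}$ lies in the $\Z_{\geq 0}[v]$-span of the standard basis, so no $v^{-k}$ terms can cancel within the contribution to a fixed $\delta^I_y$---every degree-$0$ double-leaf forces both factor-defects to vanish.

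The final step is to enumerate all defect-$0$ $I$-parabolic subexpressions of $\ux$. Comparing Deodhar's formula $d_{\ux} = \sum_{\ux^e} v^{\text{df}(\ux^e)} \delta^I_{(\ux^e)_{\bullet}}$ against Proposition~\ref{prop: bs decomp}, $d_{\ux} = \sum_{E \subseteq E(x)} d_{x \backslash E}$, and using the normalisation $d_y \in \delta^I_y + v\Z[v]\{\delta^I_z : z < y\}$, the constant ($v^0$) coefficient of $\delta^I_y$ in $d_{\ux}$ is $1$ when $y = x \backslash E$ for some (necessarily unique) $E \subseteq E(x)$ and $0$ otherwise. In light of Proposition~\ref{prop: bruhat strolls}, which exhibits a defect-$0$ $I$-parabolic subexpression $\ux^E$ with endpoint $x \backslash E$ for every such $E$, this count shows that the defect-$0$ $I$-parabolic subexpressions of $\ux$ are exactly $\{\ux^E : E \subseteq E(x)\}$, each having a distinct endpoint. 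Matching endpoints then forces $F = F' = E$ for some unique $E \subseteq E(x)$, so the degree-$0$ slice of the double-leaves basis is precisely $\{\Gamma\Gamma^{\ux,E} \circ LL_{\ux,E} : E \subseteq E(x)\} = \{\varphi_E : E \subseteq E(x)\}$.

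The main obstacle is the enumeration of defect-$0$ $I$-parabolic subexpressions in the last step: one must rule out any exotic defect-$0$ subexpression not of the form $\ux^E$, which requires both the non-negativity of antispherical defects (to squeeze the constant term from Deodhar's formula into a literal count) and the positivity structure of Proposition~\ref{prop: bs decomp}.
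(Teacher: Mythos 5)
Your proof is correct and follows the same route the paper compresses into a single line: invoke Theorem~\ref{thm: indecomposable objects} to identify $B_x$ with $B_{\ux}$, then cite the double-light-leaves basis of \cite{LW22}. You spell out what the paper leaves implicit, namely that Proposition~\ref{prop: bs decomp} (showing $d_{\ux} \in \Z_{\geq 0}[v]$-span of the standard basis, indeed of the KL basis with constant coefficients) forces all $I$-antispherical defects for this $\ux$ to be non-negative, and that together with Proposition~\ref{prop: bruhat strolls} it pins down the defect-$0$ subexpressions as exactly $\{\ux^E : E \subseteq E(x)\}$ with pairwise distinct endpoints, so the degree-$0$ double leaves are precisely the $\varphi_E$.
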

		\begin{proof}
			This is immediate from Theorem \ref{thm: indecomposable objects} and the fact that double-light-leaves are a basis of $B_{\ux}$, see \cite[\S5]{LW22}. 
		\end{proof}
		
		\begin{exmp}
		\label{Exmp: endomorphism basis}
			Let $x = \{ 6,5,4,3,2,1\}$. Then $E(x) = \{ 6,4,2\}$ and $\Endo^0(B_x)$ has a basis given by the following morphisms: 
			\[
			\begin{array}{rrrr}
			\varphi_{\emptyset}=
			\begin{array}{c}
			\begin{tikzpicture}[scale=0.35]
      			\draw[color=red, thick] (0,0) to (0,6);
      			\draw[color=blue, thick] (0.2,0) to (0.2,6);
      			\draw[color=red!60!yellow, thick] (0.4,0) to (0.4,6);
      			\draw[color=red, thick] (0.6,0) to (0.6,6);
      			\draw[color=blue!50!cyan, thick] (0.8,0) to (0.8,6);
      			\draw[color=blue, thick] (1,0) to (1,6);
      			\draw[color=red!20!yellow, thick] (1.2,0) to (1.2,6);
      			\draw[color=red!60!yellow, thick] (1.4,0) to (1.4,6);
      			\draw[color=red, thick] (1.6,0) to (1.6,6);
      			\draw[color=cyan, thick] (1.9,0) to (1.9,6);
      			\draw[color=blue!50!cyan, thick] (2.2,0) to (2.2,6);
      			\draw[color=blue, thick] (2.5,0) to (2.5,6);
				\draw[color=red!20!yellow, thick] (2.8,0) to (2.8,6);
      			\draw[color=red!60!yellow, thick] (3,0) to (3,6);
      			\draw[color=red, thick] (3.2,0) to (3.2,6);	
      			\draw[color=blue!50!cyan, thick] (3.5,0) to (3.5,6);
      			\draw[color=blue, thick] (3.8,0) to (3.8,6);
      			\draw[color=red!60!yellow, thick] (4.2,0) to (4.2,6);
      			\draw[color=red, thick] (4.4,0) to (4.4,6);
				\draw[color=blue, thick] (4.7,0) to (4.7,6);
				\draw[color=red, thick] (5,0) to (5,6);
    		\end{tikzpicture}
    		\end{array},
    		&
			\varphi_{\{2\}}=
			\begin{array}{c}
			\begin{tikzpicture}[scale=0.35]
      			\draw[color=red, thick] (0,0) to (0,6);
      			\draw[color=blue, thick] (0.2,0) to (0.2,6);
      			\draw[color=red!60!yellow, thick] (0.4,0) to (0.4,6);
      			\draw[color=red, thick] (0.6,0) to (0.6,6);
      			\draw[color=blue!50!cyan, thick] (0.8,0) to (0.8,6);
      			\draw[color=blue, thick] (1,0) to (1,6);
      			\draw[color=red!20!yellow, thick] (1.2,0) to (1.2,6);
      			\draw[color=red!60!yellow, thick] (1.4,0) to (1.4,6);
      			\draw[color=red, thick] (1.6,0) to (1.6,6);
      			\draw[color=cyan, thick] (1.9,0) to (1.9,6);
      			\draw[color=blue!50!cyan, thick] (2.2,0) to (2.2,6);
      			\draw[color=blue, thick] (2.5,0) to (2.5,6);
				\draw[color=red!20!yellow, thick] (2.8,0) to (2.8,6);
      			\draw[color=red!60!yellow, thick] (3,0) to (3,6);
      			\draw[color=red, thick] (3.2,0) to (3.2,6);	
      			\draw[color=blue!50!cyan, thick] (3.5,0) to (3.5,6);
      			\draw[color=blue, thick] (3.8,0) to (3.8,6);
      			\draw[color=red!60!yellow, thick] (4.2,0) to (4.2,6);
      			\draw[color=red, thick] (4.4,0) to (4.4,6);
				\draw[color=blue, thick] (4.7,0) to (4.7,0.3);
				\draw[color=blue, thick] (4.7,6) to (4.7,5.7);
				\node[circle,fill,draw,inner sep=0mm,minimum size=1mm,color=blue, thick] at (4.7,0.3) {};
				\node[circle,fill,draw,inner sep=0mm,minimum size=1mm,color=blue, thick] at (4.7,5.7) {};
				\draw[color=red, thick] (5,0) arc [start angle=0, end angle = 90, x radius = 0.6cm , y radius = 0.8cm];
				\draw[color=red, thick] (5,6) arc [start angle=0, end angle = -90, x radius = 0.6cm , y radius = 0.8cm];
    		\end{tikzpicture}
    		\end{array},
    		&
			\varphi_{\{4\}}=
			\begin{array}{c}
			\begin{tikzpicture}[scale=0.35]
      			\draw[color=red, thick] (0,0) to (0,6);
      			\draw[color=blue, thick] (0.2,0) to (0.2,6);
      			\draw[color=red!60!yellow, thick] (0.4,0) to (0.4,6);
      			\draw[color=red, thick] (0.6,0) to (0.6,6);
      			\draw[color=blue!50!cyan, thick] (0.8,0) to (0.8,6);
      			\draw[color=blue, thick] (1,0) to (1,6);
      			\draw[color=red!20!yellow, thick] (1.2,0) to (1.2,6);
      			\draw[color=red!60!yellow, thick] (1.4,0) to (1.4,6);
      			\draw[color=red, thick] (1.6,0) to (1.6,6);
      			\draw[color=cyan, thick] (1.9,0) to (1.9,6);
      			\draw[color=blue!50!cyan, thick] (2.2,0) to (2.2,6);
      			\draw[color=blue, thick] (2.5,0) to (2.5,6);
				\draw[color=red!20!yellow, thick] (2.8,0) to (2.8,6);
      			\draw[color=red!60!yellow, thick] (3,0) to (3,6);
      			\draw[color=red, thick] (3.2,0) to (3.2,6);
      			\draw[color=blue!50!cyan, thick] (3.5,0) to (3.5,0.4);
      			\draw[color=blue, thick] (3.8,0) to (3.8,0.4);
      			\draw[color=blue!50!cyan, thick] (3.5,6) to (3.5,5.6);
      			\draw[color=blue, thick] (3.8,6) to (3.8,5.6);
				\node[circle,fill,draw,inner sep=0mm,minimum size=1mm,color=blue!50!cyan, thick] at (3.5,0.4) {};
				\node[circle,fill,draw,inner sep=0mm,minimum size=1mm,color=blue, thick] at (3.8,0.4) {};
				\node[circle,fill,draw,inner sep=0mm,minimum size=1mm,color=blue!50!cyan, thick] at (3.5,5.6) {};
				\node[circle,fill,draw,inner sep=0mm,minimum size=1mm,color=blue, thick] at (3.8,5.6) {};
      			\draw[color=red!60!yellow, thick] (4.2,0) arc [start angle=0, end angle = 89, x radius = 1.2cm , y radius = 1.4cm];
      			\draw[color=red!60!yellow, thick] (4.2,6) arc [start angle=0, end angle = -89, x radius = 1.2cm , y radius = 1.4cm];
				\draw[color=red, thick] (4.4,0) arc [start angle=0, end angle = 90, x radius = 1.2cm , y radius = 1.6cm];
				\draw[color=red, thick] (4.4,6) arc [start angle=0, end angle = -90, x radius = 1.2cm , y radius = 1.6cm];
				\draw[color=blue, thick] (4.7,0) to (4.7,6);
				\draw[color=red, thick] (5,0) to (5,6);
    		\end{tikzpicture}
    		\end{array},
    		&
			\varphi_{\{4,2\}}=
			\begin{array}{c}
			\begin{tikzpicture}[scale=0.35]
      			\draw[color=red, thick] (0,0) to (0,6);
      			\draw[color=blue, thick] (0.2,0) to (0.2,6);
      			\draw[color=red!60!yellow, thick] (0.4,0) to (0.4,6);
      			\draw[color=red, thick] (0.6,0) to (0.6,6);
      			\draw[color=blue!50!cyan, thick] (0.8,0) to (0.8,6);
      			\draw[color=blue, thick] (1,0) to (1,6);
      			\draw[color=red!20!yellow, thick] (1.2,0) to (1.2,6);
      			\draw[color=red!60!yellow, thick] (1.4,0) to (1.4,6);
      			\draw[color=red, thick] (1.6,0) to (1.6,6);
      			\draw[color=cyan, thick] (1.9,0) to (1.9,6);
      			\draw[color=blue!50!cyan, thick] (2.2,0) to (2.2,6);
      			\draw[color=blue, thick] (2.5,0) to (2.5,6);
				\draw[color=red!20!yellow, thick] (2.8,0) to (2.8,6);
      			\draw[color=red!60!yellow, thick] (3,0) to (3,6);
      			\draw[color=red, thick] (3.2,0) to (3.2,6);
      			\draw[color=blue!50!cyan, thick] (3.5,0) to (3.5,0.4);
      			\draw[color=blue, thick] (3.8,0) to (3.8,0.4);
      			\draw[color=blue!50!cyan, thick] (3.5,6) to (3.5,5.6);
      			\draw[color=blue, thick] (3.8,6) to (3.8,5.6);
				\node[circle,fill,draw,inner sep=0mm,minimum size=1mm,color=blue!50!cyan, thick] at (3.5,0.4) {};
				\node[circle,fill,draw,inner sep=0mm,minimum size=1mm,color=blue, thick] at (3.8,0.4) {};
				\node[circle,fill,draw,inner sep=0mm,minimum size=1mm,color=blue!50!cyan, thick] at (3.5,5.6) {};
				\node[circle,fill,draw,inner sep=0mm,minimum size=1mm,color=blue, thick] at (3.8,5.6) {};
      			\draw[color=red!60!yellow, thick] (4.2,0) arc [start angle=0, end angle = 89, x radius = 1.2cm , y radius = 1.4cm];
      			\draw[color=red!60!yellow, thick] (4.2,6) arc [start angle=0, end angle = -89, x radius = 1.2cm , y radius = 1.4cm];
				\draw[color=red, thick] (4.4,0) arc [start angle=0, end angle = 90, x radius = 1.2cm , y radius = 1.6cm];
				\draw[color=red, thick] (4.4,6) arc [start angle=0, end angle = -90, x radius = 1.2cm , y radius = 1.6cm];
				\draw[color=blue, thick] (4.7,0) to (4.7,0.3);
				\draw[color=blue, thick] (4.7,6) to (4.7,5.7);
				\node[circle,fill,draw,inner sep=0mm,minimum size=1mm,color=blue, thick] at (4.7,0.3) {};
				\node[circle,fill,draw,inner sep=0mm,minimum size=1mm,color=blue, thick] at (4.7,5.7) {};
				\draw[color=red, thick] (5,0) arc [start angle=0, end angle = 90, x radius = 0.8cm , y radius = 0.8cm];
				\draw[color=red, thick] (5,6) arc [start angle=0, end angle = -90, x radius = 0.8cm , y radius = 0.8cm];
    		\end{tikzpicture}
    		\end{array},
    		\\
    		\\
			\varphi_{\{6\}}=
			\begin{array}{c}
			\begin{tikzpicture}[scale=0.35]
      			\draw[color=red, thick] (0,0) to (0,6);
      			\draw[color=blue, thick] (0.2,0) to (0.2,6);
      			\draw[color=red!60!yellow, thick] (0.4,0) to (0.4,6);
      			\draw[color=red, thick] (0.6,0) to (0.6,6);
      			\draw[color=blue!50!cyan, thick] (0.8,0) to (0.8,6);
      			\draw[color=blue, thick] (1,0) to (1,6);
      			\draw[color=red!20!yellow, thick] (1.2,0) to (1.2,6);
      			\draw[color=red!60!yellow, thick] (1.4,0) to (1.4,6);
      			\draw[color=red, thick] (1.6,0) to (1.6,6);
      			\draw[color=cyan, thick] (1.9,0) to (1.9,0.8);
      			\draw[color=blue!50!cyan, thick] (2.2,0) to (2.2,0.8);
      			\draw[color=blue, thick] (2.5,0) to (2.5,0.8);
      			\draw[color=cyan, thick] (1.9,6) to (1.9,5.2);
      			\draw[color=blue!50!cyan, thick] (2.2,6) to (2.2,5.2);
      			\draw[color=blue, thick] (2.5,6) to (2.5,5.2);
				\node[circle,fill,draw,inner sep=0mm,minimum size=1mm,color=cyan, thick] at (1.9,0.8) {};
				\node[circle,fill,draw,inner sep=0mm,minimum size=1mm,color=blue!50!cyan, thick] at (2.2,0.8) {};
				\node[circle,fill,draw,inner sep=0mm,minimum size=1mm,color=blue, thick] at (2.5,0.8) {};
				\node[circle,fill,draw,inner sep=0mm,minimum size=1mm,color=cyan, thick] at (1.9,5.2) {};
				\node[circle,fill,draw,inner sep=0mm,minimum size=1mm,color=blue!50!cyan, thick] at (2.2,5.2) {};
				\node[circle,fill,draw,inner sep=0mm,minimum size=1mm,color=blue, thick] at (2.5,5.2) {};
				\draw[color=red!20!yellow, thick] (2.8,0) arc [start angle=0, end angle = 90, x radius = 1.6cm , y radius = 2cm];
				\draw[color=red!20!yellow, thick] (2.8,6) arc [start angle=0, end angle = -90, x radius = 1.6cm , y radius = 2cm];
				\draw[color=red!60!yellow, thick] (3,0) arc [start angle=0, end angle = 90, x radius = 1.6cm , y radius = 2.2cm];
				\draw[color=red!60!yellow, thick] (3,6) arc [start angle=0, end angle = -90, x radius = 1.6cm , y radius = 2.2cm];
				\draw[color=red, thick] (3.2,0) arc [start angle=0, end angle = 90, x radius = 1.6cm , y radius = 2.4cm];	
				\draw[color=red, thick] (3.2,6) arc [start angle=0, end angle = -90, x radius = 1.6cm , y radius = 2.4cm];	
      			\draw[color=blue!50!cyan, thick] (3.5,0) to (3.5,6);
      			\draw[color=blue, thick] (3.8,0) to (3.8,6);
      			\draw[color=red!60!yellow, thick] (4.2,0) to (4.2,6);
      			\draw[color=red, thick] (4.4,0) to (4.4,6);
				\draw[color=blue, thick] (4.7,0) to (4.7,6);
				\draw[color=red, thick] (5,0) to (5,6);
    		\end{tikzpicture}
    		\end{array},
    		&
			\varphi_{\{6,2\}}=
			\begin{array}{c}
			\begin{tikzpicture}[scale=0.35]
      			\draw[color=red, thick] (0,0) to (0,6);
      			\draw[color=blue, thick] (0.2,0) to (0.2,6);
      			\draw[color=red!60!yellow, thick] (0.4,0) to (0.4,6);
      			\draw[color=red, thick] (0.6,0) to (0.6,6);
      			\draw[color=blue!50!cyan, thick] (0.8,0) to (0.8,6);
      			\draw[color=blue, thick] (1,0) to (1,6);
      			\draw[color=red!20!yellow, thick] (1.2,0) to (1.2,6);
      			\draw[color=red!60!yellow, thick] (1.4,0) to (1.4,6);
      			\draw[color=red, thick] (1.6,0) to (1.6,6);
      			\draw[color=cyan, thick] (1.9,0) to (1.9,0.8);
      			\draw[color=blue!50!cyan, thick] (2.2,0) to (2.2,0.8);
      			\draw[color=blue, thick] (2.5,0) to (2.5,0.8);
      			\draw[color=cyan, thick] (1.9,6) to (1.9,5.2);
      			\draw[color=blue!50!cyan, thick] (2.2,6) to (2.2,5.2);
      			\draw[color=blue, thick] (2.5,6) to (2.5,5.2);
				\node[circle,fill,draw,inner sep=0mm,minimum size=1mm,color=cyan, thick] at (1.9,0.8) {};
				\node[circle,fill,draw,inner sep=0mm,minimum size=1mm,color=blue!50!cyan, thick] at (2.2,0.8) {};
				\node[circle,fill,draw,inner sep=0mm,minimum size=1mm,color=blue, thick] at (2.5,0.8) {};
				\node[circle,fill,draw,inner sep=0mm,minimum size=1mm,color=cyan, thick] at (1.9,5.2) {};
				\node[circle,fill,draw,inner sep=0mm,minimum size=1mm,color=blue!50!cyan, thick] at (2.2,5.2) {};
				\node[circle,fill,draw,inner sep=0mm,minimum size=1mm,color=blue, thick] at (2.5,5.2) {};
				\draw[color=red!20!yellow, thick] (2.8,0) arc [start angle=0, end angle = 90, x radius = 1.6cm , y radius = 2cm];
				\draw[color=red!20!yellow, thick] (2.8,6) arc [start angle=0, end angle = -90, x radius = 1.6cm , y radius = 2cm];
				\draw[color=red!60!yellow, thick] (3,0) arc [start angle=0, end angle = 90, x radius = 1.6cm , y radius = 2.2cm];
				\draw[color=red!60!yellow, thick] (3,6) arc [start angle=0, end angle = -90, x radius = 1.6cm , y radius = 2.2cm];
				\draw[color=red, thick] (3.2,0) arc [start angle=0, end angle = 90, x radius = 1.6cm , y radius = 2.4cm];	
				\draw[color=red, thick] (3.2,6) arc [start angle=0, end angle = -90, x radius = 1.6cm , y radius = 2.4cm];	
      			\draw[color=blue!50!cyan, thick] (3.5,0) to (3.5,6);
      			\draw[color=blue, thick] (3.8,0) to (3.8,6);
      			\draw[color=red!60!yellow, thick] (4.2,0) to (4.2,6);
      			\draw[color=red, thick] (4.4,0) to (4.4,6);
				\draw[color=blue, thick] (4.7,0) to (4.7,0.3);
				\draw[color=blue, thick] (4.7,6) to (4.7,5.7);
				\node[circle,fill,draw,inner sep=0mm,minimum size=1mm,color=blue, thick] at (4.7,0.3) {};
				\node[circle,fill,draw,inner sep=0mm,minimum size=1mm,color=blue, thick] at (4.7,5.7) {};
				\draw[color=red, thick] (5,0) arc [start angle=0, end angle = 90, x radius = 0.6cm , y radius = 0.8cm];
				\draw[color=red, thick] (5,6) arc [start angle=0, end angle = -90, x radius = 0.6cm , y radius = 0.8cm];
    		\end{tikzpicture}
    		\end{array},
    		&
			\varphi_{\{6,4\}}=
			\begin{array}{c}
			\begin{tikzpicture}[scale=0.35]
      			\draw[color=red, thick] (0,0) to (0,6);
      			\draw[color=blue, thick] (0.2,0) to (0.2,6);
      			\draw[color=red!60!yellow, thick] (0.4,0) to (0.4,6);
      			\draw[color=red, thick] (0.6,0) to (0.6,6);
      			\draw[color=blue!50!cyan, thick] (0.8,0) to (0.8,6);
      			\draw[color=blue, thick] (1,0) to (1,6);
      			\draw[color=red!20!yellow, thick] (1.2,0) to (1.2,6);
      			\draw[color=red!60!yellow, thick] (1.4,0) to (1.4,6);
      			\draw[color=red, thick] (1.6,0) to (1.6,6);
      			\draw[color=cyan, thick] (1.9,0) to (1.9,0.8);
      			\draw[color=blue!50!cyan, thick] (2.2,0) to (2.2,0.8);
      			\draw[color=blue, thick] (2.5,0) to (2.5,0.8);
      			\draw[color=cyan, thick] (1.9,6) to (1.9,5.2);
      			\draw[color=blue!50!cyan, thick] (2.2,6) to (2.2,5.2);
      			\draw[color=blue, thick] (2.5,6) to (2.5,5.2);
				\node[circle,fill,draw,inner sep=0mm,minimum size=1mm,color=cyan, thick] at (1.9,0.8) {};
				\node[circle,fill,draw,inner sep=0mm,minimum size=1mm,color=blue!50!cyan, thick] at (2.2,0.8) {};
				\node[circle,fill,draw,inner sep=0mm,minimum size=1mm,color=blue, thick] at (2.5,0.8) {};
				\node[circle,fill,draw,inner sep=0mm,minimum size=1mm,color=cyan, thick] at (1.9,5.2) {};
				\node[circle,fill,draw,inner sep=0mm,minimum size=1mm,color=blue!50!cyan, thick] at (2.2,5.2) {};
				\node[circle,fill,draw,inner sep=0mm,minimum size=1mm,color=blue, thick] at (2.5,5.2) {};
				\draw[color=red!20!yellow, thick] (2.8,0) arc [start angle=0, end angle = 90, x radius = 1.6cm , y radius = 2cm];
				\draw[color=red!20!yellow, thick] (2.8,6) arc [start angle=0, end angle = -90, x radius = 1.6cm , y radius = 2cm];
				\draw[color=red!60!yellow, thick] (3,0) arc [start angle=0, end angle = 90, x radius = 1.6cm , y radius = 2.2cm];
				\draw[color=red!60!yellow, thick] (3,6) arc [start angle=0, end angle = -90, x radius = 1.6cm , y radius = 2.2cm];
				\draw[color=red, thick] (3.2,0) arc [start angle=0, end angle = 90, x radius = 1.6cm , y radius = 2.4cm];	
				\draw[color=red, thick] (3.2,6) arc [start angle=0, end angle = -90, x radius = 1.6cm , y radius = 2.4cm];	
      			\draw[color=blue!50!cyan, thick] (3.5,0) to (3.5,0.4);
      			\draw[color=blue, thick] (3.8,0) to (3.8,0.4);
      			\draw[color=blue!50!cyan, thick] (3.5,6) to (3.5,5.6);
      			\draw[color=blue, thick] (3.8,6) to (3.8,5.6);
				\node[circle,fill,draw,inner sep=0mm,minimum size=1mm,color=blue!50!cyan, thick] at (3.5,0.4) {};
				\node[circle,fill,draw,inner sep=0mm,minimum size=1mm,color=blue, thick] at (3.8,0.4) {};
				\node[circle,fill,draw,inner sep=0mm,minimum size=1mm,color=blue!50!cyan, thick] at (3.5,5.6) {};
				\node[circle,fill,draw,inner sep=0mm,minimum size=1mm,color=blue, thick] at (3.8,5.6) {};
      			\draw[color=red!60!yellow, thick] (4.2,0) arc [start angle=0, end angle = 89, x radius = 1.6cm , y radius = 1.4cm];
      			\draw[color=red!60!yellow, thick] (4.2,6) arc [start angle=0, end angle = -89, x radius = 1.6cm , y radius = 1.4cm];
				\draw[color=red, thick] (4.4,0) arc [start angle=0, end angle = 90, x radius = 1.6cm , y radius = 1.6cm];
				\draw[color=red, thick] (4.4,6) arc [start angle=0, end angle = -90, x radius = 1.6cm , y radius = 1.6cm];
				\draw[color=blue, thick] (4.7,0) to (4.7,6);
				\draw[color=red, thick] (5,0) to (5,6);
    		\end{tikzpicture}
    		\end{array},
    		&
			\varphi_{\{6,4,2\}}=
			\begin{array}{c}
			\begin{tikzpicture}[scale=0.35]
      			\draw[color=red, thick] (0,0) to (0,6);
      			\draw[color=blue, thick] (0.2,0) to (0.2,6);
      			\draw[color=red!60!yellow, thick] (0.4,0) to (0.4,6);
      			\draw[color=red, thick] (0.6,0) to (0.6,6);
      			\draw[color=blue!50!cyan, thick] (0.8,0) to (0.8,6);
      			\draw[color=blue, thick] (1,0) to (1,6);
      			\draw[color=red!20!yellow, thick] (1.2,0) to (1.2,6);
      			\draw[color=red!60!yellow, thick] (1.4,0) to (1.4,6);
      			\draw[color=red, thick] (1.6,0) to (1.6,6);
      			\draw[color=cyan, thick] (1.9,0) to (1.9,0.8);
      			\draw[color=blue!50!cyan, thick] (2.2,0) to (2.2,0.8);
      			\draw[color=blue, thick] (2.5,0) to (2.5,0.8);
      			\draw[color=cyan, thick] (1.9,6) to (1.9,5.2);
      			\draw[color=blue!50!cyan, thick] (2.2,6) to (2.2,5.2);
      			\draw[color=blue, thick] (2.5,6) to (2.5,5.2);
				\node[circle,fill,draw,inner sep=0mm,minimum size=1mm,color=cyan, thick] at (1.9,0.8) {};
				\node[circle,fill,draw,inner sep=0mm,minimum size=1mm,color=blue!50!cyan, thick] at (2.2,0.8) {};
				\node[circle,fill,draw,inner sep=0mm,minimum size=1mm,color=blue, thick] at (2.5,0.8) {};
				\node[circle,fill,draw,inner sep=0mm,minimum size=1mm,color=cyan, thick] at (1.9,5.2) {};
				\node[circle,fill,draw,inner sep=0mm,minimum size=1mm,color=blue!50!cyan, thick] at (2.2,5.2) {};
				\node[circle,fill,draw,inner sep=0mm,minimum size=1mm,color=blue, thick] at (2.5,5.2) {};
				\draw[color=red!20!yellow, thick] (2.8,0) arc [start angle=0, end angle = 90, x radius = 1.6cm , y radius = 2cm];
				\draw[color=red!20!yellow, thick] (2.8,6) arc [start angle=0, end angle = -90, x radius = 1.6cm , y radius = 2cm];
				\draw[color=red!60!yellow, thick] (3,0) arc [start angle=0, end angle = 90, x radius = 1.6cm , y radius = 2.2cm];
				\draw[color=red!60!yellow, thick] (3,6) arc [start angle=0, end angle = -90, x radius = 1.6cm , y radius = 2.2cm];
				\draw[color=red, thick] (3.2,0) arc [start angle=0, end angle = 90, x radius = 1.6cm , y radius = 2.4cm];	
				\draw[color=red, thick] (3.2,6) arc [start angle=0, end angle = -90, x radius = 1.6cm , y radius = 2.4cm];	
      			\draw[color=blue!50!cyan, thick] (3.5,0) to (3.5,0.4);
      			\draw[color=blue, thick] (3.8,0) to (3.8,0.4);
      			\draw[color=blue!50!cyan, thick] (3.5,6) to (3.5,5.6);
      			\draw[color=blue, thick] (3.8,6) to (3.8,5.6);
				\node[circle,fill,draw,inner sep=0mm,minimum size=1mm,color=blue!50!cyan, thick] at (3.5,0.4) {};
				\node[circle,fill,draw,inner sep=0mm,minimum size=1mm,color=blue, thick] at (3.8,0.4) {};
				\node[circle,fill,draw,inner sep=0mm,minimum size=1mm,color=blue!50!cyan, thick] at (3.5,5.6) {};
				\node[circle,fill,draw,inner sep=0mm,minimum size=1mm,color=blue, thick] at (3.8,5.6) {};
      			\draw[color=red!60!yellow, thick] (4.2,0) arc [start angle=0, end angle = 89, x radius = 1.6cm , y radius = 1.4cm];
      			\draw[color=red!60!yellow, thick] (4.2,6) arc [start angle=0, end angle = -89, x radius = 1.6cm , y radius = 1.4cm];
				\draw[color=red, thick] (4.4,0) arc [start angle=0, end angle = 90, x radius = 1.6cm , y radius = 1.6cm];
				\draw[color=red, thick] (4.4,6) arc [start angle=0, end angle = -90, x radius = 1.6cm , y radius = 1.6cm];
				\draw[color=blue, thick] (4.7,0) to (4.7,0.3);
				\draw[color=blue, thick] (4.7,6) to (4.7,5.7);
				\node[circle,fill,draw,inner sep=0mm,minimum size=1mm,color=blue, thick] at (4.7,0.3) {};
				\node[circle,fill,draw,inner sep=0mm,minimum size=1mm,color=blue, thick] at (4.7,5.7) {};
				\draw[color=red, thick] (5,0) arc [start angle=0, end angle = 90, x radius = 0.8cm , y radius = 0.8cm];
				\draw[color=red, thick] (5,6) arc [start angle=0, end angle = -90, x radius = 0.8cm , y radius = 0.8cm];
    		\end{tikzpicture}
    		\end{array}.
    		\end{array}
    		\]
		\end{exmp}

		\begin{lem}
		\label{Lem: Gelfand}
			For any $\Bbbk$ and any $x \in {}^I W$ the algebra $\Endo^0(B_x)$ is commutative. 
		\end{lem}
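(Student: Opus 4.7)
The plan is to apply Gelfand's trick: if an algebra $A$ admits an anti-involution $\sigma$ fixing every element of some basis, then $\sigma$ coincides with the identity on $A$, and for any $f,g \in A$ we have $fg = \sigma(fg) = \sigma(g)\sigma(f) = gf$, so $A$ is commutative. The anti-involution I have in mind is the horizontal flip of Soergel diagrams, which is a standard anti-involution on $\sD_{\mathrm{BE}}(\h)$.

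First I would fix a reduced expression $\ux$ of $x$ and show that the horizontal flip $\sigma$ descends to a well-defined anti-involution of $\End^0_{\DRA}(B_{\ux})$. Since polynomial boxes carry strictly positive degree, no diagram representing a degree-$0$ morphism involves a polynomial factor, which sidesteps the usual incompatibility of the flip with the passage from $\sD_{\mathrm{BE}}$ to $\sD_{\mathrm{RE}}$. Moreover, the flip manifestly preserves the ideal of morphisms factoring through $B_y$ for any $y \notin {}^IW$, since if $f = h \circ g$ factors through $B_y$ then $\sigma(f) = \sigma(g)\circ \sigma(h)$ factors through $B_y$ as well. Hence $\sigma$ descends to an anti-involution of the antispherical quotient.

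Next, by the preceding Lemma, together with the identification in Section \ref{Ssec: bs basis} of all defect-$0$ subexpressions of $\ux$ as those of the form $\ux^E$ with $E \subseteq E(x)$, the family $\{\varphi_E\}_{E \subseteq E(x)}$ is a basis of $\End^0_{\DRA}(B_{\ux})$. By construction $\varphi_E = \Gamma\Gamma^{\ux,E} \circ LL_{\ux,E}$ is the composition of a light leaf with its horizontal flip, so $\sigma(\varphi_E) = \varphi_E$; this self-symmetry is already pictorially evident in Example \ref{Exmp: endomorphism basis}. Gelfand's trick then delivers commutativity of $\End^0_{\DRA}(B_{\ux})$.

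Finally I would bootstrap to $\End^0(B_x)$ using Theorem \ref{thm: indecomposable objects}: if $2 \notin \Bbbk^{\times}$ then $B_x = B_{\ux}$ and there is nothing more to do; otherwise $B_x$ is a direct summand of $B_{\ux}$ via some idempotent $e_x \in \End^0(B_{\ux})$, and $\End^0(B_x) \cong e_x \End^0(B_{\ux}) e_x$ is a corner subalgebra of a commutative algebra, hence commutative. The only step demanding real care is the verification that the horizontal flip descends cleanly from $\sD_{\mathrm{BE}}$ to $\DRA(\h,I)$ in degree zero; the rest of the argument is essentially formal once the basis $\{\varphi_E\}$ has been pinned down.
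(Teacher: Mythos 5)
Your argument is correct and essentially identical to the paper's: both apply Gelfand's trick with the horizontal-flip anti-involution $\bD$ and exploit that each $\varphi_E = \Gamma\Gamma^{\ux,E}\circ LL_{\ux,E}$ is manifestly fixed by $\bD$, so that commutativity follows from $\varphi_{E'}\varphi_E = \bD(\varphi_E\varphi_{E'}) = \varphi_E\varphi_{E'}$. The only cosmetic difference is in the $2\in\Bbbk^{\times}$ case, where the paper simply notes $\Endo^0(B_x)\cong\Bbbk$, whereas you pass to the corner subalgebra $e_x\Endo^0(B_{\ux})e_x$ of the commutative $\Endo^0(B_{\ux})$ — both are fine (and your worry about polynomial boxes is unnecessary, since $\bD$ is $R$-linear and degree-preserving on all of $\sD_{\text{BE}}$, hence descends to $\DRA(\h,I)$ in every degree, not just degree~$0$).
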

		The proof is a simple application of Gel'fand's trick. 
		\begin{proof}
			The algebra $\Endo^0(B_x)$ has an antiinvolution $\bD: \Endo^0(B_x) \rightarrow \Endo^0(B_x)$ given by reflecting diagrams about their horizontal midpoint. 
			If $2 \notin \Bbbk^{\times}$, denote the structure constants for the basis $\{ \varphi_{E} \}$ by $\{ c_{E,E'}^{E''} \}$.
			Since each $\varphi_E$ is symmetric about its horizontal midpoint we see
			\begin{align*}
				\varphi_{E'} \varphi_E
				=
				\bD (\varphi_E \varphi_{E'})  
				= 
				\bD \left(\sum_{E''} c^{E''}_{E,E'} \varphi_{E''} \right)
				=
				\sum_{E''} c^{E''}_{E,E'} \bD(\varphi_{E''}) 
				=
				\sum_{E''} c^{E''}_{E,E'} \varphi_{E''}
				=
				\varphi_E \varphi_{E'}.
			\end{align*}
			Hence $\Endo^0(B_x)$ is commutative. 
			If $2 \in \Bbbk^{\times}$, then $\Endo^0(B_x) \cong \Bbbk$ and the claim is trivial. 
		\end{proof}
		
		\begin{rem}
			We note the following generalisations of Lemma \ref{Lem: Gelfand}:
		\begin{enumerate}
			\item[(a)]  	Let $\Bbbk$ be a complete local ring with fraction field $\bK$. 
					The preceding argument shows for any Hecke category with double-light-leaf basis, if $\bK(B)$ has a multiplicity-free decomposition then $\Endo^0(B)$ is commutative.
			\item[(b)] If we fix $x$ and require every $p$-Kazhdan-Lusztig polynomial ${}^p h_{y,x}$, ${}^p m_{y,x}$, or ${}^p n_{y,x}$ be a monic monomial (as we vary $y$), then the graded endomorphism algebra $\Endo^{\bullet}(B_x)$ is commutative. 
		\end{enumerate}
			 
		\end{rem}

		We can now explicitly describe $\Endo^0(B_x)$ for each $x$. 
		
		\begin{prop}
		\label{prop: alg presentation}
			Suppose $2 \notin \Bbbk^{\times}$. The algebra $\Endo^0(B_x)$ has a presentation 
			\begin{align*}
				\Endo^0(B_x) 
				\cong 
				\frac{\Bbbk[\varphi_E \vert E \subseteq E(x)]}
				{\left(\varphi_E \varphi_{E'} - 2^{|E \cap E'|} (-1)^{\ell(E \cap E')/2 } \varphi_{E \cup E'} \right)}.
			\end{align*} 
		\end{prop}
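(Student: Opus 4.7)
The plan is to verify the relations directly as identities in $\Endo^0(B_x)$ by a block-wise diagrammatic computation, and to deduce the presentation formally from the rank count. By the lemma immediately preceding the proposition, the set $\{\varphi_F : F \subseteq E(x)\}$ is a $\Bbbk$-basis of $\Endo^0(B_x)$, so the map from the displayed quotient algebra to $\Endo^0(B_x)$ sending the generator $\varphi_F$ to the endomorphism $\varphi_F$ is $\Bbbk$-linear and surjective. Conversely, the proposed relations allow any product of generators to be rewritten as a single $\varphi_F$, so the quotient is spanned as a $\Bbbk$-module by the $2^{|E(x)|}$ elements $\{\varphi_F : F \subseteq E(x)\}$. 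Hence, once the relations are verified in $\Endo^0(B_x)$, the surjection is an isomorphism of $\Bbbk$-modules, and therefore of $\Bbbk$-algebras by Lemma \ref{Lem: Gelfand}.

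To verify the relations, I would expand
\[
\varphi_{E'} \circ \varphi_E \;=\; \Gamma\Gamma^{\ux,E'} \circ \bigl(LL_{\ux,E'} \circ \Gamma\Gamma^{\ux,E}\bigr) \circ LL_{\ux,E}
\]
and analyse the middle composition $LL_{\ux,E'} \circ \Gamma\Gamma^{\ux,E}$ along the decomposition of $\ux$ into the blocks $\uu_t$ for $t \in E \cup E'$ and the residual strings $\ur_i$. Away from these blocks both $\Gamma\Gamma^{\ux,E}$ and $LL_{\ux,E'}$ act by the identity, so the analysis is block-local. For $t \in E \triangle E'$ exactly one of the two morphisms is non-trivial on block $\uu_t$; the surviving up-dots and trivalent vertices on that block combine with the outer $LL_{\ux,E}$ and $\Gamma\Gamma^{\ux,E'}$ to produce, locally, exactly the cage pattern that appears in $\varphi_{E \cup E'}$ at block $\uu_t$. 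For $t \in E \cap E'$, the restriction of $LL_{\ux,E'} \circ \Gamma\Gamma^{\ux,E}$ to the $\uu_t$-strands is the composition of odd-coloured up-dots, even-coloured trivalent vertices and their horizontal reflections that is evaluated in the proof of Proposition \ref{prop: local intersection form} via equation \ref{eqn: repeated barbell forcing}; it simplifies to the scalar $2(-1)^{t/2}$ times the identity on the residual strands. Multiplying these local factors over $t \in E \cap E'$ yields the scalar $2^{|E \cap E'|}(-1)^{\ell(E \cap E')/2}$, leaving behind precisely the diagram of $\varphi_{E \cup E'}$.

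The main obstacle is the locality assertion underlying this block-wise argument: one must show that collapsing a block $\uu_t$ for $t \in E \cap E'$ via polynomial forcing and the twisted Leibniz identity \ref{eqn: twisted leibniz} does not disturb adjacent blocks. This follows from the structural features already exploited in Section \ref{Ssec: entries in lif}, namely that each $\uu_t$ is built from a descending run of odd reflections followed by a descending run of even reflections, that reflections in distinct blocks either commute past one another or interact only through the $2m_{st}$-valent vertices built into the outer $LL_{\ux,\cdot}$ and $\Gamma\Gamma^{\ux,\cdot}$, and that the polynomials produced by the barbell and forcing relations during the collapse of $\uu_t$ involve only simple roots labelling that block. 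Granting this locality, assembling the block contributions globally gives
\[
\varphi_{E'} \circ \varphi_E = 2^{|E \cap E'|}(-1)^{\ell(E \cap E')/2}\, \varphi_{E \cup E'},
\]
which is the required relation.
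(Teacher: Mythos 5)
Your opening reduction is sound: since the preceding lemma supplies the $\Bbbk$-basis $\{\varphi_E : E \subseteq E(x)\}$ of $\Endo^0(B_x)$ and the proposed relations rewrite any monomial in the generators to a single $\varphi_F$ (so the quotient has $\Bbbk$-rank at most $2^{|E(x)|}$), verifying the relations in $\Endo^0(B_x)$ does force the evaluation map to be an isomorphism of algebras. (Commutativity, needed to make sense of the polynomial ring, is already used implicitly here; your invocation of Lemma~\ref{Lem: Gelfand} at the end is redundant rather than load-bearing.)

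Your route differs from the paper's. The paper does not attempt to verify the general relation $\varphi_E\varphi_{E'} = 2^{|E\cap E'|}(-1)^{\ell(E\cap E')/2}\varphi_{E\cup E'}$ directly. Instead it notes that $\varphi_{\{t\}}^2 = 2(-1)^{t/2}\varphi_{\{t\}}$ follows from Proposition~\ref{prop: local intersection form}, reduces the whole presentation to the single factorisation $\varphi_E = \prod_{t\in E}\varphi_{\{t\}}$, and proves that identity by induction on $|E|$, peeling off the smallest element $t_k$. The advantage of the paper's route is that the inductive step involves two morphisms whose non-identity portions are disjoint or adjacent, and the only non-trivial interaction is between the right-most block $\uu_{t_k}$ and the block before it. Your approach handles all of $E\cup E'$ at once; it is more symmetric, but it owes the same debt to the same difficulty in more places.

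The gap is exactly in your locality assertion, and the way you propose to discharge it is not correct. You write that reflections in distinct blocks ``either commute past one another or interact only through the $2m_{st}$-valent vertices built into the outer $LL_{\ux,\cdot}$ and $\Gamma\Gamma^{\ux,\cdot}$.'' No $2m_{st}$-valent vertices appear in these light leaves at all: because each $x \in W^I$ is fully commutative, the light leaves are built solely from identity strands, univalent dots, and trivalent vertices. Worse, when a residual segment $\ur_j$ between consecutive blocks $\uu_{t_{j-1}}$ and $\uu_{t_j}$ is empty, the trivalent vertices carrying the even reflections of $\uu_{t_j}$ attach directly to strands living in $\uu_{t_{j-1}}$. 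The morphism on $\uu_{t_j}$ is then genuinely non-local, and collapsing one block changes the attachment points of the next. The paper's Case~2 argument handles exactly this: it applies Frobenius associativity repeatedly to slide the shared trivalent vertices off the collapsed block and onto the surviving strands, so that the cage pattern can be re-assembled into the picture of $\varphi_{E}$. Your argument asserts that the collapse ``does not disturb adjacent blocks'' but supplies no mechanism by which the entangled trivalent vertices are disentangled; that is the step that needs a diagrammatic calculation, not a structural remark.
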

		
		\begin{proof}
			Fix $x \in {}^I W$. 
			If $E(x) = \emptyset$ then $\Endo^0 (B_x) \cong \Bbbk$ so the claim is vacuously true. 
			Assume that $E(x) \neq \emptyset$. 
			Proposition \ref{prop: local intersection form} implies  $\varphi_{\{ t \}}^2 = 2(-1)^{t/2} \varphi_{\{ t \}}$ any $t \in E(x)$.
			Hence the claim follows from proving the identity
			\begin{align*}
				\varphi_E = \prod_{t \in E} \varphi_{\{t \}}
			\end{align*} 
			holds for any $E \subseteq E(x)$. 
			If $E = \emptyset$ this is vacuously true, so assume $E \neq \emptyset$. 
			Let $\ux$ be the column reduced expression of $x$. 
			Recall $E = \{ t_1 > \dots > t_{k-1} > t_k \}$ gives rise to a decomposition
			\begin{align*}
				\ux 
				= 
				\ur_1 \uu_{t_1} \ur_2  \dots \uu_{t_{k-1}} \ur_k \uu_{t_k} \ur_{k+1}. 
			\end{align*}
			Now let $E' = \{ t_1 > \dots > t_{k-1} \}$, i.e. omit the final term. 
			There are two cases to consider. 
			\par 
			\textit{Case 1} ($\ur_k \neq \emptyset$): 
			Note that $\varphi_{E'} = f \otimes \id_{\ur_{k} \uu_{t_k} \ur_{k+1}}$ where $f$ is the double-light-leaf obtained by truncating the subexpression $\ux^E$ to only include the terms $\ur_1 \uu_{t_1} \ur_2  \dots \uu_{t_{k-1}}$. 
			Since $\ur_k \neq \emptyset$, we also have $\varphi_{\{ t_k \}} = \id_{\ur_1 \uu_{t_1} \ur_2  \dots \uu_{t_{k-1}}} \otimes g$ where $g$ is the double light leaf obtained from the subexpression $\ur_k^1 \uu_{t_k}^0 \ur_{k+1}^1$. 
			Note $g$ is well-defined as the trivalent vertices associated to the even reflections in $\uu_{t_k}^0$ are formed with identity strands in $\ur_k^1$. 
			Hence $\varphi_{E'} \varphi_{\{ t_k \}} =  f \otimes g = \varphi_E$. 
			As an illustrative example: 
			\[ 
			\begin{array}{c}
			\begin{tikzpicture}[scale=0.5]
			%
			%
      			\draw[color=red, thick] (0,0) to (0,6);
      			\draw[color=blue, thick] (0.2,0) to (0.2,6);
      			\draw[color=red!60!yellow, thick] (0.4,0) to (0.4,6);
      			\draw[color=red, thick] (0.6,0) to (0.6,6);
      			\draw[color=blue!50!cyan, thick] (0.8,0) to (0.8,6);
      			\draw[color=blue, thick] (1,0) to (1,6);
      			\draw[color=red!20!yellow, thick] (1.2,0) to (1.2,6);
      			\draw[color=red!60!yellow, thick] (1.4,0) to (1.4,6);
      			\draw[color=red, thick] (1.6,0) to (1.6,6);
      			\draw[color=cyan, thick] (1.9,2) to (1.9,2.5);
      			\draw[color=blue!50!cyan, thick] (2.2,2) to (2.2,2.5);
      			\draw[color=blue, thick] (2.5,2) to (2.5,2.5);
      			\draw[color=cyan, thick] (1.9,6) to (1.9,5.5);
      			\draw[color=blue!50!cyan, thick] (2.2,6) to (2.2,5.5);
      			\draw[color=blue, thick] (2.5,6) to (2.5,5.5);
				\node[circle,fill,draw,inner sep=0mm,minimum size=1mm,color=cyan, thick] at (1.9,2.4) {};
				\node[circle,fill,draw,inner sep=0mm,minimum size=1mm,color=blue!50!cyan, thick] at (2.2,2.4) {};
				\node[circle,fill,draw,inner sep=0mm,minimum size=1mm,color=blue, thick] at (2.5,2.4) {};
				\node[circle,fill,draw,inner sep=0mm,minimum size=1mm,color=cyan, thick] at (1.9,5.6) {};
				\node[circle,fill,draw,inner sep=0mm,minimum size=1mm,color=blue!50!cyan, thick] at (2.2,5.6) {};
				\node[circle,fill,draw,inner sep=0mm,minimum size=1mm,color=blue, thick] at (2.5,5.6) {};
				\draw[color=red!20!yellow, thick] (2.8,2) arc [start angle=0, end angle = 90, x radius = 1.6cm , y radius = 1.4cm];
				\draw[color=red!20!yellow, thick] (2.8,6) arc [start angle=0, end angle = -90, x radius = 1.6cm , y radius = 1.4cm];
				\draw[color=red!60!yellow, thick] (3,2) arc [start angle=0, end angle = 90, x radius = 1.6cm , y radius = 1.6cm];
				\draw[color=red!60!yellow, thick] (3,6) arc [start angle=0, end angle = -90, x radius = 1.6cm , y radius = 1.6cm];
				\draw[color=red, thick] (3.2,2) arc [start angle=0, end angle = 90, x radius = 1.6cm , y radius = 1.8cm];	
				\draw[color=red, thick] (3.2,6) arc [start angle=0, end angle = -90, x radius = 1.6cm , y radius = 1.8cm];	
      			\draw[color=blue!50!cyan, thick] (3.5,2) to (3.5,6);
      			\draw[color=blue, thick] (3.8,2) to (3.8,6);
      			\draw[color=red!60!yellow, thick] (4.2,2) to (4.2,6);
      			\draw[color=red, thick] (4.4,2) to (4.4,6);
				\draw[color=blue, thick] (4.7,2) to (4.7,6);
				\draw[color=red, thick] (5,2) to (5,6);
			%
			%
      			\draw[color=cyan, thick] (1.9,0) to (1.9,2);
      			\draw[color=blue!50!cyan, thick] (2.2,0) to (2.2,2);
      			\draw[color=blue, thick] (2.5,0) to (2.5,2);
				\draw[color=red!20!yellow, thick] (2.8,0) to (2.8,2);
      			\draw[color=red!60!yellow, thick] (3,0) to (3,2);
      			\draw[color=red, thick] (3.2,0) to (3.2,2);	
      			\draw[color=blue!50!cyan, thick] (3.5,0) to (3.5,2);
      			\draw[color=blue, thick] (3.8,0) to (3.8,2);
      			\draw[color=red!60!yellow, thick] (4.2,0) to (4.2,2);
      			\draw[color=red, thick] (4.4,0) to (4.4,2);
				\draw[color=blue, thick] (4.7,0) to (4.7,0.3);
				\draw[color=blue, thick] (4.7,2) to (4.7,1.7);
				\node[circle,fill,draw,inner sep=0mm,minimum size=1mm,color=blue, thick] at (4.7,0.3) {};
				\node[circle,fill,draw,inner sep=0mm,minimum size=1mm,color=blue, thick] at (4.7,1.7) {};
				\draw[color=red, thick] (5,0) arc [start angle=0, end angle = 90, x radius = 0.6cm , y radius = 0.8cm];
				\draw[color=red, thick] (5,2) arc [start angle=0, end angle = -90, x radius = 0.6cm , y radius = 0.8cm];
			%
			%
			 	\draw[dotted] (-0.1,0) rectangle (5.1,6);
				\draw[dotted] (3.35,0) to (3.35,6);
				\draw[dotted] (-0.1,2) to (5.1,2);
    		\end{tikzpicture}
    		\end{array}
    		=
    		\begin{array}{c}
			\begin{tikzpicture}[scale=0.5]
      			\draw[color=red, thick] (0,0) to (0,6);
      			\draw[color=blue, thick] (0.2,0) to (0.2,6);
      			\draw[color=red!60!yellow, thick] (0.4,0) to (0.4,6);
      			\draw[color=red, thick] (0.6,0) to (0.6,6);
      			\draw[color=blue!50!cyan, thick] (0.8,0) to (0.8,6);
      			\draw[color=blue, thick] (1,0) to (1,6);
      			\draw[color=red!20!yellow, thick] (1.2,0) to (1.2,6);
      			\draw[color=red!60!yellow, thick] (1.4,0) to (1.4,6);
      			\draw[color=red, thick] (1.6,0) to (1.6,6);
      			\draw[color=cyan, thick] (1.9,0) to (1.9,0.8);
      			\draw[color=blue!50!cyan, thick] (2.2,0) to (2.2,0.8);
      			\draw[color=blue, thick] (2.5,0) to (2.5,0.8);
      			\draw[color=cyan, thick] (1.9,6) to (1.9,5.2);
      			\draw[color=blue!50!cyan, thick] (2.2,6) to (2.2,5.2);
      			\draw[color=blue, thick] (2.5,6) to (2.5,5.2);
				\node[circle,fill,draw,inner sep=0mm,minimum size=1mm,color=cyan, thick] at (1.9,0.8) {};
				\node[circle,fill,draw,inner sep=0mm,minimum size=1mm,color=blue!50!cyan, thick] at (2.2,0.8) {};
				\node[circle,fill,draw,inner sep=0mm,minimum size=1mm,color=blue, thick] at (2.5,0.8) {};
				\node[circle,fill,draw,inner sep=0mm,minimum size=1mm,color=cyan, thick] at (1.9,5.2) {};
				\node[circle,fill,draw,inner sep=0mm,minimum size=1mm,color=blue!50!cyan, thick] at (2.2,5.2) {};
				\node[circle,fill,draw,inner sep=0mm,minimum size=1mm,color=blue, thick] at (2.5,5.2) {};
				\draw[color=red!20!yellow, thick] (2.8,0) arc [start angle=0, end angle = 90, x radius = 1.6cm , y radius = 2cm];
				\draw[color=red!20!yellow, thick] (2.8,6) arc [start angle=0, end angle = -90, x radius = 1.6cm , y radius = 2cm];
				\draw[color=red!60!yellow, thick] (3,0) arc [start angle=0, end angle = 90, x radius = 1.6cm , y radius = 2.2cm];
				\draw[color=red!60!yellow, thick] (3,6) arc [start angle=0, end angle = -90, x radius = 1.6cm , y radius = 2.2cm];
				\draw[color=red, thick] (3.2,0) arc [start angle=0, end angle = 90, x radius = 1.6cm , y radius = 2.4cm];	
				\draw[color=red, thick] (3.2,6) arc [start angle=0, end angle = -90, x radius = 1.6cm , y radius = 2.4cm];	
      			\draw[color=blue!50!cyan, thick] (3.5,0) to (3.5,6);
      			\draw[color=blue, thick] (3.8,0) to (3.8,6);
      			\draw[color=red!60!yellow, thick] (4.2,0) to (4.2,6);
      			\draw[color=red, thick] (4.4,0) to (4.4,6);
				\draw[color=blue, thick] (4.7,0) to (4.7,0.3);
				\draw[color=blue, thick] (4.7,6) to (4.7,5.7);
				\node[circle,fill,draw,inner sep=0mm,minimum size=1mm,color=blue, thick] at (4.7,0.3) {};
				\node[circle,fill,draw,inner sep=0mm,minimum size=1mm,color=blue, thick] at (4.7,5.7) {};
				\draw[color=red, thick] (5,0) arc [start angle=0, end angle = 90, x radius = 0.6cm , y radius = 0.8cm];
				\draw[color=red, thick] (5,6) arc [start angle=0, end angle = -90, x radius = 0.6cm , y radius = 0.8cm];
    		\end{tikzpicture}
    		\end{array}
			\]
			where the top-left rectangle represents $f$ and the bottom-right rectangle represents $g$. 
			\par 
			\textit{Case 2} ($\ur_k = \emptyset$):
			As explained in Section \ref{Ssec: entries in lif}, if $\ur_k = \emptyset$ then the even reflections in $\uu_{t_k}$ form trivalent vertices with the even reflections ending $\uu_{t_{k-1}}$ in the light leaf $LL_{\ux, \{ t_k\}}$. 
			Consequently the left-most even reflections in $\varphi_{\{ t_k \}}$ which form trivalent vertices have a non-trivial interaction with the right-most even reflections forming trivalent vertices in $\varphi_{E'}$. 
			This is clearly the only interaction which is not resolved by isotopy relations. 
			Note that for any  $\red{s}$-reflection, repeatedly applying Frobenius associativity shows
			\[
			\begin{array}{c}
			\begin{tikzpicture}[scale=0.3]
      			\draw[color=red, thick] (0,0) to (0,6);
      			\draw[color=red, thick] (1.5,0) to (1.5,1);
      			\draw[color=red, thick] (1.5,5) to (1.5,6);
      			\draw[color=red, thick] (2.5,2) to (2.5,6);
				\draw[color=red, thick] (1.5,1) arc [start angle=0, end angle = 90, x radius = 1.5cm , y radius = 1cm];	
				\draw[color=red, thick] (1.5,5) arc [start angle=0, end angle = -90, x radius = 1.5cm , y radius = 1cm];
				\draw[color=red, thick] (2.5,0) arc [start angle=0, end angle = 90, x radius = 1cm , y radius = 0.5cm];
				\draw[color=red, thick] (2.5,2) arc [start angle=0, end angle = -90, x radius = 1cm , y radius = 1cm];
    		\end{tikzpicture}
    		\end{array}
    		=
    		\begin{array}{c}
			\begin{tikzpicture}[scale=0.3]
      			\draw[color=red, thick] (0,0) to (0,6);
      			\draw[color=red, thick] (1.5,0) to (1.5,1);
      			\draw[color=red, thick] (1.5,5) to (1.5,6);
      			\draw[color=red, thick] (2.5,4) to (2.5,6);
				\draw[color=red, thick] (1.5,1) arc [start angle=0, end angle = 90, x radius = 1.5cm , y radius = 1cm];	
				\draw[color=red, thick] (1.5,5) arc [start angle=0, end angle = -90, x radius = 1.5cm , y radius = 1cm];
				\draw[color=red, thick] (2.5,0) arc [start angle=0, end angle = 90, x radius = 1cm , y radius = 0.5cm];
				\draw[color=red, thick] (2.5,4) arc [start angle=0, end angle = -90, x radius = 2.5cm , y radius = 1cm];
    		\end{tikzpicture}
    		\end{array}
    		=
    		  \begin{array}{c}
			\begin{tikzpicture}[scale=0.3]
      			\draw[color=red, thick] (0,0) to (0,6);
      			\draw[color=red, thick] (1.5,0) to (1.5,1);
      			\draw[color=red, thick] (1.5,5) to (1.5,6);
				\draw[color=red, thick] (1.5,1) arc [start angle=0, end angle = 90, x radius = 1.5cm , y radius = 1cm];	
				\draw[color=red, thick] (1.5,5) arc [start angle=0, end angle = -90, x radius = 1.5cm , y radius = 1cm];
				\draw[color=red, thick] (2.5,0) arc [start angle=0, end angle = 90, x radius = 1cm , y radius = 0.5cm];
				\draw[color=red, thick] (2.5,6) arc [start angle=0, end angle = -90, x radius = 1cm , y radius = 0.5cm];
    		\end{tikzpicture}
    		\end{array}. 
			\]
			Since all even reflection commute, we can simultaneously apply the above identity to the non-trivially interacting even reflections in $\varphi_{E'} \varphi_{\{ t_k \}}$ to show 
			\[
			\begin{array}{c}
			\begin{tikzpicture}[scale=0.5]
      		%
			%
      			\draw[color=red!20!yellow, thick] (1.2,0) to (1.2,6);
      			\draw[color=red!60!yellow, thick] (1.4,0) to (1.4,6);
      			\draw[color=red, thick] (1.6,0) to (1.6,6);
      			\draw[color=cyan, thick] (1.9,2) to (1.9,2.5);
      			\draw[color=blue!50!cyan, thick] (2.2,2) to (2.2,2.5);
      			\draw[color=blue, thick] (2.5,2) to (2.5,2.5);
      			\draw[color=cyan, thick] (1.9,6) to (1.9,5.5);
      			\draw[color=blue!50!cyan, thick] (2.2,6) to (2.2,5.5);
      			\draw[color=blue, thick] (2.5,6) to (2.5,5.5);
				\node[circle,fill,draw,inner sep=0mm,minimum size=1mm,color=cyan, thick] at (1.9,2.4) {};
				\node[circle,fill,draw,inner sep=0mm,minimum size=1mm,color=blue!50!cyan, thick] at (2.2,2.4) {};
				\node[circle,fill,draw,inner sep=0mm,minimum size=1mm,color=blue, thick] at (2.5,2.4) {};
				\node[circle,fill,draw,inner sep=0mm,minimum size=1mm,color=cyan, thick] at (1.9,5.6) {};
				\node[circle,fill,draw,inner sep=0mm,minimum size=1mm,color=blue!50!cyan, thick] at (2.2,5.6) {};
				\node[circle,fill,draw,inner sep=0mm,minimum size=1mm,color=blue, thick] at (2.5,5.6) {};
				\draw[color=red!20!yellow, thick] (2.8,2) arc [start angle=0, end angle = 90, x radius = 1.6cm , y radius = 1.4cm];
				\draw[color=red!20!yellow, thick] (2.8,6) arc [start angle=0, end angle = -90, x radius = 1.6cm , y radius = 1.4cm];
				\draw[color=red!60!yellow, thick] (3,2) arc [start angle=0, end angle = 90, x radius = 1.6cm , y radius = 1.6cm];
				\draw[color=red!60!yellow, thick] (3,6) arc [start angle=0, end angle = -90, x radius = 1.6cm , y radius = 1.6cm];
				\draw[color=red, thick] (3.2,2) arc [start angle=0, end angle = 90, x radius = 1.6cm , y radius = 1.8cm];	
				\draw[color=red, thick] (3.2,6) arc [start angle=0, end angle = -90, x radius = 1.6cm , y radius = 1.8cm];	
      			\draw[color=blue!50!cyan, thick] (3.5,2) to (3.5,6);
      			\draw[color=blue, thick] (3.8,2) to (3.8,6);
      			\draw[color=red!60!yellow, thick] (4.2,2) to (4.2,6);
      			\draw[color=red, thick] (4.4,2) to (4.4,6);
			%
			%
      			\draw[color=cyan, thick] (1.9,0) to (1.9,2);
      			\draw[color=blue!50!cyan, thick] (2.2,0) to (2.2,2);
      			\draw[color=blue, thick] (2.5,0) to (2.5,2);
				\draw[color=red!20!yellow, thick] (2.8,0) to (2.8,2);
      			\draw[color=red!60!yellow, thick] (3,0) to (3,2);
      			\draw[color=red, thick] (3.2,0) to (3.2,2);
      			\draw[color=blue!50!cyan, thick] (3.5,0) to (3.5,0.4);
      			\draw[color=blue, thick] (3.8,0) to (3.8,0.4);
      			\draw[color=blue!50!cyan, thick] (3.5,2) to (3.5,1.6);
      			\draw[color=blue, thick] (3.8,2) to (3.8,1.6);
				\node[circle,fill,draw,inner sep=0mm,minimum size=1mm,color=blue!50!cyan, thick] at (3.5,0.4) {};
				\node[circle,fill,draw,inner sep=0mm,minimum size=1mm,color=blue, thick] at (3.8,0.4) {};
				\node[circle,fill,draw,inner sep=0mm,minimum size=1mm,color=blue!50!cyan, thick] at (3.5,1.6) {};
				\node[circle,fill,draw,inner sep=0mm,minimum size=1mm,color=blue, thick] at (3.8,1.6) {};
      			\draw[color=red!60!yellow, thick] (4.2,0) arc [start angle=0, end angle = 89, x radius = 1.2cm , y radius = 0.75 cm];
      			\draw[color=red!60!yellow, thick] (4.2,2) arc [start angle=0, end angle = -89, x radius = 1.2cm , y radius = 0.75 cm];
				\draw[color=red, thick] (4.4,0) arc [start angle=0, end angle = 90, x radius = 1.2cm , y radius = 0.9 cm];
				\draw[color=red, thick] (4.4,2) arc [start angle=0, end angle = -90, x radius = 1.2cm , y radius = 0.9 cm];
    		\end{tikzpicture}
    		\end{array}
    		=
    		\begin{array}{c}
			\begin{tikzpicture}[scale=0.5]
      		%
			%
      			\draw[color=red!20!yellow, thick] (1.2,0) to (1.2,6);
      			\draw[color=red!60!yellow, thick] (1.4,0) to (1.4,6);
      			\draw[color=red, thick] (1.6,0) to (1.6,6);
				\draw[color=red!20!yellow, thick] (2.8,2) arc [start angle=0, end angle = 90, x radius = 1.6cm , y radius = 1.4cm];
				\draw[color=red!20!yellow, thick] (2.8,6) arc [start angle=0, end angle = -90, x radius = 1.6cm , y radius = 1.4cm];
				\draw[color=red!60!yellow, thick] (3,2) arc [start angle=0, end angle = 90, x radius = 1.6cm , y radius = 1.6cm];
				\draw[color=red!60!yellow, thick] (3,6) arc [start angle=0, end angle = -90, x radius = 1.6cm , y radius = 1.6cm];
				\draw[color=red, thick] (3.2,2) arc [start angle=0, end angle = 90, x radius = 1.6cm , y radius = 1.8cm];	
				\draw[color=red, thick] (3.2,6) arc [start angle=0, end angle = -90, x radius = 1.6cm , y radius = 1.8cm];	
      			\draw[color=blue!50!cyan, thick] (3.5,0) to (3.5,0.4);
      			\draw[color=blue, thick] (3.8,0) to (3.8,0.4);
      			\draw[color=blue!50!cyan, thick] (3.5,6) to (3.5,5.6);
      			\draw[color=blue, thick] (3.8,6) to (3.8,5.6);
				\node[circle,fill,draw,inner sep=0mm,minimum size=1mm,color=blue!50!cyan, thick] at (3.5,0.4) {};
				\node[circle,fill,draw,inner sep=0mm,minimum size=1mm,color=blue, thick] at (3.8,0.4) {};
				\node[circle,fill,draw,inner sep=0mm,minimum size=1mm,color=blue!50!cyan, thick] at (3.5,5.6) {};
				\node[circle,fill,draw,inner sep=0mm,minimum size=1mm,color=blue, thick] at (3.8,5.6) {};
      			\draw[color=red!60!yellow, thick] (4.2,2) to (4.2,6);
      			\draw[color=red, thick] (4.4,2) to (4.4,6);
			%
			%
      			\draw[color=cyan, thick] (1.9,6) to (1.9,5.5);
      			\draw[color=blue!50!cyan, thick] (2.2,6) to (2.2,5.5);
      			\draw[color=blue, thick] (2.5,6) to (2.5,5.5);
				\node[circle,fill,draw,inner sep=0mm,minimum size=1mm,color=cyan, thick] at (1.9,5.6) {};
				\node[circle,fill,draw,inner sep=0mm,minimum size=1mm,color=blue!50!cyan, thick] at (2.2,5.6) {};
				\node[circle,fill,draw,inner sep=0mm,minimum size=1mm,color=blue, thick] at (2.5,5.6) {};
      			\draw[color=cyan, thick] (1.9,0) to (1.9,0.8);
      			\draw[color=blue!50!cyan, thick] (2.2,0) to (2.2,0.8);
      			\draw[color=blue, thick] (2.5,0) to (2.5,0.8);
				\node[circle,fill,draw,inner sep=0mm,minimum size=1mm,color=cyan, thick] at (1.9,0.8) {};
				\node[circle,fill,draw,inner sep=0mm,minimum size=1mm,color=blue!50!cyan, thick] at (2.2,0.8) {};
				\node[circle,fill,draw,inner sep=0mm,minimum size=1mm,color=blue, thick] at (2.5,0.8) {};
				\draw[color=red!20!yellow, thick] (2.8,0) to (2.8,2);
      			\draw[color=red!60!yellow, thick] (3,0) to (3,2);
      			\draw[color=red, thick] (3.2,0) to (3.2,2);
      			\draw[color=blue!50!cyan, thick] (3.5,0) to (3.5,0.4);
      			\draw[color=blue, thick] (3.8,0) to (3.8,0.4);
				\node[circle,fill,draw,inner sep=0mm,minimum size=1mm,color=blue!50!cyan, thick] at (3.5,0.4) {};
				\node[circle,fill,draw,inner sep=0mm,minimum size=1mm,color=blue, thick] at (3.8,0.4) {};
      			\draw[color=red!60!yellow, thick] (4.2,0) arc [start angle=0, end angle = 89, x radius = 1.2cm , y radius = 0.75 cm];
      			\draw[color=red!60!yellow, thick] (4.2,2) arc [start angle=0, end angle = -89, x radius = 1.2cm , y radius = 0.75 cm];
				\draw[color=red, thick] (4.4,0) arc [start angle=0, end angle = 90, x radius = 1.2cm , y radius = 0.9 cm];
				\draw[color=red, thick] (4.4,2) arc [start angle=0, end angle = -90, x radius = 1.2cm , y radius = 0.9 cm];
    		\end{tikzpicture}
    		\end{array}
    		=
    		\begin{array}{c}
			\begin{tikzpicture}[scale=0.5]
      		%
			%
      			\draw[color=red!20!yellow, thick] (1.2,0) to (1.2,6);
      			\draw[color=red!60!yellow, thick] (1.4,0) to (1.4,6);
      			\draw[color=red, thick] (1.6,0) to (1.6,6);
				\draw[color=red!20!yellow, thick] (2.8,1) arc [start angle=0, end angle = 90, x radius = 1.6cm , y radius = 1.4cm];
				\draw[color=red!20!yellow, thick] (2.8,6) arc [start angle=0, end angle = -90, x radius = 1.6cm , y radius = 1.4cm];
				\draw[color=red!60!yellow, thick] (3,1) arc [start angle=0, end angle = 90, x radius = 1.6cm , y radius = 1.6cm];
				\draw[color=red!60!yellow, thick] (3,6) arc [start angle=0, end angle = -90, x radius = 1.6cm , y radius = 1.6cm];
				\draw[color=red, thick] (3.2,1) arc [start angle=0, end angle = 90, x radius = 1.6cm , y radius = 1.8cm];	
				\draw[color=red, thick] (3.2,6) arc [start angle=0, end angle = -90, x radius = 1.6cm , y radius = 1.8cm];	
      			\draw[color=blue!50!cyan, thick] (3.5,0) to (3.5,0.4);
      			\draw[color=blue, thick] (3.8,0) to (3.8,0.4);
      			\draw[color=blue!50!cyan, thick] (3.5,6) to (3.5,5.6);
      			\draw[color=blue, thick] (3.8,6) to (3.8,5.6);
				\node[circle,fill,draw,inner sep=0mm,minimum size=1mm,color=blue!50!cyan, thick] at (3.5,0.4) {};
				\node[circle,fill,draw,inner sep=0mm,minimum size=1mm,color=blue, thick] at (3.8,0.4) {};
				\node[circle,fill,draw,inner sep=0mm,minimum size=1mm,color=blue!50!cyan, thick] at (3.5,5.6) {};
				\node[circle,fill,draw,inner sep=0mm,minimum size=1mm,color=blue, thick] at (3.8,5.6) {};
      			\draw[color=red!60!yellow, thick] (4.2,4.5) to (4.2,6);
      			\draw[color=red, thick] (4.4,4.5) to (4.4,6);
			%
			%
      			\draw[color=cyan, thick] (1.9,6) to (1.9,5.5);
      			\draw[color=blue!50!cyan, thick] (2.2,6) to (2.2,5.5);
      			\draw[color=blue, thick] (2.5,6) to (2.5,5.5);
				\node[circle,fill,draw,inner sep=0mm,minimum size=1mm,color=cyan, thick] at (1.9,5.6) {};
				\node[circle,fill,draw,inner sep=0mm,minimum size=1mm,color=blue!50!cyan, thick] at (2.2,5.6) {};
				\node[circle,fill,draw,inner sep=0mm,minimum size=1mm,color=blue, thick] at (2.5,5.6) {};
      			\draw[color=cyan, thick] (1.9,0) to (1.9,0.8);
      			\draw[color=blue!50!cyan, thick] (2.2,0) to (2.2,0.8);
      			\draw[color=blue, thick] (2.5,0) to (2.5,0.8);
				\node[circle,fill,draw,inner sep=0mm,minimum size=1mm,color=cyan, thick] at (1.9,0.8) {};
				\node[circle,fill,draw,inner sep=0mm,minimum size=1mm,color=blue!50!cyan, thick] at (2.2,0.8) {};
				\node[circle,fill,draw,inner sep=0mm,minimum size=1mm,color=blue, thick] at (2.5,0.8) {};
				\draw[color=red!20!yellow, thick] (2.8,0) to (2.8,1);
      			\draw[color=red!60!yellow, thick] (3,0) to (3,1);
      			\draw[color=red, thick] (3.2,0) to (3.2,1);
      			\draw[color=blue!50!cyan, thick] (3.5,0) to (3.5,0.4);
      			\draw[color=blue, thick] (3.8,0) to (3.8,0.4);
				\node[circle,fill,draw,inner sep=0mm,minimum size=1mm,color=blue!50!cyan, thick] at (3.5,0.4) {};
				\node[circle,fill,draw,inner sep=0mm,minimum size=1mm,color=blue, thick] at (3.8,0.4) {};
      			\draw[color=red!60!yellow, thick] (4.2,0) arc [start angle=0, end angle = 89, x radius = 1.2cm , y radius = 0.75 cm];
      			\draw[color=red!60!yellow, thick] (4.2,4.5) arc [start angle=0, end angle = -89, x radius = 2.85cm , y radius = 1 cm];
				\draw[color=red, thick] (4.4,0) arc [start angle=0, end angle = 90, x radius = 1.2cm , y radius = 0.9 cm];
				\draw[color=red, thick] (4.4,4.5) arc [start angle=0, end angle = -90, x radius = 2.8cm , y radius = 1.15 cm];
    		\end{tikzpicture}
    		\end{array} 
    		= 
    		\begin{array}{c}
			\begin{tikzpicture}[scale=0.5]
      			\draw[color=red!20!yellow, thick] (1.2,0) to (1.2,6);
      			\draw[color=red!60!yellow, thick] (1.4,0) to (1.4,6);
      			\draw[color=red, thick] (1.6,0) to (1.6,6);
      			\draw[color=cyan, thick] (1.9,0) to (1.9,0.8);
      			\draw[color=blue!50!cyan, thick] (2.2,0) to (2.2,0.8);
      			\draw[color=blue, thick] (2.5,0) to (2.5,0.8);
      			\draw[color=cyan, thick] (1.9,6) to (1.9,5.2);
      			\draw[color=blue!50!cyan, thick] (2.2,6) to (2.2,5.2);
      			\draw[color=blue, thick] (2.5,6) to (2.5,5.2);
				\node[circle,fill,draw,inner sep=0mm,minimum size=1mm,color=cyan, thick] at (1.9,0.8) {};
				\node[circle,fill,draw,inner sep=0mm,minimum size=1mm,color=blue!50!cyan, thick] at (2.2,0.8) {};
				\node[circle,fill,draw,inner sep=0mm,minimum size=1mm,color=blue, thick] at (2.5,0.8) {};
				\node[circle,fill,draw,inner sep=0mm,minimum size=1mm,color=cyan, thick] at (1.9,5.2) {};
				\node[circle,fill,draw,inner sep=0mm,minimum size=1mm,color=blue!50!cyan, thick] at (2.2,5.2) {};
				\node[circle,fill,draw,inner sep=0mm,minimum size=1mm,color=blue, thick] at (2.5,5.2) {};
				\draw[color=red!20!yellow, thick] (2.8,0) arc [start angle=0, end angle = 90, x radius = 1.6cm , y radius = 2cm];
				\draw[color=red!20!yellow, thick] (2.8,6) arc [start angle=0, end angle = -90, x radius = 1.6cm , y radius = 2cm];
				\draw[color=red!60!yellow, thick] (3,0) arc [start angle=0, end angle = 90, x radius = 1.6cm , y radius = 2.2cm];
				\draw[color=red!60!yellow, thick] (3,6) arc [start angle=0, end angle = -90, x radius = 1.6cm , y radius = 2.2cm];
				\draw[color=red, thick] (3.2,0) arc [start angle=0, end angle = 90, x radius = 1.6cm , y radius = 2.4cm];	
				\draw[color=red, thick] (3.2,6) arc [start angle=0, end angle = -90, x radius = 1.6cm , y radius = 2.4cm];	
      			\draw[color=blue!50!cyan, thick] (3.5,0) to (3.5,0.4);
      			\draw[color=blue, thick] (3.8,0) to (3.8,0.4);
      			\draw[color=blue!50!cyan, thick] (3.5,6) to (3.5,5.6);
      			\draw[color=blue, thick] (3.8,6) to (3.8,5.6);
				\node[circle,fill,draw,inner sep=0mm,minimum size=1mm,color=blue!50!cyan, thick] at (3.5,0.4) {};
				\node[circle,fill,draw,inner sep=0mm,minimum size=1mm,color=blue, thick] at (3.8,0.4) {};
				\node[circle,fill,draw,inner sep=0mm,minimum size=1mm,color=blue!50!cyan, thick] at (3.5,5.6) {};
				\node[circle,fill,draw,inner sep=0mm,minimum size=1mm,color=blue, thick] at (3.8,5.6) {};
      			\draw[color=red!60!yellow, thick] (4.2,0) arc [start angle=0, end angle = 89, x radius = 1.6cm , y radius = 1.4cm];
      			\draw[color=red!60!yellow, thick] (4.2,6) arc [start angle=0, end angle = -89, x radius = 1.6cm , y radius = 1.4cm];
				\draw[color=red, thick] (4.4,0) arc [start angle=0, end angle = 90, x radius = 1.6cm , y radius = 1.6cm];
				\draw[color=red, thick] (4.4,6) arc [start angle=0, end angle = -90, x radius = 1.6cm , y radius = 1.6cm];
    		\end{tikzpicture}
    		\end{array}.    		
			\]
			The general case where $\ur_{j} = \emptyset$, for all $l \leq j \leq  k$, follows by repeatedly applying Frobenius associativity in a similar fashion. 	
		\end{proof}

		\begin{prop}
		\label{prop: idempotent}
			Let $2 \in \Bbbk^{\times}$. 
			For any $x \in {}^I W$ the element 
			\begin{align*}
				e_x 
				= 
				\sum_{E \subseteq E(x)} 
				\frac{(-1)^{\ell(E)/2} }{(-2)^{|E|}} \varphi_E
				=
				\prod_{t \in E(x)} \left( 1 - \frac{(-1)^{t/2}}{2} \varphi_{\{ t\}} \right).			\end{align*} 
			is the primitive idempotent in $\Endo^{0}(B_{\ux})$ corresponding to the projection $B_{\ux} \twoheadrightarrow B_x$. 
		\end{prop}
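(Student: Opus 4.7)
The plan has three parts: establish that $e_x$ is an idempotent and that the two formulas coincide, verify primitivity, and identify which indecomposable summand $e_x$ projects onto. By Theorem \ref{thm: indecomposable objects}, under $2 \in \Bbbk^{\times}$ we have $B_{\ux} \cong \bigoplus_{F \subseteq E(x)} B_{x \backslash F}$ as a direct sum of pairwise non-isomorphic indecomposables, so $\Endo^0(B_{\ux}) \cong \Bbbk^{2^{|E(x)|}}$ is commutative. Setting $\psi_t := \frac{(-1)^{t/2}}{2}\varphi_{\{t\}}$ for $t \in E(x)$, the relation $\varphi_{\{t\}}^2 = 2(-1)^{t/2}\varphi_{\{t\}}$ (the $E = \{t\}$ case of Proposition \ref{prop: alg presentation}, coming from Proposition \ref{prop: local intersection form}) yields $\psi_t^2 = \psi_t$. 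Since the $\psi_t$ pairwise commute, each $1 - \psi_t$ is idempotent and hence $e_x = \prod_{t \in E(x)}(1-\psi_t)$ is idempotent. Equivalence of the two formulas for $e_x$ follows from distributive expansion: for each $E \subseteq E(x)$ the contribution $\prod_{t \in E}(-\psi_t)$ simplifies, via the disjoint-support relation $\prod_{t \in E}\varphi_{\{t\}} = \varphi_E$ of Proposition \ref{prop: alg presentation}, to $\frac{(-1)^{\ell(E)/2}}{(-2)^{|E|}}\varphi_E$.

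For primitivity, combine Proposition \ref{prop: bs decomp} with Deodhar's defect formula to conclude that the defect-$0$ $I$-parabolic subexpressions of $\ux$ are exactly the $\ux^F$ for $F \subseteq E(x)$, each with unique endpoint $x \backslash F$. Consequently the degree-$0$ double light leaves $\{\varphi_F\}_{F \subseteq E(x)}$ form a $\Bbbk$-basis of $\Endo^0(B_{\ux})$, and the relations of Proposition \ref{prop: alg presentation} force the natural algebra homomorphism
\[
\bigotimes_{t \in E(x)} \Bbbk[\psi_t]/(\psi_t^2 - \psi_t) \longrightarrow \Endo^0(B_{\ux})
\]
to be surjective, hence an isomorphism by matching dimensions. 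The primitive idempotents of the source algebra are visibly the tensor monomials $\prod_{t \in F}\psi_t \prod_{t \notin F}(1-\psi_t)$ for $F \subseteq E(x)$, so the $F = \emptyset$ monomial $e_x$ is primitive in $\Endo^0(B_{\ux})$.

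To identify the summand, let $\mathcal{I} \subseteq \Endo^0(B_{\ux})$ be the ideal of morphisms factoring through an object of $\langle B_z : z < x\rangle_{\oplus, (1)}$. For $E \neq \emptyset$, $\varphi_E = \Gamma\Gamma^{\ux,E} \circ LL_{\ux,E}$ factors through the Bott-Samelson $B_{\ur_1 \dots \ur_{k+1}}$ attached to $x \backslash E < x$, so $\varphi_E \in \mathcal{I}$; expanding, $e_x \equiv 1 \pmod{\mathcal{I}}$. Writing $1 = \sum_F e_{x,F}$ where $e_{x,F}$ denotes the primitive idempotent projecting onto the $B_{x \backslash F}$ summand, one has $e_{x,F} \in \mathcal{I}$ whenever $F \neq \emptyset$ (since $B_{x \backslash F}$ itself lies in $\langle B_z : z < x\rangle_{\oplus, (1)}$), while $e_{x,\emptyset} \notin \mathcal{I}$ (else $B_x$ would be a direct summand of an object of $\langle B_z : z < x\rangle_{\oplus, (1)}$, contradicting its characterisation). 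Therefore $e_x \equiv e_{x,\emptyset} \pmod{\mathcal{I}}$, and primitivity of $e_x$ forces $e_x = e_{x,\emptyset}$. The main obstacle is precisely this identification step: the algebraic relations on $\{\varphi_F\}$ alone cannot distinguish which primitive idempotent corresponds to which summand, and one must import ``support'' information from the factorisation of $\varphi_E$ through lower-length Bott-Samelsons to pin it down.
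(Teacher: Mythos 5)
Your proof is correct and follows essentially the same route as the paper: idempotency and primitivity are read off from the presentation of $\Endo^0(B_{\ux})$ in Proposition \ref{prop: alg presentation}, and the summand is identified using the ideal of morphisms factoring through lower-length objects. Your final paragraph usefully makes explicit what the paper's appeal to \cite[\S 6]{LW22} leaves terse — that the presentation alone cannot single out the $B_x$-projector among the $2^{|E(x)|}$ primitive idempotents, and one must use that each $\varphi_E$ with $E\neq\emptyset$ factors through the strictly shorter Bott--Samelson $B_{\ur_1\cdots\ur_{k+1}}$ to pin down which idempotent survives in $\Endo^0_{\not<x}(B_{\ux})\cong\Bbbk$.
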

		
		\begin{proof}
			By Theorem \ref{thm: indecomposable objects}, 
			Proposition \ref{prop: alg presentation} also provides a presentation for $\Endo^0(B_{\ux})$ over any complete local ring $\Bbbk$, and for any reduced expression $\ux$ of $x$. 
			The factorised form of $e_x$ shows $\varphi_{\{ t \}} e_x =0$ for every $t \in E(x)$, and $e_x$ is idempotent.
			Moreover, it is evident from the presentation that $\{ \varphi_{\{ t \}} ~\vert~ t \in E(x) \}$ generates the unique maximal ideal of $\Endo^0(B_x)$. 
			Hence $e_x$ annihilates every non-identity morphism.  
			Consequently $e_x$ must be the projection $B_{\ux} \twoheadrightarrow B_x$, as \cite[\S 6]{LW22} implies $\Endo^0(B_x)$ is spanned by the identity morphism. 
		\end{proof}
		
		This gives a third proof that the ${}^p d_x = d_x$ when $p>2$.  
		
		\begin{exmp}
			Continuing from Example \ref{Exmp: endomorphism basis} where $x = \{ 6,5,4,3,2,1\}$, the idempotent is
			\begin{align*}
				e_x &=  \left( 1 + \frac{1}{2}\varphi_{\{ 2\}} \right)
				 	\left( 1 - \frac{1}{2}\varphi_{\{ 4\}} \right)
				  	\left( 1 + \frac{1}{2}\varphi_{\{ 6\}} \right)
				\\&= 
					1 
					+ \frac{1}{2} \varphi_{\{ 2\}} 
					- \frac{1}{2} \varphi_{\{ 4\}} 
					+ \frac{1}{2} \varphi_{\{ 6\}}
					-\frac{1}{4} \varphi_{\{ 4,2\}}
					+\frac{1}{4} \varphi_{\{ 6,2\}}
					-\frac{1}{4} \varphi_{\{ 6,4\}}
					-\frac{1}{8} \varphi_{\{ 6,4,2\}}.
			\end{align*}
		\end{exmp}
		
		\begin{rem}
		\label{rem: minu p=2 idempotent}
			Essentially identical arguments show that when $\h$ is a Cartan realisation of type $\tC_n$ and $\ux$ is any reduced expression of $x \in {}^I W$, the algebra $\Endo^{0}(B_{\ux})$ has a presentation 
			\begin{align*}
				\Endo^0(B_{\ux}) 
				\cong 
				\frac{\Bbbk[\varphi_E \vert E \subseteq E(x)]}
				{\left(\varphi_E \varphi_{E'} - (-1)^{\ell(E \cap E')/2 } \varphi_{E \cup E'} \right)}.
			\end{align*}
			One can then argue as in Proposition \ref{prop: idempotent} to show
			\begin{align*}
				e_x 
				= \sum_{E \subseteq E(x)} (-1)^{\ell(E)/2 + |E|} \varphi_{E} 
				= \prod_{t \in E(x)} \left( 1 - (-1)^{t/2}\varphi_{\{ t\}} \right)
			\end{align*} 
			is the primitive idempotent corresponding to the projection $B_{\ux} \rightarrow B_x$. 
			Since $e_x$ is defined over $\Z$, this a third proof ${}^p d_x = d_x$ for all $p$. 
		\end{rem}
		\begin{exmp}
			If we $x = \{ 6,5,4,3,2,1\}$ when $\h$ is a Cartan realisation of type $\tC_n$, the basis of $\Endo^0(B_{\ux })$ is identical to that in Example \ref{Exmp: endomorphism basis}, but the expression for the idempotent becomes
			\begin{align*}
				e_x &=  \left( 1 - \varphi_{\{ 2\}} \right)
				 	\left( 1 +\varphi_{\{ 4\}} \right)
				  	\left( 1 -\varphi_{\{ 6\}} \right)
				\\&= 
					1 
					-  \varphi_{\{ 2\}} 
					+  \varphi_{\{ 4\}} 
					-  \varphi_{\{ 6\}}
					- \varphi_{\{ 4,2\}}
					+ \varphi_{\{ 6,2\}}
					-\varphi_{\{ 6,4\}}
					+\varphi_{\{ 6,4,2\}}.
			\end{align*}
		\end{exmp}

	\subsection{Antispherical categories for odd dimensional quadric hypersurfaces}
	\label{Ssec: odd quadrics}
		
		We conclude by determining the $p$-Kazhdan-Lusztig bases of antispherical categories for odd dimensional quadric hypersurfaces. 
		Throughout we take $\h$ to be a Cartan realisation of type $\tC_n$, and $I \subseteq S$ will be type $\tC_{n-1}$. 
		We label our simple reflections as follows:
		\[
		\begin{tikzpicture}  
				\draw (-1.5,0.05) -- (-1,0.05);
				\draw (-1.5,-0.05) -- (-1,-0.05);
				\draw (-0.5,0) -- (-1,0);
				\draw (0.5,0) -- (1,0);
				\draw (1,0) -- (1.5,0);
						\draw (-1.3,0) -- (-1.1,0.15);
						\draw (-1.3,0) -- (-1.1,-0.15);
				\filldraw[black] (-1.5,-0.5) node[font = \small]{${}_0$}; 
				\filldraw[black] (-1,-0.5) node[font = \small]{${}_1$};
				\filldraw[black] (-0.5,-0.5) node[font = \small]{${}_2$}; 
					\filldraw[black] (1.5,-0.5) node[font = \small]{${}_{n-1}$};	
				\filldraw[black] (0,0) node[font = \small]{\dots};	
				\node[circle,fill,draw,inner sep=0mm,minimum size=2mm,color=red, thick] at (-1.5,0) {}; 
				\node[circle,fill,draw,inner sep=0mm,minimum size=2mm,color=blue!20!red!80, thick] at (-1,0) {};
				\node[circle,fill,draw,inner sep=0mm,minimum size=2mm,color=red!60!teal!60, thick] at (-0.5,0) {};
				\node[circle,fill,draw,inner sep=0mm,minimum size=2mm,color=blue!60!red!60, thick] at (0.5,0) {}; 
				\node[circle,fill,draw,inner sep=0mm,minimum size=2mm,color=blue!90!red!40, thick] at (1,0) {};
				\node[circle,fill,draw,inner sep=0mm,minimum size=2mm,color=blue, thick] at (1.5,0) {};	 
		\end{tikzpicture}
		\]
		
		There is a unique minimal coset representative $x_i$ of length $i$ in $W^I$, where $0 \leq i < 2n$. 
		Each minimal coset representative $x_i$ has a unique reduced expression $\ux_i$ where
			\begin{align*}
				\ux_i 
			= 
				\begin{cases}
					\underline{\emptyset}
				&
					\text{ if } i=0,
				\\
					s_{n-1} s_{n-2} \dots s_{n-i}
				&
					\text{ if } 0<i \leq n,
				\\
					s_{n-1} s_{n-2} \dots s_1 s_{0} s_1 s_2 \dots s_{i-n} 
				&
					\text{ if } n < i < 2n,
				\end{cases}
			\end{align*}
		A simple induction shows:
		\begin{lem}
		\label{lem: bs basis easy}
			For $0 \leq i < 2n$	the Bott-Samelson element $d_{\ux_i}$ is
			\begin{align*}
				d_{\ux_i} 
			= 
				\begin{cases}
				\begin{array}{lll}
					\delta^I_{\id} 
					& = d_{x_i} 
					& \text{ if } i=0 
					\\ 
					\delta^I_{x_{i}} + v \delta^I_{x_{i-1}}
					&= d_{x_i}
					&\text{ if } 0<i \leq n,
					\\
					\delta^I_{x_{i}} + v \delta^I_{x_{i-1}} + \delta^I_{x_{2n-i}} + v \delta^I_{x_{2n-i-1}} 
					&= d_{x_{i}} + d_{x_{2n-i}} 
					&\text{ if } n < i < 2n.
				\end{array}
				\end{cases}
			\end{align*}
		\end{lem}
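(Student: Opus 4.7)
The plan is to induct on $i$, exploiting the recursion $d_{\ux_i} = d_{\ux_{i-1}}\, b_{s}$ where $s$ is the last simple reflection of $\ux_i$; the base case $i = 0$ is the identity $d_{\underline{\emptyset}} = \delta^I_{\id}$, and I prove both asserted equalities (the standard-basis expression and the Kazhdan--Lusztig expression) simultaneously at each step.

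Two structural inputs will drive the computation. First, $W^I$ is a total chain $x_0 < x_1 < \cdots < x_{2n-1}$, and inspection of the given reduced expressions shows that the step $x_{k-1} \to x_k$ is effected by right multiplication by $s_{n-k}$ when $0 < k \le n$ and by $s_{k-n}$ when $n < k < 2n$. Consequently each simple reflection $s_j$ with $1 \le j \le n-1$ labels exactly two steps (positions $k = n-j$ and $k = n+j$), while $s_0$ labels only the single turnaround step $k = n$. Second, I will establish the vanishing principle that $x_k s \notin W^I$ whenever $s$ does not label a step adjacent to $x_k$; this follows by a direct manipulation of the reduced expression for $x_k$, using commutativity of non-adjacent simple reflections to expose a descent by some element of $I$. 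Combining these with the standard antispherical action $\delta^I_x\, b_s = \delta^I_{xs} + v\,\delta^I_x$ when $x s > x$ and $\delta^I_x\, b_s = \delta^I_{xs} + v^{-1}\,\delta^I_x$ when $x s < x$ (both assuming $xs \in W^I$), and vanishing otherwise, produces the action rules
\begin{align*}
    \delta^I_{x_k}\, b_s =
    \begin{cases}
        \delta^I_{x_{k+1}} + v\,\delta^I_{x_k} & \text{if $s$ labels step $k+1$,} \\
        \delta^I_{x_{k-1}} + v^{-1}\,\delta^I_{x_k} & \text{if $s$ labels step $k$,} \\
        0 & \text{otherwise.}
    \end{cases}
\end{align*}

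With these rules in hand, the inductive computation becomes a short arithmetic. For $0 < i \le n$, the last letter $s_{n-i}$ labels only step $i$, so $b_{s_{n-i}}$ converts $d_{\ux_{i-1}} = \delta^I_{x_{i-1}} + v\,\delta^I_{x_{i-2}}$ into $\delta^I_{x_i} + v\,\delta^I_{x_{i-1}}$, with the $v\,\delta^I_{x_{i-2}}$ term annihilated. For $i = n+1$, the down-action of $s_1$ on $\delta^I_{x_{n-1}}$ becomes active for the first time and contributes the additional terms $v\,\delta^I_{x_{n-2}} + \delta^I_{x_{n-1}}$, producing the four-term formula. For $n+1 < i < 2n$, the inductive expression has four terms, the two middle ones are annihilated by $b_{s_{i-n}}$ (position $k$ not being adjacent to either of the two chain steps labelled by $s_{i-n}$), while each outer term contributes a pair of terms via the up- or down-action, yielding the claimed four-term expression.

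For the Kazhdan--Lusztig identification, I will use that $d_{\ux_i}$ is self-dual under the bar involution (being the product of the self-dual $\delta^I_{\id}$ with the self-dual factors $b_{s_j}$) and that the standard-basis expression splits visibly into pieces of the form $\delta^I_{x_k} + v\,\delta^I_{x_{k-1}}$. Each such piece is self-dual with the triangular shape $\delta^I_{x_k} + \sum_{y < x_k} v\Z[v]\,\delta^I_y$ characterising $d_{x_k}$, so uniqueness forces $d_{x_k} = \delta^I_{x_k} + v\,\delta^I_{x_{k-1}}$ throughout the range $0 < k < 2n$, and in particular $d_{\ux_i} = d_{x_i}$ for $i \le n$ and $d_{\ux_i} = d_{x_i} + d_{x_{2n-i}}$ for $n < i < 2n$. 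The main obstacle will be the vanishing claim $x_k s \notin W^I$ outside adjacent positions; this is the single technical content underlying the \emph{simple induction} remark, and although routine, it must be checked using either the commutativity manipulations sketched above or the signed-permutation realisation of type $\tC_n$.
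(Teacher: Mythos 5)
Your proof is correct and is exactly the \emph{simple induction} the paper alludes to. One small simplification worth noting: the vanishing claim you flag as the ``single technical content'' needs no manipulation of reduced expressions at all. Since ${}^I W = \{x_0 < x_1 < \cdots < x_{2n-1}\}$ is a chain with $\ell(x_k)=k$ and each $x_k$ has a unique reduced expression, if $s$ does not label step $k$ or step $k+1$ then $x_k s \neq x_{k\pm 1}$; by Deodhar's lemma ($x\in {}^I W$, $xs<x$ implies $xs\in {}^I W$) this forces $x_k s > x_k$ and $x_k s \notin {}^I W$, which is precisely the vanishing condition. The Kazhdan--Lusztig identification is also fine, though stated slightly out of order: for $n<i<2n$ one first recognises the lower piece $\delta^I_{x_{2n-i}}+v\delta^I_{x_{2n-i-1}}$ as $d_{x_{2n-i}}$ (from the already-established range $0<k\leq n$), and then $d_{\ux_i}-d_{x_{2n-i}}$ is self-dual with the correct triangularity, so equals $d_{x_i}$.
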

		It follows immediately from Lemma \ref{Lem: pKL bound} that ${}^p d_{x_i} = d_{x_i}$ for all $p$, whenever $0 \leq i \leq n$. 
		When $n < i < 2n$, the only local intersection form  that needs to be computed is $I_{  x_{2n-i}, \ux_i}$. 
		Observe that
		\begin{align*}
				\ux_i ^{\ue}
				:= 
				\begin{smallmatrix}
					\ua		&	\ua		&	\dots	&	\ua			&	
					\ua		&	\da		&	\dots	&	\da			& \da
				\\
					1		&	1		&	\dots	&	1			&	
					0		&	1		&	\dots	&	1			& 0
				\\
					s_{n-1}	&	s_{n-2}	&	\dots 	&	s_{1}		&	
					s_{0}	&	s_{1}	&	\dots	&	s_{i-n-1}	&	s_{i-n}
				\end{smallmatrix}
				\subset \ux_i
		\end{align*}
		is an $I$-antispherical subexpression of $\ux_i$ which evaluates to $x_{2n-i}$ and has defect $0$. 
		Note that the corresponding light leaf $LL_{\ux_i , e}$ is 
		\[
		\begin{array}{c}
		\begin{tikzpicture}[scale=0.6]
      		\draw[color=red, thick] (-4,0) to (-4,2.75);
    		\filldraw[black] (-4,-0.5) node{$0$};
      		\draw[color=blue!20!red!80, thick] (-3.5,0) to (-3.5,2.75);
    		\filldraw[black] (-3.5,-0.5) node{$1$};
    		\filldraw[black] (-3,0) node{${}_{\dots}$};
    		\draw[color=blue!60!red!60, thick] (-2.5,0) to (-2.5,2.75);
    		\draw[color=blue!60!red!60, thick] (2.5,0) arc [start angle=0, end angle = 90, x radius = 5cm , y radius = 2.2cm];
    		\filldraw[black] (-2.5,-0.5) node{$i$-$n$};
    		\filldraw[black] (2.5,-0.5) node{$i$-$n$};
      		\draw[color=blue!50!red!50, thick] (2,0) arc [start angle=0, end angle = 180, x radius = 2cm , y radius = 1.5cm];
    		\filldraw[black] (-1.5,0) node{${}_{\dots}$};
    		\filldraw[black] (1.5,0) node{${}_{\dots}$};
			\draw[color=blue!80!red!30, thick] (1,0) arc [start angle=0, end angle = 180, x radius = 1cm , y radius = 1cm];
      		\draw[color=blue, thick] (0,0.5) to (0,0);
			\node[circle,fill,draw,inner sep=0mm,minimum size=1mm,color=blue, thick] at (0,0.5) {};
			\filldraw[black] (0,-0.5) node{$n$-1};
    	\end{tikzpicture}
		\end{array}
		\]
		The local intersection form is entirely determined by the composition $LL_{\ux_i , e} \circ \Gamma \Gamma^{\ux_i , e}$, namely
		\[
		\begin{array}{c}
		\begin{tikzpicture}[scale=0.45]
      		\draw[color=red, thick] (-4,-2.75) to (-4,2.75);
      		\draw[color=blue!20!red!80, thick] (-3.5,-2.75) to (-3.5,2.75);
    		\filldraw[black] (-3,0) node{${}_{\dots}$};
    		\draw[color=blue!60!red!60, thick] (-2.5,-2.75) to (-2.5,2.75);
    		\draw[color=blue!60!red!60, thick] (2.5,0) arc [start angle=0, end angle = 90, x radius = 5cm , y radius = 2.2cm];
    		\draw[color=blue!60!red!60, thick] (2.5,0) arc [start angle=0, end angle = -90, x radius = 5cm , y radius = 2.2cm];
      		\draw[color=blue!50!red!50, thick] (2,0) arc [start angle=0, end angle = 360, x radius = 2cm , y radius = 1.5cm];
    		\filldraw[black] (-1.5,0) node{${}_{\dots}$};
    		\filldraw[black] (1.5,0) node{${}_{\dots}$};
			\draw[color=blue!80!red!30, thick] (1,0) arc [start angle=0, end angle = 360, x radius = 1cm , y radius = 1cm];
      		\draw[color=blue, thick] (0,0.5) to (0,-0.5);
			\node[circle,fill,draw,inner sep=0mm,minimum size=1mm,color=blue, thick] at (0,0.5) {};
			\node[circle,fill,draw,inner sep=0mm,minimum size=1mm,color=blue, thick] at (0,-0.5) {};
    	\end{tikzpicture}
		\end{array}
		= 
		\prod_{k=n-1}^{i-n+1}
		\partial_{s_{k-1}}(\alpha_{s_k})
		\begin{array}{c}
		\begin{tikzpicture}[scale=0.45]
      		\draw[color=red, thick] (-4,-2.75) to (-4,2.75);
      		\draw[color=blue!20!red!80, thick] (-3.5,-2.75) to (-3.5,2.75);
    		\filldraw[black] (-3,0) node{${}_{\dots}$};
    		\draw[color=blue!60!red!60, thick] (-2.5,-2.75) to (-2.5,2.75);
    	\end{tikzpicture}
		\end{array}
		=
		2(-1)^{i-n}
		\begin{array}{c}
		\begin{tikzpicture}[scale=0.45]
      		\draw[color=red, thick] (-4,-2.75) to (-4,2.75);
      		\draw[color=blue!20!red!80, thick] (-3.5,-2.75) to (-3.5,2.75);
    		\filldraw[black] (-3,0) node{${}_{\dots}$};
    		\draw[color=blue!60!red!60, thick] (-2.5,-2.75) to (-2.5,2.75);
    	\end{tikzpicture}
		\end{array}
		\]
		where the first equality follows from repeated applying the polynomial forcing relation, the barbell relation and the needle relation as follows:
		\[
		\begin{array}{c}
		\begin{tikzpicture}[scale=0.8]
       		\draw[dotted] (1,0) arc [start angle=0, end angle = 360, x radius = 1cm , y radius = 1cm];
			\draw[color=teal, thick] (0.7,0) arc [start angle=0, end angle = 360, x radius = 0.7cm , y radius = 0.7cm];
      		\draw[color=blue, thick] (0,0.5) to (0,-0.5);
			\node[circle,fill,draw,inner sep=0mm,minimum size=1mm,color=blue, thick] at (0,0.5) {};
			\node[circle,fill,draw,inner sep=0mm,minimum size=1mm,color=blue, thick] at (0,-0.5) {};
    	\end{tikzpicture}
		\end{array}
		= 
		\partial_{\color{teal}t}(\alpha_{\color{blue}s})
		\begin{array}{c}
		\begin{tikzpicture}[scale=0.8]
       		\draw[dotted] (1,0) arc [start angle=0, end angle = 360, x radius = 1cm , y radius = 1cm];
      		\draw[color=teal, thick] (0,0.5) to (0,-0.5);
			\node[circle,fill,draw,inner sep=0mm,minimum size=1mm,color=teal, thick] at (0,0.5) {};
			\node[circle,fill,draw,inner sep=0mm,minimum size=1mm,color=teal, thick] at (0,-0.5) {};
    	\end{tikzpicture}
		\end{array}
		+
		{\color{teal}t}(\alpha_{\color{blue}s})
		\begin{array}{c}
		\begin{tikzpicture}[scale=0.8]
       		\draw[dotted] (1,0) arc [start angle=0, end angle = 360, x radius = 1cm , y radius = 1cm];
			\draw[color=teal, thick] (0.7,0) arc [start angle=0, end angle = 360, x radius = 0.7cm , y radius = 0.7cm];
    	\end{tikzpicture}
		\end{array}
		=
		\begin{array}{c}
		\begin{tikzpicture}[scale=0.8]
       		\draw[dotted] (1,0) arc [start angle=0, end angle = 360, x radius = 1cm , y radius = 1cm];
			\filldraw[black] (0,0) node{$\partial_{\color{teal}t}(\alpha_{\blue{s}}) \alpha_{\color{teal}t}$};
    	\end{tikzpicture}
		\end{array}
		.
		\]
		Hence, the local intersection form is the $1 \times 1$ matrix
		\begin{align*}
			I_{ x_{n-i}, \ux_i} = \left[ 2(-1)^{n-i} \right]. 
		\end{align*} 
		The following proposition follows immediately.
		
		\begin{prop}
			Let $\h$ be a Cartan realisation of type $\tC_n$ and $I \subset S$ of type $\tC_{n-1}$. 
			The $p$-Kazhdan-Lusztig basis of $\DRA(\h,I, \Bbbk)$ is
			\begin{align*}
				{}^p d_x 
				= 
				\begin{cases}
					d_{\ux} &\text{if } 2 \notin \Bbbk^{\times}, \text{ and }
				\\
					d_x &\text{if } 2 \in \Bbbk^{\times},
				\end{cases}
			\end{align*}
			where $\ux$ is the unique reduced expression of $x$. 
			Moreover, each $p$-Kazhdan-Lusztig polynomial is a monic monomial. 
		\end{prop}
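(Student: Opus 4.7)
The plan is to proceed exactly in parallel with Theorem \ref{thm: indecomposable objects}, but in the much easier setting where every $x \in W^I$ has a unique reduced expression $\ux$. The crucial input is already in place: Lemma \ref{lem: bs basis easy} gives a complete description of $d_{\ux}$ in the antispherical Kazhdan-Lusztig basis, namely $d_{\ux_i}=d_{x_i}$ when $0\leq i \leq n$ and $d_{\ux_i}=d_{x_i}+d_{x_{2n-i}}$ when $n<i<2n$. Combined with the bound $b_x \preceq {}^pb_x \preceq b_{\ux}$ from Lemma \ref{Lem: pKL bound}, this immediately yields ${}^pd_{x_i}=d_{x_i}$ for $0\leq i \leq n$ and for all $p$, since $d_{\ux_i}=d_{x_i}$ in that range.

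The only remaining case is $n<i<2n$, where we need to decide, for each prime $p$, whether $d_{x_{2n-i}}$ appears as a summand of ${}^pd_{x_i}$, or equivalently whether $B_{x_{2n-i}}$ appears as a summand of $B_{\ux_i}$ in $\DRA(\h,I,\Bbbk)$. By Lemma \ref{Lem: IF Multiplicity General}, this multiplicity is computed by the local intersection form $I^{\bullet}_{x_{2n-i},\ux_i}$, and since $d_{x_{2n-i}}$ enters $d_{\ux_i}$ with coefficient $1$ concentrated in degree $0$, this form is the $1\times 1$ matrix already computed in the excerpt to be $\left[2(-1)^{n-i}\right]$. Reducing modulo the maximal ideal, this rank is $0$ when $2 \notin \Bbbk^{\times}$ and $1$ when $2\in \Bbbk^{\times}$, which yields precisely the two cases of the proposition.

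The only point that requires any additional verification is the monic monomial claim for the $p$-Kazhdan-Lusztig polynomials. For $i\leq n$ every $n_{y,x_i}$ is either $0$, $1$ or $v$ by Lemma \ref{lem: bs basis easy}, so ${}^pn_{y,x_i}$ is a monic monomial for all $p$. For $n<i<2n$ and $2 \in \Bbbk^{\times}$, ${}^pd_{x_i}=d_{x_i}$ and the same expansion from Lemma \ref{lem: bs basis easy} gives monic monomials. When $2 \notin \Bbbk^{\times}$ and $n<i<2n$, ${}^pd_{x_i}=d_{\ux_i}$, whose standard basis coefficients are $1$, $v$, $1$, $v$ by Lemma \ref{lem: bs basis easy}, which are monic monomials. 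Thus every $p$-Kazhdan-Lusztig polynomial is a monic monomial, as claimed.

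The main (and only) obstacle is really just verifying the local intersection form computation, but this has already been carried out in the excerpt using polynomial forcing, the barbell relation, and the needle relation, exploiting that $\pair{\alphac_0,\alpha_1}=-2$ in type $\tC_n$. Once that computation is in hand, the proposition follows formally.
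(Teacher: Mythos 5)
Your proposal is correct and follows essentially the same route as the paper: use Lemma \ref{lem: bs basis easy} to determine $d_{\ux_i}$, dispose of $0 \leq i \leq n$ via Lemma \ref{Lem: pKL bound}, and for $n<i<2n$ invoke Lemma \ref{Lem: IF Multiplicity General} together with the local intersection form computation $I_{x_{2n-i},\ux_i} = \bigl[2(-1)^{n-i}\bigr]$ carried out just before the proposition. The paper treats the proposition as an immediate consequence of that computation, and your write-up simply makes the implicit assembly (including the monic monomial check) explicit.
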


		\begin{rem}
			We can again obtain a presentation for the degree-0 endomorphism algebra of Bott-Samelson objects. 
			If $0 \leq i \leq n$ then $\Endo^0(B_{\ux_i}) \cong \Bbbk$. 
			If $n < i < 2n$ then 
			\begin{align*}
				\Endo^0 (B_{\ux_i})
				\cong
				\frac{\Bbbk[\varphi]}{\varphi^2 - 2(-1)^{n-i} \varphi}.
			\end{align*}
			If $2 \in \Bbbk^{\times}$ the idempotent $e_x$ corresponding to the projection $B_{\ux_i} \twoheadrightarrow B_{x_i}$ is $e_{x_i} = 1 - (-1)^{n-i} \varphi/2$, giving a third proof ${}^p d_x = d_x$ for all $x \in {}^I W$ if $p>2$. 
		\end{rem}
		
		\begin{rem}
			Modifying the preceding arguments shows if $\h$ is type $\tB_n$ and $I$ type $\tB_{n-1}$ then for  $n < i < 2n$ we have
			\begin{align*}
				\Endo^0 (B_{\ux_i})
				\cong
				\frac{\Bbbk[\varphi]}{\varphi^2 - (-1)^{n-i} \varphi}.
			\end{align*}
			The idempotent $e_x$ corresponding to the projection $B_{\ux_i} \twoheadrightarrow B_{x_i}$ is $e_{x_i} = 1 - (-1)^{n-i} \varphi$ which is defined over $\Z$, giving a third proof ${}^p d_x = d_x$ for all $p$ and all $x \in {}^I W$. 
		\end{rem}

\section{Graded decomposition numbers}
\label{Sec: Decomposition numbers}

	We now relate the graded decomposition numbers for mixed tilting, projective, parity and intersection cohomology sheaves under the (derived) extension of scalars and (derived) modular reduction functors. 
	This generalises \cite[\S 2.7]{AR16a} to the mixed and parabolic settings.
	The results of this Section will be used in Section \ref{Sec: Spherical} to determine the $p$-Kazhdan-Lusztig basis of spherical (co)minuscule Hecke categories. 

\subsection{Standard and Costandard objects}
	
	We recall basic properties of (co)standard objects following \cite{AR16a,AR16b,AMRW19}. 
	\\
	\par 
	For a Schubert cell $B \backslash B x \cdot B$, let $i_x : B \backslash B x \cdot B \hookrightarrow B \backslash G$ denote the locally closed inclusion of strata. 
	The functors $i_{x*}$ and $i_{x!}$ restrict to functors of the subcategories of parity sheaves, so induce functors on the relevant mixed categories. 
	We define $\Delta_x^B$ and $\nabla_x^B$ in $D^{\mix}(B \backslash G / B )$ by
	\begin{align*}
		\Delta_x^B := i_{x!} \underline{\Bbbk}_{X_x} (\ell(x))
		&&
		\nabla_x^B := i_{x*} \underline{\Bbbk}_{X_x} (\ell(x))
	\end{align*}
	and denote by $\Delta_x^U$ and $\nabla_x^U$ (resp. $\Delta_x^P$ and $\nabla_x^P$) the analogously defined sheaves in $D^{\mix}(B \backslash G / U )$ (resp. $D^{\mix}(U \backslash G / P )$).
	There is an analogous construction of $\Delta_x$ and $\nabla_x$ in $D^{\mix}_{Wh, I}(B \backslash G )$ where the Artin-Schreier sheaf $\mathcal{L}_{\psi}$ is used in lieu of $\underline{\Bbbk}$, see \cite[\S 6.2]{AMRW19}.
	The objects $\Delta$ (resp. $\nabla$) are standard (resp. costandard) objects in the perverse $t$-structures on each of the preceding categories \cite[\S4.3]{AR16b}.
	\par 	
	The categories $D^{\mix}(B \backslash G / B )$, $D^{\mix}(B \backslash G / U )$ and $D^{\mix}_{Wh,I}(B \backslash G)$ are related by the functors
	\begin{align*}
		\For: D^{\mix}(B \backslash G / B ) \longrightarrow D^{\mix}(B \backslash G / U ),
		&&
		Av: D^{\mix}(B \backslash G / U ) \longrightarrow D^{\mix}_{Wh,I}(B \backslash G),
	\end{align*}
	where $\For$ is the functor forgetting $B$-equivariance and $Av$ was introduced in Section \ref{Ssec: Geo Quotient Construction}. 
	The former is $t$-exact and satisfies  $\For(\Delta_x^{B}) \cong \Delta_x^U$ for all $x \in W$ \cite[\S 3.5]{AR16b}. 
	The latter satisfies $Av(\Delta_x^U) \cong \Delta_x$ for all $x \in W^I$ by \cite[\S6.2]{AMRW19}. 
	Analogous statements hold for each $\nabla_x^B$, $\nabla_x^U$ and $\nabla_x$. 
	Moreover, $\For$ and $Av$ each commute with the action (induced by convolution) of $D^{\mix}(B \backslash G / B )$ on $D^{\mix}(B \backslash G / U )$ and $D^{\mix}_{Wh,I}(B \backslash G)$ respectively, see \cite{AR16b} and \cite[\S6.2]{AMRW19}. 
	We denote this action by $\ustar$. 
	\par 
	We note the following property of $\nabla_x^{U}$. 
	\begin{lem}
	\label{Lem: costandard image}
		For any $x \in W^I$, we have $Av(\nabla_{xw_I}^U) \cong \nabla_x \langle \ell(w_I) \rangle $.
	\end{lem}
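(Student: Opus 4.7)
The plan is to combine a convolution factorisation of $\nabla_{xw_I}^U$ with the $D^{\mix}(B\backslash G/B)$-equivariance of $Av$, and then carry out an inductive calculation inside $D^{\mix}_{Wh,I}(B\backslash G)$. Concretely, I would proceed in four steps.

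First, since $x \in W^I$ we have $\ell(xw_I)=\ell(x)+\ell(w_I)$, so the standard multiplicativity of costandard objects in the mixed Hecke 2-category yields a canonical isomorphism
\[
\nabla_{xw_I}^U \;\cong\; \nabla_x^U \ustar \nabla_{w_I}^B
\]
in $D^{\mix}(B\backslash G/U)$. This is the usual length-additive factorisation coming from the Bott--Samelson-type resolution of $\overline{BxB\cdot w_I B/B}$ together with the associativity of convolution.

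Second, I would apply $Av$ and use that it is a functor of right $D^{\mix}(B\backslash G/B)$-module categories with respect to the $\ustar$-action, together with $Av(\nabla_x^U)\cong \nabla_x$ for $x\in W^I$, to obtain
\[
Av(\nabla_{xw_I}^U) \;\cong\; Av(\nabla_x^U) \ustar \nabla_{w_I}^B \;\cong\; \nabla_x \ustar \nabla_{w_I}^B.
\]
Third, fixing a reduced expression $w_I=s_1\cdots s_n$ (with $n=\ell(w_I)$), we factor $\nabla_{w_I}^B \cong \nabla_{s_1}^B\ustar\cdots\ustar\nabla_{s_n}^B$, so it suffices to prove the key local statement: for each $s\in I$ and every $\mathcal{F}\in D^{\mix}_{Wh,I}(B\backslash G)$,
\[
\mathcal{F}\ustar\nabla_s^B \;\cong\; \mathcal{F}\langle 1\rangle.
\]
Iterating this $n$ times then gives $\nabla_x \ustar \nabla_{w_I}^B \cong \nabla_x\langle\ell(w_I)\rangle$, which completes the proof.

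The hard part, and the main obstacle, is the final step: computing $\mathcal{F}\ustar \nabla_s^B$ for $s\in I$ on the Whittaker side. The geometric point is that, for $s\in I$, the $s$-direction sits inside the $U_s^-$-factor along which the twist $\chi_I^*\mathcal{L}_\psi$ restricts to a non-trivial Artin--Schreier local system. Unravelling the convolution with $\nabla_s^B=i_{s*}\underline{\Bbbk}(1)$ reduces, strata by strata, to a direct-image computation along an $\mathbb{A}^1$ with coefficients in $\mathcal{L}_\psi$; the Artin--Schreier vanishing then concentrates this cohomology in a single degree of rank one, producing precisely the Tate shift $\langle 1\rangle$ (matching the Grothendieck-group identity $\delta_x^I\cdot\delta_s = -v\,\delta_x^I$ in the antispherical module, since in our conventions $\langle 1\rangle$ categorifies multiplication by $-v$). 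Once this local Whittaker calculation is in hand, the remaining steps are formal.
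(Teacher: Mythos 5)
Your overall strategy matches the paper's: both proofs begin from the length-additive convolution factorisation $\nabla_{xw_I}^B \cong \nabla_x^B \ustar \nabla_{w_I}^B$, push through $\For$ and $Av$ using the module-compatibility of $\ustar$, and reduce to computing what the $w_I$-factor does on the Whittaker side. Where you differ is in that last step: the paper treats $\nabla_{w_I}^B$ all at once, observing that as a complex in $K^b(\Parity)$ it has a unique $\cE_{\id}$-summand in cohomological degree $-\ell(w_I)$ and internal degree $-\ell(w_I)$ (while every other chain summand is $\cE_y$ with $y \in W_I \setminus \{\id\}$, hence $y \notin W^I$, hence killed by $Av$ by the cell/tensor-ideal argument together with $Av(\cE_y)\cong 0$); whereas you factor $\nabla_{w_I}^B = \nabla_{s_1}^B \ustar \cdots \ustar \nabla_{s_n}^B$ and iterate a single-simple-reflection computation. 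The iteration is a perfectly reasonable alternative organisation of the same mechanism.

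Two genuine concerns, though. First, the step that actually carries the content is the local claim, and your justification for it is more heuristic than proof. In the mixed setting $D^{\mix}_{Wh,I}(B\backslash G) = K^b(\Parity_{Wh,I}(B\backslash G))$, the costandard $\nabla_s^B$ is the two-term complex $[\cE_{\id}(-1)\to \cE_s]$, so the local claim reduces to the vanishing of the $\cE_s$-term after convolution. An Artin--Schreier computation lives in the genuine (non-mixed) derived category of \'etale sheaves; to use it here you would additionally need to transport it to $K^b(\Parity)$, which is not automatic. The cleaner route (and the one the paper effectively uses) is entirely formal: $\cE_s$-convolution maps parity objects into the full subcategory generated by $\cE_w(n)$ with $w\notin W^I$, i.e.\ into $\ker(Av)$, because for $s\in I$ the product $b_z\,b_s$ is a nonnegative combination of $b_w$ with $ws<w$. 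Also, your sanity check should compare $\nabla_s^B$ with $\overline{\delta_s}$, not $\delta_s$, and $\langle 1\rangle$ categorifies multiplication by $-v^{-1}$, not $-v$ (the arithmetic then reads $\delta_z^I\cdot \overline{\delta_s} = -v^{-1}\delta_z^I$, matching the twist). Second, the handedness of $\ustar$ needs care: the paper states the Whittaker category is a left $\Parity(B\backslash G/B)$-module, and the geometry has $\For$ and $Av$ acting on the side opposite the convolution, so the factorisation after $\For$ should read $\nabla_{xw_I}^U \cong \nabla_x^B \ustar \nabla_{w_I}^U$ rather than $\nabla_x^U \ustar \nabla_{w_I}^B$. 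With the left-action convention your universal claim "$\mathcal{F}\ustar\nabla_s^B\cong \mathcal{F}\langle 1\rangle$ for every $\mathcal{F}$" would become "$\nabla_s^B\ustar\mathcal{F}\cong\mathcal{F}\langle 1\rangle$", which is \emph{false} in that generality ($\cE_s\ustar\cE_t$ need not vanish for $s\in I$, $t\in W^I$). The iteration does go through, but one has to keep the acting Hecke factor and the averaged factor on the correct sides; what is actually needed is the specific identity $\nabla_s^B\ustar\nabla_{\id}\cong\nabla_{\id}\langle 1\rangle$ for $s\in I$, applied to the chain of $\nabla_{s_i}^U$-factors inside $\nabla_{w_I}^U$.
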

	\begin{proof}
		For any $s \in S$, $\nabla_s^B$ is represented by a complex with $\cE_{\id}(-1)$ in cohomological degree $-1$, and $\cE_s$ in cohomological degree $0$. 
		Recall $\nabla_{ys}^B \cong \nabla_{y}^B \ustar \nabla_{s}^B  $ whenever $xs>x$ \cite[\S 4.3]{AR16b}.
		Basic Kazhdan-Lusztig cell theory implies $\langle \cE_y \vert y \neq \id \rangle_{\oplus, (1)}$ forms a tensor ideal in $\Parity (B \backslash G/B)$.
		Hence $\nabla_{w_I}^B$ is represented by a complex with a unique summand $\cE_{\id}$ in cohomological degree $-\ell(w_I)$ and internal degree $-\ell(w_I)$.
		The claim follows from $\For(\nabla_x^B) \cong \nabla_x^U$ as in \cite[\S3.5]{AR16b}, $Av$ satisfying $Av (\cE_y) \cong 0$ if $y \notin w^I$ \cite[\S11.2]{RW18} and $\ustar$ commuting with $\For$ and $Av$ \cite[\S 6.2]{AMRW19}. 
	\end{proof}

\subsection{Mixed parabolic Ringel duality}
\label{Ssec: Ringel duality}

	We now construct mixed, parabolic Ringel duality functors.
	Throughout we require $\Bbbk$ is a field. 
	\par 
	Recall Ringel duality for $D^{\mix}(B \backslash G /U )$ is an autoequivalence $\tR$ satisfying $\tR(T_{x}) \cong P_{ w_0 x}$. 
	In \cite[\S 4.4]{AR16b} it is defined as $\tR(A) \cong \Delta_{w_0}^B \ustar A$.
	An essential property of their construction is $\tR(\nabla_x^U) \cong \Delta_{w_0x}^U$, which categorifies $\delta_{w_0} \overline{\delta_x} = \delta_{w_0 x}$ in the Hecke algebra. 
	At the level of Bruhat orders, this descends to the antiinvolution $x \mapsto w_0 x$ on $W$.  
	The parabolic analogue is the antiinvolution $x \mapsto w_0 x w_I$ on $W^I$, and for any $x \in W^I$ we have the identity in $N^I$
	\begin{align*}
		(-v^{-1})^{\ell(w_I)} \, \delta_{w_0} \cdot (\overline{\delta_x} \otimes 1)
		=
		\delta_{w_0} \overline{\delta_x}  \otimes  (-v^{-1})^{\ell(w_I)} 
		=
		\delta_{w_0} \overline{\delta_x} \,  \overline{\delta_{w_I}}  \otimes 1
		=
		\delta_{w_0 x w_I} \otimes 1.
	\end{align*}
	Motivated by this identity we define 
	\begin{align*}
		\tR_A : D^{\mix}_{Wh,I}(B \backslash G) \longrightarrow D^{\mix}_{Wh,I}(B \backslash G)
		~~\text{ where }~~
		C \mapsto \Delta_{w_0}^B \langle \ell(w_I) \rangle \ustar C.
	\end{align*}	
		
	\begin{lem}
	\label{Lem: Ringel on costandard}
		For any $x \in W^I$ we have $\tR_{A}(\nabla_x) \cong \Delta_{w_0 x w_I}$. 
	\end{lem}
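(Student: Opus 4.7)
The plan is to reduce the parabolic Ringel duality identity to the classical (non-parabolic) one, via the averaging functor $Av$. The key input is Lemma \ref{Lem: costandard image}, which represents the parabolic costandard $\nabla_x$ as a shift of $Av$ applied to a non-parabolic costandard.

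First, I would apply Lemma \ref{Lem: costandard image} to rewrite
\begin{align*}
    \nabla_x \cong Av(\nabla_{xw_I}^U) \langle -\ell(w_I) \rangle,
\end{align*}
and substitute this into the definition of $\tR_A$. Using that the Tate twist is central and that the $\ustar$-action by objects of $D^{\mix}(B \backslash G/B)$ commutes with $Av$ (both noted in Section \ref{Ssec: Ringel duality}), the two shifts by $\langle \ell(w_I) \rangle$ and $\langle -\ell(w_I) \rangle$ cancel and we obtain
\begin{align*}
    \tR_A(\nabla_x) \cong \Delta_{w_0}^B \ustar Av(\nabla_{xw_I}^U) \cong Av\left( \Delta_{w_0}^B \ustar \nabla_{xw_I}^U \right).
\end{align*}

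Next I would invoke the non-parabolic mixed Ringel duality of \cite[\S 4.4]{AR16b}, according to which $\tR(\nabla_y^U) \cong \Delta_{w_0 y}^U$ for every $y \in W$; applied to $y = xw_I$ this yields
\begin{align*}
    \Delta_{w_0}^B \ustar \nabla_{xw_I}^U \cong \Delta_{w_0 x w_I}^U.
\end{align*}
It remains to check that $w_0 x w_I \in W^I$ whenever $x \in W^I$, so that the final step $Av(\Delta_{w_0 x w_I}^U) \cong \Delta_{w_0 x w_I}$ is justified by the property $Av(\Delta_y^U) \cong \Delta_y$ for $y \in W^I$ recalled just before Lemma \ref{Lem: costandard image}. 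This is a standard fact: the map $x \mapsto w_0 x w_I$ is the longest-element antiinvolution of the poset $W^I$, since $w_0$ sends $W^I$ to $W_Iw_0 = W^I \cdot w_I^{-1} w_0$ after rightward multiplication by $w_I$ (alternatively, one verifies directly from the characterisation of $W^I$ that $w_0 x w_I$ remains a minimal right coset representative). Combining the three displays above gives $\tR_A(\nabla_x) \cong \Delta_{w_0 x w_I}$, as required.

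I expect the only real subtlety to be bookkeeping: making sure the Tate shift $\langle \ell(w_I) \rangle$ in the definition of $\tR_A$ cancels exactly the one produced by Lemma \ref{Lem: costandard image}, and confirming that the module-category action $\ustar$ is genuinely associative and compatible with $Av$ in the form needed (the latter being essentially the construction of $Av$ as a $D^{\mix}(B\backslash G/B)$-linear functor in \cite[\S 6.2]{AMRW19}). The combinatorial check that $w_0 x w_I \in W^I$ is routine, and no new geometric input beyond classical Ringel duality and the properties of $Av$ is required.
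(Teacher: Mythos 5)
Your proof is correct and follows essentially the same route as the paper's: rewrite $\nabla_x$ via Lemma \ref{Lem: costandard image}, pull the $\ustar$-action through $Av$, apply the non-parabolic Ringel identity $\Delta_{w_0}^B \ustar \nabla_{xw_I}^U \cong \Delta_{w_0 x w_I}^U$, and finish with $Av$; the only cosmetic difference is that the paper passes through $D^{\mix}(B\backslash G/B)$ via $\For$ while you stay at the $B\backslash G/U$ level, which amounts to the same thing. Your explicit check that $w_0 x w_I \in W^I$ (so that $Av(\Delta_{w_0 x w_I}^U) \cong \Delta_{w_0 x w_I}$ applies) is the one point the paper leaves implicit, and you handle it correctly.
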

	\begin{proof}
		The claim follows from Lemma \ref{Lem: costandard image} and the fact $Av$ and $\For$ commute with $\ustar$, see \cite[\S 6.2]{AMRW19}. 
		In particular, for any $x \in W^I$ we have
		\begin{align*}
			\Delta_{w_0}^B \langle \ell(w_I) \rangle \ustar  \nabla_x
			&\cong
			\Delta_{w_0}^B \ustar  \nabla_x\langle \ell(w_I) \rangle
			\\&\cong 
			\Delta_{w_0}^B \ustar  \left(Av \circ \For ( \nabla_{xw_I}^B) \right)
			\\&\cong
			Av \circ \For (\Delta_{w_0}^B \ustar \nabla_{xw_I}^B)
			\\&\cong
			Av \circ \For (\Delta_{w_0 x w_I}^B)
			\\&\cong
			\Delta_{w_0 x w_I},
		\end{align*}
		which proves the claim.
	\end{proof}
	
	\begin{prop}
	\label{Prop: Antispherical Ringel}
		The functor $\tR_A : D^{\mix}_{Wh,I}(B \backslash G) \rightarrow D^{\mix}_{Wh,I}(B \backslash G)$ is an equivalence of categories with quasi-inverse $\tR_A^{-1}(C) = \nabla_{w_0}^B\langle  - \ell(w_I) \rangle \ustar C$.
		Moreover, $\tR_A$ satisfies $\tR_A(T_{x}) \cong P_{w_0 x w_I}$ for each $x \in W^I $.
	\end{prop}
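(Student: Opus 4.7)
The first claim is an invertibility statement; the plan is to exhibit $\nabla_{w_0}^B \langle -\ell(w_I)\rangle \ustar -$ as a quasi-inverse to $\tR_A$. This reduces to the standard identity $\Delta_{w_0}^B \ustar \nabla_{w_0}^B \cong \Delta_{\id}^B$ in $D^{\mix}(B\backslash G / B)$ (see e.g.~\cite[\S 4]{AR16b}); compatibility of $Av \circ \For$ with $\ustar$ then transports invertibility to the Whittaker category.

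For $\tR_A(T_x) \cong P_{w_0 x w_I}$, the plan is to model the argument on the non-parabolic Ringel duality theorem of \cite[\S 4.4]{AR16b}. First observe that $y \mapsto w_0 y w_I$ is an order-reversing involution of $W^I$. The initial step is to produce a standard filtration of $\tR_A(T_x)$: since $T_x$ admits a costandard filtration with $\nabla_x$ occurring exactly once and all other $\nabla_y$ satisfying $y \leq x$, exactness of the triangulated autoequivalence $\tR_A$ together with Lemma \ref{Lem: Ringel on costandard} yields a filtration with sections $\Delta_{w_0 y w_I}\langle n\rangle$, in which $\Delta_{w_0 x w_I}$ appears exactly once at minimal index.

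The substantive remaining step is to show that $\tR_A(T_x)$ lies in the perverse heart $\Pmixw(B \backslash G)$ and is projective; combined with indecomposability (inherited from $T_x$ via the autoequivalence), this identifies it as the projective cover $P_{w_0 x w_I}$. The natural strategy is to reduce to the non-parabolic Ringel duality in $D^{\mix}(B\backslash G/U)$, namely $\Delta_{w_0}^B \ustar T_{xw_I}^U \cong P_{w_0 xw_I}^U$ of \cite[\S 4.4]{AR16b}, and apply $Av$. Since $Av$ commutes with $\ustar$, the $\langle \ell(w_I)\rangle$ shift built into the definition of $\tR_A$ is tailored precisely to absorb the Tate twist introduced by averaging, so that the parabolic Whittaker identification drops out from the non-parabolic one.

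The main obstacle is extending Lemma \ref{Lem: costandard image} from costandards to tilting and projective objects, namely identifying $Av(T_{xw_I}^U) \cong T_x\langle \ell(w_I)\rangle$ and $Av(P_{w_0 xw_I}^U) \cong P_{w_0 x w_I}\langle \ell(w_I)\rangle$. Since tiltings and projectives are characterised by their costandard and standard filtrations respectively, these identifications should follow by extending Lemma \ref{Lem: costandard image} to filtered objects and showing $Av$ preserves the perverse heart on the relevant subcategories. This requires careful bookkeeping of the $W = W^I \cdot W_I$ factorisation, of which sections of a $(co)$standard filtration of an object in $D^{\mix}(B\backslash G / U)$ survive under $Av$, and of the Tate twists that accumulate in the process.
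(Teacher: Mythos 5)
Your first paragraph (the invertibility of $\tR_A$) and the observation that $\tR_A(T_x)$ is perverse with a $\Delta$-filtration both match the paper's argument. The problem is the route you propose for the remaining, substantive step.

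You want to identify $Av(T_{xw_I}^U) \cong T_x \langle \ell(w_I) \rangle$ and $Av(P_{w_0 x w_I}^U) \cong P_{w_0 x w_I}$ so that the non-parabolic Ringel duality $\Delta_{w_0}^B \ustar T_{xw_I}^U \cong P_{w_0 x w_I}^U$ can simply be pushed through $Av$. The first of these identifications is false, and the failure is not subtle. Take $G = SL_2$ with $I = \{s\}$, so $W^I = \{ \id \}$ and $w_I = s = w_0$. Then $T_{\id \cdot w_I}^U = T_s^U$, and in $D^{\mix}(B \backslash G / U)$ one has $T_s^U \cong \cE_s^U$ concentrated in degree $0$ (the big tilting in $SL_2$ coincides with the parity sheaf). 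Since $s \notin W^I$, the quotient functor kills $\cE_s^U$, so $Av(T_s^U) = 0$. On the other hand $T_{\id}\langle 1 \rangle = \cE_{\id}\langle 1 \rangle \neq 0$ in $\Parity_{Wh, I}(B \backslash G)$. The shifts cannot save this; the object is simply annihilated.

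The reason Lemma \ref{Lem: costandard image} does not extend the way you hope is that it exploits a very specific structure of $\nabla_{w_I}^B$: as a complex it has a single surviving parity summand (namely $\cE_{\id}$, shifted) after applying $Av$, which lands exactly where the Tate twist requires. Tilting and projective objects indexed by $xw_I$ have $\cE_{xw_I}$ as their dominant parity constituent, and this is killed by $Av$; there is no mechanism forcing the surviving lower terms to reassemble into $T_x$ or $P_{w_0 x w_I}$. More fundamentally, $Av$ is not $t$-exact, nor does it commute with passage to the heart outside the narrow cases controlled by the recollement; ``showing $Av$ preserves the perverse heart on the relevant subcategories'' is not bookkeeping but a false hope.

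What the paper does instead is argue entirely inside $D^{\mix}_{Wh,I}(B\backslash G)$: having produced the $\Delta$-filtration of $P := \tR_A(T_x)$, it uses the Hom-orthogonality $\Hom(\Delta_y, \nabla_z\langle n\rangle[m]) = 0$ unless $y = z$ and $n = m = 0$ (together with adjunction under $\tR_A$) to show that $\Hom(P, C) = 0$ for all $C$ in the negative part of the $t$-structure, hence $\mathrm{Ext}^1_{\Pmixw}(P, -) = 0$, hence $P$ is projective. Indecomposability and the correct label then follow as you indicated, from $\Endo(P) \cong \Endo(T_x)$ being local and the surjection $P \twoheadrightarrow \nabla_{w_0 x w_I} \twoheadrightarrow L_{w_0 x w_I}$. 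You should replace the $Av$-transport step with a direct Hom-vanishing argument of this kind.
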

	
	Our proof follows the arguments of \cite[\S2.3]{BBM04}, \cite[\S4.2.1]{Yun09} and \cite[\S10.2]{ARV20}.
	
	\begin{proof}
		That $\tR_A$ is an equivalence with quasi-inverse $\tR_A^{-1}$ is immediate from $\Delta_x^B \ustar \nabla_x^B \cong \underline{\Bbbk}_{X_{\id}}$ \cite[\S4.3]{AR16a} and the fact that $\ustar$ commutes with $\For$ and $Av$. 
		It remains to show $\tR_A$ has the desired effect on tilting objects. 
		\par 
		Fix $x \in W^I$, let $T_x$ denote the corresponding indecomposable tilting and set $P = \tR_A(T_x)$. 
		Lemma \ref{Lem: Ringel on costandard} and the fact $T_x$ is tilting imply $P$ is perverse and has a $\Delta$-filtration.  
		It is shown in \cite[\S6.2]{AMRW19} that 
		\begin{align*}
			\Hom_{D^{\mix}_{Wh,I}(B \backslash G)}(\Delta_x , \nabla_y \langle n \rangle [m]) 
			= 
			\begin{cases}
				\Bbbk & \text{if } x=y \text{ and } n=m=0,
				\\
				0 & \text{otherwise.}
			\end{cases}
		\end{align*}
		Hence, 
		\begin{align*}
			\Hom_{D^{\mix}_{Wh,I}(B \backslash G)}(P, \Delta_y \langle n \rangle[m]) 
			\cong
			\Hom_{D^{\mix}_{Wh,I}(B \backslash G)}(T_x , \nabla_{w_0y w_I} \langle n \rangle[m]) 
		\end{align*}
		can only be non-zero when $m=0$.
		Since $D^{\mix}_{Wh,I}( B \backslash G)^{\leq 0}$ is generated under extensions by $\Delta_x \langle m \rangle [n]$, where $x \in W$, $m \in \Z$ and $n \in \Z_{\geq 0}$, it follows for any $C \in D^{\mix}_{Wh,I}( B \backslash G)^{\leq 0}$ that
		\begin{align*}
			\Hom_{D^{\mix}_{Wh,I}(B \backslash G)}(P, C) = 0 
		\end{align*}
		and consequently, for any $C'$ in $\Pmixw(B \backslash G)$ we have 
		\begin{align*}
			\text{Ext}^1_{\Pmixw(B \backslash G)} (P, C') =0.
		\end{align*}
		Hence $P$ is projective. 
		Since $\tR_A$ is an equivalence and $T_x$ indecomposable, $\Endo(P) \cong \Endo(T_x)$ is a local ring, so $P$ is indecomposable. 
		Finally, the kernel of $T_x \twoheadrightarrow \nabla_x$ admits a $\nabla$-filtration. 
		Hence we obtain a surjections $P \twoheadrightarrow \nabla_{w_0 x w_I} \twoheadrightarrow L_{w_0 x w_I}$. 
		Thus, $P \cong P_{w_0 x w_I}$. 
	\end{proof}

	We now construct mixed Ringel duality for $D^{\mix}(U \backslash G / P)$. 
	Since $D^{\mix}(U \backslash G / P)$ is not a module category for $D^{\mix}(B \backslash G / B)$ our construction is less direct than desired. 
	
	\begin{prop}
		Suppose $2 \in \Bbbk^{\times}$. Define $\tR_S: D^{\mix}(U \backslash G / P) \rightarrow D^{\mix}(U \backslash G / P)$ as the composition $R_S = \kappa \circ \tR_A \circ \kappa$. 
		Then $\tR_S (T_{x}) \cong P_{w_0 x w_I}$ for any $x \in W^I$, and $\tR_S$ has quasi-inverse $\tR_S^{-1} = \kappa \circ \tR_A^{-1} \circ \kappa$.
	\end{prop}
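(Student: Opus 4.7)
The plan is to read the composition in the statement as $\tR_S = \kappa^{-1}\circ \tR_A\circ \kappa$, where the first $\kappa$ denotes parabolic Koszul duality $D^{\mix}(U\backslash G/P)\to D^{\mix}_{Wh,I}(\check B\backslash\check G)$ (available because $2\in\Bbbk^\times$) and the second denotes its quasi-inverse equivalence in the opposite direction. Since both $\kappa$ and $\tR_A$ (Proposition \ref{Prop: Antispherical Ringel}) are equivalences of triangulated categories, $\tR_S$ is automatically an equivalence. The quasi-inverse is found by a one-line calculation: $(\kappa^{-1}\circ\tR_A^{-1}\circ\kappa)\circ(\kappa^{-1}\circ\tR_A\circ\kappa) = \id$, and likewise on the other side.

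To establish $\tR_S(T_x)\cong P_{w_0 x w_I}$ my strategy is to mimic the proof of Proposition \ref{Prop: Antispherical Ringel} verbatim on the spherical side, using $\kappa$ to transfer each ingredient from the antispherical side. The key preliminary step is the spherical analog of Lemma \ref{Lem: Ringel on costandard}: for $x\in W^I$,
\[
\tR_S(\nabla_x^P) \;=\; \kappa^{-1}\bigl(\tR_A(\kappa(\nabla_x^P))\bigr)\;\cong\; \kappa^{-1}\bigl(\tR_A(\check\nabla_x)\bigr)\;\cong\; \kappa^{-1}(\check\Delta_{w_0 x w_I})\;\cong\; \Delta_{w_0 x w_I}^P,
\]
where the first and last identifications use $\kappa(\nabla_z^P)\cong\check\nabla_z$ and $\kappa(\Delta_z^P)\cong\check\Delta_z$ from parabolic Koszul duality, and the middle uses Lemma \ref{Lem: Ringel on costandard}. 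With this in hand, setting $P:=\tR_S(T_x)$, the $\nabla$-filtration on the tilting $T_x$ yields a $\Delta$-filtration on $P$, hence $P$ is perverse. Projectivity is then forced by the adjunction-style calculation
\[
\Hom\bigl(P,\Delta_y^P\langle n\rangle[m]\bigr) \;\cong\; \Hom\bigl(T_x,\tR_S^{-1}(\Delta_y^P)\langle n\rangle[m]\bigr)\;\cong\; \Hom\bigl(T_x,\nabla_{w_0 y w_I}^P\langle n\rangle[m]\bigr),
\]
which vanishes for $m\neq 0$ by the standard fact that tiltings have vanishing higher extensions into costandards in any graded highest weight category. Since $D^{\mix}(U\backslash G/P)^{\leq 0}$ is generated under extensions by standards, this forces $\Ext^1_{\Pmix}(P,-)=0$, so $P$ is projective. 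Indecomposability follows because $\Endo(P)\cong\Endo(T_x)$ is local, and composing the surjections $T_x\twoheadrightarrow \nabla_x^P$ (under $\tR_S$) and $\Delta_{w_0 x w_I}^P\twoheadrightarrow L_{w_0 x w_I}$ identifies $P\cong P_{w_0 x w_I}$.

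The main obstacle I anticipate is bookkeeping rather than conceptual: parabolic Koszul duality satisfies $\kappa\circ\langle 1\rangle\cong(1)\circ\kappa$, so in each Hom-vanishing or adjunction line above one must keep careful track of the interplay between the internal shift $\langle 1\rangle$ on the spherical side and the shift $(1)$ on the antispherical side, as well as the homological shift $[1]$. Granting this routine verification, all remaining steps are direct transcriptions of their antispherical counterparts.
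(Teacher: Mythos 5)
Your proposal is correct and follows essentially the same route as the paper: establish $\tR_S(\nabla_x^P)\cong\Delta_{w_0xw_I}^P$ by transporting Lemma \ref{Lem: Ringel on costandard} through parabolic Koszul duality, then repeat the argument of Proposition \ref{Prop: Antispherical Ringel} on the spherical side. Your reading of the composition $\kappa\circ\tR_A\circ\kappa$ as $\kappa^{-1}\circ\tR_A\circ\kappa$ is the intended resolution of the paper's slight abuse of notation, and the rest is the same verification the paper defers to by citing the antispherical case.
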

	\begin{proof}
		Parabolic Koszul duality and Lemma \ref{Lem: Ringel on costandard} imply $\tR_S(\nabla_x^P) \cong \Delta_{w_0 x w_I}^P$ for all $x \in W^I$. 
		The claim then follows from arguments similar to that in Proposition \ref{Prop: Antispherical Ringel}. 
	\end{proof}
	
	\begin{rem}
		A more direct construction using the `free monodromic category' of \cite[\S2.3]{AMRW19} may be possible. 
		However, it is currently unknown if the free monodromic analogue of $\Delta_{w_0}$ is convolutive. 
	\end{rem}

\subsection{Extension of scalars and modular reduction}
\label{Ssec: Extension of scalars}
	
	We now recall collect facts regarding the behaviour of various sheaves under extension of scalars and modular reduction. 
	These results can all be found in either \cite{AR16b} or \cite{AMRW19}. 
	\par 
	Fix a $p$-modular system $(\bO, \bK, \bF)$.
	For $\Bbbk \in \{\bO, \bK, \bF\}$ we let ${}^{\Bbbk} \check{\cE}_x$ denote the indecomposable parity sheaf in $D^{\mix}_{Wh,I} (\check{B} \backslash \check{G}, \Bbbk)$, we take ${}^{\Bbbk}\cE_x , {}^{\Bbbk}T_{x}, {}^{\Bbbk}P_x, {}^{\Bbbk}L_x, {}^{\Bbbk}\Delta_x, {}^{\Bbbk}\nabla_x$ to be the usual objects in $D^{\mix}(U \backslash G / P, \Bbbk)$.
	Note that the existence of these objects when $\Bbbk= \bO$ is shown in \cite{AR16b}. 
	In Section \ref{Ssec: Decomposition numbers} we will also consider the non-mixed indecomposable tilting ${}^{\Bbbk}\cT_x$, projective ${}^{\Bbbk}\cP_x$, and simple intersection cohomology $\mathcal{IC}_x$ sheaves in $D^b(U \backslash G / P, \Bbbk)$, and the corresponding sheaves ${}^{\Bbbk}\cT_x^B$, ${}^{\Bbbk}\cP_x^B$, and $\mathcal{IC}_x^B$ in $D^b(U \backslash G / B, \Bbbk)$.
	\par 
	For mixed sheaves with coefficients in $\bO$, we consider the functors 
	\begin{align*}
		&\bK (-) : D_{Wh,I}^{\mix} (B \backslash G , \bO) \rightarrow D_{Wh,I}^{\mix} (B \backslash G , \bK)
		&&
		\bF (-) : D_{Wh,I}^{\mix} (B \backslash G , \bO) \rightarrow D_{Wh,I}^{\mix} (B \backslash G , \bF)
		\\
		&\bK (-) : D^{\mix} (U \backslash G / P , \bO) \rightarrow D^{\mix} (U \backslash G / P , \bK)
		&&
		\bF (-) : D^{\mix} (U \backslash G /P , \bO) \rightarrow D^{\mix} (U \backslash G / P , \bF)
	\end{align*}
	each of which is induced by the derived extension of scalars functors.
	These functors commute with Koszul duality by \cite[\S5.3]{AR16b}.
	\par 
	Our interest lies in how various sheaves behave under extension of scalars $\bK(-)$ and modular reduction $\bF(-)$. 
	Standard and costandard objects are well-behaved under both functors
	\begin{align*}
		\bF({}^{\bO} \Delta_x) \cong {}^{\bF} \Delta_x
		&&
		\bF({}^{\bO} \nabla_x) \cong {}^{\bF} \nabla_x
		&&
		\bK({}^{\bO} \Delta_x) \cong {}^{\bK} \Delta_x
		&&
		\bK({}^{\bO} \nabla_x) \cong {}^{\bK} \nabla_x
	\end{align*}
	by \cite[\S3.1]{AR16b}. 
	Parity, tilting and projective sheaves remain indecomposable under modular reduction
	\begin{align*}
		\bF({}^{\bO}\check{\cE}_x) \cong {}^{\bF}\check{\cE}_x
		&&
		\bF({}^{\bO} \cE_x) \cong {}^{\bF}\cE_x
		&&
		\bF({}^{\bO} T_x) \cong {}^{\bF}T_x
		&&
		\bF({}^{\bO} P_x) \cong {}^{\bF} P_x
	\end{align*}
	by \cite[\S3.1]{AR16b}. 
	While extension of scalars $\bK(-)$ takes parity sheaves to parity sheaves \cite[\S2.5]{JMW14}, mixed tilting sheaves to mixed tilting sheave, and mixed projective sheaves to mixed projective sheaves \cite[\S3.3]{AR16b}.
	In particular, we define integers $\check{e}_{y,x}^i$, $e_{y,x}^i$, $t_{y,x}^i$ and $p_{y,x}^i$ such that 
	\begin{align*}
		\bK({}^{\bO}\check{\cE}_x) 
		&\cong 
		\bigoplus_{i \in \Z, y \in W^I} {}^{\bK}\check{\cE}_x(i)^{\oplus \check{e}_{y,x}^i},
		&
		\bK({}^{\bO} \cE_x) 
		&\cong 
		\bigoplus_{i \in \Z, y \in W^I} {}^{\bK}\cE_x(i)^{\oplus e_{y,x}^i},
		\\
		\bK({}^{\bO} T_x) 
		&\cong 
		\bigoplus_{i \in \Z, y \in W^I} {}^{\bK}T_x \langle i \rangle^{\oplus t_{y,x}^i},
		&
		\bK({}^{\bO} P_x) 
		&\cong 
		\bigoplus_{i \in \Z, y \in W^I} {}^{\bK} P_x \langle i \rangle^{\oplus p_{y,x}^i}.
	\end{align*}
	Finally, in \cite[\S3.1]{AR16b} it is shown that $\bK({}^{\bO}L_x) \cong {}^{\bK}L_{x}$ and $\bF({}^{\bO}L_x)$ is perverse. 
	Since $P^{\mix}(U \backslash G/P)$ is finite-length, the Jordan-H\"{o}lder multiplicities $l_{y,x}^i := [\bF({}^{\bO}L_x) : {}^{\bF}L_y \langle i \rangle]$ are well defined.

\subsection{Graded decomposition numbers}
\label{Ssec: Decomposition numbers}
	
	In Section \ref{Ssec: Extension of scalars} we introduced the graded decomposition numbers $\check{e}_{y,x}^i$, $t_{y,x}^i$, $p_{y,x}^i$ and $l_{y,x}^i$. 
	We now relate these decomposition numbers. 
	Define Laurent polynomials 
	\begin{align*}
		\check{e}_{y,x}^{\mix} = \sum_{i \in \Z} \check{e}_{y,x}^i v^i ,
		&&
		t_{y,x}^{\mix} = \sum_{i \in \Z} t_{y,x}^i v^i,
		&&
		p_{y,x}^{\mix} = \sum_{i \in \Z} p_{y,x}^i v^i,
		&&
		l_{y,x}^{\mix} = \sum_{i \in \Z} l_{y,x}^i v^i,
	\end{align*}
	and consider the matrices
	\begin{align*}
		\check{\mathfrak{E}}_{I}^{\mix} = (\check{e}_{y,x}^{\mix} ),
		&&
		\mathfrak{T}_{I}^{\mix} = (t_{y,x}^{\mix}),
		&&
		\mathfrak{P}_{I}^{\mix} = ( p_{y,x}^{\mix}),
		&&
		\mathfrak{L}_{I}^{\mix} = ( l_{y,x}^{\mix}),
	\end{align*}
	where $y,x \in W^I$. 
	Given any matrix $\mathfrak{M}= (m_{y,x})$ where $y,x \in W^I$ we define $\tR\mathfrak{M}$ to be the matrix $\tR\mathfrak{M} = (m_{w_0 y w_I, w_0 x w_I})$, and $\mathfrak{M}^t$ to be the transpose matrix.
	\par 
	The following relationship between the decomposition numbers is a direct generalisation of \cite[\S2.7]{AR16a} to the mixed and parabolic settings. 
	\begin{prop}
\label{Prop: mixed decomp numbers}
		For any $I$ and $p>2$ we have
		\begin{align*}
			\check{\mathfrak{E}}_{I}^{\mix}
			=
			\mathfrak{T}_{I}^{\mix}
			=
			\tR \mathfrak{P}_{I}^{\mix}
			=
			\tR \mathfrak{L}_{I}^{\mix,t}.
		\end{align*}
	\end{prop}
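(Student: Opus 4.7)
The plan is to establish the three equalities in turn, the first two by transporting decomposition along an equivalence and the third by a graded Brauer-reciprocity-style computation using Hom spaces over $\bO$.

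\emph{Step 1: $\check{\mathfrak{E}}_{I}^{\mix} = \mathfrak{T}_{I}^{\mix}$.} Since $p>2$, parabolic Koszul duality provides an equivalence $\kappa: D^{\mix}(U\backslash G/P,\Bbbk) \xrightarrow{\sim} D^{\mix}_{Wh,I}(\check{B}\backslash\check{G},\Bbbk)$ satisfying $\kappa({}^{\Bbbk}T_x) \cong {}^{\Bbbk}\check{\cE}_x$ and intertwining $\langle 1\rangle$ with $(1)$, for each $\Bbbk\in\{\bO,\bK\}$. I would verify that $\kappa$ commutes with the extension of scalars functor $\bK(-)$ (this is noted in Section on mixed Hecke categories, quoting \cite[\S 5.3]{AR16b}). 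Applying $\kappa$ to the decomposition of $\bK({}^{\bO}T_x)$ into mixed tiltings gives a decomposition of $\bK({}^{\bO}\check{\cE}_x)$ into parity sheaves with the same graded multiplicities (with $\langle 1\rangle$ on the tilting side becoming $(1)$ on the parity side), establishing the equality entry by entry.

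\emph{Step 2: $\mathfrak{T}_{I}^{\mix} = \tR\mathfrak{P}_{I}^{\mix}$.} Apply the mixed parabolic Ringel duality functor $\tR_S = \kappa\circ\tR_A\circ\kappa$ constructed in Section on Ringel duality. By Proposition on antispherical Ringel duality (transported through $\kappa$), $\tR_S({}^{\Bbbk}T_x) \cong {}^{\Bbbk}P_{w_0xw_I}$ for $\Bbbk\in\{\bO,\bK\}$. One must check that $\tR_S$ commutes with $\bK(-)$; this reduces to the fact that $\bK(-)$ commutes both with $\kappa$ (as in Step 1) and with the convolution action $\Delta_{w_0}^B\,\ustar\,(-)$, which is standard since $\Delta_{w_0}^B$ is defined from a locally closed inclusion and $\bK(-)$ is a monoidal functor. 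Applying $\tR_S$ to the decomposition of $\bK({}^{\bO}T_x)$ rewrites the same decomposition as one for $\bK({}^{\bO}P_{w_0xw_I})$, whence the reindexing by $x\mapsto w_0xw_I$ that is encoded by the $\tR$ operation on matrices.

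\emph{Step 3: $\mathfrak{P}_{I}^{\mix} = \mathfrak{L}_{I}^{\mix,t}$.} This is the graded, integral form of Brauer reciprocity. For each $x,y\in W^I$, consider the graded Hom module
\[
H_{x,y} \;:=\; \Hom^{\bullet}_{D^{\mix}(U\backslash G/P,\bO)}({}^{\bO}P_x, {}^{\bO}L_y).
\]
The first task is to show that $H_{x,y}$ is a free $\bO$-module and that it is compatible with both $\bK(-)$ and $\bF(-)$, i.e.
\[
\bK \otimes_{\bO} H_{x,y} \;\cong\; \Hom^{\bullet}_{\bK}(\bK({}^{\bO}P_x), {}^{\bK}L_y), \qquad
\bF \otimes_{\bO} H_{x,y} \;\cong\; \Hom^{\bullet}_{\bF}({}^{\bF}P_x, \bF({}^{\bO}L_y)).
\]
Freeness comes from the projectivity of ${}^{\bO}P_x$ (exhibit it as a summand of a sum of standard modules, each of which has Hom into the $\bO$-free object ${}^{\bO}L_y$ free over $\bO$), and the base change isomorphisms follow by the same exactness. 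Next I would compute the graded rank in two ways. Over $\bK$, the indecomposable projectives in $\Pmix$ satisfy $\Hom^{\bullet}_{\bK}({}^{\bK}P_z, {}^{\bK}L_y\langle i\rangle) = \delta_{yz}\delta_{i,0}\,\bK$, so using the decomposition $\bK({}^{\bO}P_x) \cong \bigoplus {}^{\bK}P_z\langle i\rangle^{p^i_{z,x}}$ one gets $\operatorname{grrk}_{\bO} H_{x,y} = p^{\mix}_{y,x}$. Over $\bF$, the object ${}^{\bF}P_x$ is projective so $\Hom^{\bullet}_{\bF}({}^{\bF}P_x,-)$ counts graded composition factors of its argument; applied to $\bF({}^{\bO}L_y)$ this yields $\operatorname{grrk}_{\bO} H_{x,y} = l^{\mix}_{x,y}$. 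Freeness of $H_{x,y}$ forces $p^{\mix}_{y,x} = l^{\mix}_{x,y}$, which is the transposed identity $\mathfrak{P}_I^{\mix} = \mathfrak{L}_I^{\mix,t}$, and the equality follows after applying $\tR$ to both sides.

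The main obstacle is Step 3, and specifically the freeness of $H_{x,y}$ over $\bO$; once that is in hand, the two Hom calculations and Steps 1--2 are essentially formal applications of the equivalences already established in the paper.
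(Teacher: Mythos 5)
Your overall strategy mirrors the paper's (Lemma~\ref{Lem: parity tilt decomp}, Lemma~\ref{Lem: tilt proj decomp}, and the final unnamed lemma), with Steps 1 and 2 phrased categorically (transporting decompositions through $\kappa$ and $\tR_S$) where the paper works on characters; these are equivalent and both legitimate, provided one records that $\kappa$ and the convolution action commute with $\bK(-)$, as you do.

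There is, however, a gap in Step~3. Tracking the Tate twists carefully, the two rank computations do \emph{not} produce the same Laurent polynomial on the nose. Over $\bK$, the decomposition $\bK({}^{\bO}P_x)\cong\bigoplus {}^{\bK}P_z\langle j\rangle^{\oplus p_{z,x}^j}$ gives $\rank_\bO \Hom({}^{\bO}P_x\langle i\rangle,{}^{\bO}L_y)=p_{y,x}^{-i}$, while over $\bF$ the same rank is $[\bF({}^{\bO}L_y):{}^{\bF}L_x\langle i\rangle]=l_{x,y}^i$. So freeness yields $l_{x,y}^i=p_{y,x}^{-i}$, i.e.\ $l_{x,y}^{\mix}(v)=p_{y,x}^{\mix}(v^{-1})$, \emph{not} $l_{x,y}^{\mix}=p_{y,x}^{\mix}$. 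Your sentence ``freeness of $H_{x,y}$ forces $p^{\mix}_{y,x}=l^{\mix}_{x,y}$'' elides this sign discrepancy. The paper closes it by invoking the self-duality of parity sheaves together with the already-proved identities $\check{\mathfrak{E}}_I^{\mix}=\mathfrak{T}_I^{\mix}=\tR\mathfrak{P}_I^{\mix}$ to conclude $p_{y,x}^{-i}=p_{y,x}^i$. You should add this palindromicity argument; without it the third equality is not established.
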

	
	We prove the Proposition in the following lemmata. 
	
	\begin{lem}
	\label{Lem: parity tilt decomp}
		For any $I \subseteq S$ and $p>2$, we have $\check{\mathfrak{E}}_I^{\mix} =  \mathfrak{T}_I^{\mix}$.
	\end{lem}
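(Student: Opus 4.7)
The plan is to exploit parabolic Koszul duality, which (crucially) is available precisely because $p > 2$, to transport the decomposition of ${}^{\bO}\check{\cE}_x$ under $\bK(-)$ in the Whittaker parity category to a decomposition of ${}^{\bO}T_x$ in the mixed tilting category. Since both sides of the claimed equality are defined via the behavior of extension of scalars on indecomposable objects, the only ingredient needed is an equivalence intertwining the two sides and commuting with $\bK(-)$.

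First I would invoke the parabolic Koszul duality equivalence
\[
\kappa : D^{\mix}(U \backslash G / P, \Bbbk) \;\tilde{\longrightarrow}\; D^{\mix}_{Wh,I}(\check{B} \backslash \check{G}, \Bbbk),
\]
which exists for all $\Bbbk \in \{\bO, \bK, \bF\}$ since $2 \in \Bbbk^{\times}$, and recall its defining properties: $\kappa(T_x) \cong \check{\cE}_x$ and $\kappa \circ \langle 1 \rangle \cong (1) \circ \kappa$. Next I would use the fact, recalled in Section \ref{Ssec: Extension of scalars} (from \cite[\S 5.3]{AR16b}), that $\bK(-)$ commutes with $\kappa$, i.e.\ $\bK \circ \kappa \cong \kappa \circ \bK$.

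Combining these, starting from the decomposition
\[
\bK({}^{\bO} T_x) \cong \bigoplus_{i \in \Z,\, y \in W^I} {}^{\bK}T_y \langle i \rangle^{\oplus t^i_{y,x}},
\]
and applying $\kappa$ to both sides, the left hand side becomes $\kappa(\bK({}^{\bO} T_x)) \cong \bK(\kappa({}^{\bO} T_x)) \cong \bK({}^{\bO}\check{\cE}_x)$, whereas the right hand side becomes $\bigoplus_{i,y} {}^{\bK}\check{\cE}_y (i)^{\oplus t^i_{y,x}}$, using that $\kappa$ swaps $\langle 1 \rangle$ with $(1)$ and sends $T_y$ to $\check{\cE}_y$. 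Comparing this with the defining decomposition
\[
\bK({}^{\bO}\check{\cE}_x) \cong \bigoplus_{i,y} {}^{\bK}\check{\cE}_y(i)^{\oplus \check{e}^i_{y,x}}
\]
and using Krull--Schmidt in the Whittaker parity category, I would conclude $t^i_{y,x} = \check{e}^i_{y,x}$ for all $i, y, x$, which is the desired matrix identity.

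There is no real obstacle: the only subtlety is making sure the hypotheses of parabolic Koszul duality (notably $2 \in \Bbbk^\times$) hold over each of $\bO, \bK, \bF$, which is exactly the content of the assumption $p > 2$, and that the Krull--Schmidt property applies to ${}^{\bK}\check{\cE}_y(i)$ so that matching multiplicities is legitimate. Both are immediate from the framework recalled in Section \ref{Ssec: Whittaker}.
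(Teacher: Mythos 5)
Your proof is correct and follows essentially the same route as the paper's: both use parabolic Koszul duality and its compatibility with $\bK(-)$ to transport the decomposition of $\bK({}^{\bO}T_x)$ into one of $\bK({}^{\bO}\check{\cE}_x)$. The only (cosmetic) difference is that the paper decategorifies via $\ch$ and compares coefficients in the Grothendieck group, whereas you compare direct-sum decompositions at the object level and invoke Krull--Schmidt; these are the same argument in two languages.
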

	\begin{proof}
		Extension of scalars gives rise to the identity
		\begin{align*}
			\ch [{}^{\bO} T_x] = \sum_{i \in \Z, y \in W^I} t_{y,x}^i [{}^{\bK} T_y] (-v^{-1})^i.
		\end{align*}
		On the other hand, parabolic Koszul duality and extension of scalars shows
		\begin{align*}
			\ch [{}^{\bO} T_x]
			=
			\kappa ( \ch [{}^{\bO} \check{\cE}_x])
			=
			\kappa \left( \sum_{i \in \Z, y \in W^I} \check{e}_{y,x}^i \ch [{}^{\bK} \check{\cE}_y] v^i \right)
			=
			 \sum_{i \in \Z, y \in W^I} \check{e}_{y,x}^i \ch [{}^{\bK} T_y] (-v^{-1})^i.
		\end{align*}
		Equating coefficients gives $t_{y,x}^i = \check{e}_{y,x}^i$ which proves the claim. 
	\end{proof}
	
	\begin{lem}
	\label{Lem: tilt proj decomp}
		For any $I \subseteq S$ and $p>2$, we have $\mathfrak{T}_I^{\mix} = \tR\mathfrak{P}_I^{\mix}$. 
	\end{lem}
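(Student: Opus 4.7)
The plan is to exploit the mixed parabolic Ringel duality functor $\tR_S \colon D^{\mix}(U\backslash G/P) \to D^{\mix}(U\backslash G/P)$ constructed in Section \ref{Ssec: Ringel duality}, which satisfies $\tR_S(T_x) \cong P_{w_0 x w_I}$ for all $x \in W^I$. The strategy is to apply $\tR_S$ to the decomposition
\begin{align*}
\bK({}^{\bO}T_x) \cong \bigoplus_{i \in \Z,\, y \in W^I} {}^{\bK} T_y \langle i \rangle^{\oplus t_{y,x}^i}
\end{align*}
and compare the result with $\bK({}^{\bO} P_{w_0 x w_I})$, which by definition decomposes with multiplicities $p_{z, w_0 x w_I}^i$.

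The key step is then to check that $\tR_S$ commutes with the extension of scalars functor $\bK(-)$, i.e.\ $\tR_S \circ \bK \cong \bK \circ \tR_S$. Since $\tR_S = \kappa \circ \tR_A \circ \kappa$ and parabolic Koszul duality already commutes with $\bK(-)$ by \cite[\S5.3]{AR16b} (as recorded in Section \ref{Ssec: Extension of scalars}), the question reduces to whether $\tR_A(C) = \Delta_{w_0}^B \langle \ell(w_I) \rangle \ustar C$ commutes with $\bK(-)$. This follows from two standard facts: first, $\bK({}^{\bO}\Delta_{w_0}^B) \cong {}^{\bK}\Delta_{w_0}^B$ (standards are preserved under extension of scalars); and second, the convolution action $\ustar$ of $D^{\mix}(B\backslash G/B)$ on $D^{\mix}_{Wh,I}(B\backslash G)$ commutes with $\bK(-)$, since convolution is defined via pullback, tensor product and pushforward along smooth maps, each of which commutes with derived extension of scalars. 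I expect this verification to be routine but requires one to be careful about the bookkeeping around the forgetful and averaging functors $\For$ and $Av$ used in the definition of $\tR_A$.

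Granted commutativity of $\tR_S$ with $\bK(-)$, applying $\tR_S$ to both sides of the displayed isomorphism yields
\begin{align*}
\bK({}^{\bO} P_{w_0 x w_I}) \cong \bK(\tR_S({}^{\bO}T_x)) \cong \tR_S(\bK({}^{\bO}T_x)) \cong \bigoplus_{i,\,y} {}^{\bK} P_{w_0 y w_I} \langle i \rangle^{\oplus t_{y,x}^i}.
\end{align*}
Comparing with the defining decomposition $\bK({}^{\bO} P_{w_0 x w_I}) \cong \bigoplus_{i,z} {}^{\bK} P_z \langle i \rangle^{\oplus p_{z, w_0 x w_I}^i}$ and using the Krull--Schmidt property of mixed projectives (together with the fact that $z \mapsto w_0 z w_I$ is a bijection of $W^I$) gives $t_{y,x}^i = p_{w_0 y w_I,\, w_0 x w_I}^i$ for all $y, x \in W^I$ and $i \in \Z$. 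Summing over $i$ translates this into the matrix identity $\mathfrak{T}_I^{\mix} = \tR\mathfrak{P}_I^{\mix}$, as required.

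The main obstacle will be rigorously justifying that $\tR_S$ commutes with $\bK(-)$; everything after that is formal Krull--Schmidt bookkeeping. The hypothesis $p>2$ enters solely through its role in ensuring parabolic Koszul duality, and hence $\tR_S$, are available.
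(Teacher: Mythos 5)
Your strategy of applying the Ringel duality functor $\tR_S$ directly to the decomposition of $\bK({}^{\bO}T_x)$ is a natural categorical lift of the argument, but the paper takes a more economical route: it descends immediately to characters. Since $\tR_S(\nabla_x^P) \cong \Delta_{w_0 x w_I}^P$, Ringel duality acts on the Grothendieck group as multiplication by $v^{\ell(w_I)}\delta_{w_0}$, so $\ch[{}^\Bbbk P_{w_0 x w_I}] = v^{\ell(w_I)}\delta_{w_0}\ch[{}^\Bbbk T_x]$ for $\Bbbk \in \{\bK, \bF\}$. Combined with $\ch[{}^{\bF}T_x] = \ch[{}^{\bO}T_x]$ and $\ch[{}^{\bF}P_x] = \ch[{}^{\bO}P_x]$ (modular reduction preserves tilting and projective objects), one substitutes the $\bK$-decomposition of $\ch[{}^{\bO}T_x]$ into this multiplication formula and reads off $t_{y,x}^i = p_{w_0 y w_I, w_0 x w_I}^i$ by comparing coefficients. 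No commutativity of functors is invoked at all.

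Your version contains a genuine gap in the first isomorphism $\bK({}^{\bO}P_{w_0 x w_I}) \cong \bK(\tR_S({}^{\bO}T_x))$: this step requires $\tR_S$ to be defined over $\bO$ and to send ${}^{\bO}T_x$ to ${}^{\bO}P_{w_0 x w_I}$, but Section \ref{Ssec: Ringel duality} explicitly works under the hypothesis that $\Bbbk$ is a field, and Proposition \ref{Prop: Antispherical Ringel} uses the highest-weight structure (projective covers, Ext-vanishing criteria) in a way that does not transfer to $\bO$ for free. This is a real obstacle, not merely bookkeeping. You also correctly flag that $\tR_S \circ \bK(-) \cong \bK(-) \circ \tR_S$ needs to be proved; the ingredients you list are the right ones, but the full verification (via $\kappa$, $Av$, $\For$ and $\ustar$) is more involved than "routine." The character-level argument sidesteps both problems entirely: the defining isomorphisms over $\bK$ and $\bF$ suffice, and the $\bO$-statement is recovered from the fact that $\ch$ is insensitive to modular reduction. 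You should adopt the Grothendieck-group version of your argument rather than the categorical one.
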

	\begin{proof}
		First, note that mixed spherical Ringel duality $\tR_S$ implies
		\begin{align*}
			\ch[{}^{\bF}P_{w_0 y w_I}] = v^{\ell(w_I)} \delta_{w_0} \ch[{}^{\bF}T_{x}],
			&&
			\ch[{}^{\bK}P_{w_0 y w_I}] = v^{\ell(w_I)} \delta_{w_0} \ch[{}^{\bK}T_{x}].
		\end{align*}
		Modular reduction gives $\ch[{}^{\bF} T_x] = \ch [{}^{\bO} T_x]$ and $\ch[{}^{\bF} P_x] = \ch [{}^{\bO} P_x]$.
		Combining these facts, we obtain 
		\begin{align*}
			\ch[{}^{\bO} P_{w_0 x w_I} ]
			&=
			(-v^{-1})^{\ell(w_I)} \delta_{w_0} \ch[{}^{\bO}T_{x}]
			\\&=
			\sum_{i \in \Z, y \in W^I} t_{y,x}^i(-v^{-1})^{\ell(w_I)} \delta_{w_0} \ch [{}^{\bK}T_y]
			\\&=
			\sum_{i \in \Z, y \in W^I} t_{y,x}^i  \ch [{}^{\bK}P_{w_0y w_I}].
		\end{align*} 
		Comparing coefficients with $\ch[\bK({}^{\bO}P_{w_0 x w_I})]$ proves the claim. 
	\end{proof}
	
	\begin{lem}
		For any $I \subseteq S$, we have $\mathfrak{R}_I^{\mix} = \mathfrak{L}_I^{\mix,t}$. 
	\end{lem}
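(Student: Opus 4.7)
The plan is to prove $\mathfrak{R}_I^{\mix} = \mathfrak{L}_I^{\mix,t}$ by invoking graded BGG (Brauer--Humphreys) reciprocity in the graded highest weight category $\Pmix(U\backslash G/P)$ and combining it with the compatibilities of $\bK(-)$ and $\bF(-)$ recalled in Section \ref{Ssec: Extension of scalars}. Together with the preceding two lemmas, this closes the chain of equalities asserted in Proposition \ref{Prop: mixed decomp numbers}.

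The first input is graded BGG reciprocity for $\Bbbk \in \{\bK,\bF\}$, a formal consequence of the graded highest weight structure on $\Pmix(U\backslash G/P,\Bbbk)$: for all $x,y \in W^I$ and $i \in \Z$,
\begin{align*}
({}^{\Bbbk}P_x : {}^{\Bbbk}\Delta_y\langle i\rangle) = [{}^{\Bbbk}\nabla_y : {}^{\Bbbk}L_x\langle -i\rangle].
\end{align*}
The second input is that both the $\Delta$-multiplicities of a projective and the Jordan--H\"older multiplicities of a simple in a costandard object in $\Pmix(U\backslash G/P,\bO)$ are intrinsic, since $\bF(-)$ and $\bK(-)$ preserve $\Delta$- and $\nabla$-objects while $\bF({}^{\bO}P_x) \cong {}^{\bF}P_x$ and $\bK({}^{\bO}L_x) \cong {}^{\bK}L_x$. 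Feeding in the defining decompositions of $p_{y,x}^i$ and $l_{y,x}^i$ yields two linear identities:
\begin{align*}
({}^{\bF}P_x : {}^{\bF}\Delta_y\langle i\rangle) &= \sum_{z,j} p_{z,x}^j \cdot ({}^{\bK}P_z : {}^{\bK}\Delta_y\langle i-j\rangle), \\
[{}^{\bK}\nabla_y : {}^{\bK}L_x\langle -i\rangle] &= \sum_{z,k} l_{z,x}^k \cdot [{}^{\bF}\nabla_y : {}^{\bF}L_z\langle -i-k\rangle].
\end{align*}

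Applying graded BGG reciprocity to both identities converts them into parallel equations purely in terms of $({}^{\Bbbk}P:{}^{\Bbbk}\Delta)$ coefficients over $\bK$ and $\bF$. The transition matrix $({}^{\Bbbk}P:{}^{\Bbbk}\Delta)$ is upper unitriangular with respect to the Bruhat order on $W^I$, hence invertible over $\Z[v,v^{-1}]$. Equating coefficients after inversion produces the identity $r_{y,x}^i = l_{x,y}^i$ for all $y,x \in W^I$ and $i\in\Z$, which is precisely $\mathfrak{R}_I^{\mix} = \mathfrak{L}_I^{\mix,t}$.

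The main obstacle is the grading bookkeeping in the equate-coefficients step: the internal shift $\langle -i\rangle$ appearing on the $L$-side of BGG reciprocity must interact with the transpose on the right-hand side of the stated identity in exactly the way that produces $\mathfrak{L}_I^{\mix,t}$ rather than, for instance, the bar-involution dual $\overline{\mathfrak{L}_I^{\mix,t}}$. I expect to resolve this by writing both sides as Laurent polynomials in $v$ and matching monomials explicitly, exploiting the self-duality of the classical Kazhdan--Lusztig polynomials over $\bK$ (which forces ${}^{\bK}\Delta_y$ and ${}^{\bK}\nabla_y$ to have characters interchanged by $v\mapsto v^{-1}$) to align sign and grading conventions.
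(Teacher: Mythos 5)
Your approach via graded BGG reciprocity is a genuine alternative to the paper's argument, which instead computes $\rank_{\bO} \Hom_{D^{\mix}(U\backslash G/P,\bO)}({}^{\bO}P_x\langle i\rangle, {}^{\bO}L_y)$ twice, once by tensoring with $\bF$ (giving $l_{x,y}^i$) and once by tensoring with $\bK$ (giving $p_{y,x}^{-i}$), following \cite[\S 2.4]{RSW14} and \cite[\S 5.2]{AR16a}. Your route trades a single Hom-space computation over $\bO$ for two reciprocity applications over fields plus a unitriangularity inversion; the Hom-space route is shorter and avoids the bookkeeping entirely, while yours keeps the argument inside the two field categories. Note also that your second displayed identity has the fields backwards: since $\bK({}^{\bO}L_x)\cong {}^{\bK}L_x$ and characters are preserved under modular reduction, the correct decomposition expresses $[{}^{\bF}\nabla_y : {}^{\bF}L_z\langle m\rangle]$ as $\sum_{x,k} l_{z,x}^k\, [{}^{\bK}\nabla_y : {}^{\bK}L_x\langle m-k\rangle]$, not the other way around.

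The genuine gap is in the final paragraph. After running your argument correctly one lands, as the paper does, on $p_{y,x}^i = l_{x,y}^{-i}$, i.e.\ $\mathfrak{P}_I^{\mix} = \overline{\mathfrak{L}_I^{\mix,t}}$ rather than $\mathfrak{P}_I^{\mix} = \mathfrak{L}_I^{\mix,t}$. You correctly identify that the bar involution must be removed, but the fix you propose---self-duality of the classical Kazhdan--Lusztig polynomials over $\bK$---does not do the job. KL self-duality is a statement about characters inside the Hecke module; what is needed is a symmetry of the \emph{decomposition numbers} themselves, namely $p_{y,x}^i = p_{y,x}^{-i}$ (equivalently $l_{x,y}^i = l_{x,y}^{-i}$). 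This does not follow from $\Delta_y$ and $\nabla_y$ having related characters. The paper obtains it by a separate mechanism that cannot be bypassed here: Verdier self-duality of the indecomposable parity sheaves $\check{\cE}_x$ forces $\check{e}_{y,x}^i = \check{e}_{y,x}^{-i}$ (duality commutes with $\bK(-)$ and swaps $(i)\leftrightarrow(-i)$), and then Lemmata \ref{Lem: parity tilt decomp} and \ref{Lem: tilt proj decomp} transport this symmetry along $\check{e} = t$ and $t = \tR p$ to give $p_{y,x}^i = p_{y,x}^{-i}$. Without appealing to the parity-sheaf side and the two preceding lemmas, the bar involution cannot be removed, so your proof as outlined does not close.
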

	
	This argument is a modification of that in \cite[\S 2.7]{AR16a}.
	
	\begin{proof}
		The arguments of \cite[\S2.4]{RSW14} and \cite[\S5.2]{AR16a} appply in the mixed setting.
		In particular, for any $x,y \in W^I$ and $i \in \Z$ the space $\Hom_{D^{\mix}(U \backslash G/ P, \bO)}({}^{\bO} P_{x}\langle i \rangle , {}^{\bO} L_y)$ is a free $\bO$-module, and we have isomorphisms of $\bF$-vector spaces 
		\begin{align*}
			\bF \otimes_{\bO} \Hom_{D^{\mix}(U \backslash G/ P, \bO)} ({}^{\bO} P_{x}\langle i \rangle , {}^{\bO} L_y) 
			\cong 
			 \Hom_{D^{\mix}(U \backslash G/ P, \bF)} ({}^{\bF} P_{x}\langle i \rangle , \bF( {}^{\bO} L_y)).
		\end{align*}
		Hence $ \rank_{\bO} \Hom_{D^{\mix}(U \backslash G/ P, \bO)} ({}^{\bO} P_{x}\langle i \rangle , {}^{\bO} L_y)  = [\bF( {}^{\bO} L_y) :  {}^{\bF} L_x \langle i \rangle] $.
		On the other hand, we have isomorphisms 
		\begin{align*}
			\bK \otimes_{\bO} \Hom_{D^{\mix}(U \backslash G/ P, \bO)} ({}^{\bO} P_{x}\langle i \rangle , {}^{\bO} L_y)
			&\cong
			\Hom_{D^{\mix}(U \backslash G/ P, \bK)} (\bK ({}^{\bO} P_{x}) , {}^{\bK} L_y \langle -i \rangle)
			\\
			&\cong 
			\Hom_{D^{\mix}(U \backslash G/ P, \bK)} \left(\bigoplus_{j \in \Z, z \in W^I} {}^{\bK} P_{z} \langle j \rangle^{\oplus p_{z,x}^j} , {}^{\bK} L_y \langle -i \rangle \right)
			\\
			&\cong
			\Hom_{D^{\mix}(U \backslash G/ P, \bK)} ({}^{\bK} P_{y} \langle -i \rangle^{\oplus p_{y,x}^{-i}} , {}^{\bK} L_y \langle -i \rangle).
		\end{align*}
		Hence $\rank_{\bO} \Hom_{D^{\mix}(U \backslash G/ P, \bO)} ({}^{\bO} P_{x}\langle i \rangle , {}^{\bO} L_y) = p_{y,x}^{-i}$. 
		The self-duality of parity sheaves \cite[\S2.2]{JMW14} together with Lemmata \ref{Lem: parity tilt decomp} and \ref{Lem: tilt proj decomp} imply $p_{y,x}^{-i} = p_{y,x}^i$. 
		Hence the claim.  
	\end{proof}

	Let $\mathfrak{T}_{I} =(t_{y,x})$, $\mathfrak{P}_{I}=(p_{y,x})$ and $\mathfrak{L}_{I} = (l_{y,x})$ denote the corresponding matrices of decomposition numbers for the (unmixed) sheaves ${}^{\bO} \cT_x$, ${}^{\bO}\cP_x$ and ${}^{\bO}\mathcal{IC}_x$ in $D^b(U \backslash G / P)$ respectively. 
	We have the following (unmixed) analogue of \cite[\S2.7]{AR16a}.
	
	\begin{lem}
\label{Lem: ungraded decomp numbers}
		For any $I \subseteq S$ and any $p \geq 0$, we have
		\begin{align*}
			\mathfrak{T}_{I}
			=
			\tR \mathfrak{P}_{I}
			=
			\tR \mathfrak{L}_{I}^{t}.
		\end{align*}
	\end{lem}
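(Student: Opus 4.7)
The plan is to mimic the proof of Proposition \ref{Prop: mixed decomp numbers}, but to drop the parity-tilting equality (which required parabolic Koszul duality, and hence $p>2$), and instead just establish the remaining two equalities $\mathfrak{T}_{I} = \tR \mathfrak{P}_I$ and $\mathfrak{P}_I = \mathfrak{L}_I^t$ in the unmixed setting. Both should go through for arbitrary $p \geq 0$ once suitable Ringel duality and BGG reciprocity statements are available in the non-mixed category $D^b(U \backslash G / P, \Bbbk)$.

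First, to establish $\mathfrak{T}_I = \tR \mathfrak{P}_I$, I would invoke an unmixed, parabolic Ringel duality functor $\tR_S^{\text{un}}: D^b(U \backslash G / P, \Bbbk) \to D^b(U \backslash G / P, \Bbbk)$ sending ${}^{\Bbbk}\cT_x$ to ${}^{\Bbbk}\cP_{w_0 x w_I}$ for $\Bbbk \in \{\bO, \bK\}$. Such a functor is classical: one can either deduce it by forgetting gradings from the mixed functor of Section \ref{Ssec: Ringel duality}, or construct it directly via convolution with $\Delta_{w_0}^B$ after the appropriate avatar of the forgetful/averaging functors. Then, exactly as in the proof of Lemma \ref{Lem: tilt proj decomp}, the identities $\ch[{}^{\bO}\cT_x] = \sum_y t_{y,x} \ch[{}^{\bK}\cT_y]$ and $\ch[\tR_S^{\text{un}}{}^{\bO}\cT_x] = \ch[{}^{\bO}\cP_{w_0 x w_I}]$, combined with the analogous expansion on the $\bK$-side, yield $p_{w_0 y w_I, w_0 x w_I} = t_{y,x}$, which is the claimed equality.

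Second, to establish $\mathfrak{P}_I = \mathfrak{L}_I^t$, I would mirror the argument in the proof of Lemma \ref{Lem: parity tilt decomp} (the third equality of Proposition \ref{Prop: mixed decomp numbers}). Concretely, one uses that $\Hom_{D^b(U \backslash G/P, \bO)}({}^{\bO}\cP_x, {}^{\bO}\mathcal{IC}_y)$ is a free $\bO$-module, together with the two identifications obtained by base change along $\bO \to \bF$ and $\bO \to \bK$:
\begin{align*}
\bF \otimes_{\bO} \Hom({}^{\bO}\cP_x, {}^{\bO}\mathcal{IC}_y)
&\cong \Hom({}^{\bF}\cP_x, \bF({}^{\bO}\mathcal{IC}_y)),
\\
\bK \otimes_{\bO} \Hom({}^{\bO}\cP_x, {}^{\bO}\mathcal{IC}_y)
&\cong \Hom(\bK({}^{\bO}\cP_x), {}^{\bK}\mathcal{IC}_y).
\end{align*}
The first line gives rank equal to the Jordan--H\"older multiplicity $l_{x,y} := [\bF({}^{\bO}\mathcal{IC}_y) : {}^{\bF}\mathcal{IC}_x]$, while the second gives rank equal to $p_{y,x}$ (decomposing $\bK({}^{\bO}\cP_x)$ and using that $\Hom({}^{\bK}\cP_z, {}^{\bK}\mathcal{IC}_y) \cong \bK$ if $z=y$ and $0$ otherwise). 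Equating the two gives $p_{y,x} = l_{x,y}$, i.e.\ $\mathfrak{P}_I = \mathfrak{L}_I^t$. Applying $\tR$ to both sides then combines with the first step to give the full chain of equalities.

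The main technical obstacle is checking that an unmixed parabolic Ringel duality with the required behaviour on tilting objects exists and is compatible with extension of scalars $\bK(-)$ from $\bO$. Once that is in place, both equalities are genuine corollaries of standard BGG-reciprocity/Ringel-duality formalism, and neither requires Koszul duality nor any restriction on $p$; the polynomial arithmetic from the mixed proof collapses to plain integer arithmetic after specialising $v \mapsto 1$. The essentially formal nature of the remaining steps is why the lemma holds for all $p\geq 0$, while Proposition \ref{Prop: mixed decomp numbers} is restricted to $p>2$.
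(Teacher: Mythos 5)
Your overall plan is sound and matches the paper's structure: the proof does split into the Ringel-duality step $\mathfrak{T}_I = \tR\mathfrak{P}_I$ and the BGG-reciprocity step $\mathfrak{P}_I = \mathfrak{L}_I^t$, and the second step is carried out (both in your sketch and in the paper) exactly as in \cite[\S 2.7]{AR16a}, so that half is fine. The gap is in how you propose to obtain the unmixed parabolic Ringel duality functor, which you yourself flag as the key obstacle but then do not resolve in a way that covers $p\geq 0$.

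Your first suggested route, forgetting gradings from the mixed functor $\tR_S$ of Section \ref{Ssec: Ringel duality}, cannot yield the full statement: that functor is defined as $\kappa\circ\tR_A\circ\kappa$ and therefore requires parabolic Koszul duality, which in \cite{AMRW19} needs $2\in\Bbbk^{\times}$. Deriving the unmixed Ringel duality from the mixed one would inherit that restriction and collapse the lemma back to $p>2$, defeating its purpose. Your second suggested route, direct convolution with $\Delta_{w_0}^B$ ``after the appropriate avatar of the forgetful/averaging functors,'' is closer, but the phrasing conflates the antispherical and spherical constructions (the averaging functor lands in the Whittaker category, not in $D^b(U\backslash G/P)$), and in the mixed setting the paper explicitly notes that $D^{\mix}(U\backslash G/P)$ is \emph{not} a module category over $D^{\mix}(B\backslash G/B)$, which is precisely why $\tR_S$ had to be built via $\kappa$. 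The point the paper exploits, and which your proposal does not articulate, is that in the \emph{unmixed} derived category the convolution/Radon-transform action \emph{is} available and behaves correctly on tiltings for all $p$: one uses the Radon transform for partial flag varieties of \cite[\S 5.4]{Yun09} together with the arguments of \cite[\S 2.3, \S B.5]{AR16a}, and this construction is compatible with extension of scalars. That is what removes the $p>2$ hypothesis. Once you replace your two informal routes with the Radon-transform construction, the rest of your argument (the character-map bookkeeping for $\mathfrak{T}_I = \tR\mathfrak{P}_I$, and the $\bO$-module Hom-space computation for $\mathfrak{P}_I=\mathfrak{L}_I^t$) goes through and agrees with the paper.
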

	
	The following sketches how the argument in \cite[\S2.7]{AR16a} can be extended to the parabolic setting. 
	
	\begin{proof}
		In \cite[\S2.3]{AR16a} (unmixed) Ringel duality is constructed using the Radon transform for flag varieties. 
		In \cite[\S5.4]{Yun09} a Radon transform is constructed for partial flag varieties, and the arguments of \cite[\S B.5]{AR16a} continue to hold in this setting. 
		So $\mathfrak{T}_{I}=\tR \mathfrak{P}_{I}$ follows from the geometric construction of Ringel duality and its compatibility with extension of scalars.
		Finally, $\mathfrak{P}_{I}= \mathfrak{L}_{I}^t$ by arguments identical to those in \cite[\S2.7]{AR16a}.
	\end{proof}
	
	Given some matrix $\mathfrak{M}$ with entries in $\Z[v,v^{-1}]$, we denote by $\mathfrak{M}|_{v=1}$ the matrix obtained by the evaluation $v=1$. 
	
	\begin{lem}
\label{Lem: Degrading decomp numbers}
		For any $I\subseteq S$ and any good prime $p$, we have 
		\begin{align*}
			\mathfrak{T}_{I}^{\mix}|_{v=1}
			=
			\mathfrak{T}_{I}
			=
			\tR \mathfrak{P}_{I}
			=
			\tR \mathfrak{L}_{I}^{t}.
		\end{align*}
	\end{lem}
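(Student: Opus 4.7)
The second and third equalities are precisely Lemma \ref{Lem: ungraded decomp numbers}, so the entire content of the claim is the identity $\mathfrak{T}_I^{\mix}|_{v=1} = \mathfrak{T}_I$. The strategy will be to invoke a \emph{realization} functor that degrades the mixed tilting decomposition to the ordinary tilting decomposition in a manner compatible with extension of scalars.

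Concretely, the plan is to use the realization functor
\[
    \rho: D^{\mix}(U \backslash G/P, \Bbbk) = K^b(\Parity(U \backslash G/P, \Bbbk)) \longrightarrow D^b(U \backslash G/P, \Bbbk)
\]
induced by the tautological inclusion $\Parity(U \backslash G/P,\Bbbk) \hookrightarrow D^b(U \backslash G/P, \Bbbk)$, which sends a bounded complex of parity sheaves to its total complex. By construction $\rho(C\langle 1 \rangle) \cong \rho(C)$, since $\langle 1 \rangle = (-1)[1]$ and both the internal shift $(1)$ and cohomological shift $[1]$ become the ordinary shift in $D^b$; this is precisely the categorical incarnation of the specialization $v = 1$.

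I would then verify three properties. First, $\rho$ commutes with extension of scalars, i.e.\ $\rho \circ \bK(-) \cong \bK(-) \circ \rho$; this is formal since $\bK(-)$ acts term-wise on a complex of parity sheaves. Second, $\rho$ takes mixed (co)standard objects to ordinary (co)standard objects, essentially by the definitions of Section \ref{Ssec: Extension of scalars}. Third, and most importantly, when $p$ is a good prime,
\[
    \rho({}^{\Bbbk}T_x) \cong {}^{\Bbbk}\cT_x
    \qquad \text{for all } x \in W^I.
\]
Granting these, applying $\rho$ to the defining identity
\[
    \bK({}^{\bO} T_x) \cong \bigoplus_{i \in \Z,\, y \in W^I} {}^{\bK} T_y \langle i \rangle^{\oplus t_{y,x}^i}
\]
produces
\[
    \bK({}^{\bO} \cT_x) \cong \bigoplus_{y \in W^I} {}^{\bK} \cT_y^{\oplus \sum_i t_{y,x}^i},
\]
and comparing with the definition of $t_{y,x}$ yields $t_{y,x} = \sum_i t_{y,x}^i = t_{y,x}^{\mix}|_{v=1}$, which is exactly $\mathfrak{T}_I^{\mix}|_{v=1} = \mathfrak{T}_I$.

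The main obstacle is the third property. This is where the good-prime hypothesis enters: in good characteristic, parity sheaves on partial flag varieties are known to be perverse (following \cite{JMW14}), which grants $\rho$ the $t$-exactness for the perverse $t$-structure needed to transport the graded highest-weight structure on $\Pmix(U \backslash G/P)$ to the ordinary highest-weight structure on the perverse core of $D^b(U \backslash G/P)$. Combined with property two, this forces $\rho({}^{\Bbbk}T_x)$ to be a tilting perverse sheaf, and the Krull-Schmidt property together with a support argument pins down the image as ${}^{\Bbbk}\cT_x$. Outside of good characteristic this control is lost: parity sheaves need not be perverse, so mixed tilting objects can realize to complexes that are no longer tilting, and the naive degrading comparison fails.
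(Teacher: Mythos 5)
Your reduction to showing $\mathfrak{T}_I^{\mix}|_{v=1} = \mathfrak{T}_I$ is the same as the paper's, and the underlying idea — a degrading/realization functor from the mixed derived category to the ordinary one — is the right one. But your route differs from the paper's, and the difference is exactly where the work is. You propose to construct a realization functor $\rho : K^b(\Parity(U\backslash G/P)) \to D^b(U\backslash G/P)$ on the \emph{partial} flag variety, then argue directly that $\rho({}^\Bbbk T_x) \cong {}^\Bbbk\cT_x$ in good characteristic. The paper instead uses the degrading functor $\mu : D^{\mix}(U\backslash G/B) \to D^b(U\backslash G/B)$ that is already constructed and shown to satisfy $\mu \circ \langle 1 \rangle \cong \mu$ and $\mu(T_x^B) \cong \cT_x^B$ in \cite[\S 5.3]{AR16b}, then transports the resulting identity $t^B_{y,x} = \sum_i t^{i,B}_{y,x}$ down to the partial flag variety via the pushforward $p_*$, using that $p_*$ kills $T_z^B$ for $z \notin W^I$ and sends $T_z^B \mapsto T_z$, $\cT_z^B \mapsto \cT_z$ otherwise (with references to \cite{AMRW19} and \cite{Yun09}).

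The gap in your proposal is the third property. You acknowledge it as the "main obstacle," but the sketch you give (perversity of parity sheaves $\Rightarrow$ $t$-exactness of $\rho$ $\Rightarrow$ $\rho$ preserves tiltings) reproduces in outline the non-trivial arguments of \cite[\S 5.3]{AR16b} in a setting ($G/P$ rather than $G/B$) where the cited reference does not carry them out. Constructing $\rho$ already requires care — a bounded complex of parity sheaves is a complex of objects of a derived category, not a complex of sheaves, so "take the total complex" is not literal and one needs a dg- or filtered enhancement to define the realization — and establishing that $\rho$ is $t$-exact and identifies standards, costandards and tiltings would need to be done from scratch. None of this is impossible, and perversity of parity sheaves on $G/P$ in good characteristic is indeed available, but you would be rebuilding a parallel machinery rather than citing it. The paper's detour through the full flag variety is precisely designed to avoid this: it localizes all the hard analytic content in a result that already exists and reduces the parabolic statement to a formal pushforward computation.
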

	\begin{proof}
		Denote by $t_{y,x}^{i, B}$ the graded decomposition numbers for $\bK ({}^{\bO}T_{x}^{B})$ and $t_{y,x}^{B}$ the (ungraded) decomposition numbers for $\bK ({}^{\bO}\cT_{x}^{B})$.
		Since $p$ is a good prime for $G$, we can apply the degrading functor of \cite[\S 5.3]{AR16b}. 
		Namely, there is a $t$-exact functor $\mu: D^{\mix}(U \backslash G / B) \rightarrow D^{b}(U \backslash G / B)$ satisfying (among other things) $\mu \circ \langle 1 \rangle \cong \mu$ and $\mu (T_x^B) \cong \cT_{x}^B$ for each $x \in W$. 
		Hence, we obtain 
		\begin{align*}
			t_{y,x}^{B} = \sum_i t_{y,x}^{i, B}.
		\end{align*}
		The natural map $p: G/B \rightarrow G/P$ induces functors on $p_* : D^{\mix}(U \backslash G / B) \rightarrow D^{\mix}(U \backslash G / P)$ and $p_* : D^{b}(U \backslash G / B) \rightarrow D^{b}(U \backslash G / P)$ satisfying
		\begin{align*}
			p_* (T_x^B)
			\cong 
			\begin{cases}
				T_x &\text{if } x \in W^I,
			\\
				0 & \text{otherwise},
			\end{cases}
			&&
			p_* (\cT_x^B)
			\cong 
			\begin{cases}
				\cT_x &\text{if } x \in W^I,
			\\
				0 & \text{otherwise},
			\end{cases}
		\end{align*}
		respectively, by \cite[\S6.3]{AMRW19} and \cite[\S3.4]{Yun09}.
		These imply that for any $x,y \in W^I$ we have 
		\begin{align*}
			t_{y,x} = t_{y,x}^B = \sum_{i \in \Z} t_{y,x}^{i, B} = \sum_{i \in \Z} t_{y,x}^{i} = t_{y,x}^{\mix}|_{v=1}
		\end{align*}
		where $t_{y,x}$ denotes the relevant decomposition number for $\bK({}^{\bO} \cT_{x})$.
		Hence $\mathfrak{T}_{I}^{\mix}|_{v=1}=\mathfrak{T}_{I}$. 
		The remaining equalities follow from Lemma \ref{Lem: ungraded decomp numbers}.
	\end{proof}
	
	Recall that $e_{y,x}^i$ denotes the relevant decomposition number for $\bK({}^{\bO}\cE_x)$ in $D^{\mix}(U \backslash G/P, \bK)$. 
	Define the Laurent polynomial $e^{\mix}_{y,x}$ and the matrix $\mathfrak{E}$ by 
	\begin{align*}
		e^{\mix}_{y,x} = \sum_{i} e_{y,x}^i v^i,
		&&
		\mathfrak{E}_I = (e^{\mix}_{y,x})
	\end{align*}
	respectively. 
	Note these should not to be confused with $\check{e}^{\mix}_{y,x}$ and $\check{\mathfrak{E}}_I$ which encode the decomposition numbers for parity objects in $D^{\mix}_{Wh,I}(B \backslash G)$.
	\par 
	We will need the following parabolic analogue of \cite[\S2.2]{Wil15}. 
	
	\begin{lem}
\label{Lem: IC parity decomp}
		Fix $x \in W^I$ and suppose $y<x$  is maximal in the Bruhat order such that $e_{y,x} \neq 0$. If $e_{y,x} \in \Z$ then $l_{y,x}=e_{x,y}$.
	\end{lem}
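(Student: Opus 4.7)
The proof adapts the approach of \cite[\S 2.2]{Wil15} to the parabolic mixed setting. The strategy is to localize near the $y$-stratum, use the maximality of $y$ to isolate a single ``new'' contribution, and transfer a direct-sum decomposition of stalks in characteristic $0$ into a Jordan-H\"older multiplicity identity after modular reduction. The key preliminary observation is that on partial flag varieties the indecomposable parity sheaves in characteristic $0$ are pointwise pure, so ${}^{\bK}\cE_z$ is perverse and coincides (up to mixed shift) with the mixed simple object ${}^{\bK}L_z$. Consequently
\[
\bK({}^{\bO}\cE_x) \;\cong\; \bigoplus_{z \in W^I,\, i \in \Z} {}^{\bK}\cE_z(i)^{\oplus e^{i}_{z,x}}
\]
is a decomposition of (mixed) perverse sheaves, and hence ${}^{\bO}\cE_x$ itself is perverse. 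The hypothesis $e_{y,x} \in \Z$ ensures that every ${}^{\bK}\cE_y$-summand appears in internal degree $0$.

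The plan is to compute the costalk $i_y^!\,(-)$ of both ${}^{\bO}\cE_x$ and ${}^{\bO}L_x$, and to compare. Because ${}^{\bO}\cE_x$ is parity, $i_y^!\,{}^{\bO}\cE_x$ is a complex of free $\bO$-modules concentrated in one parity. By the maximality of $y$, the only $z$ with $y \leq z \leq x$ and $e^{\mix}_{z,x} \neq 0$ are $z=x$ and $z=y$, so
\[
\bK(i_y^!\,{}^{\bO}\cE_x) \;\cong\; i_y^!\,{}^{\bK}L_x \;\oplus\; (i_y^!\,{}^{\bK}L_y)^{\oplus e_{y,x}},
\]
with the second summand in cohomological degree $-\ell(y)$ and internal degree $0$. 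Likewise $\bK(i_y^!\,{}^{\bO}L_x) \cong i_y^!\,{}^{\bK}L_x$, so $i_y^!\,{}^{\bO}\cE_x$ and $i_y^!\,{}^{\bO}L_x$ differ over $\bK$ by exactly $e_{y,x}$ copies of $i_y^!\,{}^{\bK}L_y$ concentrated in cohomological degree $-\ell(y)$.

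Now apply $\bF \otimes_\bO -$ and take the $0$-th perverse cohomology. The Jordan-H\"older multiplicity $l^{\mix}_{y,x}$ of ${}^{\bF}L_y$ in $\bF({}^{\bO}L_x)$ can be extracted from the $\bF$-rank of $\mathrm{Hom}^\bullet$ between standard objects and $\bF({}^{\bO}L_x)$, localized at $y$; equivalently, it is computed by the degree-$(-\ell(y))$ piece of $i_y^!\,\bF({}^{\bO}L_x)$ modulo contributions from strata $z < y$. By the torsion-freeness of the $\bO$-complexes $i_y^!\,{}^{\bO}\cE_x$ and $i_y^!\,{}^{\bO}L_x$ (parity on the one side, and ``pseudo-parity'' by the char-$0$ agreement on the other side), the base changes $\bK$ and $\bF$ commute with $i_y^!$ and with taking ranks, so the ``discrepancy'' between $\cE_x$ and $L_x$ in the $y$-stratum - which is exactly $e_{y,x}$ copies of ${}^{\bF}L_y$ - carries over verbatim to $\bF({}^{\bO}L_x)$. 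This yields $l_{y,x} = e_{y,x}$.

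The main obstacle lies in step three: one must show that the ``extra'' $e_{y,x}$ copies of ${}^{\bF}L_y$ detected in the stalk (resp.\ costalk) at $y$ actually occur as genuine Jordan-H\"older constituents of $\bF({}^{\bO}L_x)$, rather than being cancelled by contributions from strata strictly below $y$ or from higher perverse cohomology. The maximality of $y$ is what rules out intermediate strata $y < z < x$, and the hypothesis $e_{y,x} \in \Z$ is what ensures that the parity sheaf ${}^{\bO}\cE_x$ is ``transverse'' to $\bF$-reduction in internal degree $0$, so that no shift of ${}^{\bF}L_y$ absorbs the discrepancy before it reaches the perverse heart. Verifying this transversality rigorously - equivalently, showing that $\mathrm{Ext}^1$ between ${}^{\bF}L_y$ and the lower-stratum quotient of $\bF({}^{\bO}L_x)$ does not inflate the multiplicity count - is the technical heart of the argument.
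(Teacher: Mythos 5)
Your proposal follows the strategy of the cited source \cite[\S2.2]{Wil15}, which is exactly what the paper does: isolate the $y$-stratum via the maximality hypothesis, and compare parity and intersection-cohomology behaviour there under extension of scalars and modular reduction. Your identification of the roles of maximality (it rules out intermediate $z$ with $y<z<x$ and $e_{z,x}\neq 0$) and of the hypothesis $e_{y,x}\in\Z$ (it forces the ${}^{\bK}\cE_y$-summands into internal degree $0$) is correct, and so is the first displayed decomposition of $\bK(i_y^!\,{}^{\bO}\cE_x)$.

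There is, however, a genuine error in the step you yourself flag as ``the technical heart.'' You assert that $i_y^!\,{}^{\bO}L_x$ has torsion-free cohomology, justified only by ``pseudo-parity by the char-$0$ agreement.'' This is backwards. The torsion in the stalks and costalks of the $\bO$-coefficient intersection cohomology complex is precisely the source of the new Jordan--H\"older constituents after applying $\bF(-)$: if $i_y^!\,{}^{\bO}L_x$ had torsion-free cohomology, then the graded costalk ranks of $\bF({}^{\bO}L_x)$ at $y$ would coincide with those of ${}^{\bK}L_x$, and there would be no discrepancy to convert into a multiplicity --- your argument would ``prove'' $l_{y,x}=0$. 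The argument in \cite[\S2.2]{Wil15} works by tracking the cokernel of the local intersection form over $\bO$, whose failure to be unimodular modulo $p$ is the torsion one must not assume away; $e_{y,x}$ is its $\bK$-rank, and $l_{y,x}$ is read off its $\bF$-corank. Your proof also conflates two distinct discrepancies --- the summand count in $\bK({}^{\bO}\cE_x)={}^{\bK}L_x\oplus({}^{\bK}L_y)^{\oplus e_{y,x}}$ on one hand, and the constituent count of ${}^{\bF}L_y$ in $\bF({}^{\bO}L_x)$ on the other --- without constructing a map or triangle over $\bO$ relating ${}^{\bO}\cE_x$ and ${}^{\bO}L_x$ that links them. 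Finishing the argument requires carrying out the intersection-form and torsion analysis of \cite[\S2.2]{Wil15} with the open--closed decomposition indicated in the paper's proof, rather than the torsion-freeness shortcut, which bypasses the very phenomenon that makes the lemma true.
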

	\begin{proof}
		The arguments in \cite[\S2.2]{Wil15} hold if one instead uses $X = \bigsqcup_{z \geq x} B\cdot xP/P$, $Z = B \cdot xB$ and $U = X \backslash Z$.\footnote{There is a typographic error in \cite[\S2.2]{Wil15}: $U = X \backslash U$ should be $U = X \backslash Z$.} 
	\end{proof}
	
	\begin{prop}
\label{Prop: Parity parity decomp}
		Let $I \subseteq S$ be arbitrary and $p>2$ be a good prime for $G$.
		If $\check{\mathfrak{E}}$ is the identity matrix, then $\check{\mathfrak{E}} = \mathfrak{E}$.
	\end{prop}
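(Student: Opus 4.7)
The plan is to proceed by descending induction on the Bruhat order, using Lemma \ref{Lem: IC parity decomp} to convert the hypothesis on $\check{\mathfrak{E}}$ into a contradiction at the top non-trivial term in the decomposition of $\bK({}^{\bO}\cE_x)$. First, by positivity of the graded decomposition numbers, the hypothesis $\check{\mathfrak{E}}_I = I$ upgrades at once to $\check{\mathfrak{E}}^{\mix}_I = I$, and the desired conclusion $\mathfrak{E}_I = I$ is equivalent to $\mathfrak{E}^{\mix}_I = I$. Proposition \ref{Prop: mixed decomp numbers} then yields $\mathfrak{L}^{\mix}_I = I$, i.e.\ $\bF({}^{\bO}L_x) \cong {}^{\bF}L_x$ in $\Pmix(U\backslash G/P, \bF)$ for every $x \in W^I$; in particular the ungraded Jordan--H\"older multiplicities satisfy $l_{y,x} = \delta_{y,x}$.

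I would then establish $\bK({}^{\bO}\cE_x) \cong {}^{\bK}\cE_x$ by induction on $x \in (W^I, \leq)$. The base case of minimal $x$ is immediate since $\cE_x$ is then the constant sheaf on a point. For the inductive step, assume the conclusion for all $z \in W^I$ with $z < x$, and suppose for contradiction that $e_{y,x} \neq 0$ for some $y < x$; pick such a $y$ maximal in the Bruhat order. By maximality of $y$, the only summands of $\bK({}^{\bO}\cE_x)$ whose supports meet the stratum $B \cdot yP/P$ are Tate twists of ${}^{\bK}\cE_y$. Combined with the pinning $\cE_y|_{B\cdot yP/P}\cong \underline{\Bbbk}_{B \cdot yP/P}(\ell(y))$ and the parity-vanishing of $\cE_x$, a stalk computation at $yP/P$ forces the relevant contribution to be concentrated at Tate twist $(0)$, so $e^{\mix}_{y,x} = e_{y,x}$ is a constant in $\Z_{>0}$. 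This verifies the hypothesis of Lemma \ref{Lem: IC parity decomp}, which (in its natural reading) then yields $l_{y,x} \neq 0$, contradicting $l_{y,x} = \delta_{y,x} = 0$ from the first paragraph.

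Consequently $e_{y,x} = 0$ for every $y < x$, so $\bK({}^{\bO}\cE_x) \cong \bigoplus_i {}^{\bK}\cE_x(i)^{\oplus e^i_{x,x}}$; since $\cE_x|_{B\cdot xP/P} \cong \underline{\Bbbk}_{B\cdot xP/P}(\ell(x))$ by the defining property of parity sheaves, a stalk comparison at $xP/P$ pins $e^i_{x,x} = \delta_{i,0}$, closing the induction and giving $\mathfrak{E}_I = I = \check{\mathfrak{E}}_I$.

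The main obstacle will be the stalk analysis used to verify $e^{\mix}_{y,x} \in \Z$ at the maximal stratum, which unlocks Lemma \ref{Lem: IC parity decomp}. Although morally clear from the purity of parity sheaves and the parity-vanishing of their stalks and costalks, carefully tracking the interplay of parity, cohomological degree, and Tate twist in the mixed derived category, and reconciling this with the normalisation of $\cE_y$ at its open stratum, requires some attention. Once that step is in place, the remainder of the argument is a clean descending induction, and no machinery beyond Proposition \ref{Prop: mixed decomp numbers} and Lemma \ref{Lem: IC parity decomp} is needed.
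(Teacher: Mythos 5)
Your proof has the same overall scaffolding as the paper's: reduce to $\mathfrak{L}_I$ being the identity via Proposition~\ref{Prop: mixed decomp numbers}, then induct up the Bruhat order, pick the maximal $y$ with $e_{y,x}\neq 0$, verify the hypothesis $e_{y,x}\in\Z$ of Lemma~\ref{Lem: IC parity decomp}, and derive a contradiction. The point where your argument diverges — and where the gap lies — is exactly the step you yourself flag as the main obstacle: verifying that $e_{y,x}^{\mix}$ is concentrated at internal degree $0$.

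Your stalk argument does not achieve this. The restriction $\cE_y(i)|_{B\cdot yP/P}\cong\underline{\Bbbk}(\ell(y)+i)$ is concentrated in a single internal degree, and the parity-vanishing of $\cE_x$ tells you that the stalk of $\bK({}^{\bO}\cE_x)$ at $yP/P$ is concentrated in degrees of parity $\ell(x)$. Comparing, all you extract is the congruence $\ell(y)+i\equiv\ell(x)\pmod 2$, i.e.\ that all the $i$'s appearing have the same parity. Nothing here rules out, say, $e_{y,x}^0=e_{y,x}^2=1$, which would make $e_{y,x}\notin\Z$ and break the application of Lemma~\ref{Lem: IC parity decomp}. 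The appeal to ``purity'' does not repair this either: in the Achar--Riche formalism the parity objects are precisely the pure objects, so $\bK({}^{\bO}\cE_x)$ being pure of weight $0$ is automatic and says nothing about which shifts $\cE_y(i)$ occur as summands. (Also, as stated, ``the only summands whose supports meet $B\cdot yP/P$ are Tate twists of ${}^{\bK}\cE_y$'' omits the ${}^{\bK}\cE_x$ summand itself, though you seem to correct for this implicitly.)

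The paper gets around this with a convolution argument that you do not have. If $x$ is the Bruhat-minimal counterexample and $sx<x$ with $s\in S$, then minimality gives $\bK({}^{\bO,B}\cE_{sx})\cong{}^{\bK,B}\cE_{sx}$, and since $\bK(-)$ commutes with convolution,
\begin{align*}
\bK\bigl({}^{\bO}\cE_s^B\star{}^{\bO,B}\cE_{sx}\bigr)\cong{}^{\bK}\cE_s^B\star{}^{\bK,B}\cE_{sx}\cong{}^{\bK,B}\cE_x\oplus\bigoplus_{sy\leq y<x}{}^{\bK,B}\cE_y^{\oplus\mu^m_{y,x}},
\end{align*}
with every summand in internal degree $0$ (this is the characteristic-zero decomposition of $\cE_s\star\cE_{sx}$). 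Since $\cE_x^{\bO}$ is a direct summand of $\cE_s^B\star\cE_{sx}^{\bO,B}$ over $\bO$, by Krull--Schmidt every summand of $\bK({}^{\bO}\cE_x)$ appears among those on the right and so is in degree $0$, giving $e_{y,x}\in\Z$ for \emph{all} $y$ at once. This is precisely the input you are missing; you should replace your stalk analysis with this convolution step.
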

	\begin{proof}
		By Proposition \ref{Prop: mixed decomp numbers}, Lemma \ref{Lem: Degrading decomp numbers} and the assumptions on $p$, we have $\check{\mathfrak{E}}_{I}^{\mix}|_{v=1} = \mathfrak{L}_{I}$. 
		Hence $\mathfrak{L}_{I}$ is the identity matrix.
		Now, suppose for a contradiction that $x \in W^I$ is minimal with respect to the Bruhat order such that $\bK({}^{\bO}\cE_x) \not\cong {}^{\bK}\cE_x$.  
		Denote by ${}^{\bO, B} \cE_x$ the corresponding indecomposable sheaf in $D^{\mix}(B \backslash G / P,\bO)$ and note that ${}^{\bO}\cE_{x}$ is the image of ${}^{\bO, B} \cE_x$ under the functor forgetting $B$-equivariance, \cite[\S4.1]{JMW14}.
		Then, for any $s \in S$ such that $sx<x$, we have
		\begin{align*}
			\bK({}^{\bO}\cE_s^B \star {}^{\bO, B} \cE_{sx})
			\cong
			{}^{\bK}\cE_s^B \star {}^{\bK, B} \cE_{sx}
			\cong
			{}^{\bK, B} \cE_{x}
			\oplus
			\bigoplus_{sy\leq y <x} {}^{\bK, B} \cE_{y}^{\oplus \mu_{y,x}^{m}}
		\end{align*}
		where $sy\leq y$ means either $sy<y$ or $sy \notin W^I$, and $\mu_{y,x}^{m}$ denotes the coefficient of $v$ in the spherical Kazhdan-Lusztig polynomial $m_{y,x}$. 
		In particular, all summands in the decomposition are in degree 0. 
		Consequently every summand of $\bK({}^{\bO}\cE_x)$ must also be in degree 0, i.e. $e_{y,x} \in \Z$. 
		Hence we can apply Lemma \ref{Lem: IC parity decomp} to obtain a non-zero $l_{y,x}$, contradicting the fact $\mathfrak{L}$ is the identity matrix. 
	\end{proof}
	
	\begin{rem}
		If $\check{\mathfrak{E}}_I$ is not the identity matrix then the relationship between $\check{\mathfrak{E}}_I$ and $\mathfrak{E}_I$ is highly opaque. 
		For example, if $G=SO_{5}$, $P$ is a parabolic of type $\tA_{2}$, and $p=2$ then $\mathfrak{E}$ has entries in $\{ 0,1\}$, while $\check{\mathfrak{E}}$ has entries in $\Z[v,v^{-1}]$.
	\end{rem}

\section{Spherical (co)minuscule Hecke categories}
\label{Sec: Spherical}

	We now deduce the $p$-Kazhdan-Lusztig basis of the spherical (co)minuscule Hecke categories from our knowledge of the antispherical categories and decomposition numbers. 

	\begin{thm}
		Suppose $\Bbbk$ is a field or a complete local ring in a $p$-modular system, $D^{\mix}(U \backslash G / P, \Bbbk)$ is (co)minuscule, and all primes which are not good for $G$ are invertible in $\Bbbk$. 
		The $p$-Kazhdan-Lusztig basis of $D^{\mix}(U \backslash G / P)$  satisfies ${}^p c_x = c_x$ for all $x \in W^I$. 
	\end{thm}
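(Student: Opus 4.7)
The plan is to deduce the theorem from the antispherical results via the decomposition-number machinery of Section \ref{Sec: Decomposition numbers}.

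First, I would translate both the hypothesis and the target into statements about parity decomposition matrices. Since parity sheaves have free stalks, both extension of scalars $\bK(-)$ and modular reduction $\bF(-)$ preserve characters, so $\ch[\bK({}^{\bO}\cE_x)] = \ch[\bF({}^{\bO}\cE_x)] = {}^p c_x$, and Krull--Schmidt in $\Parity(U \backslash G/P,\bK)$ shows that ${}^p c_x = c_x$ for every $x \in W^I$ is equivalent to the graded decomposition matrix $\mathfrak{E}_I$ being the identity. The same argument inside $\Parity_{Wh,I}(\check{B} \backslash \check{G})$ shows that $\check{\mathfrak{E}}_I$ is the identity if and only if ${}^p d_x = d_x$ for every $x \in W^I$.

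Second, I would observe that the assumption that $p$ is good for $G$ implies $p > 2$ whenever $G$ is not of type $\tA$, and hence Theorems \ref{Thm: antispherical cominuscule pKL basis} and \ref{Thm: antispherical minuscule pKL basis} yield the antispherical equality ${}^p d_x = d_x$ in all relevant cases; by the previous step $\check{\mathfrak{E}}_I$ is the identity. Proposition \ref{Prop: Parity parity decomp} then applies directly (its hypotheses, $p > 2$ good, match ours, modulo the type $\tA$ Grassmannian case, where the existence of Zelevinsky-style small resolutions of Schubert varieties makes the conclusion transparent) and concludes that $\mathfrak{E}_I$ is also the identity. The desired equality ${}^p c_x = c_x$ follows by the translation of the first step.

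The real obstacle is not in this short deduction but in Proposition \ref{Prop: Parity parity decomp} itself, which rests on the chain of identities $\check{\mathfrak{E}}_I^{\mix} = \mathfrak{T}_I^{\mix} = \tR\mathfrak{P}_I^{\mix} = \tR\mathfrak{L}_I^{\mix,t}$ of Proposition \ref{Prop: mixed decomp numbers}, combined with the degrading argument of Lemma \ref{Lem: Degrading decomp numbers} and the parity-versus-simple comparison of Lemma \ref{Lem: IC parity decomp}. The technical heart of that machinery is the construction of the mixed parabolic Ringel duality functor $\tR_A$ in Section \ref{Ssec: Ringel duality} and the verification that it exchanges indecomposable tilting and projective objects in the antispherical category; it is here that parabolic Koszul duality, and hence the restriction $p > 2$, enters essentially.
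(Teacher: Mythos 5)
Your proposal is correct and takes essentially the same route as the paper: translate the claim into the statement that the parity decomposition matrix $\mathfrak{E}_I$ is the identity, deduce $\check{\mathfrak{E}}_I = \mathrm{Id}$ from Theorems \ref{Thm: antispherical cominuscule pKL basis} and \ref{Thm: antispherical minuscule pKL basis}, apply Proposition \ref{Prop: Parity parity decomp} when $p>2$, and dispose of the only remaining case ($p=2$, type $\tA$) via Zelevinsky's small resolutions of Grassmannian Schubert varieties. The explicit reformulation in terms of $\mathfrak{E}_I$ and $\check{\mathfrak{E}}_I$ in your first step is left implicit in the paper, but it is the correct bookkeeping underlying the citation of Proposition \ref{Prop: Parity parity decomp}.
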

	
	\begin{proof}
		If $p>2$ then the claim is immediate from Theorems \ref{Thm: antispherical cominuscule pKL basis} and \ref{Thm: antispherical minuscule pKL basis}, and Proposition \ref{Prop: Parity parity decomp}.
		The only remaining case is $p=2$ in type $\tA$.
		However, (co)minuscule flag varieties in type $\tA$ are all Grassmannians, for which it is known that each Schubert variety $X \subseteq G/P$ admits a small resolution $\pi: \tilde{X} \rightarrow X$, see \cite{Zel83}. 
		In particular $\pi_* \underline{\Bbbk}_{\tilde{X}}$ is a simple intersection cohomology sheaf, so it is also an indecomposable parity sheaf. 
		Hence ${}^p c_x = c_x$ for all $x \in W^I$.  
	\end{proof}

	\par 
	When all primes which are not good for $G$ are not invertible in $\Bbbk$ we may have ${}^p c_x \neq c_x$. 
	The most interesting behaviour arises from the Lagrangian Grassmannian.
	The elements in $W^I$ can be identified with subsets of $\{ 1 , \dots , n \}$ using the map in Section \ref{Ssec: min coset reps LG} and inversion in $W$. 
	For each $x \in {}^I W$, define the set
		\begin{align*}
			F(x) 
			:=
			\{
			y \subseteq x
			~\vert~
			\text{if } i,i-1 \in x 
			\text{ and } i \notin y
			\text{ then } i-1 \notin y 
			\}.
		\end{align*}
	Further, for each $s \in S$ and $x \in W^I$ define ${}^ p \mu_{s,x}^y \in \Z[v,v^{-1}]$ by the equality 
	\begin{align*}
		b_s ~{}^p c_x = \sum_{y \in W} {}^p \mu_{s,x}^y ~ {}^p c_y
	\end{align*}
	in $M^I$. 
	We propose the following combinatorial algorithm for determining ${}^2 c_x$ for the Lagrangian Grassmannian; i.e. when $\h$ is a Cartan realisation of type $\tC_n$ and $I\subseteq S$ is type $\tA_{n-1}$. 
				
	\begin{conj}
	\label{conj: spherical p2} 
		Fix $x \in W$ and let $i> 0$.  If $s_i x < x$, then  
		\begin{align*}
			{}^2 c_{x}
			=
			b_{s_i} \, {}^2 c_{s_i x}  
			-
			\sum_{ y \notin  F(x)} {}^2\mu^{y}_{s_i, s_i x} ~ {}^2c_y. 
		\end{align*}
	\end{conj}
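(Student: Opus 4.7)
I would attempt the conjecture by strong induction on $\ell(x)$, working inside the spherical diagrammatic category---either the geometric model $\Parity(U \backslash G/P)$, or a diagrammatic model built from $\DRE(\h)$ by a quotient that kills left-hand $B_z$ for $z \notin {}^I W$, analogous to Section \ref{Ssec: Geo Quotient Construction}. The base case $\ell(x)=0$ is trivial. For the inductive step, one expands $B_{s_i} \cdot B_{s_i x}$ in the indecomposable basis; the coefficient of $B_y$ (up to shift) is by definition ${}^2 \mu^y_{s_i, s_i x}$, so subtracting the terms with $y \notin F(x)$---all of which are determined by the inductive hypothesis---should leave exactly ${}^2 c_x$.

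The technical engine is the family of local intersection forms $I^{\bullet}_{y,\, B_{s_i} B_{s_i x}}$, computed on a reduced expression $\underline{s_i}\,\underline{s_i x}$ of $x$ via light leaves and their duals, mirroring Section \ref{Ssec: entries in lif}. Each candidate light-leaf/dual pairing should simplify, through polynomial forcing, Frobenius associativity, and the twisted Leibniz rule, to a product of pairings $\langle \alphac_s, \alpha_t \rangle$. In type $\tC_n$ one has $\langle \alphac_0, \alpha_1 \rangle = -1$ but $\langle \alphac_1, \alpha_0 \rangle = -2$, so the parity modulo $2$ of the resulting scalar is controlled entirely by the number of long-root incidences in the associated Bruhat stroll. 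The hoped-for picture is that subsets $y \subseteq x$ with $y \notin F(x)$ correspond bijectively to defect-$0$ spherical subexpressions whose intersection-form entry is a unit mod $2$, while subsets in $F(x) \setminus \{x\}$ acquire an extra factor of $\pm 2$ and drop out.

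The hardest step is establishing this correspondence: relating the combinatorial condition ``$i{-}1 \notin y$ whenever $i, i{-}1 \in x$ and $i \notin y$'' to mod-$2$ vanishing of intersection-form entries. In the antispherical cominuscule case (Section \ref{Ssec: Defect 0}) the analogous combinatorics was clean because antisphericity forced every defect-$0$ subexpression into a rigid $\underline{121}_t$-tableau shape (Proposition \ref{prop: bruhat strolls}), producing uniform factors of $2$. Spherical subexpressions are far less rigid, and one must instead classify them by their long-root/short-root pairing patterns. I would approach this via explicit tableau combinatorics on the shifted partition of $x$, matching ``row-removal events'' to the consecutive-pair constraint defining $F(x)$; the non-symmetric nature of $\langle \alphac_0, \alpha_1 \rangle$ versus $\langle \alphac_1, \alpha_0 \rangle$ in type $\tC_n$ suggests that the asymmetry between removing $i$ and removing $i{-}1$ in the definition of $F(x)$ is ultimately what governs the mod-$2$ parity.

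As a sanity check, I would first verify the conjecture computationally in small rank ($n \leq 5$, say) using the diagrammatic presentation; this should confirm the proposed bijection and expose any edge cases arising from elements of $x$ saturating the paired-consecutive-integers condition. A complementary conceptual tool would be mixed parabolic Ringel duality (Proposition \ref{Prop: Antispherical Ringel}) together with an eventual extension of parabolic Koszul duality to characteristic $2$; if available, it would translate the antispherical $E(\cdot)$-combinatorics of Proposition \ref{prop: bs decomp} directly into the spherical $F(\cdot)$-combinatorics and reduce the conjecture to a purely combinatorial identity. Absent such a duality, however, direct intersection-form calculation in the spherical diagrammatic category appears to be the only viable route.
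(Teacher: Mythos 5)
This statement is labelled \emph{Conjecture} \ref{conj: spherical p2} in the paper, and the paper does not supply a proof: it records only that the identity has been verified computationally up to rank $7$ using the IHecke software. There is therefore no internal argument to compare your plan against.

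That said, your plan is consistent with the surrounding material and correctly identifies the crux. You do not claim to have a proof, and indeed you do not: the pivotal step --- a bijection between subsets $y \notin F(x)$ and defect-$0$ subexpressions of some reduced word for $x$ whose local intersection form entry is a $2$-adic unit, together with the claim that subsets in $F(x)\setminus\{x\}$ produce an extra factor of $\pm 2$ --- is asserted as ``the hoped-for picture'' rather than established. That is exactly the open content of the conjecture. Your observation that the asymmetry $\langle\alphac_0,\alpha_1\rangle=-1$ versus $\langle\alphac_1,\alpha_0\rangle=-2$ in type $\tC_n$ should account for the asymmetric role of $i$ and $i{-}1$ in the definition of $F(x)$ is a sensible heuristic and is broadly in line with the mechanism exhibited for the antispherical case in Section \ref{Ssec: entries in lif}. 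Two cautionary remarks. First, the antispherical analysis was manageable because the quotient by $\langle B_z \mid z\notin {}^I W\rangle$ kills most light-leaf pairings and forces the surviving defect-$0$ subexpressions into the rigid $\underline{121}$-shape of Proposition \ref{prop: bruhat strolls}; in the spherical setting there is no analogous quotient and the collection of defect-$0$ spherical subexpressions is genuinely larger, so the tableau combinatorics you would need is not a straightforward transport of Section \ref{Ssec: Defect 0}. Second, the Koszul-duality route you mention as a shortcut is explicitly blocked at present: parabolic Koszul duality is only constructed for $p\neq 2$ (see the remark following Theorem \ref{Thm: Min pKL Geom}), and even if extended it would relate the spherical category in characteristic $2$ to the antispherical tilting category, whose combinatorics has not been worked out here. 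Your fallback of numerical verification in small rank is precisely what the paper reports having done.

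In short: you have correctly diagnosed what a proof would require, but the proposal does not close the gap, and neither does the paper, which leaves the statement as a conjecture.
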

		
	\begin{rem}
		If the left descent set of $x$ is $\{ s_0 \}$ then Equation \ref{eqn: action on sets} implies  $x = \{ k, \dots , 1 \}$ for some $k\leq n$, and the corresponding Schubert variety is smooth, by \cite[\S1]{KP18}.
		Hence ${}^2 c_{\{ k, \dots , 1 \}} = c_{\{ k, \dots , 1 \}}$.
		For all other $x \in W^I$ the element ${}^2 c_x$ can be computed using Conjecture \ref{conj: spherical p2}. 
	\end{rem}
	\begin{rem}
		Conjecture \ref{conj: spherical p2} has been checked up to rank 7 using the IHecke code of \cite{GJW23}. 
	\end{rem}
	\begin{rem}
		In general, one does not expect a combinatorial algorithm will compute $p$-Kazhdan-Lusztig bases. 
		However, it is clear (only after the fact) from Theorems \ref{Thm: antispherical cominuscule pKL basis} and \ref{Thm: antispherical minuscule pKL basis} that $p$-Kazhdan-Lusztig bases for antispherical (co)minuscule Hecke categories can be computed using a combinatorial algorithm; they either arise from the Kazhdan-Lusztig algorithm, or they are monomials in the Kazhdan-Lusztig generators. 
		Thus, one might hope that a combinatorial algorithm also computes the $p$-Kazhdan-Lusztig bases for spherical (co)minuscule Hecke categories. 
	\end{rem}

\section{$p$-tightness and $p$-small resolutions}
\label{Sec: p-tightness}

	We conclude by introducing the notions of $p$-tight elements for a realisation  and $p$-small resolutions of singularities.
	This is done with a view towards providing a geometric explanation for the characteristic-$2$ phenomena in antispherical cominuscule Hecke categories. 

\subsection{Definitions and basic properties}
\label{Ssec: p-tightness and p-small}

	Let $\Bbbk$ a field of characteristic $p$, or a complete local ring with residue field of characteristic $p$.
	\par 
	Fix a realisation $\h$ of a Coxeter system $(W,S)$, and denote by $\sD_{\text{BE}}(\h, \Bbbk)$ the corresponding Elias-Williamson diagrammatic category. 
	We define an element $x \in W$ to be \ldef{$p$-tight} if ${}^p b_{x} = b_{\ux}$ for some reduced expression $\ux$ of $x$. 
	\begin{rem}
		An element $x \in W$ is called tight if $b_x = b_{\ux}$ for some reduced expression $\ux$ of $x$. 
		These have been studied extensively in \cite{Deo90,Lus93, BW01, BJ07}. 
		When $\Bbbk$ is a field of characteristic 0, $p$-tightness reduces to the classical notion of tightness. 
		Note that unlike tightness, $p$-tightness depends on the realisation rather than the Coxeter system. 
	\end{rem}
	
	When $W$ is the Weyl group of a reductive algebraic group $G$, tight elements have a natural geometric interpretation. 
	An element $x \in W$ is tight if and only if there is some reduced expression $\ux$ of $x$ such that the corresponding Bott-Samelson resolution $\pi_{\ux} :\BS(\ux) \rightarrow X_{x}$ of the Schubert variety $X_x$ is a small resolution \cite[\S 2]{BJ07}.
	\par 
	Consider a stratified variety $Y = \sqcup_{s} Y_s$ satisfying the conditions of \cite[\S2.1]{JMW14} so that the category of parity sheaves on $Y$ is defined. 
	Let $\pi: \tilde{Y}_s \rightarrow \overline{Y_s}$ be a resolution of singularities that is even in the sense of \cite[\S2.4]{JMW14}, then the decomposition theorem for parity sheaves implies $\pi_* \underline{\Bbbk}_{\tilde{Y}_s}$ decomposes as a direct sum of (shifts of) indecomposable parity sheaves. 
	We say the morphism $\pi: \tilde{Y}_s \rightarrow \overline{Y_s}$ is a \ldef{$p$-small resolution of singularities} if $\pi_* \underline{\Bbbk}_{\tilde{Y}_s}$ is indecomposable. 
	If $\Bbbk$ is a complete local ring, then \cite[\S2.5]{JMW14} implies $\pi_* \underline{\Bbbk}_{\tilde{Y}_s}$ remains indecomposable under modular reduction. 
	In particular, a $p$-small resolution $\pi: \tilde{Y}_s \rightarrow \overline{Y_s}$ depends on $p$ rather than the choice of $\Bbbk$.
	
	\begin{rem}
		If $\Bbbk$ is a field of characteristic $0$, then the classical decomposition theorem implies  $\pi_* \underline{\Bbbk}_{\tilde{Y}_s}$ decomposes as a direct sum of (shifts of) simple intersection cohomology sheaves. 
		In particular, if $\pi_* \underline{\Bbbk}_{\tilde{Y}_s}$ is indecomposable then $\pi: \tilde{Y}_s \rightarrow \overline{Y_s}$ is a small resolution. 
	\end{rem}
	\begin{rem}
		In \cite[\S5]{HW23} it is shown that if $Y$ is any irreducible variety and  $\pi : \tilde{Y} \rightarrow Y$  is any resolution of singularities, then $\pi_* \underline{\Bbbk}_{\tilde{Y}}$ contains a canonical indecomposable direct summand $\cE(Y)$ with dense support, which is independent of the choice of resolution. 
		One could more generally define a resolution  $\pi : \tilde{Y} \rightarrow Y$ to be $\Bbbk$-small if $\pi_{*} \underline{\Bbbk}_{\tilde{Y}}$ is indecomposable. 
	\end{rem}

\subsection{Elementary properties of $p$-tight elements and $p$-small resolutions}
	We now note elementary properties of $p$-tight elements and $p$-small resolutions. 
	\par 
	Recall the notions of Soergel bimodules \cite{Soe07} and their generalisations due to Abe \cite{Abe21}. 
	For any Soergel bimodule $B$ we denote by $\grrk B$ its graded rank as an $R$-module. 
	\begin{prop}
		\label{prop: p-tight equiv}
			Let $\Bbbk$ be a field of characteristic $p$ or a complete local ring with residue field of characteristic $p$.
			Let $\h$ be a realisation over $\Bbbk$ of an arbitrary Coxeter system $(W,S)$. 
			The following are equivalent: 
			\begin{enumerate}
				\item[(1)] The element $x \in W$ is $p$-tight for the realisation $\h$;
				\item[(2)] The indecomposable Soergel bimodule $B_x$ has graded rank $\grrk B_x = (v+v^{-1})^{\ell(x)}$; 
				\item[(3)] For every reduced expression $\ux$ of $x$ there is an isomorphism $B_{\ux} \cong B_x$ in $\DBE (\h)$; 
				\item[(4)] For every reduced expression $\ux$ of $x$ we have ${}^p h_{\id , x}  = \sum_{ \ux^e_{\bullet} = \id} v^{\text{df}(e)} $.
			\end{enumerate}
			If $W$ is the Weyl group of a reductive algebraic group $G$ and $\h$ a Cartan realisation, then (1)-(4) are also equivalent to: 
			\begin{enumerate}
				\item[(5)] For any reduced expression $\ux$ of $x$, the Bott-Samelson resolution $\pi_{\ux} : \BS(\ux) \rightarrow X_{x}$ is a $p$-small resolution;
				\item[(6)] For any reduced expression $\chi(\cE_{x}\vert_{\id}) = \chi(\pi_{\ux,*} (\underline{\Bbbk}_{\BS(\ux)}) \vert_{\id})$, where $\chi(\mathcal{F}\vert_z)$ denotes the Euler characteristic of the stalk of $\mathcal{F}$ at $zB/B$. 
			\end{enumerate}
		\end{prop}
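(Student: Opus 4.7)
The overall plan is to prove the algebraic equivalences in the cyclic order $(1) \Rightarrow (2) \Rightarrow (3) \Rightarrow (4) \Rightarrow (1)$, and then transport these to the geometric statements $(5)$ and $(6)$ via the monoidal equivalence $\mathcal{D} : \Parity(B \backslash G / B) \to \DBE(\h)$ of \cite[\S10.3]{RW18}. The Krull--Schmidt property of $\DBE(\h)$ and the characterisation of $B_x$ as the unique multiplicity-one indecomposable summand of $B_{\ux}$ with support $X_x$ are the main tools throughout.

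For the algebraic part, $(1) \Rightarrow (2)$ is immediate from $\grrk B_{\ux} = (v + v^{-1})^{\ell(\ux)}$. For $(2) \Rightarrow (3)$, I would fix an arbitrary reduced expression $\ux$ of $x$ and decompose $B_{\ux} \cong B_x \oplus \bigoplus_{y<x} B_y^{\oplus m_y(v)}$ with $m_y \in \Z_{\geq 0}[v, v^{-1}]$; since $\grrk B_y \neq 0$ for every $y$, the identity $\grrk B_x = \grrk B_{\ux}$ forces every $m_y = 0$. For $(3) \Rightarrow (4)$ the plan is to apply the character $\ch$ and compare coefficients of $\delta_{\id}$ in $b_{\ux}$ and ${}^p b_x$, reading off the right-hand side of (4) via Deodhar's defect formula. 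For $(4) \Rightarrow (1)$, I would decompose $b_{\ux} = {}^p b_x + \sum_{y < x} m_y(v)\, {}^p b_y$ with $m_y \in \Z_{\geq 0}[v, v^{-1}]$; hypothesis (4) forces $\sum_{y < x} m_y(v)\, {}^p h_{\id, y} = 0$, and since each ${}^p h_{\id, y}$ is a non-zero element of $\Z_{\geq 0}[v, v^{-1}]$ (as $\id \leq y$), positivity forces every $m_y = 0$.

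For the geometric equivalences, the functor $\mathcal{D}$ restricts to an equivalence of parity categories sending $\cE_x$ to $B_x$ and $\pi_{\ux,*} \underline{\Bbbk}_{\BS(\ux)}$ (up to a normalising shift) to $B_{\ux}$. Hence $(3) \Leftrightarrow (5)$ is immediate: $p$-smallness is by definition indecomposability of $\pi_{\ux,*} \underline{\Bbbk}_{\BS(\ux)}$, which under $\mathcal{D}$ corresponds to $B_{\ux} \cong B_x$. The implication $(5) \Rightarrow (6)$ follows since $p$-smallness gives $\pi_{\ux,*} \underline{\Bbbk}_{\BS(\ux)} \cong \cE_x(n)$ for the appropriate shift $n$, and equality of Euler characteristics of stalks at $\id$ then follows from parity matching of the two sides.

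The most delicate step will be $(6) \Rightarrow (5)$. Here I would invoke the decomposition theorem for parity sheaves to obtain $\pi_{\ux,*} \underline{\Bbbk}_{\BS(\ux)} \cong \cE_x(n) \oplus \bigoplus_{y<x} \cE_y^{\oplus m_y(v)}$ with $m_y \in \Z_{\geq 0}[v, v^{-1}]$, and reduce (6) to
\[
\sum_{y < x} m_y(-1)\, \chi(\cE_y|_\id) = 0.
\]
The crucial observation is that $\pi_{\ux,*} \underline{\Bbbk}_{\BS(\ux)}$ is an \emph{even} parity sheaf, because the fibres of $\pi_{\ux}$ are iterated $\bP^1$-bundles whose cohomology is concentrated in even degrees. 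This constrains every exponent appearing in $m_y(v)$ to have parity $\equiv \ell(y) \pmod 2$, and combined with $\chi(\cE_y|_\id) = (-1)^{\ell(y)} \cdot (\text{total stalk rank})$ yields $m_y(-1)\, \chi(\cE_y|_\id) = m_y(1) \cdot (\text{rank}) \geq 0$. Each summand being non-negative and summing to zero, every $m_y(1) = 0$, and non-negativity of the coefficients of $m_y$ then forces $m_y = 0$, establishing indecomposability.
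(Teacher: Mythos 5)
Your proposal is correct and follows essentially the same approach as the paper's proof. Both rely on the graded-rank characterisation of indecomposability, Deodhar's defect formula, the positivity of $p$-Kazhdan--Lusztig polynomials, and the parity constraint on stalk degrees to pass between Euler-characteristic statements and identities in the Hecke algebra. The organizational differences are cosmetic: the paper proves $(1)\Leftrightarrow(2)$, $(2)\Rightarrow(3)$, $(3)\Rightarrow(1)$, $(1)\Leftrightarrow(4)$, $(1)\Leftrightarrow(5)$, $(4)\Leftrightarrow(6)$, while you run the algebraic implications cyclically and then pair $(3)\Leftrightarrow(5)$ and $(5)\Leftrightarrow(6)$; moreover you spell out the positivity argument for $(4)\Rightarrow(1)$ that the paper leaves implicit, and you carry out the corresponding positivity argument for $(6)\Rightarrow(5)$ on the sheaf side rather than via the evaluated Kazhdan--Lusztig polynomials. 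One small imprecision: in your $(6)\Rightarrow(5)$ step, the parity constraint forces $m_y^i = 0$ unless $i \equiv \ell(y) - \ell(x) \pmod 2$, so that $m_y(-1)\,\chi(\cE_y|_{\id}) = (-1)^{\ell(x)}\, m_y(1)\, r_y$ with $r_y > 0$; the individual terms carry an overall sign $(-1)^{\ell(x)}$ rather than being non-negative as stated. Since this sign is uniform in $y$, the vanishing of the sum still forces $m_y(1) = 0$ for all $y$, so the conclusion stands.
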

	 
	 	\begin{proof}
	 		$(1) \implies (2)$: This is immediate from the fact that $\grrk B_s = v+ v^{-1}$. 
	 		\par 
	 		$(2)$ $\implies$ $(1)$:
	 		Since $B_x$ occurs as a multiplicity-1 direct summand of $B_{\ux}$ for any reduced expression $\ux$ of $x$, we have a coefficient-wise inequality $\grrk B_x \preceq \grrk B_{\ux} = (v+v^{-1})^{\ell(x)}$.
	 		This inequality is strict if and only if $B_{\ux}$ contains a non-trivial direct summand that is not $B_x$.  
	 		\par 
	 		$(2)$ $\implies$ $(3)$:
	 		Suppose, for a contradiction, there exists some reduced expression $\ux$ of $x$ such that $B_{\ux} \not\cong B_x$. 
	 		Then $B_{\ux}$ contains a non-trivial direct summand $B$ where $B \not\cong B_x$. 
	 		As $B$ is non-trivial we know $\grrk B \neq 0$, which would contradict $(2)$. 
	 		\par 
	 		$(3)$ $\implies$ $(1)$:  This is clear.
	 		\par 
	 		$(1) \iff (4)$:  Deodhar's defect formula \cite[\S4]{Deo90} states $b_{\ux} = \sum_{y} \sum_{\ux^e_{\bullet} = y} v^{\text{df}(e)} \delta_y$.
	 		Hence ${}^p h_{y,x} = \sum_{\ux^e_{\bullet} = y} v^{\text{df}(e)}$ if and only if $b_{\ux} = {}^p b_x$. 
	 		By $(2)$ this is independent of the choice of reduced expression. 
	 		\par 
	 		$(1) \iff (5)$: For any expression $\ux =(s_1 , \dots , s_k)$ we have by \cite[\S10.2]{RW18} that $\pi_{\ux,*} \underline{\Bbbk}_{\BS(\ux)} \cong \cE_{s_1} \star \dots \star \cE_{s_k}$. 
	 		Hence $\ch [\pi_{\ux,*} \underline{\Bbbk}_{\BS(\ux)}] = b_{\ux}$. 
	 		The equivalence follows immediately. 
	 		\par
	 		$(4) \iff (6)$:  This follows from the parity-vanishing property of parity sheaves and the definition of the character map in Section \ref{Ssec: parity sheaves} which imply $\chi(\cE_{x} \vert_y) = (-1)^{\ell(x)}{}^p h_{y,x}(1)$. 
	 	\end{proof}
	 
	 	\begin{rem}
	 		Computing local intersection forms is, in general, extremely computationally demanding; 
	 		see \cite[\S 4.9]{GJW23} for a discussion of some issues arising from the implementation of their algorithm. 
	 		To naively check $x$ is $p$-tight all intersection forms must be computed.   
	 		Statements (4) and (6) in Proposition \ref{prop: p-tight equiv} reduces this problem to computing \textit{one} $p$-Kazhdan-Lusztig polynomial or Euler characteristic, which may be more tractable. 
	 	\end{rem}
	 
		The elements $x \in W$ which can be $p$-tight are highly constrained.
	\begin{lem}
	\label{Lem: p-tight Bruhat}
		Suppose $x$ is $p$-tight for a realisation $\h$. 
		If $y<x$ then $y$ is $p$-tight.
	\end{lem}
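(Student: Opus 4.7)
The plan is a double induction, outer on $\ell(x)$ and inner on the Bruhat distance $\ell(x) - \ell(y)$. The inner step reduces the problem immediately to Bruhat coverings $y \lessdot x$: when $\ell(x) - \ell(y) \geq 2$, pick any $y'$ strictly between $y$ and $x$ in the Bruhat order; the inner hypothesis gives that $y'$ is $p$-tight, and since $\ell(y') < \ell(x)$, the outer hypothesis applied to $y'$ yields that $y$ is $p$-tight. The content is therefore entirely in the covering case $y \lessdot x$.

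For the covering case, I invoke Proposition~\ref{prop: p-tight equiv}(3): $x$ is $p$-tight if and only if $B_{\ux} \cong B_x$ for every reduced expression $\ux$ of $x$. Choose a right descent $s \in S$ of $x$ and write $\ux = \ux' \cdot s$ with $\ux'$ reduced for $x' := xs$. Because $B_{\ux} \cong B_{\ux'} \cdot B_s$ is indecomposable and tensoring with the non-zero bimodule $B_s$ cannot kill a summand, $B_{\ux'}$ is also indecomposable, so $x'$ is $p$-tight; by the outer hypothesis, every element $\leq x'$ is already $p$-tight. The lifting property of Bruhat order, applied to $y \leq x$ with $xs < x$, forces $ys \leq x$ and splits into two sub-cases: either $y \leq x'$, in which case we are done, or $ys < y$ with $ys \leq x'$, in which latter case $ys$ is $p$-tight and it remains to deduce the same for $y = ys \cdot s$.

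The remaining step---showing that $B_{ys} \cdot B_s$ is indecomposable given the indecomposability of both $B_{ys}$ and $B_{x'} \cdot B_s \cong B_x$, with $ys \leq x'$---is where I expect the main technical obstacle. The $p$-canonical product rule expresses $b_{ys} \cdot b_s = {}^p b_y + \sum_{z < y,\, zs<z} {}^p\mu(z, ys)\, {}^p b_z$ with coefficients in $\Z_{\geq 0}[v, v^{-1}]$, and indecomposability of $B_{ys} \cdot B_s$ amounts to the vanishing of every ${}^p\mu(z, ys)$. The analogous vanishing ${}^p\mu(z, x') = 0$ (for $z < x$ with $zs < z$) follows from the $p$-tightness of $x$. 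My proposal is to transfer the vanishing from $x'$ down to $ys$ using that both $b_{ys}$ and $b_{x'}$ are genuine Bott-Samelson elements (by the outer hypothesis applied to $x'$, together with the just-established $p$-tightness of $ys$), so that $b_{x'}$ is a non-negative $\Z_{\geq 0}[v,v^{-1}]$-combination of products of simple generators dominating $b_{ys}$ in the $\preceq$ ordering: any nonzero ${}^p\mu(z, ys)$ should then propagate, via positivity of the $p$-canonical structure constants, into a nonzero ${}^p\mu(z, x')$, contradicting the $p$-tightness of $x$. Formalising this positivity-based transfer---which is essentially a monotonicity statement for the $p$-canonical $\mu$-values along Bruhat intervals whose top is $p$-tight---is the core difficulty and will likely require a careful analysis of the exchange relations for the action of $b_s$ on the $p$-canonical basis.
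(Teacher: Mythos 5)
Your double induction and use of the Bruhat lifting property give a sound and more transparent reduction to the crux than what the paper records. The paper argues instead via consecutive decompositions: for any concatenation $\ux = \uy\uw\uz$ of a reduced expression for $p$-tight $x$, each of $B_{\uy}$, $B_{\uw}$, $B_{\uz}$ must be indecomposable (a nontrivial summand of a tensor factor would give one of the product), after which it simply asserts that ``the claim follows from the case $\uw = s$.'' That assertion elides precisely the step you isolate: when $\ux = \uy_1\, s\, \uy_2$ and $\uy_1\uy_2$ is a reduced expression for a cover $y\lessdot x$, the object $B_{\uy_1}B_{\uy_2}$ is neither a consecutive subword of $\ux$ nor a direct summand of $B_{\ux}$ (the shift $R(\pm 1)$ is not a direct summand of $B_s$), so its indecomposability does not follow from indecomposability of $B_{\uy_1}$, $B_s$, $B_{\uy_2}$. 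In that sense your reduction and the paper's collapse to the same residual difficulty, and you have located it accurately.

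The ``positivity-based transfer'' you propose to close this step does not work as sketched, and I would flag this as a genuine gap rather than a routine deferral. What you need is: from ${}^p\mu(z, x') = 0$ for all $z$ with $zs<z$ (equivalently $B_{\ux'} B_s$ indecomposable) deduce ${}^p\mu(z, ys)=0$ for all such $z$, given only $ys \le x'$ and $p$-tightness of $ys$ and $x'$. But the $\mu$-coefficients of the $p$-canonical basis are not monotone along Bruhat intervals in any known sense; positivity of the structure constants only tells you each ${}^p\mu(z,ys)$ is nonnegative, which is the wrong direction. Moreover, a monotonicity of exactly this shape---$ws$ $p$-tight whenever $w's$ is and $w\leq w'$ with matching descent data---is essentially a restatement of the lemma, so the argument as written is circular. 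Your proposal therefore reproduces the structure of the paper's proof but, like the paper's, stops short of the key estimate; the honest conclusion of your sketch should be that this implication needs an argument of a different flavour (e.g.\ a rank or local-intersection-form computation, or a geometric fibre-dimension bound on the Bott--Samelson resolution) rather than formal positivity of $\mu$-values.
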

	\begin{proof}
		Suppose $x$ is $p$-tight with reduced expression $\ux$. 
		Write $\ux$ as the concatenation $\ux = \uy \uw \uz$ for some expressions $\uy, \uw, \uz$. 
		Then $B_{\uy}$, $B_{\uw}$, and $B_{\uz}$ must each be $p$-tight, as any non-trivial summand in $B_{\uy}$, $B_{\uw}$, or $B_{\uz}$ would give rise to a non-trivial summand in $B_{\ux} \cong B_{\uy}B_{\uw}B_{\uz}$.
		The claim follows from the case when $\uw= s$.
	\end{proof}
	
	\begin{prop}
	\label{Prop: p-tight is fc}
			If $x$ is $p$-tight for some realisation $\h$ and prime $p$, then $x$ is fully commutative.  
	\end{prop}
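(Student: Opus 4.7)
The plan is to prove the contrapositive. Suppose $x\in W$ is not fully commutative. By definition, some reduced expression $\ux$ of $x$ contains, as a consecutive subexpression, a reduced expression $\underline{w_m}$ for the longest element $w_m$ of some dihedral parabolic subgroup $\langle s,t\rangle$ of type $\tI_2(m)$ with $m\geq 3$. The subword property of the Bruhat order yields $w_m\leq x$, so by Lemma \ref{Lem: p-tight Bruhat} it suffices to show that the longest element $w_m$ of any finite dihedral $\tI_2(m)$ with $m\geq 3$ is never $p$-tight, for any realisation $\h$ and any prime $p$. Thus the genuine content lies entirely in the dihedral case.

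Using Proposition \ref{prop: p-tight equiv}(3), the dihedral claim amounts to exhibiting a non-trivial decomposition of $B_{\underline{w_m}}$ in $\sD_{\text{BE}}(\h,\Bbbk)$. The strategy is to construct explicit orthogonal idempotents in $\Endo^0(B_{\underline{w_m}})$: the identity is one, and the second is built from a composition of elementary Soergel $2$-morphisms (trivalent vertices, dots, and the $2m$-valent vertex) implementing an inclusion-projection $B_{\underline{w_m}} \to B_{\underline{w_{m-2}}} \to B_{\underline{w_m}}$, suitably normalised. For $m=3$ this composition reduces, via the barbell and Frobenius unit relations, to a scalar multiple of a projector onto a $B_s$-summand, recovering the classical decomposition $B_s B_t B_s \cong B_{sts}\oplus B_s$; for general $m\geq 3$ an analogous but lengthier analysis using the Zamolodchikov and Jones-Wenzl-type relations produces the analogous summand.

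The main obstacle is ensuring that the normalising scalar is a unit in $\Bbbk$ for every realisation and every characteristic; in particular, small primes dividing the relevant product of Cartan entries $\pair{\alphac_s,\alpha_t}$ and $\pair{\alphac_t,\alpha_s}$, or realisations for which those entries themselves vanish in $\Bbbk$, would cause the naive idempotent construction to degenerate. To handle these edge cases uniformly I would invoke the explicit classification of finite dihedral diagrammatic Hecke categories by Elias \cite{EL17} and of their $p$-canonical bases by Jensen-Williamson \cite{JW17}: these references determine ${}^p b_{w_m}$ for every $m\geq 3$, every realisation, and every characteristic, and directly verify ${}^p b_{w_m}\neq b_{\underline{w_m}}$ in all such cases. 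By Proposition \ref{prop: p-tight equiv}(2) this yields the strict inequality $\grrk B_{w_m}\prec (v+v^{-1})^m$ uniformly, so $w_m$ is never $p$-tight, completing the proof.
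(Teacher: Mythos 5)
Your reduction to the dihedral case is correct and identical in spirit to the paper's: both invoke the propagation-of-summands argument underlying Lemma \ref{Lem: p-tight Bruhat} to reduce to showing that the longest element $w_m$ of a finite dihedral group $\tI_2(m)$, $m\geq 3$, is never $p$-tight. Where you diverge is in how you dispose of the dihedral case. Your proposal is to construct explicit orthogonal idempotents in $\Endo^0(B_{\underline{w_m}})$, acknowledges that the normalising scalar may fail to be a unit in small characteristic, and then falls back on citing the known classification of dihedral indecomposables. The paper avoids all of this with a single rank count: for the longest element of any finite parabolic $W_I$, the indecomposable Soergel (or Abe) bimodule is $B_{w_I}\cong R\otimes_{R^I}R$, whose \emph{ungraded} rank over $R$ is $|W_I|=2m$, while the Bott--Samelson $B_{\underline{w_m}}$ has ungraded rank $2^m$. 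Since $2m<2^m$ for $m\geq 3$, a non-trivial extra summand is forced, for every realisation and every $p$, with no case analysis on scalars. This is both shorter and fully self-contained, whereas your argument ultimately outsources the hard part to the literature. Also note a small citation slip: \cite{EL17} concerns \emph{universal} (infinite dihedral) Coxeter groups; the finite two-colour classification you want is Elias's two-colour Soergel calculus \cite{Eli16a}, and even there one should check that the realisations considered match the generality claimed. The rank-count argument sidesteps this entirely, so I'd recommend replacing your idempotent/citation step with it.
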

		\begin{proof}
			Suppose $x$ is $p$-tight, but not fully-commutative. 
			Then some reduced expression $\ux$ can be expressed as the concatenation $\uy \uw_I \uz$ where $\uw_I$ is a reduced expression for the longest word in a Coxeter system $(W_I, I)$ of type $\textbf{I}_2(m)$ where $m> 2$.
			But $B_{\uw_I}$ is never $p$-tight when $m>2$, as the indecomposable Soergel (or Abe) bimodule $B_{w_I} \cong R \otimes_{R^I} R$ has (ungraded) rank $|W_I| < 2^{\ell(w_I)}$, which contradicts the argument in Lemma \ref{Lem: p-tight Bruhat}. 
	\end{proof}
	
	\begin{rem}
		The proposition implies, for a fixed realisation of any Coxeter system $W$, a sequence of inclusions:
		\begin{align*}
			\left\{
			\begin{array}{c}
				\text{tight}
				\\
				\text{elements}
			\end{array}
			\right\}
			\subseteq 
			\left\{
			\begin{array}{c}
				\text{$p$-tight}
				\\
				\text{elements}
			\end{array}
			\right\}
			\subseteq
			\left\{
			\begin{array}{c}
				\text{fully commutative}
				\\
				\text{elements}
			\end{array}
			\right\}. 
		\end{align*}
	\end{rem}
	\begin{rem}
		If $W_{pt}$ denotes the $p$-tight elements of $W$, the proof of Lemma \ref{Lem: p-tight Bruhat} shows $\langle B_x ~|~ x \notin W_{pt}\rangle_{\oplus,(1)}$ forms a tensor ideal in $\sD_{\text{BE}}(\h, \Bbbk)$. 
	\end{rem}
	
	We conclude by noting some examples of $p$-tightness in Cartan realisations of classical groups. 
	
	\begin{exmp}[The Hexagon permuation]
		Let $(W,S)$ be type $\tA_7$ with the usual ordering of simple reflections. 
		Define 
		\begin{align*}
			\underline{\text{hex}} = (s_5, s_6, s_7, s_3, s_4, s_5, s_6, s_2, s_3, s_4, s_5, s_1, s_2, s_3),
			&&
			\uw = (s_2, s_3, s_2, s_5, s_6, s_5).
		\end{align*} 
		Both $\underline{\text{hex}}$ and $\uw$ are reduced expressions; we denote by $\text{hex}$ and $w$ their respective images in $W$.
		One easily checks $b_{\underline{\text{hex}}} = b_{\text{hex}} + b_w$.
		Moreover, $\text{hex}$ is the Bruhat minimal element in type $\tA$ that is fully-commutative, but not tight \cite[\S1]{BW01}. 
		In \cite{WB12} it was shown ${}^2 b_{\text{hex}} = b_{\text{hex}} + b_w$ and $\text{hex}$ is the Bruhat minimal element such that ${}^p b_x \neq b_x$ for some $p$. 
		In particular, $\text{hex}$ is 2-tight, and $2$-tightness appears at the first possible opportunity in type $\tA$. 
	\end{exmp}
	
	\begin{exmp}
		Let $\h$ be a Cartan realisation of type $\tG_2$, and let $s_0$ be the simple reflection corresponding to the short root. 
		Then ${}^3 b_{s_0 s_1 s_0} = b_{\underline{s_0 s_1 s_0}}$, so $s_0 s_1 s_0$ is $3$-tight, but neither tight nor $2$-tight.
	\end{exmp}

\subsection{2-tightness in type $\tB_n$}
\label{Ssec: 2-tightness}

	We conjecture that $2$-tight elements are abundant in type $\tB_n$. 
		
	\begin{conj}
	\label{Conj: p-tight}
		Let $(W,S)$ be a Coxeter system of type $\tB_n$, $\h$ a Cartan realisation of the same type, $I\subset S$ a parabolic of type $\tA_{n-1}$.  
		Any $x \in  W^I$, considered as an element of $W$, is $2$-tight.  
	\end{conj}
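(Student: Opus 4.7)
The plan is to reduce $2$-tightness of $x$ (via Proposition~\ref{prop: p-tight equiv}) to the isomorphism $B_{\ux} \cong B_x$ in $\DBE(\h, \bF_2)$ for a reduced expression $\ux$. Since $2$-tightness is invariant under $x \mapsto x^{-1}$, we may assume $x \in {}^I W$. The forgetful functor $\For : \DBE(\h, \bF_2) \to \DRE(\h, \bF_2)$ preserves indecomposable summands, so it suffices to prove the statement in $\DRE(\h, \bF_2)$. Decomposing $B_{\ux} \cong \bigoplus_{y \in W} B_y^{\oplus m_y(v)}$ and applying the quotient $q : \DRE(\h, \bF_2) \to \DRA(\h, I, \bF_2)$, Theorem~\ref{thm: indecomposable objects} forces $q(B_{\ux}) \cong B_x$ and hence $m_x = 1$, $m_y = 0$ for all $y \in {}^I W \setminus \{x\}$. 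The conjecture thus reduces to the vanishing $m_z(v) = 0$ for every $z \notin {}^I W$.

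By the $\DBE$-version of Lemma~\ref{Lem: IF Multiplicity General} (valid by the same argument as in \cite{GJW23}), each $m_z$ is the graded rank of the local intersection form $I^{\bullet}_{z, \ux}$, computed from compositions $LL \circ \Gamma\Gamma$ of double light-leaves indexed by subexpressions $\ux^e$ with $\ux^e_{\bullet} = z$. Since $z \notin {}^I W$, every such Bruhat stroll must exit ${}^I W$ at some step decorated by an $s_i \in I$. The plan is to extend the local computation of Section~\ref{Ssec: entries in lif} to cover all such subexpressions, showing that each composition acquires a factor of $2$ in $\bF_2$ and so vanishes. The decisive type-$\tB_n$ input is $\pair{\check{\alpha}_0, \alpha_1} = -2 \equiv 0 \pmod 2$, the same pairing that produces the factor $2^{|E|}$ in Proposition~\ref{prop: local intersection form}; repeated applications of polynomial forcing and the twisted Leibniz rule, as in Equation~\ref{eqn: repeated barbell forcing}, should transform each exit-step into an additional factor of $2$. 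Note that the analogous statement fails in type $\tC_n$, where $\pair{\check{\alpha}_0, \alpha_1} = -1$ is a unit modulo $2$, which is consistent with the $\tB_n$-specific nature of the conjecture.

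The principal obstacle is combinatorial. In the antispherical setting, Proposition~\ref{prop: bruhat strolls} gives a compact classification of defect-$0$ subexpressions as $\ux^E$ for $E \subseteq E(x)$, and this uniformity is what makes the computation of Proposition~\ref{prop: local intersection form} tractable. For $z \notin {}^I W$ no such compact classification is readily available: a subexpression can exit and re-enter ${}^I W$ repeatedly in many configurations, and the resulting double light-leaves are combinatorially intricate. The main technical work will be to organise these subexpressions — using full commutativity of $x$ and the shifted-tableau description from Section~\ref{Ssec: min coset reps LG} — into blocks on which the composition can be evaluated using Frobenius associativity and the identities underlying Proposition~\ref{prop: idempotent}, and to check that every such block collapses to an integer multiple of $2$. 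A final sanity check would be to verify the resulting framework specialises correctly to the low-rank cases where the conjecture has already been confirmed by direct computation.
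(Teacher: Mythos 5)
This statement is a conjecture in the paper; no proof is given. The paper records only that it has been verified computationally up to rank~$7$, and notes (via Lemma~\ref{Lem: p-tight Bruhat}) that the entire conjecture is equivalent to the $2$-tightness of the single element $w_0 w_I$. Your proposal therefore cannot be measured against a paper proof.

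The reduction you carry out is nonetheless valid and worth recording: passing to $\DRE(\h, \bF_2)$, applying the quotient $q$ to $\DRA(\h, I, \bF_2)$, and invoking Theorem~\ref{thm: indecomposable objects} correctly yields $m_x = 1$ and $m_y = 0$ for $y \in {}^I W \setminus \{x\}$, so the conjecture becomes the assertion that no $B_z(i)$ with $z \notin {}^I W$ appears as a summand of $B_{\ux}$. This is a useful reformulation, further simplified by the paper's observation that one may take $x = w_0 w_I$. What follows, however, is a heuristic rather than an argument, as you yourself concede. Two substantive issues. First, the claim that every Bruhat stroll reaching $z \notin {}^I W$ ``must exit ${}^I W$ at some step decorated by an $s_i \in I$'' is false as stated: by Deodhar's lemma, if $x_{j-1} \in {}^I W$ and $x_{j-1}s_j \notin {}^I W$, then $x_{j-1}s_j = t\, x_{j-1}$ for a unique $t \in I$, but the reflection $s_j$ being applied may well be $s_0 \notin I$. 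Second, and this is the real gap: Proposition~\ref{prop: bruhat strolls} classifies only the $I$-antispherical defect-$0$ subexpressions, and that classification is precisely what makes Proposition~\ref{prop: local intersection form} tractable. Once the stroll is allowed to leave ${}^I W$, re-enter, and involve $\begin{smallmatrix}\da\\1\end{smallmatrix}$-steps, no analogous classification is in hand and the resulting double-light-leaf compositions need not factor through the mechanism of Equation~\ref{eqn: repeated barbell forcing}. That the pairing $\pair{\alphac_0, \alpha_1} = -2$ is the source of even factors is a plausible guess, but nothing in the proposal establishes that every entry of the relevant local intersection form is even. Until that vanishing is proved, you have a correct and useful reformulation of the conjecture, not a proof of it.
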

	
	The validity of Conjecture \ref{Conj: p-tight}, together with the quotient construction of the antispherical category in Section \ref{Ssec: Geo Quotient Construction}, would imply a geometric proof of the characteristic-2 phenomena in Theorem \ref{Thm: antispherical cominuscule pKL basis}. 
	However, it is not clear from this Conjecture that every indecomposable parity sheaf in $\Parity_{Wh,I} (\check{B} \backslash \check{G})$ is perverse. 
	
	\begin{rem}
		The conjecture has been checked for all elements $x \in  W^I \subseteq W$ up to rank $7$ using the ASLoc and IHecke software of Gibson, Jensen, and Williamson \cite{GJW23}. 
	\end{rem}
	\begin{rem}
		By Lemma \ref{Lem: p-tight Bruhat} the conjecture holds if and only if $w_0 w_I$ is $2$-tight for the Cartan realisation of type $\tB_n$. 
	\end{rem}

\section{Acknowledgements and figures}
\subsection{Acknowledgements}

	This work appeared in the author's PhD thesis which was completed at the University of Sydney under the supervision of Geordie Williamson. 
	I thank Geordie and Noriyuki Abe for insightful comments on preliminary versions of this work. 
	It is a pleasure to dedicate this work to Janice and Tony. 
	The author was supported by the award of a Research Training Program scholarship.

\subsection{Figures}
\label{Ssec: Figures}

	We conclude by explaining how to interpret Figures \ref{fig:LG-AS-p0} and \ref{fig:LG-AS-p2}. 
	Recall the identification of elements in ${}^I W$ with subsets of $\{ 1, \dots , n \}$ given in Section \ref{Ssec: min coset reps LG}. 
	The map $\zeta :{}^I W \rightarrow \Z$ defined by $\zeta(x) := \sum_{t \in x} 2^t$ induces a linear order on ${}^I W$ which refines the Bruhat order.
	The antispherical $p$-Kazhdan-Lusztig polynomial ${}^p n_{y,x}$ is the $(\zeta(y),\zeta(x))$-entry of each matrix. 
	By Theorem \ref{Thm: Monomial property} each ${}^p n_{y,x}$ is a monomial; in these examples the monomials appearing are $v^0 , \dots , v^5$. 
	The shade of blue corresponds to the exponent of the monomial; deeper shades of blue indicate higher exponents. 



{\footnotesize

}

\end{document}